\documentclass[12pt]{amsart}
\usepackage{graphicx} 
\usepackage{caption}
\usepackage{subcaption}
\usepackage{url} 
\usepackage{hyperref}
\usepackage{amsfonts,amsthm,latexsym,amsmath,amssymb,amscd,amsmath,epsf} 
\usepackage{tikz}
\usepackage{verbatim}
\usetikzlibrary{snakes,arrows,shapes}

\theoremstyle{plain}
\newtheorem{theorem}{\bf Theorem}[section]
\newtheorem{lemma}[theorem]{\bf Lemma}
\newtheorem{proposition}[theorem]{\bf Proposition}
\newtheorem{corollary}[theorem]{\bf Corollary}

\theoremstyle{definition}
\newtheorem{definition}[theorem]{Definition}

\theoremstyle{remark}
\newtheorem{remark}[theorem]{\bf Remark}

\numberwithin{equation}{section}

\makeindex

\newcommand{\ZZ}{\mathbb{Z}}
\newcommand{\NN}{\mathbb{N}}

\newcommand{\E}{\mathcal{E}}
\newcommand{\F}{\mathcal{F}}

\newcommand{\T}{\mathcal{T}}
\newcommand{\BT}{\mathcal{BT}}

\newcommand{\BB}{\mathcal{B}}
\newcommand{\MM}{\mathcal{M}}

\newcommand{\sym}{\mathfrak{S}}
\newcommand{\lie}{\mathcal{L}ie}

\newcommand{\comb}{\textsf{Comb}^2}
\newcommand{\lyn}{\textsf{Lyn}^2}
\newcommand{\liu}{\textsf{Liu}^2}

\def\newop#1{\expandafter\def\csname #1\endcsname{\mathop{\rm #1}\nolimits}}

\newop{diag}
\newop{Tor}
\newop{supp}
\newop{per}
\newop{rank}
\newop{ker}
\newop{cl}
\newop{step}
\newop{col}
\newop{red}
\newop{blue}
\newop{length}
\newop{sgn}
\newop{val}
\newop{c}
\newop{root}
\newop{inv}
\newop{im}
\newop{couple}
\newop{free}
\newop{asc}
\newop{des}
\newop{red}
\newop{ch}
\newop{gr}

\title{On the (co)homology of the poset of weighted partitions}

\author[R. S. Gonz\'alez D'Le\'on]{Rafael S. Gonz\'alez D'Le\'on$^*$}
\address{Department of Mathematics, University of Miami, Coral Gables, FL 33124}
\email{dleon@math.miami.edu}
\thanks{$^*$Supported by NSF Grant  DMS 1202755}

\author[M. L. Wachs]{Michelle L. Wachs$^{**}$}
\address{Department of Mathematics, University of Miami, Coral Gables, FL 33124}
\email{wachs@math.miami.edu}
\thanks   {$^{**}$This work was partially supported by a grant from the Simons Foundation (\#267236 to Michelle Wachs) and by NSF Grants DMS 0902323 and DMS 1202755.}

\setcounter{tocdepth}{1}
\begin{document}
\allowdisplaybreaks

\subjclass[2010]{Primary 05E45; Secondary 05E15, 05E18, 06A11, 17B01 }

\begin{abstract}
We consider the poset of weighted partitions $\Pi_n^w$,  introduced by Dotsenko and Khoroshkin in 
their study of a certain pair of dual operads.   The maximal intervals of  $\Pi_n^w$ provide a 
generalization of the lattice $\Pi_n$ of  partitions,  which we show possesses many of the
well-known properties of $\Pi_n$.   
In particular,  we prove 
these intervals are EL-shellable, we show that the M\"obius invariant of each maximal interval is given up to sign by 
the number of  rooted trees on  node set $\{1,2,\dots,n\}$ having a fixed number of descents,
 we
find combinatorial bases for homology and 
cohomology, and we give an explicit sign twisted $\mathfrak{S}_n$-module isomorphism from 
cohomology to  
the multilinear component of the free Lie algebra with two compatible brackets.  We  also show that
the characteristic polynomial of $\Pi_n^w$ has 
a nice factorization analogous to that of  $\Pi_n$.\end{abstract}

\date{November 20, 2013; revised May 16, 2014; final version March 27, 2015}

\maketitle

\tableofcontents
\section{Introduction}\label{section:introduction}
We recall some combinatorial, topological and representation theoretic properties of the lattice
$\Pi_n$ of  partitions of the set  $[n]:=\{1,2,\dots,n\}$
ordered by refinement.\footnote{The poset terminology used here is defined in
Section~\ref{section:otheralgebraicinvariants}.}   The M\"obius invariant of $\Pi_n$ is given by 
$$\mu_{\Pi_n}(\hat 0, \hat 1) = (-1)^{n-1} (n-1)!,$$ 
 and  the characteristic polynomial by $$
\chi_{\Pi_n}(x) =  (x-1)(x-2) \dots (x-n+1)$$ (see \cite[Example 3.10.4]{Stanley2012}). 
 It was proved by Bj\"orner 
\cite{Bjorner1980}, using an edge labeling of Stanley \cite{Stan1974}, that $\Pi_n$ is  
EL-shellable;  consequently the order complex $\Delta(\overline{\Pi}_n)$  of the proper
part $\overline{\Pi}_n$ of the partition lattice $\Pi_n$ has the homotopy type of
a wedge of  $(n-1)!$ spheres of dimension $n-3$.  Various nice bases for the homology and cohomology
of the partition lattice have been introduced and studied; see \cite{Wachs1998} for a discussion of
these bases.

The symmetric group $\sym_n$ acts naturally on $\Pi_n$ and this action induces isomorphic
representations 
of $\sym_n$ on the unique nonvanishing reduced simplicial homology 
$\tilde H_{n-3}(\overline{\Pi}_n)$ of  the order complex $\Delta(\overline{\Pi}_n)$  and  on the 
unique nonvanishing simplicial cohomology  $\tilde H^{n-3}(\overline{\Pi}_n)$.
Joyal \cite{Joyal1986} observed that a formula
of Stanley and Hanlon (see  \cite{Stanley1982}) for the  character of this representation is a sign
twisted version 
of an  earlier formula of  Brandt \cite{Brandt1944} for the character of the representation of  
$\sym_n$  on the multilinear component  $\lie(n)$ of the free Lie algebra  on $n$ generators. Hence 
 the following $\sym_n$-module isomorphism  holds,
\begin{equation} 
\label{intro lie}  \tilde H_{n-3}(\overline{\Pi}_n) \simeq_{\sym_n} \lie(n) \otimes \sgn_n,
\end{equation}  
where  $\sgn_n$ is the sign representation of $\sym_n$. 
Joyal \cite{Joyal1986} gave a proof of the isomorphism using his theory of species.  
 The first purely 
combinatorial proof was obtained by Barcelo \cite{Barcelo1990} who provided  a bijection between
known 
bases for the two $\sym_n$-modules  (Bj\"orner's NBC basis for  $\tilde H_{n-3}(\overline{\Pi}_n)$ and the
Lyndon basis for $\lie(n)$) and analyzed the representation matrices for these bases.  Later Wachs
\cite{Wachs1998} gave a more general combinatorial proof 
by providing a natural bijection between generating sets of $\tilde H^{n-3}(\overline{\Pi}_n)$ and 
$\lie(n)$, which  revealed the strong connection between the two $\sym_n$-modules.

In this paper we explore analogous properties for  a weighted version of $\Pi_n$,    introduced by
Dotsenko and Khoroshkin 
\cite{DotsenkoKhoroshkin2007} in their study of  Koszulness of certain quadratic binary operads.   
A weighted partition of $[n]$ is a set $\{B_1^{v_1},B_2^{v_2},...,B_t^{v_t}\}$
where $\{B_1,B_2,...,B_t\}$ is a partition of $[n]$ and $v_i \in \{0,1,2,...,|B_i|-1\}$ for all $i$.
The {\it poset of weighted partitions} $\Pi_n^{w}$  is the set of weighted partitions of $[n]$ with 
order relation given by 
$\{A_1^{w_1},A_2^{w_2},...,A_s^{w_t}\}\le\{B_1^{v_1}, B_2^{v_2},...,B_t^{v_t}\}$ if the following
conditions hold:
\begin{itemize}
 \item $\{A_1,A_2,...,A_s\} \le \{B_1,B_2,...,B_t\}$ in $\Pi_n$
 \item if $B_k=A_{i_1}\cup A_{i_2}\cup ... \cup A_{i_l} $ then 
 $v_k-(w_{i_1} + w_{i_2} + ... + w_{i_l})\in \{0,1,...,l-1\}$.
\end{itemize}
Equivalently, we can define the covering relation  by 
$$\{A_1^{w_1},A_2^{w_2},...,A_s^{w_s}\}\lessdot \{B_1^{v_1}, B_2^{v_2},...,B_t^{v_t}\}$$ if the
following conditions hold:
\begin{itemize}
 \item $\{A_1,A_2,\dots,A_s\} \lessdot \{B_1,B_2,\dots,B_t\}$ in $\Pi_n$
 \item if $B_k=A_{i}\cup A_{j}$, where $i \ne j$, then $v_k-(w_{i} + w_{j}) \in \{0,1\}$
 \item if $B_k = A_i$ then $v_k = w_i$.
 \end{itemize}
In Figure~\ref{fign3k2}  below the set brackets and commas have been omitted.

\begin{figure}[h]

\begin{center} 
\begin{tikzpicture}[line join=bevel,scale=1]

\tikzstyle{every node}=[inner sep=0pt, scale=0.8, minimum width=4pt]
\node (n1232) at (3,4) {$123^{2}$};
  \node (n13020) at (-3,2) {$13^{ 0}| 2^{ 0}$};
  \node (n102030) at (0,0)  {$1^{0}| 2^{0}| 3^{0}$};
  \node (n1231) at (0,4) {$123^{1}$};
  \node (n12030) at (-5,2) {$12^{0}| 3^{0}$};
  \node (n13120) at (3,2)  {$13^{1}| 2^{0}$};
  \node (n1230) at (-3,4){$123^ {0}$};
  \node (n10230) at (-1,2)  {$1^{0}| 23^{0}$};
  \node (n12130) at (1,2)  {$12^{ 1}| 3^{0}$};
  \node (n10231) at (5,2) {$1^{0}| 23^ {1}$};

  \draw (n1231) -- (n10230) ;
  \draw [] (n13020) -- (n102030);
  \draw [] (n1232) -- (n13120);
  \draw [] (n1231)-- (n13020);
  \draw [] (n10230)--(n102030);
  \draw [] (n1230) -- (n10230);
  \draw [] (n1231) -- (n13120);
  \draw [] (n12030)-- (n102030);
  \draw [] (n1231) --(n12130);
  \draw [] (n1232) -- (n12130);
  \draw [] (n13120) --(n102030);
  \draw [] (n1231) --(n10231);
  \draw [] (n1230) -- (n13020);
  \draw [] (n1230)  -- (n12030);
  \draw [] (n12130) --  (n102030);
  \draw [] (n1232)  --  (n10231);
  \draw [] (n10231)  --  (n102030);
  \draw [] (n1231) -- (n12030);

\end{tikzpicture}
\end{center}
\caption[]{Weighted partition poset for $n=3$}\label{fign3k2}
\end{figure}
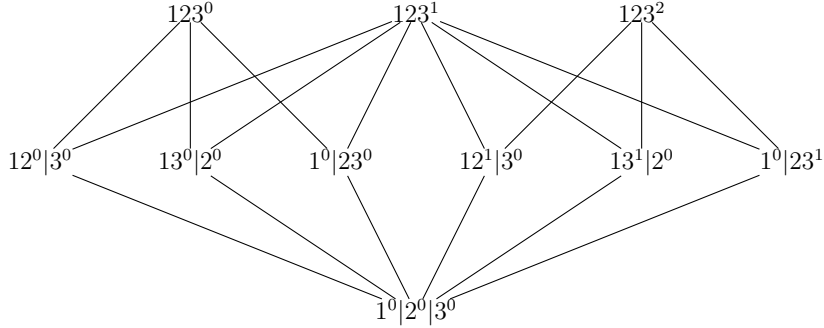

The poset $\Pi_n^w$ has a minimum element $$\hat 0:= \{\{1\}^0,\{2\}^0,\dots, \{n\}^0\}$$ and  $n$
maximal elements 
$$\{[n]^0\}, \, \{[n]^1\}, \dots,  \{[n]^{n-1}\}.$$  We write each maximal element $\{[n]^i\}$ as
$[n]^i$. Note  that  for all $i$, the maximal  intervals 
$[\hat 0, [n]^i]$ and $[\hat 0, [n]^{n-1-i}]$ are isomorphic to each other, and the two maximal 
intervals $[\hat 0, [n]^0]$ and $[\hat 0, [n]^{n-1}]$ are isomorphic to $\Pi_n$.  

The basic properties of $\Pi_n$ mentioned above have nice weighted analogs for the intervals $[\hat 0, [n]^i]$.  For 
instance,  the $\sym_n$-module isomorphism (\ref{intro lie}) can be generalized. Let $\lie_2(n)$ be the  multilinear 
component of the free Lie  
algebra  on $n$ generators with two compatible 
 brackets (defined in Section~\ref{subsection:genlie2})   and let $\lie_2(n,i)$ be  the component of $\lie_2(n)$ 
 generated by bracketed permutations with $i$ brackets of one type and $n-1-i$ brackets of the other type. The 
 symmetric group acts naturally on each $\lie_2(n,i)$ and on each open interval $(\hat 0, [n]^i)$.   It follows from  
 operad theoretic results of
Vallette \cite{Vallette2007} and Dotsenko-Khoroshkin \cite{DotsenkoKhoroshkin2010} that the following $\sym_n$-
module isomorphism holds:
\begin{equation}\label{gradelie} \tilde H_{n-3}((\hat 0, [n]^i))
\simeq_{\sym_n}  \lie_2(n,i)  \otimes \sgn_n.
\end{equation}  Note that this reduces to  (\ref{intro 
 lie}) when $i=0$ or $i=n-1$.  The character of each $\sym_n$-module $ \lie_2(n,i)$ was computed by Dotsenko 
 and 
 Khoroshkin \cite{DotsenkoKhoroshkin2007}.

 In \cite{Liu2010}  Liu proves a conjecture of Feigin that $\dim \lie_2(n) = n^{n-1}$ by constructing
 a combinatorial basis for $\lie_2(n)$  indexed by rooted
trees on node set $[n]$.   An operad theoretic proof of Feigin's conjecture  was obtained by Dotsenko and 
 Khoroshkin \cite{DotsenkoKhoroshkin2007}, but with a gap  pointed out in \cite{Strohmayer2008} and
corrected in 
 \cite{DotsenkoKhoroshkin2010}.
In fact, Liu and  Dotsenko-Khoroshkin  obtain the following refinement of Feigin's conjecture 
 \begin{equation} \label{introprodlie} \sum_{i=0}^{n-1}\dim \lie_2(n,i) t^i=  
\prod_{j=1}^{n-1} ((n-j) + jt).\end{equation}   
Since, as was proved by Drake \cite{Drake2008},  the  right hand side of (\ref{introprodlie})
is equal to the generating function for rooted trees on node set $[n]$ according to the number of 
descents of the tree, it follows that for each $i$,
the dimension of $\lie_2(n,i)$ equals  the number of rooted trees on node set
$[n]$ with $i$ descents.  (Drake's result is a refinement of the well-known result that the number
of trees on node set $[n]$ is $n^{n-1}$.) 

In
this paper we give an alternative proof of (\ref{gradelie}) by presenting an explicit  bijection between natural
 generating sets of  $\tilde H^{n-3}((\hat 0, [n]^i))$ and $
\lie_2(n,i)$, which reveals the connection between these  modules and
generalizes the  bijection that Wachs 
 \cite{Wachs1998} used to prove (\ref{intro lie}).  With  (\ref{gradelie}), we   take a  different path to proving the Liu 
 and 
 Dotsenko-Khoroshkin
formula (\ref{introprodlie}), one that employs  poset theoretic techniques.    

We prove that the augmented poset of weighted partitions 
$$\widehat{\Pi^w_n} := \Pi^w_n \cup \{ \hat 1\}$$   is EL-shellable by providing an interesting 
weighted analog of
the Bj\"orner-Stanley EL-labeling of 
$\Pi_n$.   In fact our labeling restricts to the Bj\"orner-Stanley EL-labeling on the intervals
$[\hat 0, [n]^0]$ and $[\hat 0, [n]^{n-1}]$.   A 
consequence of  shellability is that $\widehat{\Pi^w_n}$ is Cohen-Macaulay, which implies a result
of  Dotsenko and 
Khoroshkin \cite{DotsenkoKhoroshkin2010},
obtained through operad theory, that all  maximal intervals $[\hat 0, [n]^i]$ of $\Pi_n^w$ are
Cohen-Macaulay.  (Two prior attempts \cite{DotsenkoKhoroshkin2007,Strohmayer2008} to establish
Cohen-Macaulayness of $[\hat 0, [n]^i]$ are discussed in Remark~\ref{stro}.)  The ascent-free chains
 of
our EL-labeling provide a generalization of the Lyndon basis for cohomology of $\overline{\Pi}_n$ (i.e.
the basis for cohomology that corresponds to the classical Lyndon basis for $\lie(n)$).

 Direct
computation of the M\"obius function of $\Pi_n^w$, which  exploits the recursive nature of $\Pi^w_n$
and makes use of the compositional formula,  shows that 
$$(-1)^{n-1}\sum_{i=0}^{n-1}\mu_{\Pi_{n}^{w}}(\hat{0},[n]^i)t^{i}$$ equals the right hand side of (\ref{introprodlie}).  
From this computation
 and  the fact that $\widehat{\Pi_n^w}$ is EL-shellable (and thus the maximal intervals of $\Pi_n^w$
are Cohen-Macaulay), we conclude that
 \begin{equation} \label{introprod} \sum_{i=0}^{n-1} \rank \tilde H_{n-3}((\hat 0, [n]^i)) t^i=  
\prod_{j=1}^{n-1} ((n-j) + jt).\end{equation} The Liu and 
 Dotsenko-Khoroshkin
formula (\ref{introprodlie}) is  a consequence of this and  (\ref{gradelie}).

By (\ref{introprod}) and Drake's result mentioned above,  the rank of $ \tilde H_{n-3}((\hat 0,
[n]^i))$ is equal 
 to the number of rooted trees on $[n]$ with $i$ descents.  
  We construct a nice combinatorial basis for   
 $\tilde H_{n-3}((\hat 0, [n]^i))$ consisting of fundamental cycles  indexed by such rooted trees,
which generalizes 
 Bj\"orner's NBC basis for  $\tilde H_{n-3}(\overline{\Pi}_n)$.  Our proof  that these fundamental cycles form a basis 
 relies on Liu's \cite{Liu2010}
 generalization for $\lie_2(n,i)$ of the classical Lyndon basis for $\lie(n)$ and our bijective proof
of (\ref{gradelie}). Indeed, 
 our bijection enables us to transfer bases for 
$\lie_2(n,i)$ to bases for  $\tilde H^{n-3}((\hat 0, [n]^i))$ and vice verse.  
We first  transfer Liu's generalization of the Lyndon basis to $\tilde
H^{n-3}((\hat 0, [n]^i))$  and then use the natural pairing between homology and cohomology to prove
that our proposed homology basis is indeed a basis. (We also obtain an alternative proof that Liu's
generalization of the Lyndon basis is a basis along the way.)    By transferring the basis for
$\tilde H^{n-3}((\hat 0, [n]^i))$ that comes from the ascent-free chains of our
EL-labeling to $\lie_2(n,i)$, we  obtain a different generalization of the Lyndon basis that has a
somewhat simpler description than that of Liu's generalized Lyndon basis.

 The paper is organized as follows:
 In Section~\ref{section:otheralgebraicinvariants} we derive basic properties of the weighted
partition lattice, which include the formula for the 
 M\"obius function of $\Pi^w_n$ mentioned above.
 We also show that  
the M\"obius invariant of  the augmented poset of weighted partitions $\widehat{\Pi^w_n}$ is given 
by  $$\mu_{\widehat{\Pi^w_n}}(\hat 0,\hat 1)= (-1)^n(n-1)^{n-1}$$ and  the
characteristic polynomial  factors nicely as $$\chi_{\Pi^w_n}(x)= (x-n)^{n-1}.$$  The Whitney numbers of the
first and second  kind are also discussed.
 
 Section~\ref{section:topology} contains our results on  EL-shellability of the augmented poset of
weighted partitions and  its topological consequences. 
 
  In Section~\ref{section:conlie} we give a presentation of the cohomology of the maximal open
intervals $(\hat 0, [n]^i)$  in terms 
  of maximal chains associated with labeled bicolored binary trees.  This presentation enables us to
use a natural 
  bijection between  generating sets of  $ \tilde H^{n-3}((\hat 0, [n]^i))$ and
 $\lie_2(n,i)$ to establish the  $\sym_n$-module isomorphism (\ref{gradelie}).  
  Bases for cohomology and  for homology of $(\hat 0, [n]^i)$ are discussed in
Section~\ref{section:combinatorialbases}.   We also construct  bases for cohomology of the full poset
$\Pi_n^w \setminus \{\hat 0 \}$.
   
By extending the technique of Section~\ref{section:conlie}, we prove in
Section~\ref{section:whitney} that Whitney homology of $\Pi_n^w$  tensored with the sign
representation is isomorphic to the multilinear component of  the exterior algebra of the doubly
bracketed free Lie algebra on $n$ generators.   In Section~\ref{section:futurework} we mention
related results that will appear in  forthcoming papers.

\section{Basic properties}\label{section:otheralgebraicinvariants}

 For poset terminology not defined here see \cite{Stanley2012}, \cite{Wachs2007}. For $u  \le v$ in
a poset $P$,  the open interval $\{w \in P : u < w < v\}$ is denoted  by $(u,v)$ and the closed
interval $\{w \in P : u \le w \le v\}$ by $[u,v]$. 
A poset is said to be {\em bounded} if it has a minimum element $\hat 0$ and a maximum element 
$\hat 1$.  For a bounded poset $P$, we define the {\em proper part} of $P$ as $\overline {P}:=
P\setminus\{\hat 0,\hat 1\}$.    A poset is said to 
be {\em pure} (or ranked)  if all its maximal chains have the same length, where the length of a 
chain $s_0<s_1 < \dots < s_n$ is $n$.  The {\em length} $l(P)$ of a poset $P$ is the length of 
its longest chain.  For a poset $P$ with a minimum element $\hat 0$, the rank function $\rho:P \to
\NN$ is defined  by $\rho(s) = l([\hat 0, s])$.  The rank generating function $\F_P(x)$ is defined
by 
$\F_P(x) = \sum_{u \in P} x^{\rho(u)}$.

\subsection{The rank generating function} \label{ranksec}

It is easy to see that  the weighted partition poset $\Pi_n^w$ is pure of length $n-1$ and has 
minimum element $\hat 0 =\{\{1\}^0,\dots,\{n\}^0\}$.   For each $\alpha \in \Pi_n^w$, we have
$\rho(\alpha)=n-|\alpha|$.

\begin{proposition}  \label{rankprop} For all $n \ge 1$,  the rank generating function is given by 
\[\F_{ \Pi_n^{w}}(x)=\sum_{k=0}^{n-1}\binom{n}{k}(n-k)^k x^k.\]
\end{proposition}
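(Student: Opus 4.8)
The plan is to count weighted partitions of $[n]$ with exactly $k$ blocks, since $\rho(\alpha) = n - |\alpha|$ means rank-$k$ elements are those with $n-k$ blocks; equivalently, setting $k$ to be the rank, I count weighted partitions with $n-k$ blocks. Let me instead fix the rank $k$ and count weighted partitions $\alpha$ with $|\alpha| = n-k$ blocks. A weighted partition is a set partition $\{B_1,\dots,B_{n-k}\}$ together with a choice $v_i \in \{0,1,\dots,|B_i|-1\}$ for each block. So the number of weighted partitions of rank $k$ is
\[
\sum_{\{B_1,\dots,B_{n-k}\}} \prod_{i=1}^{n-k} |B_i|,
\]
the sum ranging over all set partitions of $[n]$ into $n-k$ blocks, and I must show this equals $\binom{n}{k}(n-k)^k$.

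First I would interpret the quantity $\prod_i |B_i|$ combinatorially: choosing a set partition into $n-k$ blocks and then marking one element in each block is the same as choosing a partition of $[n]$ into $n-k$ \emph{pointed} blocks. Equivalently, this counts pairs $(\pi, f)$ where $\pi$ is a set partition with $n-k$ blocks and $f$ selects a distinguished element from each block. I claim such structures are in bijection with pairs consisting of a $k$-subset $S \subseteq [n]$ (the non-distinguished elements... no, rather the "extra" elements) together with an arbitrary function from $S$ to the $n-k$ complementary elements. Concretely: the $n-k$ distinguished elements form an $(n-k)$-subset, hence their complement $S$ has size $k$; and specifying which block each non-distinguished element lies in is exactly a function $S \to [n]\setminus S$ sending each element to the distinguished element of its block. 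This gives $\binom{n}{k}$ choices for which elements are distinguished, times $(n-k)^k$ functions, which is the desired count.

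The main thing to verify carefully is that this correspondence is a genuine bijection — in particular that every function $S \to [n]\setminus S$ arises and that distinct (partition, pointing) pairs give distinct (subset, function) pairs. This is straightforward: given $S$ of size $k$ and $f : S \to [n]\setminus S$, reconstruct the blocks as the fibers $\{d\} \cup f^{-1}(d)$ for $d \in [n]\setminus S$; these are nonempty (they contain $d$), disjoint, cover $[n]$, number exactly $n-k$, and come with the natural pointing $d$. The two constructions are mutually inverse. So the only real "obstacle" is bookkeeping, and there is no serious difficulty; the identity then reads
\[
\F_{\Pi_n^w}(x) = \sum_{k=0}^{n-1} \binom{n}{k}(n-k)^k x^k,
\]
noting the range of $k$ is $0$ to $n-1$ because $\Pi_n^w$ has length $n-1$ (a partition into $n-k \ge 1$ blocks forces $k \le n-1$), matching the stated formula.

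Alternatively, if one prefers an algebraic route, I would use the exponential formula: the exponential generating function for pointed blocks (a block of size $m\ge 1$ contributes weight $m$) is $\sum_{m\ge 1} m \frac{x^m}{m!} = x e^x$, so $\sum_n \big(\sum_\alpha t^{|\alpha|}\big)\frac{x^n}{n!} = \exp(t x e^x)$, and extracting coefficients via Lagrange inversion or a direct expansion recovers $\sum_k \binom{n}{k}(n-k)^k t^{n-k}$; substituting $t \mapsto$ appropriate variable and re-indexing gives the claim. But the bijective argument above is cleaner and self-contained, so that is the one I would write up.
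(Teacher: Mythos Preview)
Your proof is correct and follows essentially the same approach as the paper: identify a weighted partition of rank $k$ with a partition into $n-k$ pointed blocks, then count these by first choosing the $n-k$ distinguished elements and then assigning each of the remaining $k$ elements to one of them. The paper's argument is exactly this bijection, stated more tersely.
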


\begin{proof}
Let $R_n(k)=\{ \alpha \in \Pi_n^{w}|\, \rho(\alpha)=k\}$.
We need to show 
 \begin{equation}\label{rankeq} |R_n(k)|=\binom{n}{n-k}(n-k)^k. \end{equation} 
 A weighted partition in $ R_n(k)$ can be viewed as a partition of $[n]$ into $n-k$ blocks,    with
one  element of each block   marked (or distinguished).  To choose such a partition, we first choose
the $n-k$ marked elements.  There are $\binom n {n-k}$ ways to choose these elements and place them
in $n-k$ distinct blocks.  To each of the remaining $k$ elements we allocate one of these $n-k$ 
blocks.  We can do this in $(n-k)^k$ ways.  Hence (\ref{rankeq}) holds. 
\end{proof}

\subsection{The M\"obius function} \label{subsection:mobius}

For $\alpha = \{A_1^{w_1},\dots,A_k^{w_k}\}  \in \Pi_n^w$, let $w(\alpha)=\sum_{i=1}^k w_i$.
 The following observations will be used to compute the M\"obius function 
 of the weighted partition
poset.  
\begin{proposition}\label{proposition:upperlowerideals}
 For all  $\alpha = \{A_1^{w_1},\dots,A_k^{w_k}\} \in \Pi_n^{w}$, 
 \begin{enumerate}
 \item $[\alpha,\hat 1]$ and $\widehat{\Pi_k^{w}}$ are isomorphic posets,
 \item $[\alpha, [n]^i]$ and $[\hat 0, [|\alpha|]^{i-w(\alpha)}]$ are isomorphic posets for
$w(\alpha) \le i\le n-1$,
 \item $[\hat{0},\alpha]$ and  $[\hat{0},[|A_1|]^{w_1}] \times \cdots \times [\hat 0,
[|A_k|]^{w_k}]$ 
are isomorphic posets.
\end{enumerate}
\end{proposition}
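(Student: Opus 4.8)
The plan is to write down, in each of the three cases, the obvious bijection between the two posets and then check that it and its inverse preserve order; in every case the only subtlety is bookkeeping the weight condition in the definition of $\Pi_n^w$. All three maps are the natural ones: \emph{contract} each block $A_i$ of $\alpha$ to a point for (1) and (2), and \emph{restrict} a weighted partition below $\alpha$ to each block $A_i$ separately for (3).

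For (1) — a statement about the interval $[\alpha,\hat 1]$ in the augmented poset $\widehat{\Pi_n^w}$ — write $\alpha=\{A_1^{w_1},\dots,A_k^{w_k}\}$ with $k=|\alpha|$. Every $\beta\in[\alpha,\hat 1]$ with $\beta\neq\hat 1$ has each of its blocks equal to a union of blocks of $\alpha$, so I would define $\phi(\hat 1)=\hat 1$ and otherwise replace each block $B=A_{i_1}\cup\cdots\cup A_{i_l}$ of $\beta$ carrying weight $v$ by the block $\{i_1,\dots,i_l\}$ carrying weight $v-(w_{i_1}+\cdots+w_{i_l})$. The candidate inverse expands point $i$ back to $A_i$ and restores the subtracted weight, and $\phi(\alpha)=\hat 0$ since $w_i-w_i=0$. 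To see $\phi$ and $\phi^{-1}$ are mutually inverse poset isomorphisms I would verify one bookkeeping identity: if $\beta'\le\beta$ in $[\alpha,\hat 1]$, $B$ is a block of $\beta$ of weight $v$, and $B_1,\dots,B_m$ are the blocks of $\beta'$ inside $B$ with weights $v_1,\dots,v_m$, then the weights $v',v'_1,\dots,v'_m$ assigned by $\phi$ satisfy $v'-(v'_1+\cdots+v'_m)=v-(v_1+\cdots+v_m)$, since both sides equal $v$ minus the $\alpha$-weights inside $B$ minus the $\sum_r v_r$. Hence the order condition ``$v-\sum_r v_r\in\{0,\dots,m-1\}$'' for $\Pi_n^w$ is literally the order condition for $\Pi_k^w$; specializing to $\beta'=\alpha$ shows $\phi$ is well defined, and since the underlying unweighted partitions obviously correspond, $\phi$ and $\phi^{-1}$ preserve order. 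One last check is that $\phi$ is surjective onto $\widehat{\Pi_k^w}$, which is clear from the description of $\phi^{-1}$.

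Part (2) is then a restriction of (1): $[\alpha,[n]^i]$ consists of the elements of $[\alpha,\hat 1]$ lying below $[n]^i$, and $\phi$ carries $[n]^i$ (single block $[n]=A_1\cup\cdots\cup A_k$, weight $i$) to $[k]^{\,i-w(\alpha)}$, where $w(\alpha)=\sum_j w_j$; so $\phi$ restricts to an isomorphism $[\alpha,[n]^i]\cong[\hat 0,[\,|\alpha|\,]^{\,i-w(\alpha)}]$. The hypothesis $i\ge w(\alpha)$ makes the new weight nonnegative, and $\alpha\le[n]^i$ forces $i-w(\alpha)\le k-1$, so $[k]^{\,i-w(\alpha)}$ is genuinely a maximal element of $\Pi_k^w$. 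For (3), given $\beta\le\alpha=\{A_1^{w_1},\dots,A_k^{w_k}\}$, I would group the blocks of $\beta$ according to which $A_j$ contains them; the $j$-th group is a weighted partition $\beta^{(j)}$ of $A_j$, regarded as an element of $\Pi_{|A_j|}^w$ after an order-preserving relabeling of $A_j$ by $[\,|A_j|\,]$. By the definition of the order relation, $\beta\le\alpha$ unpacks precisely into the conjunction over $j$ of $\beta^{(j)}\le[\,|A_j|\,]^{w_j}$ (the blocks of $\beta^{(j)}$ merge into $A_j$, with the single relevant weight inequality). Thus $\beta\mapsto(\beta^{(1)},\dots,\beta^{(k)})$ is the desired bijection onto $\prod_{j=1}^{k}[\hat 0,[|A_j|]^{w_j}]$, with inverse given by disjoint union, and order is preserved in both directions because pieces inside distinct $A_j$ are compared independently.

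The only point where anything genuinely has to be checked is the bookkeeping identity in the proof of (1); part (2) is a formal corollary of it, and part (3) is the routine weighted analog of the classical decomposition of the interval below a partition in $\Pi_n$ as a product of smaller partition lattices.
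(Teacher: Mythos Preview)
Your proof is correct and supplies exactly the natural bijections one would expect. The paper itself does not prove this proposition at all; it is stated as a preliminary observation and left to the reader, so there is nothing to compare against beyond noting that your maps are the canonical ``contract the blocks of $\alpha$ to points'' and ``restrict to each block of $\alpha$'' maps that anyone filling in the details would write down. Your bookkeeping identity in part (1) is the right thing to check, and your derivation of (2) as a restriction of (1) is clean.
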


For a bounded poset $P$, let  $\mu_P$ denote its M\"obius function.
We will use the recursive definition of the M\"obius function and the compositional formula to
derive the following result.

\begin{proposition}\label{proposition:muweightedsumtrees}  For all $n \ge 1$,
\begin{equation} \label{3mobius}
\sum_{i=0}^{n-1}\mu_{\Pi_{n}^{w}}(\hat{0},[n]^i)t^{i}
=(-1)^{n-1}\prod_{i=1}^{n-1}((n-i)+it).
\end{equation}
Consequently,\[
\sum_{i=0}^{n-1}\mu_{\Pi_{n}^{w}}(\hat{0},[n]^i)=(-1)^{n-1}n^{n-1}
.\]
\end{proposition}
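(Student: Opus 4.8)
The plan is to compute $\mu_{\Pi_n^w}(\hat 0, [n]^i)$ recursively, organizing the bookkeeping with the generating function $F_n(t) := \sum_{i=0}^{n-1}\mu_{\Pi_n^w}(\hat 0,[n]^i)t^i$, and to show $F_n(t) = (-1)^{n-1}\prod_{i=1}^{n-1}((n-i)+it)$ by induction on $n$. The base case $n=1$ gives $F_1(t)=1$ (empty product), which matches. For the inductive step, I would start from the defining recursion for the M\"obius function: for each $i$,
\[
\sum_{\hat 0 \le \alpha \le [n]^i} \mu_{\Pi_n^w}(\hat 0,\alpha) = 0 ,
\]
so that $\mu_{\Pi_n^w}(\hat 0,[n]^i) = -\sum_{\hat 0 \le \alpha < [n]^i}\mu_{\Pi_n^w}(\hat 0,\alpha)$. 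By Proposition~\ref{proposition:upperlowerideals}(3), $\mu_{\Pi_n^w}(\hat 0,\alpha)$ factors as $\prod_{j=1}^{k}\mu_{\Pi_{|A_j|}^w}(\hat 0,[|A_j|]^{w_j})$ when $\alpha=\{A_1^{w_1},\dots,A_k^{w_k}\}$. The key remaining input is a description of which $\alpha$ lie below $[n]^i$: by the order relation in $\Pi_n^w$, $\alpha\le[n]^i$ exactly when $w(\alpha)=\sum_j w_j$ satisfies $w(\alpha)\le i\le w(\alpha)+(k-1)$, i.e. $i-(k-1)\le w(\alpha)\le i$.

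The main work is to convert the sum over all weighted partitions $\alpha$ of $[n]$ with $k$ blocks into a product, and here I expect to invoke the compositional (exponential) formula, exactly as the excerpt advertises. Introduce the exponential generating function $\Phi(x,t) = \sum_{n\ge 1} F_n(t)\frac{x^n}{n!}$ (or a sign-adjusted variant). Summing $\mu_{\Pi_n^w}(\hat 0,[n]^i)+\sum_{\hat 0\le\alpha<[n]^i}\mu(\hat0,\alpha)=0$ over $i$ against $t^i$, the contribution of partitions with block multiset $\{A_1,\dots,A_k\}$ and weight vector $(w_1,\dots,w_k)$ contributes $\prod_j \mu_{\Pi_{|A_j|}^w}(\hat0,[|A_j|]^{w_j})\, t^{\sum w_j}$ — but one must be careful that $t^i$ ranges only over $i$ with $\sum w_j\le i\le\sum w_j+k-1$, which introduces an extra factor $(1+t+\cdots+t^{k-1})=[k]_t$ for each partition into $k$ blocks. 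So the recursion reads, schematically,
\[
F_n(t) = -\sum_{k=2}^{n} [k]_t \sum_{\substack{B_1,\dots,B_k \\ \text{partition of }[n]}} \prod_{j=1}^k F_{|B_j|}(t),
\]
with the $k=1$ term moved to the left-hand side. Exponentiating via the compositional formula, if $G(x,t)=\sum_{n\ge1}F_n(t)x^n/n!$ then $\sum_k [k]_t \frac{G^k}{k!}$-type expressions appear; handling the $[k]_t$ weight is the technical crux, since it is not multiplicative in the usual exponential-formula sense. I would resolve this by writing $[k]_t=\frac{t^k-1}{t-1}$ and splitting into two ordinary exponential generating function identities ($\sum_k \frac{(tG)^k}{k!}=e^{tG}$ and $\sum_k\frac{G^k}{k!}=e^{G}$), obtaining a clean functional equation for $G$, and then verifying that the claimed product formula for $F_n(t)$ solves it — equivalently, checking the identity
\[
\prod_{i=1}^{n-1}((n-i)+it) = \sum (\text{terms from the recursion})
\]
directly by induction, which reduces to a polynomial identity in $t$ for each $n$.

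The hardest part will be managing the interaction between the $[k]_t$ factor and the compositional formula cleanly; a slicker route, which I would try first, is to avoid generating functions in two variables and instead prove the product formula for $F_n(t)$ by strong induction, plugging the inductive hypothesis $F_m(t)=(-1)^{m-1}\prod_{i=1}^{m-1}((m-i)+it)$ into the block-sum recursion and recognizing the right-hand side via a known identity for $\prod_{i=1}^{n-1}((n-i)+it)$ (for instance, its interpretation as a descent generating function over rooted trees, per Drake's result cited in the introduction, or a direct Abel-type summation identity). Finally, the "Consequently" clause is immediate: set $t=1$ in \eqref{3mobius}, giving $\prod_{i=1}^{n-1}((n-i)+i)=\prod_{i=1}^{n-1}n=n^{n-1}$, so $\sum_{i=0}^{n-1}\mu_{\Pi_n^w}(\hat0,[n]^i)=(-1)^{n-1}n^{n-1}$.
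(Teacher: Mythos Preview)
Your approach is correct and arrives at the same place as the paper's: both use the M\"obius recursion together with the compositional formula to show that $W(x):=\sum_{n\ge1}F_n(t)\,x^n/n!$ and $U(x):=(e^{tx}-e^x)/(t-1)$ are compositional inverses, and then identify the inverse of $U$ as $\sum_{n\ge1}(-1)^{n-1}\prod_{i=1}^{n-1}((n-i)+it)\,x^n/n!$. The only difference is the direction of the recursion. The paper sums $\mu(\alpha,[n]^i)$ from above, using the upper-interval isomorphism of Proposition~\ref{proposition:upperlowerideals}(2); this puts the $t$-factor $\prod_{B\in\pi}[|B|]_t$ on the blocks, so the compositional formula applies immediately to give $W(U(x))=x$. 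You sum $\mu(\hat 0,\alpha)$ from below, using the lower-interval factorization of Proposition~\ref{proposition:upperlowerideals}(3); this puts the factor $[k]_t$ on the number of blocks and yields $U(W(x))=x$. Your concern that the $[k]_t$ weight is ``not multiplicative'' is unfounded: $\sum_{k\ge1}[k]_t\,W^k/k!=(e^{tW}-e^W)/(t-1)=U(W)$ on the nose, no splitting needed. For the final step, the paper cites Gessel--Seo \cite{GesselSeo2004} for the compositional inverse of $U$ rather than verifying by induction; your proposed verification would work too but is more labor, and invoking Drake's tree-descent identity (as you suggest in your ``slicker route'') is essentially equivalent to the Gessel--Seo citation.
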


\begin{proof}
By the recursive definition of  the M\"obius function we have that
\[
\sum_{i=0}^{n-1}t^{i}
\sum_{\hat{0}\le\alpha\le[n]^i}\mu_{\Pi_{n}^{w}}(\alpha,[n]^i)
=\delta_{n,1}.
\]
Proposition~\ref{proposition:upperlowerideals} implies
$\mu_{\Pi_{n}^{w}}(\alpha,[n]^i) = \mu_{\Pi_{|\alpha|}^{w}}(\hat{0},[|\alpha|]^j)$, where $j=
i-w(\alpha)$.  Note also that $\hat 0 \le \alpha \le [n]^i$ if and only if $w(\alpha) \le i$ and $i -w(\alpha) \le |\alpha| -1$.
Hence,
\begin{align*}
 \delta_{n,1}
	    &= 	\sum_{\alpha\in \Pi_{n}^{w}}
		t^{w(\alpha)}
		\sum_{i=w(\alpha)}^{w(\alpha)+|\alpha|-1}
		\mu_{\Pi_{n}^{w}}(\alpha,[n]^i)t^{i-w(\alpha)}\\
	    &= 	\sum_{\alpha \in \Pi_{n}^{w}}
		t^{w(\alpha)}
		\sum_{j=0}^{|\alpha|-1}
		\mu_{\Pi_{|\alpha|}^{w}}(\hat{0},[|\alpha|]^j)t^{j}\\
	    &= 	\sum_{\pi \in \Pi_{n}}
		\left(\prod_{B \in \pi }(t^{|B|-1}+t^{|B|-2}+\cdots+1)\right)
		\sum_{j=0}^{|\pi|-1}
		\mu_{\Pi_{|\pi|}^{w}}(\hat{0},[|\pi|]^j)t^{j}\\\
            &= 	\sum_{\pi \in \Pi_{n}}
		\left(\prod_{B \in \pi }\dfrac{t^{|B|}-1}{t-1}\right)
		\sum_{j=0}^{|\pi|-1}
		\mu_{\Pi_{|\pi|}^{w}}(\hat{0},[|\pi|]^j)t^{j}.
 \end{align*}
This implies by the compositional formula (see \cite[Theorem~5.1.4]{Stanley1999}) that
\[
U(x)=\sum_{n \ge 1}\dfrac{t^{n}-1}{t-1}\frac{x^n}{n!}=\dfrac{e^{tx}-e^{x}}{t-1}
\]
and
\[
W(x)=\sum_{n \ge 1}\sum_{j=0}^{n-1}
		\mu_{\Pi_{n}^{w}}(\hat{0},[n]^j)t^{j}\frac{x^n}{n!}
\]
are compositional inverses.  

It follows from  \cite[Theorem 5.1]{GesselSeo2004}  that the compositional inverse of 
$U(x)$ is given by 
$$\sum_{n \ge 1}(-1)^{n-1}\prod_{i=1}^{n-1}((n-i)+it)\frac{x^n}{n!}.$$ (See \cite[Eq.
(10)]{Drake2008}.)
This yields (\ref{3mobius}).
\end{proof}

Let $T$ be a rooted tree on node set $[n]$.  A {\it descent} of $T$ is a node $x$ that has a smaller
label than its parent $p_T(x)$.   
We call the edge $\{x,p_T(x)\} $ a {\it descent edge}.  
We denote by $\T_{n,i}$  the set of 
 rooted trees on node set $[n]$ with exactly $i$ descents. 
In \cite{Drake2008} Drake proves that
\begin{equation}  \label{equation:drake} \sum_{i = 0}^{n-1} |\T_{n,i}| t^i =
\prod_{i=1}^{n-1}((n-i)+it).\end{equation}
The following result is a consequence of this and 
 Proposition~\ref{proposition:muweightedsumtrees}.

\begin{corollary}\label{corollary:mutrees} For all $n \ge 1$ and $i \in \{0,1,\dots,n-1\}$, 
\[ \mu_{\Pi_{n}^{w}}(\hat{0},[n]^{i})=(-1)^{n-1}|\T_{n,i}|.\]
\end{corollary}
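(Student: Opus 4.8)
The plan is to deduce Corollary~\ref{corollary:mutrees} directly by comparing two generating function identities that are already available to us. By Proposition~\ref{proposition:muweightedsumtrees}, we have
\[
\sum_{i=0}^{n-1}\mu_{\Pi_{n}^{w}}(\hat{0},[n]^i)t^{i}
=(-1)^{n-1}\prod_{i=1}^{n-1}((n-i)+it),
\]
and by Drake's formula (\ref{equation:drake}),
\[
\sum_{i=0}^{n-1}|\T_{n,i}|\,t^{i}=\prod_{i=1}^{n-1}((n-i)+it).
\]
Multiplying the second identity by $(-1)^{n-1}$ and subtracting gives
\[
\sum_{i=0}^{n-1}\Bigl(\mu_{\Pi_{n}^{w}}(\hat{0},[n]^i)-(-1)^{n-1}|\T_{n,i}|\Bigr)t^{i}=0,
\]
an identity of polynomials in $t$. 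Since a polynomial is zero iff all its coefficients vanish, we conclude $\mu_{\Pi_{n}^{w}}(\hat{0},[n]^{i})=(-1)^{n-1}|\T_{n,i}|$ for every $i\in\{0,1,\dots,n-1\}$, which is exactly the claim.

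First I would state the two generating functions side by side, then observe that both equal (up to the global sign $(-1)^{n-1}$) the same product $\prod_{i=1}^{n-1}((n-i)+it)$. The one genuinely load-bearing remark is that extracting coefficients is legitimate here: both sides are honest polynomials in $t$ of degree at most $n-1$ (the left side because $\mu_{\Pi_n^w}(\hat0,[n]^i)$ is only defined, and only nonzero, for $0\le i\le n-1$; the right side by inspection), so equality of the polynomials forces equality of coefficients. No analytic subtlety and no convergence issue arises, since we are working with formal polynomials of bounded degree.

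There is essentially no obstacle: the corollary is a pure bookkeeping consequence once Proposition~\ref{proposition:muweightedsumtrees} and (\ref{equation:drake}) are in hand — indeed the excerpt itself flags it as "a consequence of this and Proposition~\ref{proposition:muweightedsumtrees}." If anything, the only thing to be careful about is making sure the reader sees that the two products are literally identical (same index range $i=1$ to $n-1$, same factors $(n-i)+it$), so that the comparison is immediate rather than requiring any reindexing or manipulation. The consequent statement $\sum_i \mu_{\Pi_n^w}(\hat0,[n]^i)=(-1)^{n-1}n^{n-1}$ is then just the $t=1$ specialization, already recorded in Proposition~\ref{proposition:muweightedsumtrees}, and matches $\sum_i |\T_{n,i}| = n^{n-1}$, the count of all rooted trees on $[n]$.
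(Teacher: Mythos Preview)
Your proof is correct and takes exactly the same approach as the paper: the paper does not write out a separate proof of this corollary but simply states it as ``a consequence of this [equation~(\ref{equation:drake})] and Proposition~\ref{proposition:muweightedsumtrees},'' and you have spelled out precisely that coefficient-comparison argument.
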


  We can use Proposition~\ref{proposition:upperlowerideals} and 
Corollary~\ref{corollary:mutrees} to compute the M\"obius function on other intervals. 
 A {rooted
forest} 
on node set $[n]$ is a set of rooted trees whose node sets form a partition of $[n]$.
We associate a weighted partition $\alpha(F)$ with each   rooted forest $F= \{T_1,\dots,T_k\}$ on
node set $[n]$, by letting  $\alpha(F) = \{A_1^{w_1},\dots, A_k^{w_k}\}$ where $A_i$ is the node set
 of $T_i$  and $w_i$ is the number of descents  of $T_i$.  
 For lower intervals we obtain the following generalization of Corollary~\ref{corollary:mutrees}.

\begin{corollary} \label{corfor} For all $\alpha \in \Pi_n^w$,
$$\mu_{\Pi_n^w}(\hat 0, \alpha) = (-1)^{n-|\alpha|} |\{ F \in \mathcal F_n : \alpha(F) =
\alpha\}|,$$
where $\mathcal F_n$ is the set of rooted forests on node set $[n]$.
\end{corollary}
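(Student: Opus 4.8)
The plan is to combine the product decomposition of lower intervals from Proposition~\ref{proposition:upperlowerideals}(3) with the evaluation of the M\"obius function on maximal intervals from Corollary~\ref{corollary:mutrees}, and then to recognize the resulting product as a count of rooted forests.

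First I would write $\alpha = \{A_1^{w_1},\dots,A_k^{w_k}\}$, so that $|\alpha| = k$ and $w_i$ is the weight attached to the block $A_i$. Proposition~\ref{proposition:upperlowerideals}(3) gives a poset isomorphism
$$[\hat 0, \alpha] \;\cong\; \prod_{i=1}^k [\hat 0, [|A_i|]^{w_i}].$$
Since the M\"obius function of a direct product of bounded posets is the product of the M\"obius functions of the factors (see \cite[Section~3.8]{Stanley2012}), this yields
$$\mu_{\Pi_n^w}(\hat 0, \alpha) \;=\; \prod_{i=1}^k \mu_{\Pi_{|A_i|}^w}(\hat 0, [|A_i|]^{w_i}).$$

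Next I would apply Corollary~\ref{corollary:mutrees} to each factor, obtaining $\mu_{\Pi_{|A_i|}^w}(\hat 0, [|A_i|]^{w_i}) = (-1)^{|A_i|-1}\,|\T_{|A_i|,w_i}|$. Multiplying over $i$ and using $\sum_{i=1}^k (|A_i|-1) = n - k = n - |\alpha|$ gives
$$\mu_{\Pi_n^w}(\hat 0, \alpha) \;=\; (-1)^{n-|\alpha|}\,\prod_{i=1}^k |\T_{|A_i|,w_i}|.$$
It then remains to interpret the product $\prod_{i=1}^k |\T_{|A_i|,w_i}|$ combinatorially. Because whether a node is a descent depends only on the relative order of node labels, any order-preserving bijection $A_i \to [|A_i|]$ induces a descent-preserving bijection between rooted trees on node set $A_i$ and rooted trees on $[|A_i|]$; hence there are exactly $|\T_{|A_i|,w_i}|$ rooted trees on $A_i$ with precisely $w_i$ descents. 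A rooted forest $F \in \mathcal F_n$ with $\alpha(F) = \alpha$ is, by the definition of $\alpha(F)$, exactly a choice for each $i$ of a rooted tree on $A_i$ with $w_i$ descents, so the product rule gives $|\{F \in \mathcal F_n : \alpha(F) = \alpha\}| = \prod_{i=1}^k |\T_{|A_i|,w_i}|$, which completes the argument.

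There is no serious obstacle here: the statement is essentially a bookkeeping consequence of multiplicativity of the M\"obius function over products together with Corollary~\ref{corollary:mutrees}. The only points that need a moment's care are the observation that $\T_{m,w}$ counts rooted trees on an arbitrary $m$-element node set (via descent-invariance under order-preserving relabeling) and the sign computation $\sum_{i}(|A_i|-1) = n - |\alpha|$.
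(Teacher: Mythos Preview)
Your proof is correct and follows exactly the approach the paper indicates: it combines Proposition~\ref{proposition:upperlowerideals}(3) (product decomposition of lower intervals) with multiplicativity of the M\"obius function and Corollary~\ref{corollary:mutrees}, and then identifies the resulting product as a forest count. The paper itself only states that the corollary follows from these two ingredients without writing out the details, so your argument is precisely the intended one.
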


Next we consider the full poset $\widehat{\Pi_n^{w}}$.  To compute its M\"obius invariant we will
make use of Abel's identity (see \cite[Ex. 5.31 c]{Stanley1999}), 
\begin{equation}\label{proposition:abelidentity}
(x+y)^n=\sum_{k=0}^{n}\binom{n}{k}x(x-kz)^{k-1}(y+kz)^{n-k}.
\end{equation}

\begin{proposition}\label{proposition:mobiushat}
\[\mu_{\widehat{\Pi_n^{w}}}(\hat{0},\hat{1})=(-1)^n(n-1)^{n-1}.\]
\end{proposition}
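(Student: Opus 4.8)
The plan is to compute $\mu_{\widehat{\Pi_n^w}}(\hat 0, \hat 1)$ by the recursive definition of the M\"obius function, summing over the elements $\alpha \in \widehat{\Pi_n^w}$ strictly below $\hat 1$, i.e. over $\hat 0$ together with all weighted partitions $\alpha \in \Pi_n^w$. Since $\mu_{\widehat{\Pi_n^w}}(\hat 0,\hat 1) = -\sum_{\hat 0 \le \alpha < \hat 1}\mu_{\widehat{\Pi_n^w}}(\hat 0, \alpha)$, and for $\alpha \ne \hat 1$ one has $\mu_{\widehat{\Pi_n^w}}(\hat 0,\alpha)=\mu_{\Pi_n^w}(\hat 0, \alpha)$, I would write
\[
\mu_{\widehat{\Pi_n^w}}(\hat 0,\hat 1) = -\sum_{\alpha \in \Pi_n^w} \mu_{\Pi_n^w}(\hat 0,\alpha).
\]
Now I would group the sum according to the underlying set partition $\pi \in \Pi_n$ of $\alpha$. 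Using Proposition~\ref{proposition:upperlowerideals}(3), for $\alpha = \{A_1^{w_1},\dots,A_k^{w_k}\}$ we have $\mu_{\Pi_n^w}(\hat 0,\alpha) = \prod_{j=1}^k \mu_{\Pi_{|A_j|}^w}(\hat 0, [|A_j|]^{w_j})$, and summing over all admissible weight vectors $(w_1,\dots,w_k)$ (each $w_j \in \{0,\dots,|A_j|-1\}$) factors as $\prod_{j=1}^k \bigl(\sum_{w=0}^{|A_j|-1}\mu_{\Pi_{|A_j|}^w}(\hat 0, [|A_j|]^w)\bigr)$.

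By the second formula in Proposition~\ref{proposition:muweightedsumtrees}, the inner sum over weights for a block of size $m$ equals $(-1)^{m-1}m^{m-1}$. Hence
\[
\mu_{\widehat{\Pi_n^w}}(\hat 0,\hat 1) = -\sum_{\pi \in \Pi_n}\ \prod_{B \in \pi}(-1)^{|B|-1}|B|^{|B|-1}
= (-1)^n \sum_{\pi \in \Pi_n}\ \prod_{B \in \pi} (-1)^{|B|}|B|^{|B|-1},
\]
using $\sum_{B\in\pi}(|B|-1) = n - |\pi|$ to collect signs. So it remains to prove the combinatorial identity
\[
\sum_{\pi \in \Pi_n}\ \prod_{B \in \pi} (-1)^{|B|} |B|^{|B|-1} = (n-1)^{n-1}.
\]
I would prove this via the exponential formula: with $a_m = (-1)^m m^{m-1}$, the sum over set partitions equals the coefficient extraction $n!\,[x^n]\exp\bigl(\sum_{m\ge 1} a_m x^m/m!\bigr)$. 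The key input is the classical tree-function identity: the series $\sum_{m\ge 1} m^{m-1}x^m/m!$ is the Lambert $W$-type function $T(x)$ satisfying $T(x) = x e^{T(x)}$, so $\sum_{m\ge 1}(-1)^m m^{m-1}x^m/m! = -T(-x) =: y$, where $y = -x e^{-y}$, equivalently $y e^{y} = -x$. Then $\exp\bigl(\sum a_m x^m/m!\bigr) = e^{y} = -x/y$, and one expands $-x/y = -x\cdot(\text{inverse of } ye^y \text{ composed appropriately})$; extracting $[x^n]$ by Lagrange inversion applied to $y e^y = -x$ gives exactly $n![x^n](e^y) = (n-1)^{n-1}$ after simplification. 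Alternatively, and more in the spirit of the paper, I would recognize $\sum_\pi \prod_B (-1)^{|B|}|B|^{|B|-1}$ as (up to sign) a signed count of forests and apply Abel's identity \eqref{proposition:abelidentity}: setting appropriate values of $x,y,z$ (e.g. $x = y = $ constants chosen so the binomial–Abel sum collapses) turns $\sum_{k=0}^n \binom nk x(x-kz)^{k-1}(y+kz)^{n-k}$ into the desired identity with $(n-1)^{n-1}$ on one side, which is presumably exactly why Abel's identity was quoted just before the statement.

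The main obstacle is the final combinatorial identity $\sum_{\pi\in\Pi_n}\prod_{B\in\pi}(-1)^{|B|}|B|^{|B|-1} = (n-1)^{n-1}$; everything before it is a routine application of results already established in the excerpt. I expect the cleanest route is the Abel-identity one, since Abel's identity \eqref{proposition:abelidentity} is stated immediately beforehand with no other apparent use: one wants to choose the specialization of $(x,y,z)$ that recasts the left-hand side — most naturally after interpreting $\prod_B(-1)^{|B|}|B|^{|B|-1}$ via rooted forests and peeling off the block containing a fixed element, which introduces the binomial coefficient $\binom{n-1}{k-1}$ or $\binom nk$ — so that the Abel sum telescopes to $(n-1)^{n-1}$. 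The care needed is purely in matching up signs and the shift by one in the exponents; I would double-check the small cases $n=1,2,3$ against Figure~\ref{fign3k2} (for $n=3$ the answer should be $(-1)^3\cdot 2^2 = -4$) to fix all signs before writing the general argument.
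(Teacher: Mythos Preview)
Your overall strategy---summing $\mu(\hat 0,\alpha)$ over all $\alpha\in\Pi_n^w$ and factoring through the block decomposition---is sound, but the sign manipulation in the displayed equation is incorrect. Since $\prod_{B\in\pi}(-1)^{|B|}=(-1)^{\sum_B|B|}=(-1)^n$ is a constant independent of $\pi$, your right-hand side equals $\sum_{\pi}\prod_B|B|^{|B|-1}$, whereas the left-hand side equals $-\sum_\pi(-1)^{n-|\pi|}\prod_B|B|^{|B|-1}$; these differ already at $n=2$ (the left side is $1$, the right side is $3$). The identity you actually need, after correctly collecting signs, is
\[
\sum_{\pi\in\Pi_n}(-1)^{|\pi|-1}\prod_{B\in\pi}|B|^{|B|-1}=(n-1)^{n-1},
\]
which your exponential-formula/Lagrange-inversion sketch, suitably adjusted, will prove.

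The paper's primary proof takes a genuinely different route: it sums over \emph{upper} intervals, using $[\alpha,\hat 1]\cong\widehat{\Pi_{|\alpha|}^w}$ and induction on $n$, so that the sum collapses to $\sum_k\binom{n}{k}k^{n-k}(-1)^k(k-1)^{k-1}$ via the rank count, and then Abel's identity~(\ref{proposition:abelidentity}) with $x=1$, $y=0$, $z=1$ finishes it. Your lower-interval approach is in fact the paper's \emph{second} proof in disguise: the paper packages exactly your sum as the characteristic polynomial, shows (via Corollary~\ref{corfor} and the forest count of Proposition~\ref{proposition:numberofforests}) that $\chi_{\Pi_n^w}(x)=(x-n)^{n-1}$, and then evaluates at $x=1$. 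That route avoids both Abel and Lagrange inversion, so once your sign is fixed you may find it cleaner to go through Proposition~\ref{proposition:numberofforests} rather than the tree function.
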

\begin{proof}
We proceed by induction on $n$.
If $n=1$ then $$\mu_{\widehat{\Pi_1^{w}}}(\hat{0},\hat{1})=-1=(-1)^1(1-1)^{1-1}$$ since
$\widehat{\Pi_1^{w}}$ is the chain of length 1.

Let $n \ge 1$ and let $\alpha \in \Pi_n^{w} \setminus \{\hat{0}\}$.  Since the interval $[\alpha,
\hat{1}]$ in $\widehat{\Pi_n^{w}}$
is  isomorphic to  $\widehat{\Pi_{|\alpha|}^{w}}$ (cf.
Proposition~\ref{proposition:upperlowerideals}),  we  can assume by induction that 
\[
\mu_{\widehat{\Pi_{n}^{w}}}(\alpha,\hat{1})=(-1)^{|\alpha|}(|\alpha|-1)^{|\alpha|-1}.
\] 
Hence by the recursive definition of the M\"obius function we have,
\begin{eqnarray} \nonumber
\mu_{\widehat{\Pi_{n}^{w}}}(\hat{0},\hat{1}) &=& - \sum_{\alpha \in \widehat{\Pi_n^{w}}\setminus
\hat 0} \mu_{\widehat{\Pi_{n}^{w}}}(\alpha,\hat{1})
\\ \nonumber &=& -1 - \sum_{k=1}^{n-1} \sum_{\substack{\alpha \in \Pi_n^{w}\\ |\alpha| = k}
}\mu_{\widehat{\Pi_{n}^{w}}}(\alpha,\hat{1})
\\ \nonumber &=& -1 - \sum_{k=1}^{n-1}\sum_{\substack{\alpha \in \Pi_n^{w}\\|\alpha|=k}}
(-1)^{k}(k-1)^{k-1}
\\ \nonumber &=& -1 - \sum_{k=1}^{n-1} \binom{n}{	k}k^{n-k}(-1)^{k}(k-1)^{k-1}
\hspace{.1in}\mbox{ (by (\ref{rankeq}))}
\\ \label{mobeq} &=& -1 + \sum_{k=0}^{n} \binom{n}{	k}k^{n-k}(1-k)^{k-1} - (1-n)^{n-1}.
\end{eqnarray}
By setting $x=1,y=0,z=1$ in  Abel's identity (\ref{proposition:abelidentity}), we get 
$$
1=\sum_{k=0}^n\binom{n}{k}(1-k)^{k-1}k^{n-k}.$$
Substituting this into (\ref{mobeq}) yields the result.
\end{proof}

\begin{remark}In Section~\ref{subsection:characteristicpolynomial} we compute the characteristic
polynomial of $\Pi_n^w$ and use it to give a second proof of
Proposition~\ref{proposition:mobiushat}.
\end{remark}

\subsection{The characteristic polynomial}\label{subsection:characteristicpolynomial}

Recall that the characteristic polynomial of $\Pi_n$ factors nicely.  
We prove that the same is true for $\Pi_n^w$.

\begin{theorem}\label{proposition:characteristicpolynomial}
For all $n \ge 1$, the characteristic polynomial of $\Pi_n^w$ is given by 
\[\chi_{\Pi_n^{w}}(x):=\sum_{\alpha \in
\Pi_n^w}\mu_{\Pi_n^w}(\hat{0},\alpha)x^{n-1-\rho(\alpha)}=(x-n)^{n-1}. \]
\end{theorem}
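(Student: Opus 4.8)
The plan is to compute $\chi_{\Pi_n^w}(x)$ directly from the definition by summing $\mu_{\Pi_n^w}(\hat 0,\alpha) x^{n-1-\rho(\alpha)}$ over all $\alpha\in\Pi_n^w$, grouping the weighted partitions $\alpha$ according to the underlying set partition $\pi$ of $[n]$. By Corollary~\ref{corfor} (or directly via part (3) of Proposition~\ref{proposition:upperlowerideals} together with Corollary~\ref{corollary:mutrees}), for a weighted partition $\alpha=\{A_1^{w_1},\dots,A_k^{w_k}\}$ with underlying partition $\pi=\{A_1,\dots,A_k\}$ we have $\mu_{\Pi_n^w}(\hat 0,\alpha)=\prod_{i=1}^k(-1)^{|A_i|-1}|\mathcal{T}_{|A_i|,w_i}|$, and $\rho(\alpha)=n-k$. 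Summing first over the weights $w_i\in\{0,\dots,|A_i|-1\}$ attached to a fixed $\pi$, Drake's identity (\ref{equation:drake}) (or equivalently Proposition~\ref{proposition:muweightedsumtrees} specialized at $t=1$) collapses $\sum_{w_i} |\mathcal{T}_{|A_i|,w_i}| = |A_i|^{|A_i|-1}$, so the contribution of all $\alpha$ lying over $\pi$ is $x^{k-1}\prod_{B\in\pi}(-1)^{|B|-1}|B|^{|B|-1}$.

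Thus the problem reduces to showing
\[
\sum_{\pi\in\Pi_n} x^{|\pi|-1}\prod_{B\in\pi}(-1)^{|B|-1}|B|^{|B|-1} = (x-n)^{n-1}.
\]
I would prove this by the exponential formula. Setting $a_m := (-1)^{m-1}m^{m-1}$, the left side is the sum over set partitions weighted multiplicatively by $a_{|B|}$ on blocks and by $x$ per block (up to the overall $x^{-1}$), so its exponential generating function is $x^{-1}\bigl(\exp(x\, A(y)) - \text{(constant term adjustment)}\bigr)$ where $A(y)=\sum_{m\ge 1} a_m \frac{y^m}{m!}$. The key classical fact is that $A(y) = \sum_{m\ge1}(-1)^{m-1}m^{m-1}\frac{y^m}{m!}$ is the tree function's cousin: it satisfies $A(y) = y e^{-A(y)}$, equivalently $-A(-y)$ is the Lambert/tree series $T(y)=\sum m^{m-1}y^m/m!$ with $T(y)=ye^{T(y)}$. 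Consequently $\exp(xA(y)) = \sum_n \bigl(\sum_{\pi\in\Pi_n}x^{|\pi|}\prod_B a_{|B|}\bigr)\frac{y^n}{n!}$, and by Lagrange inversion (or by recalling the standard generating-function identity $\sum_{n\ge0}(x-n)^{n-1}? $ — more precisely $\sum_{n\ge 1} x(x-n)^{n-1}\frac{y^n}{n!}$ is the compositional solution of $z = y e^{?}$) one identifies the coefficient of $\frac{y^n}{n!}$ in $\frac{1}{x}\exp(xA(y))$ as $(x-n)^{n-1}$ for $n\ge1$. This last identification is exactly Abel's identity (\ref{proposition:abelidentity}) in generating-function form: expanding $(x-n)^{n-1}$ via Abel's identity with the roles of the variables chosen to match the binomial convolution coming from $\exp(xA(y))$ gives the claim.

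The main obstacle I anticipate is getting the generating-function bookkeeping exactly right — in particular pinning down that $\frac1x[y^n/n!]\exp(xA(y)) = (x-n)^{n-1}$ with the correct sign conventions, since $A(y)$ involves the alternating tree function rather than the ordinary one, and one must be careful that the $x^{|\pi|-1}$ (one fewer power of $x$ than blocks) is accounted for. The cleanest route is probably to avoid EGF manipulation altogether and instead argue purely combinatorially: rewrite $(-1)^{n-1}\chi_{\Pi_n^w}(x)$ as a signed sum over rooted forests on $[n]$ (using Corollary~\ref{corfor}), where a forest with $k$ components contributes $(-1)^{n-k}x^{k-1}$, and then invoke Abel's identity (\ref{proposition:abelidentity}) — the same tool used in the proof of Proposition~\ref{proposition:mobiushat} — now with $x$ kept as a free variable rather than specialized, to evaluate the forest sum $\sum_{F} x^{(\#\text{components of }F)-1}(-1)^{n-\#\text{components}}$ to $(x-n)^{n-1}$. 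I would present the proof this way, deriving the forest expansion of $\chi_{\Pi_n^w}$ first and then applying Abel's identity, since it parallels the earlier Möbius computation and keeps the argument self-contained. As a sanity check, setting $x=0$ recovers $\mu_{\Pi_n^w}$ summed appropriately and setting $x$ at $\hat 1$ level recovers $\mu_{\widehat{\Pi_n^w}}(\hat0,\hat1)=(-1)^n(n-1)^{n-1}$, matching Proposition~\ref{proposition:mobiushat} as promised in the remark.
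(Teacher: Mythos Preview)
Your approach is correct and matches the paper's: both reduce $\chi_{\Pi_n^w}(x)$ to a signed sum over rooted forests on $[n]$ via Corollary~\ref{corfor}, arriving at
\[
\chi_{\Pi_n^w}(x)=\sum_{k=1}^n (-1)^{n-k}\,|\mathcal F_n^k|\,x^{k-1}.
\]
The only difference is in how you finish. You propose either an exponential-formula/Lagrange-inversion argument or a direct appeal to Abel's identity, and you flag the bookkeeping as the main obstacle. The paper sidesteps this entirely: it simply quotes the classical count $|\mathcal F_n^k|=\binom{n-1}{k-1}n^{n-k}$ (stated as Proposition~\ref{proposition:numberofforests}) and then applies the ordinary binomial theorem,
\[
\sum_{k=1}^n (-1)^{n-k}\binom{n-1}{k-1}n^{n-k}x^{k-1}=\sum_{j=0}^{n-1}\binom{n-1}{j}(-n)^{n-1-j}x^{j}=(x-n)^{n-1}.
\]
So your plan works, but you are reaching for heavier tools than needed; once the forest expansion is in hand, no Abel identity or generating-function inversion is required.
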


We will need  the following result.
\begin{proposition}[see {\cite[Proposition 5.3.2]{Stanley1999}}]\label{proposition:numberofforests}
Let $\F_n^k$ be the number of rooted forests on node set $[n]$ with $k$ rooted trees.  Then
\[
 |\F_n^k|=\binom{n-1}{k-1}n^{n-k}.
\]
\end{proposition}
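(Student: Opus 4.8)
The plan is to prove the formula bijectively, via the Prüfer correspondence, reproving the needed refinement of Cayley's formula along the way. First I would convert the count of forests into a count of trees on one extra vertex. Adjoin a new node, call it $0$, to the node set $[n]$, and to a rooted forest $F$ on $[n]$ with components $T_1,\dots,T_k$ rooted at $r_1,\dots,r_k$ associate the graph $\phi(F)$ on $\{0\}\cup[n]$ obtained from $F$ by adding the $k$ edges $\{0,r_1\},\dots,\{0,r_k\}$. Since $F$ has $n-k$ edges, $\phi(F)$ has $n$ edges on $n+1$ vertices and is connected, hence is a tree, and $0$ has degree exactly $k$ in it. Conversely, given a tree $T$ on $\{0\}\cup[n]$ in which $0$ has degree $k$, deleting $0$ leaves a forest on $[n]$ with exactly $k$ components, and rooting each component at its (unique) former neighbor of $0$ returns a rooted forest; this is plainly a two-sided inverse to $\phi$. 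Hence $|\mathcal F_n^k|$ equals the number of trees on $n+1$ labeled vertices in which one prescribed vertex has degree $k$.

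Next I would count those trees with the Prüfer bijection: trees on a vertex set of size $n+1$ correspond to sequences of length $n-1$ over that vertex set, and in this correspondence a vertex $v$ occurs in the sequence exactly $\deg(v)-1$ times. So the trees in which $0$ has degree $k$ correspond exactly to the sequences of length $n-1$ over $\{0\}\cup[n]$ in which $0$ occurs precisely $k-1$ times. There are $\binom{n-1}{k-1}$ ways to choose the positions of those occurrences of $0$, and $n^{(n-1)-(k-1)}=n^{n-k}$ ways to fill the remaining positions with elements of $[n]$, giving $\binom{n-1}{k-1}n^{n-k}$ such sequences, hence that many forests. Combining the two steps yields $|\mathcal F_n^k|=\binom{n-1}{k-1}n^{n-k}$. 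As a consistency check, $\sum_{k=1}^n\binom{n-1}{k-1}n^{n-k}=(n+1)^{n-1}$ by the binomial theorem, recovering the total number of rooted forests on $[n]$.

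An alternative route avoids Prüfer sequences entirely: the decomposition of a rooted tree into its root together with the set of rooted subtrees hanging from it gives the functional equation $t(x)=xe^{t(x)}$ for the exponential generating function $t(x)=\sum_{n\ge 1}|\mathcal F_n^1|\,x^n/n!$; a set of $k$ rooted trees then has exponential generating function $t(x)^k/k!$, and Lagrange inversion extracts $[x^n]t(x)^k=\tfrac kn[x^{n-k}]e^{nx}=\tfrac kn\cdot\tfrac{n^{n-k}}{(n-k)!}$, whence $|\mathcal F_n^k|=\tfrac{n!}{k!}\cdot\tfrac kn\cdot\tfrac{n^{n-k}}{(n-k)!}=\binom{n-1}{k-1}n^{n-k}$. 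Either route fits the generating-function style already used in this section.

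I do not expect a genuine obstacle: the argument is elementary once the extra vertex $0$ is introduced. The only care required is routine bookkeeping — verifying that $\phi$ and its inverse are well defined (distinct components of the forest become distinct components after deleting $0$, and the degenerate cases $k=1$, giving Cayley's $n^{n-1}$, and $k=n$, giving the star, behave correctly), and invoking the Prüfer degree statement in exactly the form "$v$ occurs $\deg(v)-1$ times."
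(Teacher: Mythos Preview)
Your argument is correct: both the Pr\"ufer-code bijection (via the extra vertex $0$) and the Lagrange-inversion alternative are standard and valid proofs of this formula. However, the paper does not actually prove this proposition at all --- it is stated with the attribution ``see \cite[Proposition 5.3.2]{Stanley1999}'' and used as a black box in the proof of Theorem~\ref{proposition:characteristicpolynomial} that immediately follows. So there is no proof in the paper to compare against; you have supplied what the authors chose to cite rather than reprove.
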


\begin{proof}[Proof of Theorem~\ref{proposition:characteristicpolynomial}]
We have
 \begin{align*}
  \chi_{\Pi_n^{w}}(x)
&= \sum_{\alpha \in \Pi_n^{w}}  \mu(\hat{0},\alpha)x^{|\alpha|-1}\\
&= \sum_{k=1}^n \sum_{\substack{\alpha \in \Pi_n^{w}\\|\alpha|=k}}  \mu(\hat{0},\alpha)x^{k-1}\\
&=      \sum_{k=1}^n (-1)^{n-k}|\mathcal F_n^k| x^{k-1} \hspace{.1in} \mbox{ (by
Corollary~\ref{corfor})} \\
&= \sum_{k=1}^n (-1)^{n-k} \binom{n-1}{k-1}n^{n-k} x^{k-1} \hspace{.1in} \mbox{ (by Proposition
\ref{proposition:numberofforests})} \\
&= \sum_{k=0}^{n-1}  \binom{n-1}{k}(-n)^{n-1-k} x^{k}\\
&= (x-n)^{n-1}.
 \end{align*}

\end{proof}

Theorem~\ref{proposition:characteristicpolynomial} yields an easier way to calculate
$\mu_{\widehat{\Pi_n^w}}(\hat 0, \hat 1)$.

\begin{proof}[Second proof of Proposition \ref{proposition:mobiushat} ]
 By the recursive definition of M\"obius function,
\begin{eqnarray*}
 \mu_{\widehat{\Pi_n^{w}}}(\hat{0},\hat{1})&=& - \sum_{\alpha \in \Pi_n^{w} } \mu(\hat{0},\alpha)\\
&=& - \chi_{\Pi_n^{w}}(1)\\
&=&-(1-n)^{n-1}\\
&=&(-1)^n(n-1)^{n-1}.
\end{eqnarray*}

\end{proof}

\subsection{Whitney numbers and uniformity}
Let $P$ be a pure poset of length $n$ with minimum element $\hat 0$. Recall that  the \emph{Whitney
number of the first kind} $w_k(P)$ is the coefficient of $x^{n-k}$ in the characteristic polynomial 
$\chi_P(x)$ and the \emph{Whitney number of the second kind} $W_k(P)$ is the coefficient of $x^{k}$
in the rank generating
function $\F_P(x)$; see \cite{Stanley2012}.  It follows from  
Theorem~\ref{proposition:characteristicpolynomial} and 
Proposition~\ref{rankprop}, respectively,
 that 
\begin{eqnarray} \label{whiteq} w_k(\Pi_n^w) &=& (-1)^k\binom{n-1}{k}n^k \\ \nonumber W_k(\Pi_n^w)
&=&\binom{n}{k}(n-k)^k.   
\end{eqnarray}

For the partition lattice $\Pi_n$, the Whitney numbers of the first and second kind are the Stirling
numbers of the first and second kind.   It is well-known that the Stirling numbers  of the first
kind and second kind form inverse matrices, cf., \cite[Proposition 1.9.1 a]{Stanley2012}.  
This can be viewed as a consequence of a property of the partition lattice  called uniformity \cite[Ex.
3.130]{Stanley2012}.  We observe in this section that $\Pi_n^w$ is also uniform and
discuss a Whitney number consequence.

A pure   poset $P$ of length $l$ with minimum element $\hat 0$ and with rank function $\rho$, is said to 
be {\it uniform}  if
there is a family of posets $\{P_i : 0 \le i \le l\}$ such that for all 
$x\in P$, the  upper order ideal  $I_x:=\{y \in P : x \le y\}$ is isomorphic to $P_i$, where $i = l- \rho(x)$.
We refer to
$(P_0,\dots,P_l)$  as the associated {\em uniform sequence}.
It follows from Proposition~\ref{proposition:upperlowerideals} that $P=\Pi_n^w$ is uniform
with $P_i =\Pi_{i+1}^w$ for $i=0,\dots, n-1$.  We will use the following variant 
of {\cite[Exercise 3.130(a)]{Stanley2012}} whose proof is left to the reader.  (A weighted version of this is 
proved in \cite{Dleon2013a}.)

\begin{proposition} \label{uniprop}Let $P$ be a uniform
poset of length $l$, with associated uniform sequence $(P_0,\dots,P_l)$.  
Then the matrices  $[w_{i-j}(P_i)]_{0\le i,j \le l}$ and $[W_{i-j}(P_i)]_{0\le i,j \le l}$ are
inverses of each other.
\end{proposition}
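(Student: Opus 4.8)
The plan is to prove directly that the matrices $A := [w_{i-j}(P_i)]_{0\le i,j\le l}$ and $B := [W_{i-j}(P_i)]_{0\le i,j\le l}$ satisfy $AB = I$; the identity $BA = I$ is then automatic since $A$ and $B$ are finite square matrices. First I would assemble the structural facts needed even to make sense of these matrices. Since $P$ is pure of length $l$, every rank $0,1,\dots,l$ is attained, and for $x\in P$ with $\rho(x)=l-i$ the upper order ideal $I_x\cong P_i$ is pure of length $i$ with minimum element $x$; moreover $P_i$ is again uniform, with associated uniform sequence $(P_0,P_1,\dots,P_i)$. The last point is the only one requiring an argument: if $\bar y\in P_i$ corresponds to $y\ge x$ in $P$, then the upper order ideal of $\bar y$ in $P_i$ corresponds to $I_y$, which is $\cong P_{l-\rho(y)}$, and $\rho_{P_i}(\bar y)=\rho(y)-(l-i)$ gives $l-\rho(y)=i-\rho_{P_i}(\bar y)$, as required. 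Consequently the Whitney numbers $w_k(P_i)$ and $W_k(P_i)$ are defined, vanish unless $0\le k\le i$, and equal $1$ when $k=0$; so $A$ and $B$ are lower triangular and unipotent.

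The heart of the proof is a computation inside a single $P_i$. For each $z\in P_i$ the recursive definition of the M\"obius function gives $\sum_{\hat 0\le y\le z}\mu_{P_i}(\hat 0,y)=\delta_{z,\hat 0}$. Fix $k$ with $0\le k\le i$ and sum this over all $z\in P_i$ of rank $i-k$; the right side collapses to $\delta_{ik}$, since the only element of rank $0$ is $\hat 0$. On the left I would interchange the summations and sort the outer variable $y$ by rank, writing $\rho_{P_i}(y)=i-j$ (the condition $y\le z$ with $\rho(z)=i-k$ forces $k\le j\le i$). The number of $z$ with $y\le z$ and $\rho_{P_i}(z)=i-k$ is the number of rank-$(j-k)$ elements of $I_y$; by uniformity of $P_i$ one has $I_y\cong P_j$, so this number is $W_{j-k}(P_j)$, \emph{independent of $y$}. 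What is left of the inner sum is $\sum_{\rho_{P_i}(y)=i-j}\mu_{P_i}(\hat 0,y)=w_{i-j}(P_i)$. Putting the pieces together,
\[
\delta_{ik}=\sum_{j=k}^{i} w_{i-j}(P_i)\,W_{j-k}(P_j)=(AB)_{ik},
\]
which is exactly the claim.

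The routine steps I am deferring are: the verification that $I_x$ is pure of length $i$, the precise unwinding of the rank shift $\rho_{I_x}=\rho_P-\rho_P(x)$ used throughout, and the bookkeeping identities $w_k(Q)=\sum_{\rho(y)=k}\mu_Q(\hat 0,y)$ and $W_k(Q)=|\{y\in Q:\rho(y)=k\}|$, all immediate from the definitions of $\chi_Q$ and $\F_Q$. The one load-bearing idea — and the step I would take the most care over — is the interchange of summation together with the observation that $|\{z\ge y:\rho(z)=i-k\}|$ depends on $y$ only through $\rho(y)$: this is precisely where uniformity enters, and it is the structural reason behind the classical fact that the inverse relationship between the Stirling numbers of the two kinds comes from uniformity of $\Pi_n$. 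Everything else is bookkeeping; no hypotheses on $P$ beyond purity and uniformity are used.
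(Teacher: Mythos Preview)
Your proof is correct. The paper itself does not give a proof of this proposition: it is stated as a variant of \cite[Exercise~3.130(a)]{Stanley2012} ``whose proof is left to the reader,'' so there is no argument to compare against. Your approach --- summing the M\"obius recursion $\sum_{y\le z}\mu_{P_i}(\hat 0,y)=\delta_{z,\hat 0}$ over all $z$ of fixed rank in $P_i$, interchanging sums, and invoking uniformity to replace the count $|\{z\ge y:\rho(z)=i-k\}|$ by $W_{j-k}(P_j)$ --- is exactly the intended standard argument, and your verification that each $P_i$ inherits uniformity with sequence $(P_0,\dots,P_i)$ is the one structural point that needs checking and is handled correctly.
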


From the uniformity of $\Pi_n^w$ and (\ref{whiteq}),  we have the following consequence of
Proposition~\ref{uniprop}.
\begin{corollary} The matrices $ A=[(-1)^{i-j} \binom {i-1} {j-1}i^{i-j} ]_{1\le i,j \le n}$ and 
$B=[\binom i {j} j^{i-j}]_{1\le i,j \le n}$ are inverses of each other.  
\end{corollary}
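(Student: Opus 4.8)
The plan is to derive the corollary as a direct specialization of Proposition~\ref{uniprop}, so essentially no new work is required beyond bookkeeping. First I would recall that $\Pi_n^w$ is uniform with associated uniform sequence $(P_0,\dots,P_{n-1})$ where $P_i = \Pi_{i+1}^w$, as already observed right after Proposition~\ref{proposition:upperlowerideals}. Thus Proposition~\ref{uniprop}, applied with $l = n-1$, tells us that the $n \times n$ matrices $[w_{i-j}(\Pi_{i+1}^w)]_{0 \le i,j \le n-1}$ and $[W_{i-j}(\Pi_{i+1}^w)]_{0 \le i,j \le n-1}$ are mutual inverses.

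Next I would substitute the explicit Whitney numbers from (\ref{whiteq}): $w_k(\Pi_{m}^w) = (-1)^k \binom{m-1}{k} m^k$ and $W_k(\Pi_m^w) = \binom{m}{k}(m-k)^k$. Setting $m = i+1$ and $k = i-j$ gives the $(i,j)$ entry of the first matrix as $(-1)^{i-j}\binom{i}{i-j}(i+1)^{i-j}$ and of the second as $\binom{i+1}{i-j}(i+1-(i-j))^{i-j} = \binom{i+1}{i-j}(j+1)^{i-j}$. Finally I would reindex by shifting $i \mapsto i-1$, $j \mapsto j-1$ so that the indices run over $1 \le i,j \le n$; since $\binom{i}{i-j} = \binom{i}{j}$, the $(i,j)$ entry of the first matrix becomes $(-1)^{i-j}\binom{i-1}{i-j}i^{i-j} = (-1)^{i-j}\binom{i-1}{j-1}i^{i-j}$, matching $A$, and the second becomes $\binom{i}{i-j}j^{i-j} = \binom{i}{j}j^{i-j}$, matching $B$. (One should note these are lower-triangular in the appropriate sense, with the $k<0$ convention making off-support entries vanish, which is consistent with Proposition~\ref{uniprop}.)

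There is no real obstacle here; the only thing to be careful about is the index conventions — keeping straight that Proposition~\ref{uniprop} indexes from $0$ to $l$ while the corollary indexes from $1$ to $n$, and that $l = n-1$, $P_i = \Pi_{i+1}^w$. I would also double-check the binomial identity $\binom{i}{i-j} = \binom{i}{j}$ and the arithmetic $i+1-(i-j) = j+1$ used in rewriting the $W$-entries. With those verified, the corollary follows immediately, so the write-up can be just a few lines citing Proposition~\ref{uniprop}, the uniformity of $\Pi_n^w$, and the formulas (\ref{whiteq}).
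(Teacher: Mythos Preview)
Your proposal is correct and follows exactly the same approach as the paper: the corollary is obtained by applying Proposition~\ref{uniprop} to the uniform poset $\Pi_n^w$ (with $P_i=\Pi_{i+1}^w$) and substituting the Whitney numbers from~(\ref{whiteq}), then reindexing. The index bookkeeping you carry out is precisely the routine verification the paper leaves implicit.
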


This result  is not new and an equivalent dual version 
(conjugated by the matrix $[(-1)^j \delta_{i,j}]_{1\le i,j \le n}$)
was already obtained by Sagan in \cite{Sagan1983}, 
also by using essentially Proposition~\ref{uniprop}, but with a completely different poset.  
So we can consider this to be a new proof of that result (see also \cite{JoniRota1981}).

Chapoton and Vallette \cite{ChapotonVallette2006} consider another poset  that is quite similar to
the poset of weighted partitions, namely the poset of pointed partitions.  A pointed partition of
$[n]$ is a partition of $[n]$ in which one element of each block is distinguished.  The covering
relation is given by 
$$\{(A_1,{a_1}),(A_2,{a_2}),...,(A_s,{a_s})\}\lessdot \{(B_1,{b_1}),
(B_2,{b_2}),...,(B_t,{b_t})\},$$
where $a_i$ is the distinguished element of $A_i$ and $b_i$ is the distinguished element of $B_i$
for each $i$, if the following conditions hold:
\begin{itemize}
\item $\{A_1,A_2,\dots,A_s\} \lessdot \{B_1,B_2,\dots,B_t\}$ in $\Pi_n$ \item if $B_k=A_{i}\cup
A_{j}$, where $i \ne j$, then $b_k \in \{a_i,a_j\}$
 \item if $B_k = A_i$ then $b_k = a_i$.
\end{itemize}
Let $\Pi_n^p$ be the poset of pointed partitions of $[n]$.  It is easy to see that there is a rank
preserving bijection between $\Pi_n^w$ and $ \Pi_n^p$.  It follows that both posets have the same
Whitney numbers of the second kind.   Since  both posets are uniform, it follows from
Proposition~\ref{uniprop} that both posets have the same Whitney numbers of the  first kind and thus the same 
characteristic polynomial.  
The following result of Chapoton and Vallette \cite{ChapotonVallette2006} is therefore equivalent to
Theorem~\ref{proposition:characteristicpolynomial}.

\begin{corollary}[Chapoton and Vallette \cite{ChapotonVallette2006}] For all $n \ge 1$, the
characteristic polynomial of $\Pi_n^p$ is given by 
\begin{equation}\label{equation:pointed} \chi_{\Pi_n^{p}}(x)=(x-n)^{n-1}. \end{equation}
 Consequently,
$$\mu_{\widehat{\Pi_n^p}}(\hat 0, \hat 1) = (-1)^n(n-1)^{n-1}.$$
\end{corollary}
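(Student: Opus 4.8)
The plan is to transfer the computation of the characteristic polynomial from the weighted partition poset, where it has already been carried out in Theorem~\ref{proposition:characteristicpolynomial}, to the pointed partition poset, by combining a rank-preserving bijection between the two posets with the uniformity machinery of Proposition~\ref{uniprop}. First I would make the bijection explicit: for each subset $B\subseteq[n]$ fix the bijection $\{0,1,\dots,|B|-1\}\to B$ sending $v$ to the $(v{+}1)$-st smallest element of $B$, and let $\psi\colon\Pi_n^w\to\Pi_n^p$ send $\{B_1^{v_1},\dots,B_k^{v_k}\}$ to the pointed partition with underlying set partition $\{B_1,\dots,B_k\}$ whose distinguished element of $B_i$ is the image of $v_i$ under the chosen bijection. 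This is manifestly a bijection, and since the rank of a weighted or pointed partition equals $n$ minus its number of blocks, $\psi$ preserves rank. The same construction works over any ground set $[m]$, so $W_k(\Pi_m^w)=W_k(\Pi_m^p)$ for all $m$ and $k$.

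Next I would establish that $\Pi_n^p$ is uniform, which is the pointed-partition analogue of Proposition~\ref{proposition:upperlowerideals}(1)--(2). If $\alpha\in\Pi_n^p$ has $k$ blocks, then contracting each block of $\alpha$ to a single node carrying that block's distinguished element identifies each pointed partition $\beta\ge\alpha$ with a pointed partition of the $k$-element set of blocks of $\alpha$; one checks that this identification is a poset isomorphism $I_\alpha:=\{\beta\in\Pi_n^p:\beta\ge\alpha\}\cong\Pi_k^p$, since a covering step in $\Pi_n^p$ above $\alpha$ merges two blocks and retains one of their two distinguished elements, which is exactly the covering relation of $\Pi_k^p$. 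Hence $\Pi_n^p$ is uniform of length $n-1$ with associated uniform sequence $(\Pi_1^p,\dots,\Pi_n^p)$, just as $\Pi_n^w$ is uniform with sequence $(\Pi_1^w,\dots,\Pi_n^w)$ by Proposition~\ref{proposition:upperlowerideals}.

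Now I would apply Proposition~\ref{uniprop} to both uniform posets: for each of $\Pi_n^w$ and $\Pi_n^p$ it gives that the matrix of Whitney numbers of the first kind along the uniform sequence is the inverse of the matrix of Whitney numbers of the second kind along that sequence. Since the second-kind matrices coincide by the first step, so do their inverses, and therefore $w_k(\Pi_n^p)=w_k(\Pi_n^w)$ for every $k$. Consequently $\chi_{\Pi_n^p}(x)=\chi_{\Pi_n^w}(x)=(x-n)^{n-1}$ by Theorem~\ref{proposition:characteristicpolynomial}. For the final assertion, the recursive definition of the M\"obius function together with $\mu_{\widehat{\Pi_n^p}}(\hat 0,\alpha)=\mu_{\Pi_n^p}(\hat 0,\alpha)$ for $\alpha\in\Pi_n^p$ gives $\mu_{\widehat{\Pi_n^p}}(\hat 0,\hat 1)=-\sum_{\alpha\in\Pi_n^p}\mu_{\Pi_n^p}(\hat 0,\alpha)=-\chi_{\Pi_n^p}(1)=-(1-n)^{n-1}=(-1)^n(n-1)^{n-1}$, exactly as in the second proof of Proposition~\ref{proposition:mobiushat}.

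I expect the only point demanding genuine care to be the verification of uniformity of $\Pi_n^p$, i.e.\ the pointed-partition version of Proposition~\ref{proposition:upperlowerideals}: one must check that the block-contraction map really is order-preserving and that it sends covers to covers. The explicit bijection $\psi$ and the inverse-matrix bookkeeping are routine, and the rest is a direct appeal to Theorem~\ref{proposition:characteristicpolynomial} and Proposition~\ref{uniprop}.
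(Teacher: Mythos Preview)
Your proposal is correct and follows essentially the same route as the paper: the paper argues (in the paragraph immediately preceding the corollary) that the rank-preserving bijection between $\Pi_n^w$ and $\Pi_n^p$ forces equal Whitney numbers of the second kind, that uniformity of both posets together with Proposition~\ref{uniprop} then forces equal Whitney numbers of the first kind, and hence equal characteristic polynomials, so the corollary is equivalent to Theorem~\ref{proposition:characteristicpolynomial}. Your derivation of the M\"obius invariant via $-\chi_{\Pi_n^p}(1)$ likewise mirrors the second proof of Proposition~\ref{proposition:mobiushat}; you have simply made explicit the bijection $\psi$ and the uniformity verification that the paper leaves to the reader.
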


One can  also compute the M\"obius function for all intervals of $\Pi_n^p$ from (\ref{equation:pointed}). 
 Indeed, since all $n$
maximal intervals are isomorphic to 
each other, the M\"obius invariant can be obtained from (\ref{equation:pointed}) by setting $x = 0$
and then dividing by $n$.  This yields 
for all $i$, $$(-1)^n \mu_{\widehat{\Pi_n^{p}}}(\hat 0, ([n],i)) =  n^{n-2},$$ which is
the number of trees on node set 
$[n]$.   The M\"obius function on other intervals  can be computed from this since all intervals of
$\Pi_n^p$ are 
isomorphic to products of maximal intervals of ``smaller"  posets of pointed partitions.

\section{Homotopy type of the poset of weighted partitions} \label{section:topology}

In this section we use EL-shellability to  determine the homotopy type of the intervals of
$\widehat{\Pi_n^w}$ and to show that $\widehat{\Pi_n^w}$ is Cohen-Macaulay, extending a result of
Dotsenko and Khoroshkin \cite{DotsenkoKhoroshkin2010}, in which operad theory is used to prove that
all intervals of $\Pi_n^w$ are Cohen-Macaulay.  Some prior attempts to establish shellability of the
maximal intervals are discussed in Remark~\ref{stro}.

\subsection{EL-shellability}\label{subsection:cohenmacaulayness}

After reviewing some basic facts from the theory of lexicographic
shellability  (cf.~\cite{Bjorner1980}, \cite{BjornerWachs1983}, \cite{BjornerWachs1996},
\cite{Wachs2007}), we will present our main results on lexicographic shellability of the poset of
weighted partitions.

An {\em edge labeling} of a bounded poset $P$ is a map $\lambda: \mathcal E(P) \to
\Lambda$, where
$\mathcal E(P)$ is the set of edges of the Hasse diagram of $P$, i.e., the covering
relations $x <\!\!\!\!\cdot \,\, y$ of $P$, and $\Lambda$ is some poset. Given an edge labeling
$\lambda:
\mathcal E(P) \to \Lambda$, one can associate  a label word $$\lambda(c) = \lambda(x_0, x_1)
\lambda(x_1, x_2) \cdots \lambda(x_{t-1}, x_{t})$$  with each maximal chain $c = (\hat 0 = x_0
<\!\!\!\!\cdot
\,\,x_1 <\!\!\!\!\cdot \,\,
\cdots<\!\!\!\!\cdot\,\, x_{t-1} <\!\!\!\!\cdot\,\, x_t= \hat 1)$.   We say that  $c
$ is  {\em increasing} if its label word $\lambda(c)$ is
{\em strictly}  increasing.   That is, $c$ is  increasing if 
$$ \lambda(x_0, x_1) <
\lambda(x_1, x_2)<  \cdots < \lambda(x_{t-1}, x_t).$$  We say that  $c
$ is  {\em ascent-free} (or decreasing, falling) if its label word $\lambda(c)$ has no ascents, i.e.
$  \lambda(x_i, x_{i+1}) \not<  \lambda(x_{i+1}, x_{i+2}) $, for all $i=0,\dots,t-2$. We can
partially order the
maximal chains lexicographically by using the lexicographic order on the corresponding label
words.  Any
edge labeling
$\lambda$ of
$P$ restricts to an edge labeling of each closed interval $[x,y]$ of $P$.  So we may refer
to increasing and ascent-free maximal chains of $[x,y]$, and lexicographic order of maximal
chains of
$[x,y]$.

 \begin{definition}  Let $P$ be a bounded poset. {\em  An edge-lexicographical
labeling} (EL-labeling, for short)  of
$P$ is an edge labeling such that in each closed
interval $[x,y]$ of $P$, there is a unique  increasing maximal chain, and this chain
lexicographically precedes all other maximal chains of $[x,y]$.  A poset that admits an EL-labeling
is said to be {\em EL-shellable}.
\end{definition}

Note that if $P$ is EL-shellable then so is every closed interval of $P$.

A classical EL-labeling for the partition lattice $\Pi_n$ is obtained as follows.  Let $\Lambda =
\{(i,j)\in [n-1] \times [n] : i <j\}$ with lexicographic order as the order relation on $\Lambda$. 
If $x\lessdot y $ in $\Pi_n$  then $y$ is obtained from $x$ by merging two blocks $A$ and $B$, 
where $\min A < \min B$.
 Let $\lambda(x,y) = (\min A, \min B)$.  This defines a map $\lambda:\mathcal E(\Pi_n) \to \Lambda$.
 By viewing $\Lambda$ as  the set of atoms of $\Pi_n$, one sees that this labeling is a special case
of an  edge labeling for geometric lattices, which first appeared in Stanley \cite{Stan1974} and was
one of
 Bj\"orner's \cite{Bjorner1980} initial examples of an EL-labeling.  
 
 We now generalize the Bj\"orner-Stanley EL-labeling of $\Pi_n$ to the weighted partition lattice. 
For each $a \in [n]$, let $\Gamma_a:= \{(a,b)^u :   a<b \le n+1, \,\, u \in \{0,1\} \}$. 
We
partially order $\Gamma_a$  by letting $(a,b)^u \le  (a,c)^v$ if $b\le  c$ and $u \le v$.   
Note that $\Gamma_a$ is isomorphic to the direct product of the chain $a+1< a+2 <\dots < n+1 $ and
the chain $0 < 1$.  Now define $\Lambda_n$ to be the 
ordinal sum
$\Lambda_n := \Gamma_1 \oplus  \Gamma_2  \oplus \cdots \oplus \Gamma_{n}$.  (See Figure
\ref{fig:lambdaposet}.)

 If $x\lessdot y $ in $\Pi^w_n$   then $y$ is obtained from $x$ by merging two blocks $A$ and $B$, 
where $\min A < \min B$, and  assigning weight $u + w_A + w_B$ to the resulting block $A \cup B$,
where $u \in \{0,1\}$, and $w_A$, $w_B$ are the respective weights of $A$ and $B$ in the weighted
partition $x$.  Let $$\lambda(x \lessdot y) = (\min A, \min B)^u.$$  This defines a map
$\lambda:\mathcal E(\Pi^w_n) \to \Lambda_n$.  We extend this map to $\lambda:\mathcal
E(\widehat{\Pi^w_n})\to \Lambda_n$ by letting ${\lambda}([n]^i \lessdot \hat{1})=(1,n+1)^0$, for all
$i=0,\dots,n-1$. (See Figure~\ref{fig:ellabelingposet}.)  Note that when $\lambda$ is restricted to
the intervals $[\hat 0, [n]^0]$ and $[\hat 0, [n]^{n-1}]$,  which are both isomorphic to $\Pi_n$,
the labeling reduces to the Bj\"orner-Stanley EL-labeling of $\Pi_n$.   
 
 \begin{theorem}\label{theorem:ellabelingposet}
 The labeling $\lambda:\E(\widehat{\Pi_{n}^w})\rightarrow \Lambda _n$ defined above is an
EL-labeling of $\widehat{\Pi_{n}^w}$.  
\end{theorem}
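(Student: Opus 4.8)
The plan is to check the EL-labeling property on every closed interval of $\widehat{\Pi_n^w}$, using Proposition~\ref{proposition:upperlowerideals} to reduce to three kinds of intervals: the maximal intervals $[\hat 0,[n]^i]$; the intervals $[\alpha,\hat 1]$, which are isomorphic to $\widehat{\Pi_{|\alpha|}^w}$; and the intervals $[\alpha,\beta]$ with $\beta\ne\hat 1$, which, by a blockwise application of that proposition, are isomorphic to products $\prod_k[\hat 0,[m_k]^{j_k}]$ of maximal intervals. Because $\lambda$ depends only on the relative order of the block minima involved, each of these isomorphisms transports $\lambda$ to the canonical labeling on the smaller pieces, so it suffices to analyze maximal intervals directly and then reassemble.

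First I would treat $[\hat 0,[n]^i]$. I claim its unique increasing maximal chain is the one that successively merges the block containing $1$ with $\{2\},\{3\},\dots,\{n\}$, taking $u=0$ on the first $n-1-i$ merges and $u=1$ on the last $i$ merges, so that its label word is
\[
(1,2)^0\,(1,3)^0\cdots(1,n-i)^0\,(1,n-i+1)^1\cdots(1,n)^1 .
\]
For uniqueness: along an increasing maximal chain the first coordinates of the labels are weakly increasing, since $\Lambda_n$ is the ordinal sum $\Gamma_1\oplus\cdots\oplus\Gamma_n$; the last merge creates the block $[n]$, of minimum $1$, so the last label has first coordinate $1$, hence so do all of them. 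The second coordinates then weakly increase, and being the minima of the successively absorbed blocks they form a permutation of $\{2,\dots,n\}$, so they are $2,3,\dots,n$ in order; the exponents weakly increase, lie in $\{0,1\}$, and sum to $i$ (the total weight of $[n]^i$), so they are forced; and a label word of this shape determines its chain. That this chain is also lexicographically first is a short check that it coincides with the greedy chain: at each step the least label among covers still lying weakly below $[n]^i$ is the next label above, the constraint ``$\le[n]^i$'' being exactly what prevents an early weight increment and so forces $u=0$ until only $i$ merges remain. The point to remember is that all labels of this chain share the same first coordinate, namely $1$.

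Second, for $\widehat{\Pi_n^w}$ itself: a maximal chain is a maximal chain of some $[\hat 0,[n]^i]$ followed by the top edge $[n]^i\lessdot\hat 1$, labeled $(1,n+1)^0$. For the label word to increase, the initial part must be the unique increasing chain of $[\hat 0,[n]^i]$, whose last label is $(1,n)^1$ if $i\ge 1$ and $(1,n)^0$ if $i=0$; since $(1,n)^1\not<(1,n+1)^0$ while $(1,n)^0<(1,n+1)^0$ in $\Gamma_1$, the unique increasing maximal chain of $\widehat{\Pi_n^w}$ is the one through $[n]^0$, and it is again lex-first. The intervals $[\alpha,\hat 1]$ with $\alpha\ne\hat 0$ fall under this case with $|\alpha|$ in place of $n$.

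For a general interval $[\alpha,\beta]\cong\prod_k[\hat 0,[m_k]^{j_k}]$ with blocks $B_1,\dots,B_t$ of $\beta$ ordered by $\min B_1<\cdots<\min B_t$ (so $\min B_1=1$), note that under the isomorphism every label in the $k$-th factor acquires first coordinate $\min B_k$. A maximal chain of the product is a shuffle of maximal chains of the factors, each step carrying its factor's label; if this shuffle is increasing, each factor's subchain is increasing, hence (by the first step) is that factor's unique increasing chain, all labels of which have first coordinate $\min B_k$. Since the first coordinates of an increasing word weakly increase and the $\min B_k$ strictly increase in $k$, the shuffle must list the factors' chains in the order $k=1,2,\dots,t$; this shuffle is unique, it is genuinely increasing (the break between consecutive factors is an ascent because $\min B_k<\min B_{k+1}$ moves one strictly higher in $\Lambda_n$), and the same bookkeeping shows it is the greedy, hence lex-first, chain. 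I expect this last, product step to be the main obstacle: products of EL-shellable posets need not be EL-shellable, and the argument only works because of the structural fact isolated in the first step --- each factor's unique increasing chain uses a single first coordinate, equal to the minimum of its block --- together with the distinctness of those block minima. Pinning down the interaction of this fact with the ordinal-sum order on $\Lambda_n$ and with the ``below $\beta$'' condition (which is what controls the exponents) is the delicate part; the arithmetic checks --- that the candidate chains really are chains below the relevant top element, and that the greedy cover at each step has the asserted label --- are routine once the structure is set up.
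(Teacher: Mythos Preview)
Your proof is correct and follows essentially the same strategy as the paper: both handle $[\hat 0,[n]^i]$ directly (identifying the unique increasing chain as the one with constant first coordinate $1$ and exponents $0\cdots01\cdots1$), then $[\hat 0,\hat 1]$, then reduce general intervals via the product decomposition together with the upper-interval isomorphism $[\alpha,\hat 1]\cong\widehat{\Pi_{|\alpha|}^w}$, with the single-first-coordinate observation controlling how the factors must be concatenated in any increasing shuffle; the paper's organization differs only in that it treats the product step for $[\hat 0,\alpha]$ and then invokes the upper-interval isomorphism to cover $[\alpha,\beta]$, whereas you treat the product directly at the level of $[\alpha,\beta]$. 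One caution: the sentence ``every label in the $k$-th factor acquires first coordinate $\min B_k$'' is false as written (a merge inside $B_k$ not involving the block containing $\min B_k$ has larger first coordinate), but your actual argument only uses the correct weaker statement---that the labels along the \emph{increasing} chain of the $k$-th factor all have first coordinate $\min B_k$---which you state explicitly a few lines later.
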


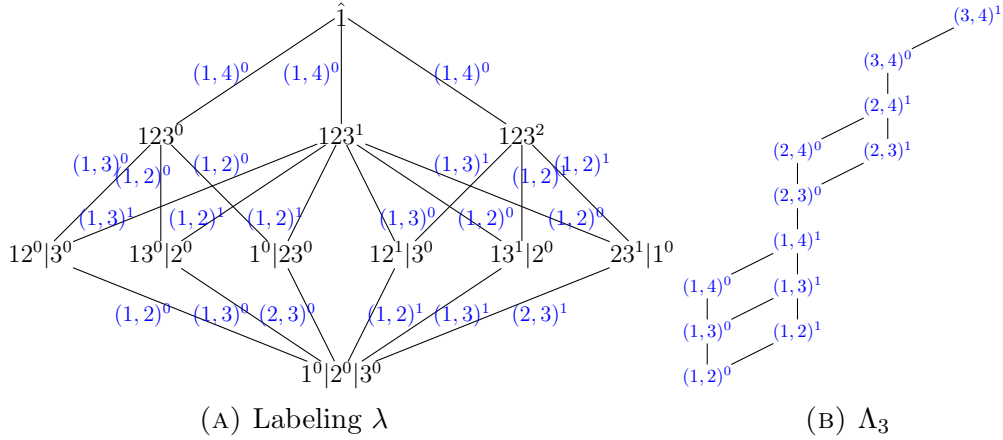
\begin{figure}
        \centering
        \begin{subfigure}[b]{0.6\textwidth}
                \centering
               \begin{tikzpicture}[line join=bevel,scale=0.8]

\tikzstyle{every node}=[inner sep=0pt, scale=0.8, minimum width=4pt]
\node (hat1) at (0,6) {$\hat{1}$};
\node (n1232) at (3,4) {$123^{2}$};
  \node (n13020) at (-3,2) {$13^{ 0}| 2^{ 0}$};
  \node (n102030) at (0,0)  {$1^{0}| 2^{0}| 3^{0}$};
  \node (n1231) at (0,4) {$123^{1}$};
  \node (n12030) at (-5,2) {$12^{0}| 3^{0}$};
  \node (n13120) at (3,2)  {$13^{1}| 2^{0}$};
  \node (n1230) at (-3,4){$123^ {0}$};
  \node (n10230) at (-1,2)  {$1^{0}| 23^{0}$};
  \node (n12130) at (1,2)  {$12^{ 1}| 3^{0}$};
  \node (n10231) at (5,2) {$23^ {1}| 1^{0}$};

  \draw (n1231) -- (n10230) ;
  \draw [] (n13020) -- (n102030);
  \draw [] (n1232) -- (n13120);
  \draw [] (n1231)-- (n13020);
  \draw [] (n10230)--(n102030);
  \draw [] (n1230) -- (n10230);
  \draw [] (n1231) -- (n13120);
  \draw [] (n12030)-- (n102030);
  \draw [] (n1231) --(n12130);
  \draw [] (n1232) -- (n12130);
  \draw [] (n13120) --(n102030);
  \draw [] (n1231) --(n10231);
  \draw [] (n1230) -- (n13020);
  \draw [] (n1230)  -- (n12030);
  \draw [] (n12130) --  (n102030);
  \draw [] (n1232)  --  (n10231);
  \draw [] (n10231)  --  (n102030);
  \draw [] (n1231) -- (n12030);	
	\draw (hat1) -- (n1230);
	\draw (hat1) -- (n1231);
	\draw (hat1) -- (n1232);

\tikzstyle{every node}= [scale=0.7]

\node  at (-3.3,1) {\color{blue}$(1,2)^0$};
\node  at (-2,1) {\color{blue}$(1,3)^0$};
\node  at (-0.9,1) {\color{blue}$(2,3)^0$};
\node  at (0.9,1) {\color{blue}$(1,2)^1$};
\node  at (2,1) {\color{blue}$(1,3)^1$};
\node  at (3.3,1) {\color{blue}$(2,3)^1$};

\node  at (-4,3.5) {\color{blue}$(1,3)^0$};
\node  at (-3.3,3.3) {\color{blue}$(1,2)^0$};
\node  at (-2,3.5) {\color{blue}$(1,2)^0$};

\node  at (-3.9,2.6) {\color{blue}$(1,3)^1$};
\node  at (-2.4,2.6) {\color{blue}$(1,2)^1$};
\node  at (-1.1,2.6) {\color{blue}$(1,2)^1$};
\node  at (1.1,2.6) {\color{blue}$(1,3)^0$};
\node  at (2.4,2.6) {\color{blue}$(1,2)^0$};
\node  at (3.9,2.6) {\color{blue}$(1,2)^0$};

\node  at (4,3.5) {\color{blue}$(1,2)^1$};
\node  at (3.3,3.3) {\color{blue}$(1,2)^1$};
\node  at (2,3.5) {\color{blue}$(1,3)^1$};

\node  at (-2,5) {\color{blue}$(1,4)^0$};
\node  at (-.5,5) {\color{blue}$(1,4)^0$};
\node  at (2,5) {\color{blue}$(1,4)^0$};

\end{tikzpicture}
                \caption{Labeling $\lambda$}
                \label{fig:ellabelingposet}
        \end{subfigure}%
        ~ 
        \begin{subfigure}[b]{0.55\textwidth}
                \centering
                \begin{tikzpicture}[scale=0.6]
 \tikzstyle{every node}=[inner sep=1pt, minimum width=14pt,scale=0.7, font=\footnotesize]
\draw (0,0) node (n120) {\color{blue}$(1,2)^0$};
\draw (0,1) node (n130) {\color{blue}$(1,3)^0$};
\draw (0,2) node (n140) {\color{blue}$(1,4)^0$};
\draw (2,1) node (n121) {\color{blue}$(1,2)^1$};
\draw (2,2) node (n131) {\color{blue}$(1,3)^1$};
\draw (2,3) node (n141) {\color{blue}$(1,4)^1$};

\draw (n141) -- (n140) ;
\draw (n131) -- (n130) ;
\draw (n121) -- (n120) ;
\draw (n140)-- (n130) -- (n120) ;

\draw (2,4) node (n230) {\color{blue}$(2,3)^0$};
\draw (2,5) node (n240) {\color{blue}$(2,4)^0$};
\draw (4,5) node (n231) {\color{blue}$(2,3)^1$};
\draw (4,6) node (n241) {\color{blue}$(2,4)^1$};
\draw (4,7) node (n340) {\color{blue}$(3,4)^0$};
\draw (6,8) node (n341) {\color{blue}$(3,4)^1$};
\draw (n241) -- (n240) ;
\draw (n231) -- (n230) ;

\draw (n240)-- (n230)  -- (n141) -- (n131) -- (n121);

\draw (n341) -- (n340) ;

\draw (n340) -- (n241) -- (n231);

\end{tikzpicture}
                \caption{$\Lambda _3$}
                \label{fig:lambdaposet}
        \end{subfigure}
\caption{EL-labeling of the poset $\widehat{\Pi_3^w}$}
\end{figure}

\begin{proof}
We need to show that in every closed interval of $\widehat{\Pi_{n}^w}$  there is a unique increasing
chain 
(from bottom to top), which  is also lexicographically first.  Let $\rho$ denote the rank function  of
$\widehat{\Pi_{n}^w}$.  
We divide the proof into $4$ cases:
\begin{enumerate}
 \item \emph{Intervals of the form $[\hat{0},[n]^r]$}. Since, from bottom to top, the last
 step of merging two blocks includes a block that contains 1,  all of the maximal
 chains have a final label of the form $(1,m)^u$, and so 
 any  increasing maximal chain has to have label word
 $(1,2)^{u_1}(1,3)^{u_2}\cdots(1,n)^{u_{n-1}}$ with $u_i=0$ for $i\le n-1-r$ and $u_i=1$ for $i>
n-1-r$.
 This label word is lexicographically first and the only chain with this label word is 
 (listing only the nonsingleton blocks)
 \[
  \hat{0}\lessdot 12^{u_1}\lessdot 123^{u_1+u_2} \lessdot \cdots \lessdot 123\cdots n ^r.
 \] 

 \item \emph{Intervals of the form $[\hat{0},\alpha]$ for $\rho(\alpha)<n-1$}.  Let
$A_1^{u_1},\dots, A_k^{u_k}$ be the weighted blocks of $\alpha$, where $\min A _i < \min A_j$ if $i
<j$.  For each $i$, let $m_i = \min A_i$.
 By the previous case, in each of the posets $[\hat{0},A_i^{u_i}]$ there is only one increasing
manner 
 of merging the blocks, and the labels of the increasing chain belong to the label set
$\Gamma_{m_i}$.  The increasing chain is also lexicographically first.  Consider the maximal chain
of $[\hat{0},\alpha]$ obtained
 by first  merging  the blocks of the increasing chain in $[\hat{0},A_1^{u_1}]$, then the ones in
 the increasing chain in $[\hat{0},A_2^{u_2}]$, and so on.
The constructed chain is still increasing
 since the labels in $\Gamma_{m_i}$ are less than the labels in $\Gamma_{m_{i+1}}$ for each
$i=1,\dots,k-1$.  It is not difficult to see that this is  the only increasing chain of
$[\hat{0},\alpha]$ and that it is lexicographically first. 

\vspace{.1in} \item \emph{The interval $[\hat 0,\hat 1]$.}  An increasing chain $c$ of this interval
must be of the form $\c^\prime \cup \{\hat 1\}$, where  $c^\prime$ is the unique increasing chain of
  some interval $[\hat{0},[n]^r]$.  By Case 1, the label word of $c^\prime$ ends in $(1,n)^u$
for some $u$. 
 For $c$ to be increasing, $u$ must be $0$. But $u=0$ only in the interval
$[\hat{0},[n]^0]$.  Hence the unique increasing chain of $[\hat{0},[n]^0]$ concatenated with $\hat
1$ is the only increasing chain of $[\hat 0,\hat 1]$.  It is clearly lexicographically first. 
 
\vspace{.1in} \item \emph{Intervals of the form $[\alpha,\beta]$ for $\alpha \ne \hat 0$}. We extend
the definition of $\Pi_n^w$ to  $\Pi_S^w$, where $S$ is an arbitrary  finite set of positive integers, by 
considering
partitions of $S$ rather than $[n]$.  We also extend the definition of the labeling $\lambda$ to
$\widehat{\Pi_S^w}$. Now we can identify the interval $[\alpha, \hat 1]$ with $\widehat{\Pi_{S}^w}$,
where $S$ is the set of minimum elements of the blocks of $\alpha$, 
 by replacing each block $A$ of $\alpha$ by  its minimum element and subtracting the weight of $A$ 
from the weight of the block containing $A$ in each
weighted partition of $[\alpha, \hat 1]$. This isomorphism preserves the labeling and so
 the three previous cases show that there is a unique  increasing chain  in $[\alpha,\beta]$ that is
also lexicographically
 first.	
 \end{enumerate}
\end{proof}

\subsection{Topological consequences}

  When we attribute a topological property to a poset $P$, we are really attributing the property to
the order 
  complex $\Delta(P)$, which is defined to be the simplicial complex whose faces are the chains of
$P$.  For 
  instance, by $\tilde H_r(P;{\bf k})$ and $\tilde H^r(P;{\bf k})$ we mean, respectively, reduced
simplicial homology 
  and cohomology of the order complex $\Delta(P)$, taken over ${\bf k}$, where ${\bf k}$ is an
arbitrary field or the ring of integers $\ZZ$.  (We will usually omit the ${\bf k}$  and  write just
$\tilde H_r(P)$ and $\tilde H^r(P)$.)
  For a brief review of the homology and cohomology of posets, see the appendix
(Section~\ref{section:homologyposets}).

The fundamental link between lexicographic shellability and topology is
given  is the following result.  Recall that the proper part $\overline P$ of a bounded poset $P$ is 
defined by  
$\overline P:=P \setminus \{\hat 0, \hat 1\}$.  Hence, if $c$ is a maximal chain of $P$ then $\bar 
c$ denotes the maximal chain of $\overline P$ given by  $c \setminus \{\hat 0, \hat 1\}$.

\begin{theorem}[Bj\"orner and Wachs \cite{BjornerWachs1996}] \label{elth}
Let $\lambda$ be an EL-labeling  of a bounded   poset $P$. 
Then for all $x<y$ in $P$, 
\begin{enumerate}
\item the open interval $(x,y)$ is homotopy equivalent to a wedge of  spheres, where for each $r \in
\NN$ the number of spheres  of dimension $r$ is the number of ascent-free maximal chains of the
closed interval $[x,y]$ of length $r+2$. 
\item the set
$$\{\bar c : c \mbox{ is an ascent-free maximal chain of $[x,y]$ of length } r+2 \}$$
forms a basis for cohomology $\tilde H^{r}((x,y))$, for all $r$.
\end{enumerate}
 \end{theorem}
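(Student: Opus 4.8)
The plan is to derive both statements from the general theory of shellable (not necessarily pure) simplicial complexes. Since $\lambda$ restricts to an EL-labeling of the closed interval $[x,y]$, I may assume $P=[x,y]$, and I write $\Delta := \Delta(\overline P)=\Delta((x,y))$; its facets (maximal faces) are exactly the sets $\overline c = c\setminus\{\hat 0,\hat 1\}$ for $c$ a maximal chain of $P$, and $\dim\overline c = \ell(c)-2$.

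The core step is to prove that listing the maximal chains of $P$ in increasing lexicographic order of their label words is a \emph{shelling order} of $\Delta$, with restriction (minimal new face) map
\[
\R(\overline c) := \{\, x_i : 1\le i\le k-1,\ \lambda(x_{i-1},x_i)\not<\lambda(x_i,x_{i+1})\,\}
\]
for $c=(\hat 0 = x_0\lessdot x_1\lessdot\cdots\lessdot x_k=\hat 1)$; that is, $\R(\overline c)$ collects the internal elements of $c$ at which its label word fails to ascend. Unwinding the definition of shelling, this amounts to showing that a face $\sigma\subseteq\overline c$ lies on no lexicographically earlier maximal chain of $P$ if and only if $\R(\overline c)\subseteq\sigma$. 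For the ``only if'' direction, if $\sigma$ omits some $x_i\in\R(\overline c)$, replace the segment $x_{i-1}\lessdot x_i\lessdot x_{i+1}$ of $c$ by the unique increasing (hence lexicographically first) maximal chain of $[x_{i-1},x_{i+1}]$ guaranteed by the EL-property; the result is a maximal chain that still contains $\sigma$ but has a strictly smaller label word. For the ``if'' direction, note that when $\R(\overline c)\subseteq\sigma$ the chain $c$ restricts, on each subinterval between consecutive elements of $\sigma\cup\{\hat 0,\hat 1\}$, to an increasing --- hence lexicographically first --- maximal chain of that subinterval, so no maximal chain refining $\sigma\cup\{\hat 0,\hat 1\}$ can have a smaller label word than $c$. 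This establishes the shelling, and shows that the ``homology facets'' (those $\overline c$ with $\R(\overline c)=\overline c$) are exactly the $\overline c$ coming from ascent-free maximal chains $c$.

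It then remains to invoke the structure theorem for shellings of non-pure complexes: $\Delta$ is homotopy equivalent to a wedge of spheres with exactly one sphere of dimension $\dim\overline c = \ell(c)-2$ for each homology facet $\overline c$; the fundamental cycles of the homology facets form a basis of $\tilde H_*(\Delta)$; and dually, since each homology facet $\overline c$ is a maximal face of $\Delta$ its dual cochain $\overline c^{\,*}$ is a cocycle, and these represent a basis of $\tilde H^*(\Delta)$. Grouping by dimension $r$ --- equivalently, by ascent-free maximal chains of $[x,y]$ of length $r+2$ --- gives statements (1) and (2). I expect the main obstacle to be the ``if'' direction above, i.e.\ checking rigorously that lexicographic order is a shelling order with the claimed restriction map in this (possibly non-pure) generality; this is precisely the technical content of the Bj\"orner--Wachs lexicographic shellability machinery, and once it is granted the rest of the argument is a formal application of standard shellability theory.
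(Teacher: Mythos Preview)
The paper does not prove this theorem; it is stated as a background result attributed to Bj\"orner and Wachs \cite{BjornerWachs1996} and is used as a black box. So there is no ``paper's own proof'' to compare against. Your sketch is a correct outline of the standard Bj\"orner--Wachs argument: lexicographic order on label words shells the order complex, with restriction map given by the non-ascent positions, so the homology facets are exactly the ascent-free chains; the homotopy type and the (co)homology basis then follow from the general theory of (non-pure) shellable complexes.
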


Since the M\"obius invariant of a  bounded poset $P$ equals the reduced Euler characteristic of the
order complex of $\overline{P}$, the Euler-Poincar\'e formula implies the following corollary.
\begin{corollary}Let $P$ be a pure  EL-shellable poset of length $n$.  Then
\begin{enumerate}
\item $\overline{P}$ has the homotopy type of a wedge of spheres all of dimension $n-2$, where the number
of spheres is  $|\mu_P(\hat 0, \hat 1)|$. 
\item $P$ is Cohen-Macaulay, which means that $\tilde H_i((x,y)) = 0$ for all $x <y$ in $P$ and $i <
l([x,y]) -2$. \end{enumerate}
\end{corollary}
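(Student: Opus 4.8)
The plan is to deduce this corollary directly from Theorem~\ref{elth} (the Bj\"orner--Wachs theorem) together with the Euler--Poincar\'e formula and the definition of Cohen--Macaulayness, so very little genuinely new work is needed. First I would note that by Theorem~\ref{theorem:ellabelingposet} the augmented poset $\widehat{\Pi_n^w}$ is EL-shellable, and hence so is every closed interval of it; but the statement here is about an abstract pure EL-shellable poset $P$ of length $n$, so I work at that level of generality. Apply Theorem~\ref{elth}(1) with $x = \hat 0$ and $y = \hat 1$: the open interval $(\hat 0, \hat 1) = \overline P$ is homotopy equivalent to a wedge of spheres, where the number of $r$-spheres equals the number of ascent-free maximal chains of $P = [\hat 0,\hat 1]$ of length $r+2$. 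Since $P$ is pure of length $n$, every maximal chain of $P$ has length exactly $n$, so the only contribution is in dimension $r = n-2$. This gives part (1): $\overline P$ has the homotopy type of a wedge of $(n-2)$-spheres.

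For the count of spheres, I would invoke the fact recalled just before the corollary: the M\"obius invariant $\mu_P(\hat 0,\hat 1)$ equals the reduced Euler characteristic $\tilde\chi(\Delta(\overline P))$. Since $\Delta(\overline P)$ has the homotopy type of a wedge of $N$ spheres of dimension $n-2$, its reduced homology is concentrated in degree $n-2$ and has rank $N$, so $\tilde\chi(\Delta(\overline P)) = (-1)^{n-2} N = (-1)^n N$. Hence $N = |\mu_P(\hat 0,\hat 1)|$, completing (1). (Alternatively one can count ascent-free maximal chains directly, but the Euler-characteristic route is cleaner and is exactly what the sentence preceding the corollary sets up.)

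For part (2), I would argue that every open interval $(x,y)$ of $P$ with $x < y$ is, by Theorem~\ref{elth}(1) applied to the interval $[x,y]$ (which inherits an EL-labeling), homotopy equivalent to a wedge of spheres; and moreover, because $P$ is \emph{pure}, the interval $[x,y]$ is itself pure, so all its maximal chains have the same length $\ell := l([x,y])$, and therefore all the spheres in the wedge have dimension $\ell - 2$. Consequently $\tilde H_i((x,y)) = 0$ for all $i \ne \ell - 2$, in particular for all $i < \ell - 2 = l([x,y]) - 2$, which is precisely the Cohen--Macaulay condition. I expect no real obstacle here: the only subtlety is making sure purity of $P$ is used to conclude purity of every interval $[x,y]$ (immediate, since a maximal chain of $[x,y]$ extends to a maximal chain of $P$ and all of those have equal length), and that one remembers $\tilde H_{-1}(\emptyset)$ conventions are irrelevant because $x < y$ forces $[x,y]$ to have length $\ge 1$; the case $\ell = 1$ makes $(x,y)$ empty and the vanishing condition vacuous. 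The whole corollary is thus a routine unwinding of the cited theorem.
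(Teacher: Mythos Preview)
Your proposal is correct and matches the paper's approach exactly: the paper does not give an explicit proof, merely noting before the corollary that the M\"obius invariant equals the reduced Euler characteristic of $\Delta(\overline P)$ and that the Euler--Poincar\'e formula then yields the result, which is precisely the argument you have spelled out. The brief aside about $\widehat{\Pi_n^w}$ at the start is irrelevant to this general statement (as you yourself note), so you could simply drop it.
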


In \cite{DotsenkoKhoroshkin2010} Dotsenko and Khoroshkin use operad theory to prove that all
intervals of $\Pi_n^w$ are Cohen-Macaulay.  The following extension of their result is a consequence
of Theorem~\ref{theorem:ellabelingposet}.

\begin{corollary}\label{corollary:cohenmacaulay}
 The poset $\widehat{\Pi_n^w}$ is Cohen-Macaulay.
\end{corollary}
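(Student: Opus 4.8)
The plan is to read this off directly from Theorem~\ref{theorem:ellabelingposet} together with the general corollary stated just above (and ultimately from Theorem~\ref{elth} of Bj\"orner and Wachs). First I would check that $\widehat{\Pi_n^w}$ is a \emph{pure} bounded poset. It has a minimum $\hat 0$ and a maximum $\hat 1$ by construction, so it is bounded. From Section~\ref{ranksec} we know $\Pi_n^w$ is pure of length $n-1$ and that $\rho(\alpha)=n-|\alpha|$; in particular each of the $n$ maximal elements $[n]^0,\dots,[n]^{n-1}$ has rank $n-1$, since $|[n]^i|=1$. Adjoining $\hat 1$ above all of them therefore produces a poset in which every maximal chain has the form $\hat 0\lessdot\cdots\lessdot[n]^i\lessdot\hat 1$ and hence has length $n$. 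Thus $\widehat{\Pi_n^w}$ is pure of length $n$.

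Next, Theorem~\ref{theorem:ellabelingposet} says $\lambda$ is an EL-labeling of $\widehat{\Pi_n^w}$, so this poset is EL-shellable, and since it is also pure it satisfies the hypotheses of the corollary preceding this statement. Part~(2) of that corollary then yields immediately that $\widehat{\Pi_n^w}$ is Cohen-Macaulay, i.e.\ $\tilde H_i((x,y))=0$ for all $x<y$ in $\widehat{\Pi_n^w}$ and all $i<l([x,y])-2$. (Concretely, this is because every closed interval $[x,y]$ of a pure poset is itself pure of length $l([x,y])$, so by Theorem~\ref{elth}(1) every open interval $(x,y)$ is a wedge of spheres of the single top dimension $l([x,y])-2$, forcing homology to vanish below that degree.)

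I do not expect any real obstacle: the content of the statement is entirely carried by Theorem~\ref{theorem:ellabelingposet}. The only step that needs a moment's care is the purity of the \emph{augmented} poset, which reduces to the observation that all maximal elements of $\Pi_n^w$ sit at the same rank $n-1$, so that capping with $\hat 1$ does not destroy purity.
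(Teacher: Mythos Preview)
Your proposal is correct and matches the paper's approach exactly: the paper simply states that the corollary is a consequence of Theorem~\ref{theorem:ellabelingposet}, and you have supplied precisely the details (purity of $\widehat{\Pi_n^w}$ and application of the pure EL-shellable $\Rightarrow$ Cohen--Macaulay corollary) that make this implication go through.
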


Now  by Theorem~\ref{theorem:ellabelingposet}, Proposition~\ref{proposition:mobiushat} and
Corollary~\ref{corollary:mutrees}  we have,

\begin{theorem} \label{theorem:homotopy} For all $n \ge 1$, 
\begin{enumerate}
\item $\Pi_n^w \setminus \{\hat 0\}$ has the homotopy type of a wedge of $(n-1)^{n-1}$ spheres of
dimension $n-2$,
\item $(\hat 0, [n]^i)$ has the homotopy type of a wedge of $|\T_{n,i}|$ spheres of dimension $n-3$
for all $i \in \{0,1,\dots,n-1\}$.
\end{enumerate}
\end{theorem}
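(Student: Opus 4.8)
The plan is to derive Theorem~\ref{theorem:homotopy} as a direct consequence of the EL-shellability established in Theorem~\ref{theorem:ellabelingposet}, together with the M\"obius computations of Section~\ref{subsection:mobius}. The key observation is that both $\Pi_n^w \setminus \{\hat 0\}$ and $(\hat 0, [n]^i)$ are proper parts of suitable bounded EL-shellable posets: the first is $\overline{\widehat{\Pi_n^w}}$, and the second is $\overline{[\hat 0, [n]^i]}$, where $[\hat 0,[n]^i]$ is a closed interval of $\widehat{\Pi_n^w}$ and hence itself EL-shellable. So the real work is just to check that the relevant posets are pure of the right length and then to quote the homotopy statement.

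First I would record that $\widehat{\Pi_n^w}$ is pure of length $n$. Indeed, $\Pi_n^w$ is pure of length $n-1$ (noted at the start of Section~\ref{ranksec}), and adjoining $\hat 1$ above the $n$ maximal elements, each of rank $n-1$, produces a pure poset of length $n$. By the Corollary to Theorem~\ref{elth} applied to $P = \widehat{\Pi_n^w}$ (which is EL-shellable by Theorem~\ref{theorem:ellabelingposet}), the proper part $\overline{\widehat{\Pi_n^w}} = \Pi_n^w \setminus \{\hat 0\}$ has the homotopy type of a wedge of spheres of dimension $n-2$, the number of spheres being $|\mu_{\widehat{\Pi_n^w}}(\hat 0, \hat 1)| = (n-1)^{n-1}$ by Proposition~\ref{proposition:mobiushat}. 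This gives part (1).

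For part (2), I would apply the same Corollary to the closed interval $[\hat 0, [n]^i]$ of $\widehat{\Pi_n^w}$. Since $\widehat{\Pi_n^w}$ is EL-shellable, so is every closed interval, and $[\hat 0,[n]^i]$ is pure of length $n-1$ because $\Pi_n^w$ is pure of length $n-1$ with $[n]^i$ a maximal element. Hence $\overline{[\hat 0,[n]^i]} = (\hat 0, [n]^i)$ has the homotopy type of a wedge of spheres of dimension $(n-1)-2 = n-3$, with the number of spheres equal to $|\mu_{\Pi_n^w}(\hat 0, [n]^i)|$. By Corollary~\ref{corollary:mutrees} this M\"obius invariant is $(-1)^{n-1}|\T_{n,i}|$ in absolute value $|\T_{n,i}|$, which yields the stated count.

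I do not anticipate a genuine obstacle here: the theorem is essentially a bookkeeping consequence of results already in place. The only point requiring a small remark is the degenerate cases (e.g. $n=1$, where $\Pi_1^w \setminus \{\hat 0\}$ is empty and the wedge of $(1-1)^{1-1}=1$ sphere of dimension $-1$ is interpreted via the convention that the $(-1)$-sphere is the empty complex, whose reduced homology is ${\bf k}$ in degree $-1$), and the observation that parts (1) and (2) are consistent with each other since $\Pi_n^w\setminus\{\hat 0\}$ is glued from the intervals $[\hat 0,[n]^i]$ along $\Pi_n^w\setminus\{\hat 0,\, [n]^0,\dots,[n]^{n-1}\}$; but this consistency is automatically guaranteed by the Euler characteristic identity $\sum_i \mu_{\Pi_n^w}(\hat 0,[n]^i)$ agreeing up to sign with $\mu_{\widehat{\Pi_n^w}}(\hat 0,\hat 1)$, which follows by comparing the "Consequently" clause of Proposition~\ref{proposition:muweightedsumtrees} with Proposition~\ref{proposition:mobiushat}.
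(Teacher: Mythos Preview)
Your proof is correct and follows exactly the paper's approach: the paper simply records that the theorem follows from Theorem~\ref{theorem:ellabelingposet}, Proposition~\ref{proposition:mobiushat}, and Corollary~\ref{corollary:mutrees}, and you have spelled out precisely how. One small slip in your closing aside: $\sum_i |\mu_{\Pi_n^w}(\hat 0,[n]^i)| = n^{n-1}$ while $|\mu_{\widehat{\Pi_n^w}}(\hat 0,\hat 1)| = (n-1)^{n-1}$, so these do \emph{not} agree up to sign; the relationship between parts (1) and (2) is not that simple Euler-characteristic identity, but this remark is inessential to the argument.
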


It follows from Theorem~\ref{theorem:homotopy} (and Proposition~\ref{prop:free} in the appendix)
that  top cohomology $\tilde H^{n-2}(\Pi_n^w\setminus \hat{0})$ and $\tilde H^{n-3}((\hat 0,
[n]^i))$ are  free ${\bf k}$-modules, which are isomorphic to the corresponding top homology
modules, that is 
$$\tilde H^{n-2}(\Pi_n^w\setminus \hat{0}) \simeq \tilde H_{n-2}(\Pi_n^w\setminus \hat{0})$$ and
$$\tilde H^{n-3}((\hat 0, [n]^i)) \simeq \tilde H_{n-3}((\hat 0, [n]^i))$$ for  $0 \le i \le n-1$.
Moreover, we have the following result.
\begin{corollary}\label{proposition:dimensionhat} For  $0\le i \le n-1$, 
\begin{eqnarray*} \rank \tilde H_{n-2}(\Pi_n^w\setminus \hat{0})&=&(n-1)^{n-1}
\\ \rank \tilde H_{n-3}((\hat{0},[n]^{i})) &=& |\T_{n,i}| 
\\  \rank  \bigoplus_{i=0}^{n-1} \tilde H_{n-3}((\hat 0, [n]^i)) &= & n^{n-1}.
\end{eqnarray*}
\end{corollary}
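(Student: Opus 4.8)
The plan is to read off all three ranks directly from the homotopy types already determined in Theorem~\ref{theorem:homotopy}, so that essentially all the work has been done in the preceding subsections. The underlying input is Theorem~\ref{theorem:ellabelingposet} (the EL-labeling), which together with Proposition~\ref{proposition:mobiushat} and Corollary~\ref{corollary:mutrees} yields Theorem~\ref{theorem:homotopy}.

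First I would recall the elementary fact that a wedge of $N$ spheres of a single dimension $d$ has reduced homology of rank $N$ concentrated in degree $d$ and zero in all other degrees, over any field or over $\ZZ$, and that this homology is free. Applying this to Theorem~\ref{theorem:homotopy}(1) immediately gives $\rank \tilde H_{n-2}(\Pi_n^w \setminus \hat 0) = (n-1)^{n-1}$, and applying it to Theorem~\ref{theorem:homotopy}(2) gives $\rank \tilde H_{n-3}((\hat 0,[n]^i)) = |\T_{n,i}|$ for each $i \in \{0,1,\dots,n-1\}$. The corresponding statements for top cohomology are recorded in the paragraph preceding the corollary: by Proposition~\ref{prop:free} the relevant top cohomology modules are free and isomorphic to the top homology modules, so their ranks coincide. (Equivalently, one invokes the universal coefficient theorem together with the freeness of top homology.)

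For the third equation I would sum the second equation over $i$, obtaining $\rank \bigoplus_{i=0}^{n-1} \tilde H_{n-3}((\hat 0,[n]^i)) = \sum_{i=0}^{n-1} |\T_{n,i}|$. By Drake's identity~(\ref{equation:drake}) (equivalently, by~(\ref{introprod})) evaluated at $t=1$, this equals $\prod_{i=1}^{n-1}\big((n-i)+i\big) = \prod_{i=1}^{n-1} n = n^{n-1}$; alternatively, $\sum_i |\T_{n,i}|$ is visibly the total number of rooted trees on node set $[n]$, which is $n^{n-1}$ by Cayley's formula.

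There is no substantive obstacle here — the corollary is a bookkeeping consequence of Theorem~\ref{theorem:homotopy} and Drake's formula. The only point deserving a moment's care is that "$\rank$" presupposes a free module; this is supplied by the wedge-of-spheres conclusion of Theorem~\ref{theorem:homotopy} (and Proposition~\ref{prop:free}), not merely by Cohen-Macaulayness, and it is precisely why the identification of the homotopy type, rather than just the Cohen-Macaulay property, is needed at this step.
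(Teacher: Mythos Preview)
Your proposal is correct and matches the paper's approach: the corollary is stated immediately after Theorem~\ref{theorem:homotopy} (and the paragraph invoking Proposition~\ref{prop:free}) with no separate proof, precisely because the ranks are read off from the wedge-of-spheres homotopy types and the third line follows by summing over $i$ and using (\ref{equation:drake}) at $t=1$ (equivalently, Cayley's formula).
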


\begin{remark} \label{stro}{\rm In a prior attempt to  establish Cohen-Macaulayness of each maximal
interval $[\hat 0, [n]^i]$ of $\Pi_n^w$,
it is argued in \cite{DotsenkoKhoroshkin2007}  that  the intervals are  totally semimodular and
hence CL-shellable\footnote{CL-shellability is a property more general the EL-shellability, which
also implies Cohen-Macaulaynes; see \cite{BjornerWachs1983}, \cite{BjornerWachs1996} or
\cite{Wachs2007}}.  In  \cite{Strohmayer2008} it is noted that this is
not the case and a proposed recursive atom ordering\footnote{See \cite{BjornerWachs1983},
\cite{BjornerWachs1996} or \cite{Wachs2007} for the definition of  recursive atom ordering. The
property of admitting a recursive atom ordering is equivalent to that of being CL-shellable.} of
each maximal interval $[\hat 0, [n]^i]$ is given in order to establish CL-shellability.    In
\cite[Proof of Proposition 3.9]{Strohmayer2008} it is claimed that given any linear ordering $
\{i_1,j_1\} , \{i_2,j_2\},\cdots, \{i_m,j_m\}
$
 of the atoms of $\Pi_n$ (the singleton blocks have been omitted), the linear ordering 
\begin{equation}\label{eq:stroh}
\{i_1,j_1\}^0, \{i_1,j_1\}^1, \{i_2,j_2\}^0,\{i_2,j_2\}^1\,
\cdots \{i_m,j_m\}^0,\{i_m,j_m\}^1
\end{equation}
 satisfies the criteria for being a recursive atom ordering of $[\hat 0, [n]^i]$, where $1\le i \le
n-2$.
We note here that one of the requisite conditions in the definition of recursive atom ordering fails
to hold when 
 $n=4$ and $i=2$. Indeed, assume (without loss of generality) that    the first two 
atoms in the atom ordering of $[\hat 0, [4]^2]$ given in (\ref{eq:stroh})  are $\{1,2\}^0$ and
$\{1,2\}^1$.  Then the atoms of  the interval $ 
[\{1,2\}^1, [4]^2]$ that cover $\{1,2\}^0$ are $\{1,2,3\}^1$ and $\{1,2,4\}^1$.  So by the
definition of recursive atom ordering one of these covers must come first in any recursive atom
ordering of $ 
[\{1,2\}^1, [4]^2]$ and the other must come second.  But this contradicts the form of 
(\ref{eq:stroh}) applied to the interval $ 
[\{1,2\}^1, [4]^2]$ which requires the atom $\{1,2,3\}^2$ to  immediately follow the atom
$\{1,2,3\}^1$ and the atom $\{1,2,4\}^2$ to immediately follow the atom $\{1,2,4\}^1$.  The proof of
Proposition 3.9 of \cite{{Strohmayer2008}}  breaks down in the second from last paragraph.}
\end{remark}

\section{Connection with the doubly bracketed free Lie algebra}\label{section:conlie}
\subsection{The doubly bracketed free Lie algebra} \label{subsection:genlie2} 
In this section ${\bf k}$ denotes an arbitrary field. 
Recall that a \emph{Lie bracket} on a  vector
space $V$ is a bilinear binary product  
$[\cdot,\cdot]:V\times V \rightarrow V$ such that for all $x,y,z \in V$, 
\begin{align}
 [x,y] = - [y,x]\quad\quad& \text{(Antisymmetry)}\label{rln:lb1}\\
[x,[y,z]]+[z,[x,y]]+[y,[z,x]]=0 \quad\quad&\text{(Jacobi Identity).}\label{rln:lb2}
\end{align}
The \emph{free Lie algebra on $[n]$} (over the field ${\bf k}$) is the  ${\bf k}$-vector space
generated by the elements
of $[n]$ and all the possible bracketings involving these elements subject only to the relations
(\ref{rln:lb1}) and (\ref{rln:lb2}).
Let $\lie(n)$ denote the {\it multilinear} component of the free Lie algebra on $[n]$, i.e., 
the subspace generated by bracketings that contain each
element of $[n]$ exactly once.   For example $[[2,3],1]$ is an element of $\lie(3)$, while
$[[2,3],2]$ is not.

Now let $V$ be a vector space equipped with two Lie brackets
${\color{blue}[}\cdot,\cdot{\color{blue}]}$ 
and ${\color{red}\langle} \cdot, \cdot {\color{red}\rangle}$.
The brackets are said to be {\it compatible} if any linear combination of them is  a Lie bracket.  
As pointed out in \cite{DotsenkoKhoroshkin2007,Liu2010}, compatibility is equivalent to the {\em
mixed Jacobi}  condition: for all $x,y,z \in V$,

\begin{align}
{\color{blue}[}x,{\color{red}\langle} y, z
{\color{red}\rangle}{\color{blue}]}+{\color{blue}[}z,{\color{red}\langle} x,y
{\color{red}\rangle}{\color{blue}]}+{\color{blue}[}y,{\color{red}\langle} z,
x{\color{red}\rangle}{\color{blue}]}+{\color{red}\langle} x,{\color{blue}[}y,z{\color{blue}]}
{\color{red}\rangle} + 
{\color{red}\langle} z,{\color{blue}[}x,y{\color{blue}]} {\color{red}\rangle}+{\color{red}\langle}
y,{\color{blue}[}z,x{\color{blue}]}{\color{red}\rangle}=0.\quad 
\label{rln:lb3}
\end{align}
Let $\lie_2(n)$ denote the multilinear component of the free Lie algebra on $[n]$ with two
compatible brackets 
${\color{blue}[}\cdot,\cdot{\color{blue}]}$ and ${\color{red}\langle} \cdot, \cdot
{\color{red}\rangle}$, 
that is, the multilinear component of the  ${\bf k}$-vector space generated by (mixed) bracketings
of elements of $[n]$  
subject only to the five  relations given by (\ref{rln:lb1}) and (\ref{rln:lb2}), for each bracket,
and 
(\ref{rln:lb3}). We will call the bracketed words  that generate  $\lie_2(n)$ {\it bracketed
permutations}.

 It will be convenient to refer to the bracket ${\color{blue}[}\cdot,\cdot{\color{blue}]}$ as the
{\it blue}
 bracket and the bracket ${\color{red}\langle} \cdot, \cdot {\color{red}\rangle}$ as the {\it red}
bracket.
For each $i$, let $\lie_2(n,i)$ be the subspace of $\lie_2(n)$ generated by bracketed permutations
with exactly
$i$ red brackets and $n-1-i$ blue brackets.  

A permutation  $\tau \in \sym_n$ acts on the bracketed permutations by replacing each letter $i$ by
$\tau(i)$. 
For example $(1,2)\,\,{\color{red}\langle}{\color{blue}[}{\color{red}\langle} 3,5
{\color{red}\rangle}, {\color{blue}[}2,4{\color{blue}]}
{\color{blue}]},1{\color{red}\rangle}= {\color{red}\langle}{\color{blue}[}{\color{red}\langle} 3,5
{\color{red}\rangle}, {\color{blue}[}1,4{\color{blue}]}
{\color{blue}]},2{\color{red}\rangle}$.  Since this action  respects the five relations, it induces
a 
representation of $\sym_n$ on   $\lie_2(n)$.    Since this action also preserves the number of red
and blue brackets, 
we have the following decomposition into $\sym_n$-submodules: $\lie_2(n)=\oplus_{i=0}^{n-1}
\lie_2(n,i)$.  
Note that by replacing red brackets with blue brackets and vice verce, we get the  $\sym_n$-module
isomorphism, $$\lie_2(n,i) \simeq_{\sym_n}\lie_2(n,n-1-i)$$
for all $i$. Also note that  $$\lie_2(n,0)\simeq_{\sym_n} \lie_2(n,n-1) \simeq_{\sym_n} \lie(n).$$

A {\it bicolored binary tree} is a complete binary tree (i.e., every internal node has a left and a
right child) 
for which each internal node has been colored red or blue. 
For a bicolored binary tree $T$ with $n$ leaves and $\sigma\in \sym_n$, define the 
\emph{labeled bicolored binary tree} $(T,\sigma)$ 
to be the tree $T$ whose $j$th leaf from left to 
right has been labeled $\sigma(j)$. We denote by $\BT_n$  the set of  labeled bicolored binary
trees 
with $n$ leaves and by $\BT_{n,i}$ the set of  labeled bicolored binary trees with $n$ nodes and
$i$ 
red internal nodes.  

It will also be convenient to consider 
labeled bicolored trees whose label 
set is more general than $[n]$.  For a finite set $A$, let $\BT_A$ be the set of bicolored binary
trees whose leaves are labeled by a permutation of $A$ and $\BT_{A,i}$ be the subset of $\BT_A$
consisting of trees with $i$ red internal nodes.    If $(S,\alpha) \in  \BT_{A}$ and $(T,\beta) \in 
\BT_{B}$, where $A$ and $B$ are disjoint finite sets, and $\col \in \{\text{red, blue}\}$ then
$(S,\alpha) \substack{\col \\ \wedge} (S,\beta)$ denotes the tree in $\BT_{A\cup B}$ whose left
subtree is $(S,\alpha)$, right subtree is $(T,\beta)$, and root color is $\col $.

We can represent the bracketed permutations that generate $\lie_2(n)$ with labeled bicolored 
binary trees.  More precisely, \ let
$(T_1,\sigma_1)$ and $(T_2,\sigma_2)$ be the left and right labeled subtrees of the root $r$ of
$(T,\sigma)$.  Then define recursively

\begin{equation}\label{definition:treebracket}
[T,\sigma]= \left\{ 
  \begin{array}{l l}
    {\color{blue}[}[T_1,\sigma_1],[T_2,\sigma_2]{\color{blue}]} & \quad \text{if $r$ is blue and
$n>1$}\\
    {\color{red}\langle }[T_1,\sigma_1],[T_2,\sigma_2]{\color{red}\rangle} & \quad \text{if $r$ is
red and $n>1$}\\
    \sigma & \quad \text{if $n=1$.}\\
  \end{array} \right.
\end{equation}
Clearly  $(T,\sigma) \in \BT_{n,i}$ if and only if $[T,\sigma] $ is a bracketed permutation of
$\lie_2(n,i)$. 
See Figure \ref{fig:bicolorbinarytree}.

\begin{figure}
        \centering
        \begin{subfigure}[b]{0.6\textwidth}
                \centering
               \begin{tikzpicture}[thick,scale=0.8]
    \draw [circle,color=red] (1.7,4)  node (red){ Red};
    \draw [color=blue] (1.8,3.5)  node (blue){ Blue};
\tikzstyle{every node}=[fill, draw,inner sep=4pt, minimum width=1pt,scale=0.8]
    \draw [circle,color=red] (1,4)  node (r){};
    \draw [color=blue] (1,3.5)  node (b){};

\tikzstyle{every node}=[fill, draw,inner sep=4pt, minimum width=1pt,scale=0.8]
    \draw [circle,color=red] (1,4)  node (r){};
    \draw [circle,color=red] (6,4)  node (i1){};
    \draw [circle,color=red] (8.5,3)  node (i2){};
    \draw [circle,color=red] (7.5,2)  node (i3){};
    \draw [color=blue] (6.5,1)  node (i4){};
    \draw [color=blue] (4,3)  node (i5){};
    \draw [color=blue] (5,2)  node (i6){};
    \draw [circle,color=red] (3,2)  node (i7){};
    \draw [color=blue] (2,1)  node (i8){};
\tikzstyle{every node}=[inner sep=1pt, minimum width=14pt,scale=0.8]

    \draw (4.3,1)  node (l1){1};
    \draw (5.5,0)  node (l2){2};
    \draw (1,0)  node (l3){3};
    \draw (3,0)  node (l4){4};
    \draw (6,1)  node (l5){5};
    \draw (3.7,1)  node (l6){6};
    \draw (7.5,0)  node (l7){7};
    \draw (9.5,2)  node (l8){8};
    \draw (8.5,1)  node (l9){9};

    \draw (i1) --  (i2) ;
    \draw (i1) --  (i5) ;
    \draw (i2) --  (i3) ;
    \draw (i2) --  (l8) ;
    \draw (i3) --  (i4) ;
    \draw (i3) --  (l9) ;
    \draw (i4) --  (l7) ;
    \draw (i4) --  (l2) ;
    \draw (i5) --  (i6) ;
    
    \draw (i5) --  (i7) ;
    \draw (i6) --  (l5) ;
    \draw (i6) --  (l1) ;
    \draw (i7) --  (l6) ;
    \draw (i7) --  (i8) ;
    \draw (i8) --  (l3) ;
    \draw (i8) --  (l4) ;
\end{tikzpicture}
        \end{subfigure}%
        ~ 
        \begin{subfigure}[b]{0.55\textwidth}
                \centering
                 \begin{tikzpicture}[thick,scale=0.8,baseline=50]

\tikzstyle{every node}=[inner sep=1pt, minimum width=14pt,scale=1]

    \draw (0,4)  node
(v){\hspace{-.6in}${\color{red}\langle}{\color{blue}[}{\color{red}\langle}{\color{blue}[}3,4{\color{blue}]},6{
\color{red}\rangle}, 
{\color{blue}[}1,5{\color{blue}] ]},{\color{red}\langle
\langle}{\color{blue}[}2,7{\color{blue}]},9{\color{red}\rangle},8{\color{red}\rangle \rangle}$};

\end{tikzpicture}
        \end{subfigure}
  \caption{Example of a tree $(T,346152798) \in \BT_{9,4}$\,\, and \,\, $[T,346152798]\in
\lie_2(9,4)$}
  \label{fig:bicolorbinarytree}

  \end{figure}

\subsection{A generating set for $\tilde H^{n-3}((\hat 0, [n]^i))$}\label{section:genhom}

In this section the ring of coefficients ${\bf k}$  for cohomology is either $\ZZ$ or an arbitrary
field.

The top dimensional cohomology of a pure poset $P$, say of length $\ell$, has a particularly simple
description (see Appendix \ref{section:homologyposets}).  Let $\MM(P)$ denote the set of maximal
chains of $P$ and let $\MM^\prime(P)$ denote the set of chains of length $\ell-1$.  We view the
coboundary map $\delta$ as a map from the chain space of  $P$ to itself, which takes chains of
length $d$ to chains of length $d+1$ for all $d$.  Since the image of $\delta$ on the top chain
space (i.e. the space spanned by $\MM(P)$) is $0$, the kernel  is the entire top chain space.   
Hence top cohomology is the quotient of the space spanned by $\MM(P)$ by the image of the space
spanned by $\MM^\prime(P)$.  The image of $\MM^\prime(P)$ is what we call the coboundary relations. 
We thus have the following presentation of the top cohomology  $$\tilde H^{\ell}(P) = \langle \MM(P)
| \mbox{ coboundary relations} \rangle.$$

Recall that the {\it postorder listing} of the internal nodes of a binary tree $T$ is defined
recursively as follows:  first list the internal nodes of the left subtree  in postorder, then list
the internal nodes of the right subtree in postorder, and finally list the root.   
The postorder listing of the internal nodes of the binary tree of Figure~\ref{fig:bicolorbinarytree}
is illustrated in Figure \ref{fig:postorder}.

Given $k$ blocks $A_1^{w_1},A_2^{w_2}, \dots, A_k^{w_k}$ in a weighted partition $\alpha$ and 
$u \in \{0,\dots,k-1\}$, by $u$-{\em merge} these blocks we mean remove them from $\alpha$ and
replace them by the block $(\bigcup A_i )^{\sum w_i +u}$.   Given $\col \in \{\mbox{blue, red}\}$,
let 
$$u(\col) = \begin{cases} 0 &\mbox{ if $\col = $ blue} \\
 1 &\mbox{ if $\col = $ red}.  \end{cases}  $$    For  $(T,\sigma) \in \BT_{A,i}$, let
$\pi(T,\sigma)=A^i$.   

\begin{definition}\label{definition:treechain} 
For $(T,\sigma) \in \BT_n$ and $k \in [n-1]$, let $T_{k}=L_{k}\substack{\col_k \\ \wedge} R_{k}$ be
the subtree of $(T,\sigma)$ rooted at 
the $k$th node listed in postorder.  The chain $\c(T,\sigma)\in \MM(\Pi_n^{w})$   is the one
whose rank $k$ weighted partition is obtained from the rank $k-1$ weighted partition by
$u(\col_k)$-merging the blocks $\pi(L_{k})$ and
$\pi(R_{k})$. 
See Figure~\ref{fig:binarytreepostorderandchain}.
\end{definition}

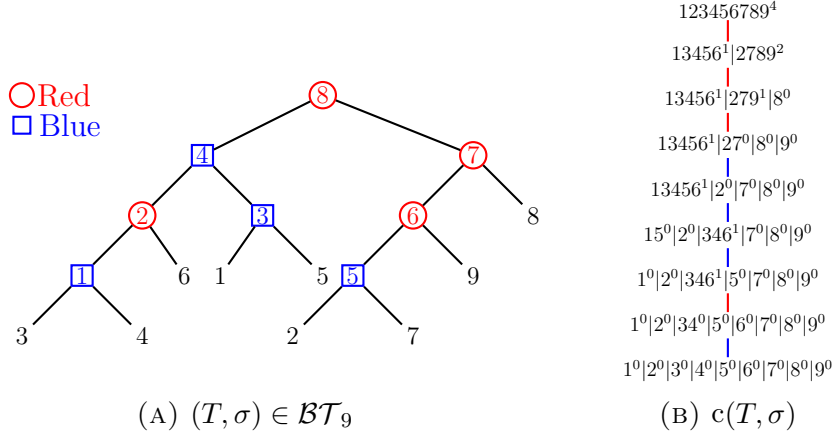
\begin{figure}
        \centering
        \begin{subfigure}[b]{0.5\textwidth}
                \centering
                   \begin{tikzpicture}[thick,scale=0.8]
\draw [circle,color=red] (1.7,4)  node (red){ Red};
    \draw [color=blue] (1.8,3.5)  node (blue){ Blue};
\tikzstyle{every node}=[draw,inner sep=4pt, minimum width=1pt,scale=0.8]
    \draw [circle,color=red] (1,4)  node (r){};
    \draw [color=blue] (1,3.5)  node (b){};

\tikzstyle{every node}=[draw,inner sep=1pt, minimum width=10pt,scale=0.8]

    \draw [circle,color=red] (6,4)  node (i1){8};
    \draw [circle,color=red] (8.5,3)  node (i2){7};
    \draw [circle,color=red] (7.5,2)  node (i3){6};
    \draw [color=blue] (6.5,1)  node (i4){5};
    \draw [color=blue] (4,3)  node (i5){4};
    \draw [color=blue] (5,2)  node (i6){3};
    \draw [circle,color=red] (3,2)  node (i7){2};
    \draw [color=blue] (2,1)  node (i8){1};
\tikzstyle{every node}=[inner sep=1pt, minimum width=14pt,scale=0.8]

    \draw (4.3,1)  node (l1){1};
    \draw (5.5,0)  node (l2){2};
    \draw (1,0)  node (l3){3};
    \draw (3,0)  node (l4){4};
    \draw (6,1)  node (l5){5};
    \draw (3.7,1)  node (l6){6};
    \draw (7.5,0)  node (l7){7};
    \draw (9.5,2)  node (l8){8};
    \draw (8.5,1)  node (l9){9};

    \draw (i1) --  (i2) ;
    \draw (i1) --  (i5) ;
    \draw (i2) --  (i3) ;
    \draw (i2) --  (l8) ;
    \draw (i3) --  (i4) ;
    \draw (i3) --  (l9) ;
    \draw (i4) --  (l7) ;
    \draw (i4) --  (l2) ;
    \draw (i5) --  (i6) ;
    
    \draw (i5) --  (i7) ;
    \draw (i6) --  (l5) ;
    \draw (i6) --  (l1) ;
    \draw (i7) --  (l6) ;
    \draw (i7) --  (i8) ;
    \draw (i8) --  (l3) ;
    \draw (i8) --  (l4) ;
\end{tikzpicture}

                \caption{$(T,\sigma) \in \BT_9$}
                \label{fig:postorder}
        \end{subfigure}%
        ~ 
        \begin{subfigure}[b]{0.5\textwidth}
                \centering
                 \begin{tikzpicture}[thick,scale=0.6]

 \tikzstyle{every node}=[inner sep=1pt, minimum width=10pt,scale=0.65]

    \draw (0,0)  node (c0){$1^0|2^0|3^0|4^0|5^0|6^0|7^0|8^0|9^0$};

    \draw (0,1)  node (c1){$1^0|2^0|34^0|5^0|6^0|7^0|8^0|9^0$};
    \draw [color=blue] (c0) -- (c1);

    \draw (0,2)  node (c2){$1^0|2^0|346^1|5^0|7^0|8^0|9^0$};
    \draw [color=red](c1) -- (c2);

    \draw (0,3)  node (c3){$15^0|2^0|346^1|7^0|8^0|9^0$};
    \draw [color=blue] (c2) -- (c3) ;

    \draw (0,4)  node (c4){$13456^1|2^0|7^0|8^0|9^0$};
    \draw [color=blue] (c3) -- (c4);

    \draw (0,5)  node (c5){$13456^1|27^0|8^0|9^0$};
    \draw [color=blue](c4) -- (c5);

    \draw (0,6)  node (c6){$13456^1|279^1|8^0$};
    \draw [color=red](c5) -- (c6);

    \draw (0,7)  node (c7){$13456^1|2789^2$};
    \draw [color=red] (c6) -- (c7);

    \draw (0,8)  node (c8){$123456789^4$};
    \draw [color=red](c7) -- (c8);

\end{tikzpicture}
 
                \caption{$\c(T,\sigma)$}
                 \label{fig:binarytreechain}
        \end{subfigure}
 \caption{Example of postorder (internal nodes) of the binary tree $T$ of Figure
\ref{fig:bicolorbinarytree} and the chain $\c(T,\sigma)$}
    \label{fig:binarytreepostorderandchain}

  \end{figure}

Not all maximal chains in $\MM(\Pi_n^w)$ can be described as $\c(T,\sigma)$. For some maximal chains
postordering of the internal
nodes is not enough to describe the process of merging the blocks.  We need a more flexible
construction in terms of linear
extensions (cf. \cite{Wachs1998}).  Let $v_1,\dots,v_n$ be the postorder listing of the internal
nodes of $T$.  A listing $v_{\tau(1)},v_{\tau(2)},...,v_{\tau(n-1)}$ of the internal nodes  such
that each node precedes its parent is said to be a {\em linear extension} of $T$.  We will say that
the permutation $\tau$ induces the linear extension.   In particular, the identity permutation
$\varepsilon$ induces postorder which is a linear extension. Denote by $\mathcal{E}(T)$  the set of 
permutations   that induce linear extensions of the internal nodes of  $T$.
So we extend the construction of $\c(T,\sigma)$ by letting $\c(T,\sigma,\tau)$  be the chain in
$\MM(\Pi_n^{w})$ 
whose rank $k$ weighted partition is obtained from the rank $k-1$ weighted partition by
$u(\col_{\tau(k)})$-merging the blocks
$\pi(L_{\tau(k)})$ and
$\pi(R_{\tau(k)})$, where $L_{i}\substack{\col_{i} \\ \wedge}
R_{i}$ 
is the subtree rooted at $v_{i}$. 
In particular,
$\c(T,\sigma)=\c(T,\sigma,\varepsilon)$. From each maximal chain we can easily construct a binary
tree and a linear extension that encodes the merging
instructions along the chain. So it follows that any maximal chain can be obtained in this form.

\begin{lemma}[{\cite[Lemma 5.1]{Wachs1998}}]\label{lemma:51}
 Let $T$ be a binary tree. Then
\begin{enumerate}
 \item $\varepsilon \in \mathcal{E}(T)$
 \item If $\tau \in \mathcal{E}(T)$ and $\tau(i)>\tau(i+1)$ then $\tau (i,i+1) \in \mathcal{E}(T)$,
\end{enumerate}
where $\tau (i,i+1)$ denotes the product of $\tau$ and the transposition $(i,i+1)$ in the symmetric group.
\end{lemma}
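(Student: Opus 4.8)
The plan is to identify $\mathcal{E}(T)$ with the set of linear extensions of the poset $Q_T$ whose underlying set is the collection of internal nodes of $T$ and in which $x \le_{Q_T} y$ means that $x$ is a descendant of $y$ in $T$. Under this identification a permutation $\tau$ lies in $\mathcal{E}(T)$ precisely when the listing $v_{\tau(1)}, \dots, v_{\tau(n-1)}$ respects $\le_{Q_T}$, and for a finite poset this is equivalent to respecting just the covering relations of $Q_T$, i.e.\ to the stated requirement that each internal node precede its parent. I would record at the outset the two bookkeeping facts used throughout: $\varepsilon$ corresponds to the postorder listing, and right multiplication of $\tau$ by the transposition $(i,i+1)$ interchanges the entries in \emph{positions} $i$ and $i+1$ of the induced listing while fixing all other positions.

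For part (1), I would induct on the number of internal nodes of $T$. If $T$ is a single leaf there is nothing to prove. Otherwise write $T$ as a root $r$ with left and right subtrees $T_1$ and $T_2$; postorder lists the internal nodes of $T_1$ in postorder, then those of $T_2$ in postorder, then $r$. By the inductive hypothesis the two blocks each respect the ancestor order internally, and $r$, being listed last, follows every other internal node and in particular its two children. Since every internal node of $T$ other than $r$ lies in $T_1$ or $T_2$ and has its parent either in the same subtree or equal to $r$, the full listing has each node preceding its parent, so $\varepsilon \in \mathcal{E}(T)$.

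For part (2), suppose $\tau \in \mathcal{E}(T)$ with $\tau(i) > \tau(i+1)$, and set $x = v_{\tau(i)}$ and $y = v_{\tau(i+1)}$, which occupy the consecutive positions $i$ and $i+1$ in the $\tau$-listing. The listing induced by $\tau(i,i+1)$ is obtained from the $\tau$-listing by interchanging $x$ and $y$ and nothing else, so it suffices to show that $x$ and $y$ are incomparable in $Q_T$: swapping two adjacent incomparable elements of a linear extension again yields a linear extension, and no other pair of positions is disturbed. Since the $\tau$-listing is a linear extension and $x$ precedes $y$ in it, $x$ is not a proper ancestor of $y$. To rule out the other possibility, note that postorder is a linear extension by part (1), so in postorder every node precedes all of its proper ancestors; as $v_1, v_2, \dots$ is precisely the postorder listing, if $y = v_{\tau(i+1)}$ were a proper ancestor of $x = v_{\tau(i)}$ we would have $\tau(i) < \tau(i+1)$, contradicting the hypothesis. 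Hence $x$ and $y$ are incomparable and $\tau(i,i+1) \in \mathcal{E}(T)$. I do not expect a real obstacle here; the proof is elementary, and the only points needing care are the bookkeeping conventions for how $(i,i+1)$ acts on listings and the appeal to transitivity that bridges the ``precedes its parent'' formulation and the full ancestor-order (linear-extension) formulation used in the incomparability argument.
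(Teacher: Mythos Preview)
Your proof is correct and follows essentially the same approach as the paper's: both use that postorder is a linear extension to deduce from $\tau(i)>\tau(i+1)$ that $v_{\tau(i+1)}$ is not an ancestor of $v_{\tau(i)}$, which is exactly the constraint needed for the swap to remain a linear extension. You additionally verify the other direction to establish full incomparability of $x$ and $y$; the paper omits this since only the one direction is required (the swap can only violate the order relation by placing an ancestor before a descendant, and the relevant potential ancestor is $y=v_{\tau(i+1)}$).
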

\begin{proof}
 Postorder $\varepsilon$ is a linear extension since in postorder we list children before 
parents. Now, 
$\tau(i)>\tau(i+1)$
 means that $v_{\tau(i+1)}$ is listed in postorder before  $v_{\tau(i)}$, and so $v_{\tau(i+1)}$ 
cannot be an
 ancestor of $v_{\tau(i)}$. This implies that $\tau (i,i+1)$ is also a linear extension. 
\end{proof}

The number of inversions of a permutation  $\tau \in \sym_n$ is defined by $\inv(\tau) := |\{ (i,j) :
1 \le i < j \le n, \,\, \tau(i) > \tau(j) \}|$ and the sign of $\tau$ is defined by
$\sgn(\tau) := (-1)^{\inv(\tau)}$.   
For $T \in \BT_{n,i}$, $\sigma \in \sym_n$, and $\tau \in 
\mathcal{E}(T)$, 
write $\bar c(T,\sigma,\tau)$ for  $\overline{c(T,\sigma,\tau)}:= c(T,\sigma,\tau) \setminus \{\hat 
0, [n]^i\}$ and  $\bar c(T,\sigma)$ for  $\overline{c(T,\sigma)}:= c(T,\sigma) \setminus \{\hat 0, 
[n]^i\}$.

\begin{lemma}[cf. {\cite[Lemma 5.2]{Wachs1998}}]\label{lemma:52}
 Let $T \in \BT_{n,i}$, $\sigma \in \sym_n$, $\tau \in \mathcal{E}(T)$. Then in 
$ \tilde H^{n-3}((\hat{0},[n]^i))$
\[
\bar{\c}(T,\sigma,\tau)=\sgn(\tau)\bar{\c}(T,\sigma).
\]
\end{lemma}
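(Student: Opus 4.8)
The plan is to reduce the general statement to the single adjacent-transposition case, via Lemma~\ref{lemma:51}(2), and then to handle that case by producing an explicit coboundary relation. First I would note that by Lemma~\ref{lemma:51}(1) the statement is trivial when $\tau = \varepsilon$, since then $\sgn(\tau) = 1$ and $\bar{\c}(T,\sigma,\varepsilon) = \bar{\c}(T,\sigma)$ by definition. For the general case, I would argue by induction on $\inv(\tau)$. If $\inv(\tau) > 0$, then $\tau$ has a descent, i.e.\ there is an index $i$ with $\tau(i) > \tau(i+1)$; set $\tau' = \tau(i,i+1)$, so $\inv(\tau') = \inv(\tau) - 1$ and $\sgn(\tau') = -\sgn(\tau)$. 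By Lemma~\ref{lemma:51}(2), $\tau' \in \mathcal{E}(T)$, so the inductive hypothesis gives $\bar{\c}(T,\sigma,\tau') = \sgn(\tau')\,\bar{\c}(T,\sigma) = -\sgn(\tau)\,\bar{\c}(T,\sigma)$ in $\tilde H^{n-3}((\hat 0,[n]^i))$. So it suffices to prove $\bar{\c}(T,\sigma,\tau) = -\bar{\c}(T,\sigma,\tau')$ in cohomology, i.e.\ the single swap case.

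For the swap case, the two chains $\c(T,\sigma,\tau)$ and $\c(T,\sigma,\tau')$ agree in every rank except rank $i$ (more precisely, the rank where they differ is the position at which the $\tau(i)$th and $\tau(i+1)$th internal nodes get processed in opposite orders). Because $v_{\tau(i)}$ and $v_{\tau(i+1)}$ are incomparable in $T$ (neither is an ancestor of the other, as observed in the proof of Lemma~\ref{lemma:51}), the two merges they encode are performed on disjoint collections of blocks; hence performing them in either order yields the same rank-$(k+1)$ weighted partition, and the two chains share all elements except the one at rank $k$. I would then invoke the top-cohomology presentation recalled in Section~\ref{section:genhom}: $\tilde H^{n-3}((\hat 0,[n]^i)) = \langle \MM \mid \text{coboundary relations}\rangle$, where the coboundary relations come from applying $\delta$ to chains of length one less than maximal. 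Apply $\delta$ to the chain obtained from $\c(T,\sigma,\tau)$ by deleting its rank-$k$ element: this codimension-one chain has exactly two maximal chains above it in the relevant interval of rank one — namely $\c(T,\sigma,\tau)$ and $\c(T,\sigma,\tau')$ — so its coboundary is (up to a global sign) $\pm(\c(T,\sigma,\tau) + \c(T,\sigma,\tau'))$ after restricting to the open interval, whence $\bar{\c}(T,\sigma,\tau) = -\bar{\c}(T,\sigma,\tau')$ in $\tilde H^{n-3}$. Here I would use that in the length-one interval $[\gamma,\delta]$ obtained by deleting the rank-$k$ element, there are precisely two intermediate elements (corresponding to doing the two disjoint merges in one order or the other), so the coboundary relation has exactly two terms.

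The main obstacle I anticipate is the bookkeeping in this last step: making precise exactly which two maximal chains lie above the deleted-element chain, verifying that they are exactly $\c(T,\sigma,\tau)$ and $\c(T,\sigma,\tau')$ and no others, and getting the signs in the coboundary map right so that the relation reads as a \emph{sum} (giving $\bar{\c}(T,\sigma,\tau) = -\bar{\c}(T,\sigma,\tau')$) rather than a difference. The key point that makes "exactly two" work is that the rank-$k$ element of a maximal chain in an interval $[\gamma,\delta]$ with $l([\gamma,\delta]) = 2$ in $\Pi_n^w$ is determined by which pair of blocks (among those present at rank $k-1$) one chooses to merge with which weight increment; since the two merges prescribed by $v_{\tau(i)}$ and $v_{\tau(i+1)}$ involve four distinct blocks and the final partition $\delta$ is fixed, only the two orders are possible. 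This is precisely the weighted analog of the argument in \cite[Lemma 5.2]{Wachs1998}, and I expect the weights to cause no extra trouble because each of the two merges carries its own fixed weight datum inherited from the colors $\col_{\tau(i)}, \col_{\tau(i+1)}$, independent of the order.
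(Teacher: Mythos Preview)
Your proposal is correct and follows essentially the same approach as the paper: induction on $\inv(\tau)$, reduction to a single adjacent transposition via Lemma~\ref{lemma:51}, the observation that incomparability of $v_{\tau(i)}$ and $v_{\tau(i+1)}$ forces the two chains to differ only at one rank, and then the Type~I coboundary relation (the length-$2$ interval with exactly two intermediate elements) to obtain $\bar{\c}(T,\sigma,\tau)+\bar{\c}(T,\sigma,\tau')=0$ in cohomology. Your anticipated ``obstacle'' about verifying exactly two extensions is handled in the paper just as you suggest, by noting that the four blocks $\pi(L_{\tau(i)}),\pi(R_{\tau(i)}),\pi(L_{\tau(i+1)}),\pi(R_{\tau(i+1)})$ are pairwise disjoint, so the interval is of Type~I.
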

\begin{proof}
We proceed by induction on $\inv(\tau)$. If $\inv(\tau)=0$ then $\tau=\varepsilon$ and the result 
is trivial. 
If $\inv(\tau)\ge 1$,
then there is some descent $\tau(i)>\tau(i+1)$ and by Lemma \ref{lemma:51}, $\tau (i,i+1) \in 
\mathcal{E}(T)$. Since
$\inv(\tau (i,i+1))=\inv(\tau)-1$, by induction we have,
\[
\bar{\c}(T,\sigma,\tau (i,i+1))=\sgn(\tau(i,i+1))\bar{\c}(T,\sigma)=-\sgn(\tau)\bar{\c}(T,\sigma).
\]
We have to show then that
\[
\bar{\c}(T,\sigma,\tau)=-\bar{\c}(T,\sigma,\tau (i,i+1)).
\]

By the proof of Lemma \ref{lemma:51} we know that the internal nodes $v_{\tau(i)}$ and $v_{\tau(i+1)}$ 
are unrelated in $T$
and so $\pi(L_{\tau(i)})$, $\pi(R_{\tau(i)})$, $\pi(L_{\tau(i+1)})$ and 
$\pi(R_{\tau(i+1)})$ are pairwise disjoint sets
which are all blocks of the rank $i-1$ partition in both $\bar{\c}(T,\sigma,\tau)$ and 
$\bar{\c}(T,\sigma,\tau(i,i+1))$. 
The blocks $\pi(L_{\tau(i)} \wedge R_{\tau(i)})$ and $\pi(L_{\tau(i+1)} \wedge 
R_{\tau(i+1)})$ are blocks 
of the rank $i+1$ partition in both $\bar{\c}(T,\sigma,\tau)$ and $\bar{\c}(T,\sigma,\tau(i,i+1))$. Hence 
the 
maximal chains 
$\bar{\c}(T,\sigma,\tau)$ and $\bar{\c}(T,\sigma,\tau(i,i+1))$ only differ at rank $i$. So if we 
denote by $c$ either
of these maximal chains with the rank $i$ partition removed we get, using equation 
(\ref{equation:cohomologyboundary}), a cohomology relation given by
\[
 \delta(c)=(-1)^{i}(\bar{\c}(T,\sigma,\tau) + \bar{\c}(T,\sigma,\tau(i,i+1)))
\]
as desired.
\end{proof}

We conclude that in cohomology any maximal chain $c \in\MM(\Pi_n^w)$ is cohomology equivalent to a
chain of the form $\c(T,\sigma)$,
more precisely, in cohomology $\bar c=\pm \bar\c(T,\sigma)$.

We will make further use of  the elementary cohomology relations that are obtained by setting the 
coboundary
(given in (\ref{equation:cohomologyboundary})) of a codimension 1 
chain in $(\hat{0},[n]^i)$ equal to 0.  There are three types of codimension 1 chains, which 
correspond to the three types of intervals of length 2 (see Figure~\ref{fig:cohomologyrelations}). 
 Indeed, if $\bar c$ is a codimension 1 chain of $(\hat{0},[n]^i)$ then $c =\bar c \cup \{\hat 0,
[n]^i\}$ 
is unrefinable except between one pair of adjacent elements $x < y$, where $[x,y]$ is an interval 
of length 2.  If the open interval $(x,y) = \{z_1,\dots,z_k\}$ then it follows from 
(\ref{equation:cohomologyboundary}) that
$$\delta(\bar c) = \pm (\bar c \cup \{z_1\} + \dots + \bar c \cup \{z_k\} ).$$ By setting 
$\delta(\bar c) = 0$ we obtain the elementary cohomology relation  
$$(\bar c \cup \{z_1\}) + \dots + (\bar c \cup \{z_k\} )= 0.$$

\begin{enumerate}
\item[{\bf Type I}:] Two pairs of distinct blocks  of $x$ are merged to get $y$.   The open interval
$(x,y)$ equals $\{z_1,z_2\}$, where $z_1$ is obtained by $u_1$-merging the first pair of blocks and $z_2$
is obtained by $u_2$-merging the second pair of blocks for some $u_1,u_2 \in \{0,1\}$.  Hence the Type~I elementary cohomology relation
is
$$\bar c \cup \{z_1\} = - ( \bar c \cup \{z_2\}).$$ 
\item[{\bf Type II}:] Three distinct blocks  of $x$ are $2u$-merged to get $y$, where $u \in \{0,1\}$.  The open interval
$(x,y)$ equals $\{z_1,z_2,z_3\}$, where each weighted partition  $z_i$ is obtained from $x$ by
$u$-merging two of the three blocks.    Hence the Type~II elementary cohomology relation is
$$(\bar c \cup \{z_1\}) + ( \bar c \cup \{z_2\}) + ( \bar c \cup \{z_3\} )= 0.$$ 
\item[{\bf Type III}:] Three distinct blocks  of $x$ are $1$-merged to get $y$. The open interval
$(x,y)$ equals $\{z_1,z_2,z_3,z_4,z_5,z_6\}$, where each weighted partition  $z_i$ is obtained from
$x$ by either $0$-merging or $1$-merging two of the three blocks.    Hence the Type~III elementary
cohomology relation is
$$(\bar c \cup \{z_1\} )+  (\bar c \cup \{z_2\} )+  (\bar c \cup \{z_3\} ) +  (\bar c \cup \{z_4\})
+  (\bar c \cup \{z_5\} )+  (\bar c \cup \{z_6\}) = 0.$$ 
 \end{enumerate}

\begin{figure}
\begin{subfigure}[b]{\textwidth}
\centering
\begin{tikzpicture}[line join=bevel,scale=0.8]
\begin{scope}
  \tikzstyle{every node}=[inner sep=0pt, scale=0.65, minimum width=4pt]
  \node (v1-2-3-4) at (0,0)  {$A^{a}| B^{b}| C^{c}| D^{d}$};
  \node (v12-34) at (0,4)  {$AB^ {a+b+u_1}|CD^ {c+d+u_2}$};
  \node (v12-3-4) at (-2,2)  {$AB^{a+b+u_1}| C^{c}| D^{d}$};
  \node (v1-2-34) at (2,2)  {$A^{a}| B^{b}| CD^{c+d+u_2}$};
  \draw [] (v12-34) -- (v12-3-4);
  \draw [] (v12-34) -- (v1-2-34);
  \draw [] (v1-2-34) -- (v1-2-3-4); 
  \draw [] (v12-3-4) -- (v1-2-3-4);
 \end{scope}
\end{tikzpicture}
\caption{Type I}
\label{fig:type1}
\end{subfigure}
\vspace{0.2in}

\begin{subfigure}[b]{\textwidth}
\centering
\begin{tikzpicture}[line join=bevel,scale=0.8]
\begin{scope}
  \tikzstyle{every node}=[inner sep=0pt, scale=0.65, minimum width=4pt]
  \node (v1-2-3) at (0,0)  {$A^{a}| B^{b}| C^{c}$};
  \node (v123) at (0,4)  {$ABC^ {a+b+c+2u}$};
  \node (v12-3) at (-3,2)  {$AB^{a+b+u}| C^{c}$};
  \node (v1-23) at (3,2)  {$A^{a}| BC^{b+c+u} $};
 \node (v13-2) at (0,2)  {$AC^{a+c+u}| B^{b}$};
  \draw [] (v123) -- (v12-3);
  \draw [] (v123) -- (v1-23);
 \draw [] (v123) -- (v13-2);
  \draw [] (v1-23) -- (v1-2-3); 
  \draw [] (v12-3) -- (v1-2-3);
  \draw [] (v13-2) -- (v1-2-3);
 \end{scope}
\end{tikzpicture}
\caption{Type II}
\label{fig:type2}
\end{subfigure}
\vspace{0.2in}

\begin{subfigure}[b]{\textwidth}
\centering
\begin{tikzpicture}[line join=bevel,scale=0.8]
\tikzstyle{every node}=[ inner sep=0pt, scale=0.65, minimum width=4pt]
  \node (v1-2-3) at (0,0)  {$A^{a}| B^{b}| C^{c}$};
  \node (v123) at (0,4)  {$ABC^{a+b+c+1}$};
  \node (v12a-3) at (-7,2)  {$AB^{a+b}| C^{c}$};
  \node (v13a-2) at (-4.5,2)  {$AC^{a+c}| B^{b}$};
  \node (v1-23a) at (-1.5,2)  {$A^{a}| BC^{b+c}$};   
  \node (v12b-3) at (1.5,2)  {$AB^{a+b+1}| C^{c}$};
  \node (v13b-2) at (4.5,2)  {$AC^{a+c+1}| B^{b}$};
  \node (v1-23b) at (7,2)  {$A^{a}|BC^ {b+c+1}$};

  \draw [] (v123)-- (v12a-3);
  \draw [] (v123) -- (v1-23a);
  \draw [] (v123)--(v12b-3);
  \draw [] (v123) --(v13a-2);
  \draw [] (v123) --(v13b-2);
  \draw [] (v123) --(v1-23b);
  \draw [] (v1-23a)-- (v1-2-3); 
  \draw [] (v12a-3) --(v1-2-3);
  \draw [] (v13a-2)--(v1-2-3);
  \draw [] (v1-23b) -- (v1-2-3);
  \draw [] (v12b-3) --(v1-2-3);
  \draw [] (v13b-2) --(v1-2-3);
\end{tikzpicture}
  \caption{Type III}
  \label{fig:type3}
\end{subfigure}

\caption[]{Intervals of length 2}\label{fig:cohomologyrelations}
\end{figure}
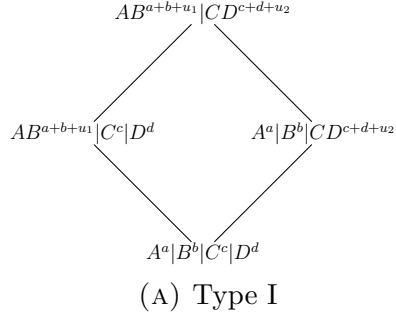
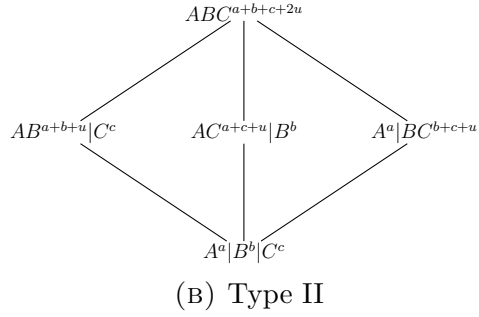
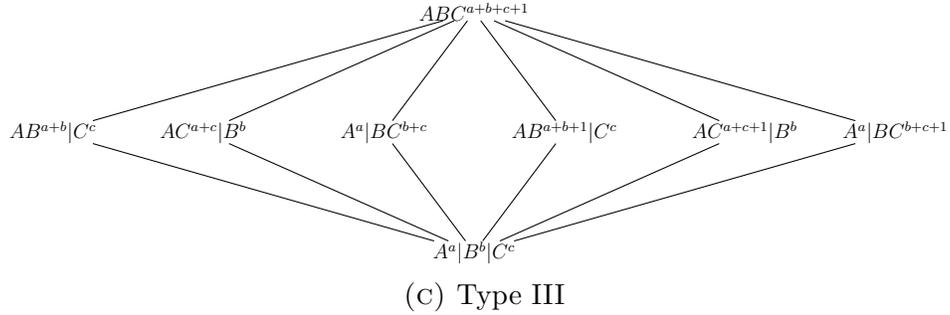

Let $I(\Upsilon)$ denote the set of internal nodes of the labeled bicolored binary tree $\Upsilon$.
Recall that $\Upsilon_1\substack{\col \\ \wedge}\Upsilon_2$ denotes the labeled bicolored binary
tree whose left subtree is $\Upsilon_1$, right subtree is $\Upsilon_2$ and root color  is  col,
where col $\in \{\mbox{blue, red}\}$. If $\Upsilon$ is a labeled bicolored binary tree then  
$\alpha(\Upsilon)\beta$ denotes a labeled bicolored binary tree with $\Upsilon$ as a subtree.  The
following result generalizes  \cite[Theorem 5.3]{Wachs1998}.

\begin{theorem}\label{proposition:binarybasishomology}
 The set $\{ \bar{\c}(T,\sigma) :  (T,\sigma) \in \BT_{n,i}\}$ is a generating set for $\tilde
H^{n-3}((\hat 0, [n]^i))$,
subject only to the relations
  \begin{equation}\bar{\c}(\alpha(\Upsilon_1\substack{\col \\ \wedge \\
\,}\Upsilon_2)\beta)=(-1)^{|I(\Upsilon_1)||I(\Upsilon_2)|}\bar{\c}(\alpha(\Upsilon_2\substack{\col
\\ \wedge \\ \,}\Upsilon_1)\beta),\label{relation:1h}\end{equation}

\begin{eqnarray}\label{relation:3h} \\ \nonumber \bar{\c}(\alpha(\Upsilon_1\substack{\col \\ \wedge
\\ \,}(\Upsilon_2\substack{\col \\ \wedge \\ \,\\ }\Upsilon_3))\beta)
&+& (-1)^{|I(\Upsilon_3)|}\bar{\c}(\alpha((\Upsilon_1\substack{\col \\ \wedge \\
\,}\Upsilon_2)\substack{\col \\ \wedge \\ \,}\Upsilon_3)\beta)\\ 
  &+& (-1)^{|I(\Upsilon_1)||I(\Upsilon_2)|}\bar{\c}(\alpha(\Upsilon_2\substack{\col \\ \wedge \\
\,}(\Upsilon_1\substack{\col \\ \wedge \\ \,}\Upsilon_3))\beta),
\nonumber  \\ &=& 0,\nonumber
\end{eqnarray}
 where $\col \in \{\blue, \red\}$, and
\begin{eqnarray} \label{relation:5h} 
 \bar{\c}(\alpha(\Upsilon_1\substack{\red \\ \wedge}(\Upsilon_2\substack{\blue \\
\wedge}\Upsilon_3))\beta)
 &+& \bar{\c}(\alpha(\Upsilon_1\substack{\blue \\ \wedge}(\Upsilon_2\substack{\red \\
\wedge}\Upsilon_3))\beta)\\ \nonumber
+ \,\, \,\, (-1)^{|I(\Upsilon_3)|} {\Big {(}} \bar{\c}(\alpha((\Upsilon_1\substack{\red \\
\wedge}\Upsilon_2)\substack{\blue \\ \wedge}\Upsilon_3)\beta)
&+&  \bar{\c}(\alpha((\Upsilon_1\substack{\blue \\ \wedge}\Upsilon_2)\substack{\red \\
\wedge}\Upsilon_3)\beta) \Big) \\ \nonumber
+\,\,\,\, (-1)^{|I(\Upsilon_1)||I(\Upsilon_2)|}\Big(\bar{\c}(\alpha(\Upsilon_2\substack{\red \\
\wedge}(\Upsilon_1\substack{\blue \\ \wedge}\Upsilon_3))\beta)
&+& \bar{\c}(\alpha(\Upsilon_2\substack{\blue \\ \wedge}(\Upsilon_1\substack{\red \\
\wedge}\Upsilon_3))\beta)\Big) \\ \nonumber
&=& 0.
\end{eqnarray}

\end{theorem}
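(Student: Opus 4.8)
The plan is to identify the abstract presentation with the concrete cohomology $\tilde H^{n-3}((\hat 0,[n]^i))$ using the results already developed. By Theorem~\ref{elth}, top cohomology is spanned by (the bars of) the maximal chains, and by the discussion preceding Theorem~\ref{proposition:binarybasishomology}, $\tilde H^{n-3}((\hat 0,[n]^i)) = \langle \MM(\Pi_n^w) \mid \text{coboundary relations}\rangle$ where the coboundary relations are generated by the Type~I, Type~II and Type~III elementary relations coming from the three kinds of length-$2$ intervals. So the proof has two halves: (a) show the $\bar\c(T,\sigma)$ for $(T,\sigma)\in\BT_{n,i}$ already span, and that every coboundary relation is a consequence of the listed relations (\ref{relation:1h})--(\ref{relation:5h}); (b) show conversely that (\ref{relation:1h})--(\ref{relation:5h}) actually hold in $\tilde H^{n-3}((\hat 0,[n]^i))$. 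Half (b) together with the spanning statement gives a surjection from the abstract presentation onto cohomology, and half (a) gives the reverse, so the two presentations agree.

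First I would establish spanning. Every maximal chain $c$ of $\Pi_n^w$ can be written as $\c(T,\sigma,\tau)$ for some binary tree $T$, labeling $\sigma$, and linear extension $\tau\in\E(T)$, by reading off the merging instructions along the chain as explained after Definition~\ref{definition:treechain}. By Lemma~\ref{lemma:52}, $\bar\c(T,\sigma,\tau)=\sgn(\tau)\bar\c(T,\sigma)$ in cohomology, so the set $\{\bar\c(T,\sigma):(T,\sigma)\in\BT_{n,i}\}$ spans $\tilde H^{n-3}((\hat 0,[n]^i))$. Next I would translate the three types of elementary coboundary relations into statements about tree-chains. Fix a codimension-$1$ chain $\bar c$; the unrefined interval $[x,y]$ has length $2$ and falls into one of the three types. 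In each case I would choose a binary tree $\Upsilon_1$, $\Upsilon_2$ (and $\Upsilon_3$) recording how the blocks below $x$ were built, and subtrees $\alpha(\cdot)\beta$ recording the rest of the chain, so that the three-, four-, or six-term coboundary relation becomes exactly relation (\ref{relation:1h}), (\ref{relation:3h}), or (\ref{relation:5h}) — after using Lemma~\ref{lemma:52} to reorder linear extensions, which is where the signs $(-1)^{|I(\Upsilon_1)||I(\Upsilon_2)|}$ and $(-1)^{|I(\Upsilon_3)|}$ come from (swapping two subtrees of sizes $|I(\Upsilon_1)|,|I(\Upsilon_2)|$ in the postorder merge-order contributes $(-1)^{|I(\Upsilon_1)||I(\Upsilon_2)|}$; reassociating past $\Upsilon_3$ contributes $(-1)^{|I(\Upsilon_3)|}$). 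Type~I with $u_1=u_2$, a red and a blue copy separately, gives (\ref{relation:1h}) and also, combined suitably, the monochromatic cases; Type~II (all three merges the same color) gives the monochromatic Jacobi relation (\ref{relation:3h}); Type~III gives the mixed-Jacobi relation (\ref{relation:5h}) after adding the $u=0$ and $u=1$ layers. This shows every coboundary relation lies in the span of (\ref{relation:1h})--(\ref{relation:5h}).

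For half (b) — that (\ref{relation:1h})--(\ref{relation:5h}) genuinely hold in $\tilde H^{n-3}((\hat 0,[n]^i))$ — I would run the same correspondence in reverse: given the trees $\alpha,\beta,\Upsilon_1,\Upsilon_2,\Upsilon_3$ appearing in a relation, build the common codimension-$1$ chain $\bar c$ by merging all the blocks except at the one spot where the differing merges occur, and verify that the corresponding length-$2$ interval is of exactly the right type, so that the elementary coboundary relation there, rewritten via Lemma~\ref{lemma:52}, is the asserted identity. One subtlety: the spot where the chains differ is the step merging $\pi(\Upsilon_1)$, $\pi(\Upsilon_2)$, $\pi(\Upsilon_3)$ into one block, and one must check that all of $\Upsilon_1,\Upsilon_2,\Upsilon_3$ are indeed fully merged and "available" as blocks at that rank — which is arranged by choosing a linear extension that builds $\Upsilon_1,\Upsilon_2,\Upsilon_3$ first (legitimate by Lemma~\ref{lemma:51}) — and that the total number of red nodes is $i$ so we are in the right interval; for (\ref{relation:5h}) the color bookkeeping must match in every term, which it does since each term has one red and one blue node at the two top internal positions shown.

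The main obstacle I expect is the bookkeeping in half (a): verifying that the listed relations suffice to generate \emph{all} coboundary relations, not just that they are among them. The delicate point is that a given codimension-$1$ chain $\bar c$ need not be of the form $\bar c(\Upsilon_0)$ with the "active" subtree at the very end of postorder — the length-$2$ interval where refinement is possible can sit anywhere along the chain. One must argue, using Lemma~\ref{lemma:51} and Lemma~\ref{lemma:52}, that $\bar c$ can always be brought (up to sign, and up to the already-available relation (\ref{relation:1h}) for reordering) into the shape $\alpha(\cdots)\beta$ with the active internal node(s) isolated, so that the coboundary relation there is literally one of (\ref{relation:1h})--(\ref{relation:5h}) applied inside $\alpha(\cdot)\beta$. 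This is essentially the content of the analogous step in \cite[Theorem 5.3]{Wachs1998}, and the argument there adapts once one keeps careful track of the two colors and the extra weight-$u\in\{0,1\}$ choice that produces the Type~III (six-term) intervals absent in the unweighted case.
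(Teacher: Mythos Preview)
Your argument for spanning and for ``half (b)'' (that relations (\ref{relation:1h})--(\ref{relation:5h}) hold in cohomology) is correct and matches the paper: Lemma~\ref{lemma:52} gives spanning, and each of the three relations is obtained by writing down the appropriate Type~I, II, or III elementary coboundary relation with suitable non-postorder linear extensions $\tau_1,\tau_2$, then applying Lemma~\ref{lemma:52} to produce the signs. This establishes the surjection $M/R\twoheadrightarrow \tilde H^{n-3}((\hat 0,[n]^i))$.

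Where you diverge from the paper is in ``half (a)'', showing the relations \emph{suffice}. The paper does \emph{not} attempt to show directly that every elementary coboundary relation is a consequence of (\ref{relation:1h})--(\ref{relation:5h}). Instead it gives a rank argument, deferred to Section~\ref{section:combbasis}: using only (\ref{relation:1h})--(\ref{relation:5h}) one straightens every generator $\bar\c(T,\sigma)$ into the span of $\{\bar\c(\Upsilon):\Upsilon\in\comb_{n,i}\}$; since $|\comb_n|=n^{n-1}=\rank\bigoplus_i\tilde H^{n-3}$, this forces $\rank(M/R)\le\rank\tilde H^{n-3}$, and combined with the surjection gives the isomorphism (see Remark~\ref{remark:finalstep}).

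Your direct route can be made to work, but the sketch has a real gap. The sentence ``the three-, four-, or six-term coboundary relation becomes exactly relation (\ref{relation:1h}), (\ref{relation:3h}), or (\ref{relation:5h})'' is not correct as stated. A general Type~I relation swaps two consecutive \emph{unrelated} merges anywhere along the chain; relation~(\ref{relation:1h}) swaps the two \emph{children} of a single node. These are different operations, and not every maximal chain is of the form $\bar\c(T,\sigma)$ for some ordered tree (not every linear extension of the internal nodes is the postorder of a reordering). What you actually need is to build a well-defined map from the free module on all maximal chains to $M/R$, sending $\bar\c(T,\sigma,\tau)\mapsto\sgn(\tau)\,\bar\c(T,\sigma)$, and then check that each Type~I, II, III relation lies in the kernel. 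Well-definedness (independence of the choice of ordered representative $(T,\sigma)$) must be verified using relation~(\ref{relation:1h}) and a sign computation for how $\tau$ changes under a child-swap; only then can you invoke the manoeuvres of Lemma~\ref{lemma:51}--\ref{lemma:52} \emph{inside} $M/R$ to reduce an arbitrary Type~II or Type~III relation to an instance of (\ref{relation:3h}) or (\ref{relation:5h}). Your ``main obstacle'' paragraph sees the difficulty but proposes to resolve it with Lemma~\ref{lemma:52}, which is a statement in $\tilde H^{n-3}$ rather than in the abstract $M/R$; that is precisely the circularity to be broken. The paper's rank argument sidesteps all of this bookkeeping.
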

\begin{proof}
It is an immediate consequence of Lemma \ref{lemma:52}  that $\{ \bar{\c}(\Upsilon)| \Upsilon \in
\BT_{n,i}\}$ generates $H^{n-3}((\hat{0},[n]^i))$.

Relation (\ref{relation:1h}):  This is also a consequence of Lemma~\ref{lemma:52}.  Indeed, first
note that 
$$c(\alpha(\Upsilon_2 \substack{\col \\ \wedge} \Upsilon_1)\beta) = c(\alpha(\Upsilon_1
\substack{\col \\ \wedge} \Upsilon_2)\beta,\tau),$$
where $\tau$ is the permutation that induces the linear extension that is just like postorder 
except that the internal nodes of 
$\Upsilon_2 $ are listed before those of $\Upsilon_1$. Since $\inv(\tau) = |I(\Upsilon_1)| |
I(\Upsilon_2)|$, relation (\ref{relation:1h}) follows from Lemma~\ref{lemma:52}.  (Note that since
Lemma~\ref{lemma:52} is a consequence only of the Type~I cohomology relation, one can view
(\ref{relation:1h}) as a consequence only of the Type~I cohomology relation.)

Relation (\ref{relation:3h}): Note that the following relation is a Type~II elementary
cohomology relation:
\begin{eqnarray*} 
 \bar{\c}(\alpha(\Upsilon_1\substack{\col \\ \wedge \\ \,}(\Upsilon_2\substack{\col \\ \wedge \\
\,\\ }
 \Upsilon_3))\beta) &+ &  \bar{\c}(\alpha((\Upsilon_1\substack{\col \\ \wedge \\
\,}\Upsilon_2)\substack{\col \\ \wedge \\ \,\\ }
 \Upsilon_3)\beta, \tau_1) 
 \\ &+ & \bar{\c}(\alpha(\Upsilon_2\substack{\col \\ \wedge \\ \,}(\Upsilon_1\substack{\col \\
\wedge \\ \,\\ }
 \Upsilon_3))\beta,\tau_2) = 0,
\end{eqnarray*}
where $\tau_1$ is the permutation that induces the linear extension that is like postorder but that
lists  the internal nodes of $\Upsilon_3$
before listing the root of $\Upsilon_1\wedge \Upsilon_2$, and  
$\tau_2$ is the permutation that induces the  linear extension that is like postorder but lists  the
internal nodes of $\Upsilon_1$
before listing the internal nodes of $\Upsilon_2$. So then $\inv(\tau_1)=|I(\Upsilon_3)|$ and
$\inv(\tau_2)=|I(\Upsilon_1)||I(\Upsilon_2)|$, and
using  Lemma \ref{lemma:52} we obtain relation (\ref{relation:3h}).

Relation (\ref{relation:5h}): Note that the following relation is a Type~III elementary cohomology
relation:

\begin{eqnarray*}
 \bar{\c}(\alpha(\Upsilon_1\substack{\red \\ \wedge}(\Upsilon_2\substack{\blue \\
\wedge}\Upsilon_3))\beta)
 &+& \bar{\c}(\alpha(\Upsilon_1\substack{\blue \\ \wedge}(\Upsilon_2\substack{\red \\
\wedge}\Upsilon_3))\beta)\\
+ \,\, \,\,\bar{\c}(\alpha((\Upsilon_1\substack{\red \\ \wedge}\Upsilon_2)\substack{\blue \\
\wedge}\Upsilon_3)\beta,\tau_1)
&+& \bar{\c}(\alpha((\Upsilon_1\substack{\blue \\ \wedge}\Upsilon_2)\substack{\red \\
\wedge}\Upsilon_3)\beta,\tau_1) \\
+\,\,\,\, \bar{\c}(\alpha(\Upsilon_2\substack{\red \\ \wedge}(\Upsilon_1\substack{\blue \\
\wedge}\Upsilon_3))\beta,\tau_2)
&+& \bar{\c}(\alpha(\Upsilon_2\substack{\blue \\ \wedge}(\Upsilon_1\substack{\red \\
\wedge}\Upsilon_3))\beta,\tau_2) \\
&=& 0,
\end{eqnarray*}
where as in the previous case, $\tau_1$ is the permutation that induces the linear extension that is
like postorder but that lists  the internal nodes of $\Upsilon_3$
before listing the root of $\Upsilon_1\wedge \Upsilon_2$, and  
$\tau_2$ is the permutation that induces the  linear extension that is like postorder but lists  the
internal nodes of $\Upsilon_1$
before listing the internal nodes of $\Upsilon_2$. So then $\inv(\tau_1)=|I(\Upsilon_3)|$ and
$\inv(\tau_2)=|I(\Upsilon_1)||I(\Upsilon_2)|$, and
using  Lemma \ref{lemma:52} we obtain relation (\ref{relation:5h}).

To complete the proof, we need to show that these relations generate all the cohomology relations.
In other words, we  need to show that $\tilde H^{n-3}((\hat 0, [n]^i)) = M/R$, where  $M$ is the
free ${\bf k}$-module with basis $\{ \bar{\c}(T,\sigma) : (T,\sigma) \in \BT_{n,i}\}$ and $R$ is the
submodule spanned by elements given in the relations (\ref{relation:1h}), (\ref{relation:3h}),
(\ref{relation:5h}).   We have already shown that $\rank \tilde H^{n-3}((\hat 0, [n]^i)) \le \rank M/R$. 
To complete the proof we need to establish the reverse inequality.
This is postponed to 
Section~\ref{section:combbasis}.  We will prove there, that a certain set $S$ of maximal chains
of $(\hat 0, [n]^i)$ whose cardinality equals $\rank \tilde H^{n-3}((\hat 0, [n]^i))$ generates $M/R$ by showing that
there is a straightening algorithm, which 
 using only the relations (\ref{relation:1h}),(\ref{relation:3h}),(\ref{relation:5h}), enables us
to express every 
 generator $\bar c(T,\sigma)$   as a linear combination of the elements of $S$. 
 It 
 follows that $\rank M/R \le |S| = \rank \tilde H^{n-3}((\hat 0, [n]^i))$. 
 See Remark~\ref{remark:finalstep}.
\end{proof}

\subsection{The isomorphism}\label{section:isomorphism}  In this section homology and cohomology are
taken over an arbitrary field ${\bf k}$, as is $\lie_2(n,i)$.

 The symmetric group $\sym_n$ acts  naturally  on $\Pi_n^w$.  Indeed,  let $\sigma\in \sym_n$ act on
the weighted blocks of $\pi\in \Pi_n^w$ by replacing each element $x$ of each weighted block of
$\pi$ with $\sigma(x)$.  Since the maximal elements  of $\Pi_n^w$ are fixed by each $\sigma\in
\sym_n$ and the order is preserved, each open interval $( \hat 0, [n]^i)$ is a $\sym_n$-poset. 
Hence by (\ref{eq:isocohom}) we have the $\sym_n$-module isomorphism,
$$\tilde H_{n-3}((\hat 0, [n]^i)) \simeq_{\sym_n} \tilde H^{n-3}((\hat 0, [n]^i)).$$  
The symmetric group $\sym_n$ also acts naturally on $\lie_2(n)$.  Indeed, let $\sigma \in \sym_n$
act by replacing letter $x$ of a bracketed permutation with $\sigma(x)$.  Since this action
preserves the number of brackets of each type, $\lie_2(n,i)$ is an $\sym_n$-module for each $i$.
In this section we obtain an explicit sign-twisted isomorphism between the $\sym_n$-modules $\tilde
H^{n-3}((\hat 0, [n]^i))$ and $\lie_2(n,i)$.

Define the {\em sign} of a binary tree $T$ recursively by 
$$\sgn(T) = \begin{cases} 1 &\mbox{ if } I(T) = \emptyset \\ (-1)^{|I(T_2)|}\sgn(T_1)\sgn(T_2) &
\mbox{ if } T = T_1 \land T_2 \end{cases}
$$
where $I(T)$ is the set of internal nodes of the binary tree $T$.
The sign of a bicolored   binary tree is defined to be the sign of the binary tree obtained by
removing the colors.

\begin{theorem}\label{theorem:liehomisomorphism}
 For each $i \in \{0,1,\dots,n-1\}$,  there is an $\sym_n$-module isomorphism
$\phi:\lie_2(n,i)\rightarrow \tilde H^{n-3}((\hat 0, [n]^i))\otimes \sgn_{n}$ determined by
\[
\phi([T,\sigma])=\sgn(\sigma)\sgn(T)\bar{\c}(T,\sigma),\]
for all $ (T,\sigma) \in \BT_{n,i}$.
\end{theorem}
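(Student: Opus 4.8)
The plan is to verify that the prescribed map $\phi$ is a well-defined $\sym_n$-module homomorphism, then check it is an isomorphism by a dimension count. The bulk of the work is well-definedness: $\lie_2(n,i)$ is presented as the span of bracketed permutations $[T,\sigma]$ (equivalently, elements of $\BT_{n,i}$) modulo the multilinear instances of antisymmetry \eqref{rln:lb1}, Jacobi \eqref{rln:lb2} (for each of the two brackets), and mixed Jacobi \eqref{rln:lb3}, while $\tilde H^{n-3}((\hat 0, [n]^i))$ is presented by Theorem~\ref{proposition:binarybasishomology} as the span of the $\bar{\c}(T,\sigma)$ modulo relations \eqref{relation:1h}, \eqref{relation:3h}, \eqref{relation:5h}. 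So the first and main step is to rewrite the defining relations of $\lie_2(n,i)$ in the tree language and match them, term by term, against the image relations — i.e.\ show that applying $[T,\sigma]\mapsto \sgn(\sigma)\sgn(T)\bar{\c}(T,\sigma)$ carries antisymmetry to \eqref{relation:1h}, each single-color Jacobi to \eqref{relation:3h}, and mixed Jacobi to \eqref{relation:5h}. This is a sign-bookkeeping computation: one must track how $\sgn(T)$ changes when a subtree $\Upsilon_1\substack{\col \\ \wedge}\Upsilon_2$ is replaced by $\Upsilon_2\substack{\col \\ \wedge}\Upsilon_1$ (picking up $(-1)^{|I(\Upsilon_1)||I(\Upsilon_2)|}$, by induction on the recursive definition of $\sgn$) or when a left-combed triple is reassociated (the $\sgn$ of the two shapes $\Upsilon_1\substack{\col \\ \wedge}(\Upsilon_2\substack{\col \\ \wedge}\Upsilon_3)$ and $(\Upsilon_1\substack{\col \\ \wedge}\Upsilon_2)\substack{\col \\ \wedge}\Upsilon_3$ differ by $(-1)^{|I(\Upsilon_3)|}$). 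Since $\sgn(\sigma)$ is unaffected by these local moves (the leaf labels are only permuted within the subtree, contributing nothing extra once the tree-sign is accounted for), and since the recipe of Theorem~\ref{proposition:binarybasishomology} already produces exactly relations with these signs, the match goes through. This establishes that $\phi$ is a well-defined linear map; $\sym_n$-equivariance is immediate because $\sigma'\in\sym_n$ acts on $[T,\sigma]$ by $[T,\sigma'\sigma]$ and on $\bar{\c}(T,\sigma)$ compatibly, while the sign twist accounts for $\sgn(\sigma'\sigma)=\sgn(\sigma')\sgn(\sigma)$.

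Second step: surjectivity. By the first assertion of Theorem~\ref{proposition:binarybasishomology}, $\{\bar{\c}(T,\sigma):(T,\sigma)\in\BT_{n,i}\}$ generates $\tilde H^{n-3}((\hat 0, [n]^i))$, and each such generator is $\pm\phi([T,\sigma])$ since $\sgn(\sigma)\sgn(T)=\pm1$; hence $\phi$ is onto.

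Third step: injectivity via dimensions. Both sides are finite-dimensional ${\bf k}$-vector spaces. By Corollary~\ref{proposition:dimensionhat} (equivalently \eqref{introprod}, a consequence of Theorem~\ref{theorem:ellabelingposet} and Proposition~\ref{proposition:muweightedsumtrees}), $\sum_i \dim \tilde H^{n-3}((\hat 0, [n]^i)) t^i = \prod_{j=1}^{n-1}((n-j)+jt)$; by the Liu--Dotsenko-Khoroshkin formula \eqref{introprodlie}, $\sum_i \dim \lie_2(n,i) t^i$ equals the same product. Comparing coefficients of $t^i$, the two spaces have equal dimension for every $i$, so the surjection $\phi$ is an isomorphism. (Alternatively, and without invoking \eqref{introprodlie} as a black box, one can first transfer Liu's basis through $\phi$ and use the pairing between homology and cohomology; but the dimension argument is the cleanest path given what is available in the excerpt.)

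The hard part is the first step — confirming that \emph{every} defining relation of $\lie_2(n,i)$ maps into the span of \eqref{relation:1h}, \eqref{relation:3h}, \eqref{relation:5h} with exactly the right signs. The subtlety is not conceptual but bookkeeping: the sign $\sgn(T)$ is asymmetric in left vs.\ right subtrees (only $|I(T_2)|$ appears), which is precisely what is needed to absorb the $(-1)^{|I(\Upsilon_2)|}$-type discrepancies between the symmetric-looking algebraic relations and the postorder-induced cohomology relations, but one must check the induction carefully. A clean way to organize it is to prove a lemma: for any labeled bicolored binary tree with a distinguished subtree occurrence, swapping the children of that subtree multiplies $\sgn$ by $(-1)^{|I(\text{left})||I(\text{right})|}$, and reassociating a triple at that occurrence multiplies $\sgn$ by $(-1)^{|I(\Upsilon_3)|}$; then each of the three relation families of Theorem~\ref{proposition:binarybasishomology} is seen to be the $\phi$-image of the corresponding relation family of $\lie_2(n,i)$ essentially by inspection.
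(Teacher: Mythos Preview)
Your overall plan is right, but the sign bookkeeping in the well-definedness step contains concrete errors that would make the verification fail.

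(i) Swapping the children of a subtree does \emph{not} multiply $\sgn(T)$ by $(-1)^{|I(\Upsilon_1)||I(\Upsilon_2)|}$; from the recursive definition one gets
\[
\sgn(a(T_1\wedge T_2)b)=(-1)^{|I(T_1)|+|I(T_2)|}\sgn(a(T_2\wedge T_1)b)
\]
(this is Lemma~\ref{lemma:signTformulas}(1)). (ii) The permutation sign $\sgn(\sigma)$ is \emph{not} unaffected: swapping $\Upsilon_1$ and $\Upsilon_2$ rearranges the leaf-label word from $uw_1w_2v$ to $uw_2w_1v$, contributing a factor $(-1)^{l(w_1)l(w_2)}=(-1)^{(|I(\Upsilon_1)|+1)(|I(\Upsilon_2)|+1)}$ (Lemma~\ref{lemma:signpermutationrelation}). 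It is only the \emph{product} of these two contributions that equals $(-1)^{|I(\Upsilon_1)||I(\Upsilon_2)|+1}$, which together with the minus sign from antisymmetry is exactly what matches \eqref{relation:1h}. Similarly, your reassociation sign is off by one: Lemma~\ref{lemma:signTformulas}(2) gives $\sgn(a((T_1\wedge T_2)\wedge T_3)b)=(-1)^{|I(T_3)|+1}\sgn(a(T_1\wedge(T_2\wedge T_3))b)$, while the leaf word is unchanged; combined with the minus sign in the Jacobi relation~\eqref{relation:3} this produces the $(-1)^{|I(\Upsilon_3)|}$ of \eqref{relation:3h}. With the signs as you stated them the relations would not line up.

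A secondary point: your injectivity step invokes \eqref{introprodlie}, which in the paper's logical flow is \emph{derived} from Theorem~\ref{theorem:liehomisomorphism} together with Corollary~\ref{proposition:dimensionhat}, so this is circular here. The paper avoids any dimension count: once the signs are computed correctly, $\phi$ (being diagonal with entries $\pm1$ on the free module spanned by $\BT_{n,i}$) carries each family of Lie relations \emph{onto} the corresponding family of cohomology relations. Since Proposition~\ref{proposition:binarybasislie} and Theorem~\ref{proposition:binarybasishomology} assert these are complete presentations of the two sides, the induced map of quotients is automatically an isomorphism---no appeal to $\dim\lie_2(n,i)$ is needed.
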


Before proving the theorem we make a few preliminary observations.  The following lemma, which is
implicit in \cite[Proof of Theorem 5.4]{Wachs1998}, is easy to prove. For a binary tree $T$, let 
$a(T)b$ denote  a binary tree with $T$ as a subtree. 
\begin{lemma}\label{lemma:signTformulas} For all binary trees $T_1,T_2, T_3$,
\begin{enumerate}
  \item $\sgn(a(T_1 \wedge T_2)b)=(-1)^{|I(T_1)|+|I(T_2)|}\sgn(a(T_2 \wedge
T_1)b)$
 \vspace{.1in}
  \item $\sgn(a((T_1 \wedge T_2)\wedge T_3)b)=(-1)^{|I(T_3)|+1}\sgn(a(T_1 \wedge
(T_2\wedge T_3))b)$
\vspace{.1in}  \item $\sgn(a(T_2 \wedge (T_1\wedge
T_3))b)\!=\!(-1)^{|I(T_1)|+|I(T_2)|}\sgn(a(T_1 \wedge (T_2\wedge T_3))b)$.
 \end{enumerate}
\end{lemma}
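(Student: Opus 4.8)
The plan is to first reduce each of the three identities to the case where the displayed subtree constitutes the entire ambient tree, i.e.\ where $a(\cdot)b$ is trivial so that $a(T)b=T$, and then to verify them by unwinding the recursive definition of $\sgn$.

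For the reduction, note that in each of (1), (2), (3) the two binary trees being compared differ only inside a single distinguished subtree, and that this subtree has the \emph{same} number of internal nodes in both trees: for (1) it is $|I(T_1)|+|I(T_2)|+1$ on each side, and for (2) and (3) it is $|I(T_1)|+|I(T_2)|+|I(T_3)|+2$ on each side. I would prove the following auxiliary claim by structural induction on the ambient tree $a(\cdot)b$: if $S$ and $S'$ are binary trees with $|I(S)|=|I(S')|$, then $\sgn(a(S)b)\,\sgn(S)=\sgn(a(S')b)\,\sgn(S')$. The base case $a(S)b=S$ is trivial. For the inductive step write $a(S)b=P\wedge Q$, with the distinguished subtree lying in $P$ or in $Q$. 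If it lies in $P$, then $Q$ is literally unchanged, and since $\sgn(P\wedge Q)=(-1)^{|I(Q)|}\sgn(P)\sgn(Q)$ the claim follows from the inductive hypothesis for $P$; if it lies in $Q$, then $|I(Q)|$ and $P$ are unchanged (here $|I(Q)|$ stays fixed precisely because $|I(S)|=|I(S')|$), and the same formula reduces the claim to the inductive hypothesis for $Q$. Applying this claim with $S,S'$ the two distinguished subtrees appearing in (1), (2), (3) reduces each identity to the case of trivial ambient context.

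In that case everything is a direct computation from the definition $\sgn(U\wedge V)=(-1)^{|I(V)|}\sgn(U)\sgn(V)$. One gets $\sgn(T_1\wedge T_2)=(-1)^{|I(T_2)|}\sgn(T_1)\sgn(T_2)$, whence (1) follows by swapping the roles of $T_1$ and $T_2$; and unwinding the definition twice gives
\[
\sgn((T_1\wedge T_2)\wedge T_3)=(-1)^{|I(T_2)|+|I(T_3)|}\sgn(T_1)\sgn(T_2)\sgn(T_3),
\]
\[
\sgn(T_1\wedge(T_2\wedge T_3))=(-1)^{|I(T_2)|+1}\sgn(T_1)\sgn(T_2)\sgn(T_3),
\]
\[
\sgn(T_2\wedge(T_1\wedge T_3))=(-1)^{|I(T_1)|+1}\sgn(T_1)\sgn(T_2)\sgn(T_3),
\]
so comparing the first line with the second yields (2) and comparing the third line with the second yields (3). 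The only step that needs a little care is the bookkeeping in the reduction, where one must check that the ambient context contributes exactly the same sign to both sides of each identity; this is precisely where one uses that all three moves preserve the number of internal nodes of the modified subtree (so that every sibling size met while descending to the hole is unchanged). The rest is mechanical.
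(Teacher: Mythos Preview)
Your proof is correct. The paper does not actually give a proof of this lemma; it simply states that the result ``is implicit in \cite[Proof of Theorem 5.4]{Wachs1998}'' and ``is easy to prove.'' Your argument makes this explicit in exactly the way one would expect: the reduction to trivial ambient context via the auxiliary claim (that $\sgn(a(S)b)/\sgn(S)$ depends only on $|I(S)|$ and the context) is clean and correctly uses the recursion, and the three local computations are accurate, including the bookkeeping $|I(T_2\wedge T_3)|=|I(T_2)|+|I(T_3)|+1$ needed for (2) and (3).
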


For a word $w$ denote by $l(w)$ the \emph{length} or number of letters in $w$. We also have the
following easy relation, which we state as a lemma.
\begin{lemma}\label{lemma:signpermutationrelation}
 For $uw_1w_2v \in \sym_n$, where $u\,,w_1\,,w_2\,,v$ are subwords, 
\[
\sgn(uw_1w_2v)=(-1)^{l(w_1)l(w_2)}\sgn(uw_2w_1v).
\]
\end{lemma}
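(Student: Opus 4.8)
The plan is to work directly from the definition $\sgn(\tau)=(-1)^{\inv(\tau)}$ given just before the lemma, and to track exactly how the inversion count changes when the two consecutive blocks are interchanged. Write $\tau=uw_1w_2v$ and $\tau'=uw_2w_1v$; these are two elements of $\sym_n$ whose one-line notations occupy the positions $1,\dots,n$ split into four consecutive ranges, the first of length $l(u)$, then $l(w_1)$, then $l(w_2)$, then $l(v)$. I would partition the set of position-pairs $\{(i,j):i<j\}$ according to which of the four ranges $i$ and $j$ fall into, and argue class by class that the inversion contribution is the same for $\tau$ and $\tau'$, except for the one class consisting of a position in the $w_1$-range and a position in the $w_2$-range.

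For that I would observe two things. First, within any single range the relative order of the entries is identical in $\tau$ and $\tau'$, so pairs of positions inside one range contribute equally. Second, for pairs of positions lying in two \emph{distinct} ranges other than the pair $\{w_1\text{-range},w_2\text{-range}\}$ (for instance one in the $u$-range and one in the $w_2$-range, or one in the $w_1$-range and one in the $v$-range, etc.), the left–right order of the two ranges is the same in $\tau$ as in $\tau'$, and the entries themselves are the same, so again the inversion status of each such pair is unchanged. The only class that changes is the one with one entry $a$ coming from $w_1$ and one entry $b$ coming from $w_2$: in $\tau$ the entry $a$ appears to the left of $b$, in $\tau'$ the entry $b$ appears to the left of $a$. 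Since $a\ne b$, exactly one of the ordered pairs $(a,b)$, $(b,a)$ is a descent, so each of these $l(w_1)\,l(w_2)$ pairs is an inversion in precisely one of $\tau,\tau'$. Hence $\inv(\tau)+\inv(\tau')\equiv l(w_1)\,l(w_2)\pmod 2$, equivalently $\inv(uw_1w_2v)\equiv \inv(uw_2w_1v)+l(w_1)\,l(w_2)\pmod 2$, which upon applying $(-1)^{(\cdot)}$ gives exactly $\sgn(uw_1w_2v)=(-1)^{l(w_1)l(w_2)}\sgn(uw_2w_1v)$.

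There is no real obstacle here; the whole content is the bookkeeping of which position-pairs are affected by the block swap, and the observation that all of them flip simultaneously. If one prefers to avoid the case analysis entirely, an equivalent route is induction on $l(w_2)$: when $l(w_2)=1$, moving the single letter of $w_2$ leftward past the $l(w_1)$ letters of $w_1$ realizes $\tau'$ from $\tau$ as a composite of $l(w_1)$ adjacent transpositions, each of which multiplies $\sgn$ by $-1$ (since $\sgn$ is multiplicative and an adjacent transposition has sign $-1$), giving the base case; the inductive step peels the letters of $w_2$ off one at a time, and the exponents add, yielding $(-1)^{l(w_1)l(w_2)}$ overall. Either argument is short, so I would include just the inversion-counting one.
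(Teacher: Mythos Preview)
Your argument is correct. Note, however, that the paper does not actually give a proof of this lemma: it simply states it as an ``easy relation'' and leaves it to the reader. So there is nothing to compare against; your inversion-counting argument (and the alternative induction on $l(w_2)$) are both standard and perfectly adequate ways to fill in this omitted verification.
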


 We give a presentation of $\lie_2(n,i)$ in terms of labeled bicolored binary trees and a  slightly
modified, but clearly equivalent, form of the relations  (\ref{rln:lb1}), (\ref{rln:lb2}) and
(\ref{rln:lb3}) in the following proposition.

\begin{proposition}\label{proposition:binarybasislie}
The set $\{ [T,\sigma] : (T,\sigma) \in \BT_{n,i}\}$ is a generating set for $\lie_2(n,i)$,
subject only to the relations

  \begin{equation}[\alpha(\Upsilon_1\substack{\col \\ \wedge \\ \,}\Upsilon_2)\beta]= -
[\alpha(\Upsilon_2\substack{\col \\ \wedge \\ \,}\Upsilon_1)\beta]  \label{relation:1}\end{equation}

\begin{eqnarray}\label{relation:3}  [\alpha(\Upsilon_1\substack{\col \\ \wedge \\
\,}(\Upsilon_2\substack{\col \\ \wedge \\ \,\\ }\Upsilon_3))\beta]
&-& [\alpha((\Upsilon_1\substack{\col \\ \wedge \\ \,}\Upsilon_2)\substack{\col \\ \wedge \\
\,}\Upsilon_3)\beta]\\ 
&-&  [\alpha(\Upsilon_2\substack{\col \\ \wedge \\ \,}(\Upsilon_1\substack{\col \\ \wedge \\
\,}\Upsilon_3))\beta]
\nonumber  \\ &=& 0\nonumber
\end{eqnarray}

\begin{eqnarray} \label{relation:5} 
[\alpha(\Upsilon_1\substack{\red \\ \wedge}(\Upsilon_2\substack{\blue \\ \wedge}\Upsilon_3))\beta]
 &+& [\alpha(\Upsilon_1\substack{\blue \\ \wedge}(\Upsilon_2\substack{\red \\
\wedge}\Upsilon_3))\beta]\\ \nonumber
- \,\, \,\, [\alpha((\Upsilon_1\substack{\red \\ \wedge}\Upsilon_2)\substack{\blue \\
\wedge}\Upsilon_3)\beta]
&-&  [\alpha((\Upsilon_1\substack{\blue \\ \wedge}\Upsilon_2)\substack{\red \\
\wedge}\Upsilon_3)\beta] \\ \nonumber
-\,\,\,\, [\alpha(\Upsilon_2\substack{\red \\ \wedge}(\Upsilon_1\substack{\blue \\
\wedge}\Upsilon_3))\beta]
&-& [\alpha(\Upsilon_2\substack{\blue \\ \wedge}(\Upsilon_1\substack{\red \\
\wedge}\Upsilon_3))\beta]\\ \nonumber
&=& 0.
\end{eqnarray}

\end{proposition}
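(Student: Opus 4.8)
\textbf{Proof proposal for Proposition~\ref{proposition:binarybasislie}.}

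The plan is to translate the standard multilinear presentation of $\lie_2(n)$, given by relations (\ref{rln:lb1}), (\ref{rln:lb2}) and (\ref{rln:lb3}), into the tree language, exactly in parallel with the way Wachs \cite{Wachs1998} handles the single-bracket case. First I would observe that the map $(T,\sigma)\mapsto [T,\sigma]$ from $\BT_{n,i}$ onto bracketed permutations with $i$ red and $n-1-i$ blue brackets is surjective by construction (\ref{definition:treebracket}), so $\{[T,\sigma] : (T,\sigma)\in\BT_{n,i}\}$ generates $\lie_2(n,i)$. The content is that the relations (\ref{relation:1}), (\ref{relation:3}), (\ref{relation:5}) generate the full relation module.

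Next I would verify that each of (\ref{relation:1}), (\ref{relation:3}), (\ref{relation:5}) is a consequence of (\ref{rln:lb1})--(\ref{rln:lb3}). Relation (\ref{relation:1}) is just antisymmetry (\ref{rln:lb1}) applied at the internal node joining $\Upsilon_1$ and $\Upsilon_2$, read through (\ref{definition:treebracket}). Relation (\ref{relation:3}) is the ``left Jacobi'' form of (\ref{rln:lb2}): writing $x=[\Upsilon_1,\cdot]$, $y=[\Upsilon_2,\cdot]$, $z=[\Upsilon_3,\cdot]$ with all three brackets of color $\col$, the Jacobi identity rearranged (together with one application of antisymmetry) gives $[x,[y,z]] = [[x,y],z] + [y,[x,z]]$, which is exactly (\ref{relation:3}) after applying the operator $\alpha(\,\cdot\,)\beta$ to the subtree. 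Relation (\ref{relation:5}) is the mixed Jacobi (\ref{rln:lb3}) put in the same rearranged ``left'' form: it is the sum of the red-outer/blue-inner and blue-outer/red-inner versions of (\ref{relation:3})-type rearrangements, and a short check shows the six terms of (\ref{relation:5}) are precisely what (\ref{rln:lb3}) produces after the analogous rearrangement. Since any $\FF$-linear combination of (\ref{rln:lb1})--(\ref{rln:lb3}) lies in the span of (\ref{relation:1})--(\ref{relation:5}), the quotient of the free module on $\BT_{n,i}$ by (\ref{relation:1})--(\ref{relation:5}) surjects onto $\lie_2(n,i)$.

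For the reverse direction I would show that conversely every instance of (\ref{rln:lb1})--(\ref{rln:lb3}) — as it appears inside an arbitrary bracketed permutation — lies in the span of (\ref{relation:1})--(\ref{relation:5}). The three Lie relations are each applied to some subword which, read as a tree, is of the form $\Upsilon_1\,\substack{\col\\\wedge}\,\Upsilon_2$ (for (\ref{rln:lb1})), or $\Upsilon_1\,\substack{\col\\\wedge}\,(\Upsilon_2\,\substack{\col\\\wedge}\,\Upsilon_3)$ up to reassociation and reordering (for (\ref{rln:lb2})), or the corresponding mixed-color shape (for (\ref{rln:lb3})). Using (\ref{relation:1}) freely to permute the two children of any node, one reduces the generic ``inner three-element'' Jacobi instance to the canonical left-combed shape appearing in (\ref{relation:3}) and (\ref{relation:5}); thus the defining relations, in their most general placement, are already in the span of (\ref{relation:1})--(\ref{relation:5}). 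Hence the relation modules coincide and the presentation is as claimed.

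The main obstacle is the bookkeeping in the last step: one must be careful that ``reassociation plus reordering by (\ref{relation:1})'' genuinely lets one move an arbitrary application of (\ref{rln:lb2}) or (\ref{rln:lb3}) — which in principle acts on three subtrees in any relative position inside a larger tree — to the normalized form in (\ref{relation:3})/(\ref{relation:5}), without secretly invoking a relation one has not yet justified. This is exactly the kind of verification carried out in \cite{Wachs1998} for the single bracket, and the two-bracket case follows the same pattern; the only genuinely new ingredient is checking that the mixed Jacobi (\ref{rln:lb3}), rearranged into left-combed form, produces precisely the six-term relation (\ref{relation:5}), which is a direct (if slightly tedious) computation with (\ref{rln:lb1}) and (\ref{rln:lb3}).
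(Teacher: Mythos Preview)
Your approach is correct and is essentially what the paper intends: the paper does not give a proof of this proposition at all, but introduces it with the remark that the relations (\ref{relation:1}), (\ref{relation:3}), (\ref{relation:5}) are ``a slightly modified, but clearly equivalent, form of the relations (\ref{rln:lb1}), (\ref{rln:lb2}) and (\ref{rln:lb3}).'' You have supplied the verification that the paper leaves to the reader.

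One small comment: the ``main obstacle'' you flag is less delicate than you suggest. The point is that $\lie_2(n,i)$ is, by definition, the quotient of the free ${\bf k}$-module on bracketed permutations by the span of all expressions obtained by applying (\ref{rln:lb1}), (\ref{rln:lb2}) or (\ref{rln:lb3}) to a subexpression of a bracketed permutation (this is how the ideal intersects the multilinear component). Under the bijection (\ref{definition:treebracket}), a subexpression is exactly a subtree, and the context $\alpha(\,\cdot\,)\beta$ already encodes the ambient tree. So every instance of (\ref{rln:lb1}) is literally an instance of (\ref{relation:1}); every instance of (\ref{rln:lb2}) becomes (\ref{relation:3}) after two applications of (\ref{relation:1}) (your computation $[x,[y,z]]+[z,[x,y]]+[y,[z,x]]=[x,[y,z]]-[[x,y],z]-[y,[x,z]]$); and every instance of (\ref{rln:lb3}) becomes (\ref{relation:5}) after the analogous antisymmetry moves. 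No ``reassociation'' of the ambient tree is ever required, so there is no risk of circularity.
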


\begin{proof}[Proof of Theorem~\ref{theorem:liehomisomorphism}]
 The map $\phi$ maps generators onto generators  and clearly respects the $\sym_n$ action. We will
prove
 that the map $\phi$ extends to a well defined  homomorphism by showing that the relations in
$\lie_2(n,i)$ of
 the generators in Proposition~\ref{proposition:binarybasislie} map onto to the relations in Theorem
\ref{proposition:binarybasishomology}. Since by Theorem~\ref{proposition:binarybasishomology} (whose
proof will be completed in Section~\ref{section:combbasis}),  the relations in
 Theorem~\ref{proposition:binarybasishomology} span all the relations in cohomology, this also
 implies that the map is an isomorphism.
 
For each $\Upsilon_j$ in the relations of Proposition~\ref{proposition:binarybasislie}, let $w_j$ and $T_j$ be such that $\Upsilon_j =(T_j,w_j)$.
Let $u$ be the  permutation
labeling the portion $a$ of the tree  corresponding to the preamble $\alpha$, and let $v$ be the  permutation
labeling the portion $b$ of the tree  corresponding to the tail $\beta$.
Using Lemmas \ref{lemma:signTformulas} and \ref{lemma:signpermutationrelation} we have
the following.

{\em Relation  (\ref{relation:1})}:  Let $\wedge \in \{\substack{\blue\\ \wedge}, \substack{\red\\
\wedge}\}$.  Then
\begin{align*}
 \phi([\alpha(\Upsilon_2 \wedge \Upsilon_1)\beta])&
    =\sgn(uw_2w_1v)\sgn(a(T_2 \wedge T_1)b)\bar{c}(\alpha(\Upsilon_2 \wedge
\Upsilon_1)\beta)\\ \\
    &=\sgn(uw_1w_2v)\sgn(a(T_1 \wedge T_2)b)\\
    &\hspace{.3in}\cdot (-1)^{l(w_1)l(w_2)+|I(T_1)|+|I(T_2)|}
\bar{c}(\alpha(\Upsilon_2 \wedge \Upsilon_1)\beta)\\ \\
     &=\sgn(uw_1w_2v) \sgn(a(T_1 \wedge T_2)b)\\ 
    &\hspace{.3in}\cdot
(-1)^{(|I(T_1)|+1)(|I(T_2)|+1)+|I(T_1)|+|I(T_2)|}\bar{c}
(\alpha(\Upsilon_2 \wedge \Upsilon_1)\beta)\\ \\
    &=\sgn(uw_1w_2v)\sgn(a(T_1 \wedge T_2)b)\\
    &\hspace{.3in}\cdot  (-1)^{|I(T_1)||I(T_2)|+1}\bar{c}(\alpha(\Upsilon_2 \wedge
\Upsilon_1)\beta).
  \end{align*}
  
 Hence,
 \begin{align*}
 \phi([\alpha(\Upsilon_1\wedge \Upsilon_2)\beta])&+\phi([\alpha(\Upsilon_2\wedge \Upsilon_1)\beta])
=\sgn(uw_1w_2v)\sgn(a(T_1 \wedge T_2)b)\\
&\cdot{\big (}\bar{c}(\alpha(\Upsilon_1 \wedge \Upsilon_2)\beta) -(-1)^{|I(\Upsilon_1)|
|I(\Upsilon_2)|}\bar{c}(\alpha(\Upsilon_2 \wedge \Upsilon_1)\beta){\big)}.
\end{align*}
We conclude that relation (\ref{relation:1}) maps to relation (\ref{relation:1h}).

{\em Relations (\ref{relation:3}) and (\ref{relation:5})}:  Let $\wedge, \tilde\wedge \in
\{\substack{\blue\\ \wedge}, \substack{\red\\ \wedge}\}$.  Then
  \begin{align*}
  \phi([\alpha((\Upsilon_1 \wedge \Upsilon_2)\tilde \wedge
\Upsilon_3)\beta])&=\sgn(uw_1w_2w_3v)\sgn(a((T_1 \wedge T_2)\wedge
T_3)b)\\
    &\hspace{.3in} \cdot \bar{c}(\alpha((\Upsilon_1 \wedge \Upsilon_2)\tilde\wedge \Upsilon_3)\beta)
    \\ \\
    &=\sgn(uw_1w_2w_3v) \sgn(a(T_1 \wedge (T_2 \wedge T_3))b)\\
    &\hspace{.3in}\cdot (-1)^{|I(T_3)|+1} \bar{c}(\alpha((\Upsilon_1 \wedge \Upsilon_2)\tilde\wedge
\Upsilon_3)\beta).
    \\ \\
  \phi([\alpha(\Upsilon_2 \wedge (\Upsilon_1 \tilde \wedge
\Upsilon_3))\beta])&=\sgn(uw_2w_1w_3v)\sgn(a(T_2 \wedge (T_1 \wedge
T_3))b)\\
    &\hspace{.3in}\cdot\ \bar{c}(\alpha(\Upsilon_2 \wedge (\Upsilon_1 \tilde \wedge
\Upsilon_3))\beta)
    \\ \\
    &=\sgn(uw_1w_2w_3v)\sgn(a(T_1 \wedge (T_2 \wedge T_3))b) \\
    &\hspace{.3in}\cdot (-1)^{l(w_1)l(w_2)+|I(T_1)|+|I(T_2)|}\bar{c}(\alpha(\Upsilon_2 \wedge
(\Upsilon_1 \tilde\wedge \Upsilon_3))\beta)
    \\ \\
    &=\sgn(uw_1w_2w_3v)  \sgn(a(T_1 \wedge (T_2 \wedge T_3))b)\\
    &\hspace{.3in}\cdot (-1)^{|I(T_1)||I(T_2)|+1}\bar{c}(\alpha(\Upsilon_2 \wedge (\Upsilon_1 \tilde
\wedge \Upsilon_3))\beta).\\
  \end{align*}
Hence,
  \begin{align} \label{mapeq} 
 \phi([\alpha(\Upsilon_1\wedge&(\Upsilon_2 \tilde \wedge \Upsilon_3))\beta])
-\phi([\alpha((\Upsilon_1\wedge \Upsilon_2)\tilde \wedge \Upsilon_3)\beta])
-\phi([\alpha(\Upsilon_2\wedge(\Upsilon_1\tilde \wedge \Upsilon_3))\beta]) 
\\   \nonumber =
&\sgn(uw_1w_2w_3v)\sgn(a(T_1 \wedge (T_2 \wedge T_3))b)
\\ \nonumber
& \cdot\Big(\bar{\c}(\alpha(\Upsilon_1 \wedge(\Upsilon_2 \tilde\wedge \Upsilon_3))\beta)
+(-1)^{|I(T_3)|}\bar{\c}(\alpha((\Upsilon_1 \wedge \Upsilon_2) \tilde\wedge \Upsilon_3)\beta)  \\
\nonumber
&\hspace{.2in}+ (-1)^{|I(\Upsilon_1)||I(\Upsilon_2)|}\bar{\c}(\alpha(\Upsilon_2 \wedge (\Upsilon_1 \tilde \wedge
\Upsilon_3))\beta)\Big).
\end{align}
By setting $\wedge = \tilde \wedge$ in (\ref{mapeq}) we conclude that relation (\ref{relation:3}) 
maps
to relation (\ref{relation:3h}).
By adding (\ref{mapeq}) with  $\wedge = \substack{\blue \\ \wedge \\ \,}$ and $\tilde \wedge = \substack{\red \\ \wedge \\ \,}$ to (\ref{mapeq}) with
$\wedge = \substack{\red \\ \wedge \\ \,}$ and $\tilde \wedge = \substack{\blue \\ \wedge \\ \,}$, we are also able to conclude that  relation
(\ref{relation:5}) maps to relation (\ref{relation:5h}).
\end{proof}
  
Theorem~\ref{theorem:liehomisomorphism} and Corollary~\ref{proposition:dimensionhat} yield the
following result.  
\begin{corollary}[Liu \cite{Liu2010}, Dotsenko and Khoroshkin \cite{DotsenkoKhoroshkin2010}] For  $0
\le i \le n-1$,  $\dim \lie_2(n,i) = |\mathcal T_{n,i}|$.
\end{corollary}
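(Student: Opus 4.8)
The plan is to read this off directly from Theorem~\ref{theorem:liehomisomorphism} together with the rank computation in Corollary~\ref{proposition:dimensionhat}; no new argument specific to $\lie_2(n,i)$ is required. Recall that in this section $\lie_2(n,i)$ and the (co)homology of $(\hat 0, [n]^i)$ are all taken over a field ${\bf k}$.

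First I would observe that the $\sym_n$-module isomorphism $\phi\colon \lie_2(n,i) \to \tilde H^{n-3}((\hat 0, [n]^i)) \otimes \sgn_n$ of Theorem~\ref{theorem:liehomisomorphism} is in particular an isomorphism of ${\bf k}$-vector spaces, and tensoring with the one-dimensional module $\sgn_n$ does not change dimension, so
\[
\dim_{\bf k} \lie_2(n,i) = \dim_{\bf k} \tilde H^{n-3}((\hat 0, [n]^i)).
\]
Next, since cohomology is taken over a field, the natural pairing between $\tilde H^{n-3}((\hat 0, [n]^i))$ and $\tilde H_{n-3}((\hat 0, [n]^i))$ is perfect --- this is exactly the identification $\tilde H^{n-3}((\hat 0, [n]^i)) \simeq \tilde H_{n-3}((\hat 0, [n]^i))$ recorded just before Corollary~\ref{proposition:dimensionhat} --- so $\dim_{\bf k}\tilde H^{n-3}((\hat 0, [n]^i)) = \rank \tilde H_{n-3}((\hat 0, [n]^i))$. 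Finally I would invoke the middle line of Corollary~\ref{proposition:dimensionhat}, namely $\rank \tilde H_{n-3}((\hat 0, [n]^i)) = |\T_{n,i}|$. Chaining the three equalities yields $\dim \lie_2(n,i) = |\T_{n,i}|$, and summing over $i$ recovers the Liu--Dotsenko--Khoroshkin formula (\ref{introprodlie}) via Drake's identity (\ref{equation:drake}).

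It is worth tracing where the real content lies, since the deduction above is formal. The equality $\rank \tilde H_{n-3}((\hat 0, [n]^i)) = |\T_{n,i}|$ comes from Theorem~\ref{theorem:homotopy}(2), hence ultimately from EL-shellability (Theorem~\ref{theorem:ellabelingposet} via Theorem~\ref{elth}) together with the M\"obius computation Corollary~\ref{corollary:mutrees}, which in turn rests on Proposition~\ref{proposition:muweightedsumtrees} and Drake's formula; while $\phi$ rests on Theorem~\ref{proposition:binarybasishomology}, whose proof is only completed by the straightening argument in Section~\ref{section:combbasis}. So the one point to be careful about is that the corollary is not strictly self-contained at the present stage of the paper, becoming rigorous only once that section is in place. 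That dependence, rather than anything in the present deduction, is the main obstacle.
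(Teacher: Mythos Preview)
Your proposal is correct and matches the paper's own argument exactly: the paper simply states that the corollary follows from Theorem~\ref{theorem:liehomisomorphism} and Corollary~\ref{proposition:dimensionhat}, which is precisely the chain of equalities you spell out. Your additional remark about the forward dependence on the straightening argument in Section~\ref{section:combbasis} (needed to complete Theorem~\ref{proposition:binarybasishomology} and hence Theorem~\ref{theorem:liehomisomorphism}) is accurate and is acknowledged in the paper via Remark~\ref{remark:finalstep}.
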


\section{Combinatorial bases}\label{section:combinatorialbases}

Throughout this section we take homology and cohomology over the integers or over an arbitrary field ${\bf
k}$. 
 We present three bases for cohomology and one for homology of each interval $(\hat 0, [n]^i)$.  Two of the three cohomology bases
correspond to known bases for $\lie_2(n,i)$ and one appears to be new.  The homology basis also
appears to be new. We also present two new bases for cohomology of the full weighted partition poset $\Pi_n^w \setminus \{\hat 0\}$.

We say that a labeled  binary tree  is {\it normalized} if  the leftmost leaf of each subtree has
the smallest label in the subtree.  Using cohomology relation (\ref{relation:1h}), we see that
$\tilde H^{n-3}((\hat 0, [n]^i))$ is generated by maximal chains of the form $\bar c(T,\sigma)$,
where  $(T,\sigma)$ is a normalized binary tree in  $\BT_{n,i}$.   The first two bases for $\tilde
H^{n-3}((\hat 0, [n]^i))$  presented here are  subsets of this set of maximal chains.

\subsection{A bicolored comb basis for $\tilde H^{n-3}((\hat 0, [n]^i))$ and
$\lie_2(n,i)$}\label{section:combbasis}

In this section we present a generalization  of a classical basis for $\tilde H^{n-3}(\overline{\Pi}_n)$
and a corresponding generalization of a classical basis  for $\mathcal Lie(n)$; the classical bases
are sometimes referred to as  comb bases (see \cite[Section 4]{Wachs1998}).   The generalization for
$\lie_2(n)$ is due to Bershtein, Dotsenko and Khoroshkin (see \cite{BershteinDotsenkoKhoroshkin2007}
and \cite[Theorem 4]{DotsenkoKhoroshkin2007}).

A {\it bicolored comb} is a  normalized bicolored binary tree that satisfies the following coloring
restriction:  for each internal node $x$ whose right child $y$ is not a leaf, $x$ is colored red and
 $y$ is colored blue.  Let $\comb_n$ be the set of bicolored combs in $\BT_n$ and let $\comb_{n,i}$
be the set of bicolored combs in $\BT_{n,i}$
The set of bicolored combs for $n=3$ is depicted in Figure \ref{fig:bicoloredcombs}.

\begin{figure}[h]
        \centering
         \begin{tikzpicture}[thick,scale=0.6]
\begin{scope}[xshift=0,yshift=-1cm]

\tikzstyle{every node}=[fill, draw,inner sep=2pt,scale=0.8]
    \draw [color=blue] (1,1)  node (i1){};
    \draw [color=blue] (2,2)  node (i2){};

\tikzstyle{every node}=[inner sep=1pt, minimum width=14pt,scale=0.7]

    \draw (0,0)  node (m){$1$};
    \draw (2,0)  node (l1){$2$};
    \draw (3,1)  node (l2){$3$};

    \draw (m) --  (i1) ;
    \draw (i1) --  (l1) ;
    \draw (i1) --  (i2) ;
    \draw (i2) --  (l2) ;
\end{scope}

\begin{scope}[xshift=3.5cm,yshift=0]
\tikzstyle{every node}=[fill, draw,inner sep=2pt,scale=0.8]
    \draw [circle,color=red] (1,1)  node (i1){};
    \draw [color=blue] (2,2)  node (i2){};

\tikzstyle{every node}=[inner sep=1pt, minimum width=14pt,scale=0.7]

    \draw (0,0)  node (m){$1$};
    \draw (2,0)  node (l1){$2$};
    \draw (3,1)  node (l2){$3$};

    \draw (m) --  (i1) ;
    \draw (i1) --  (l1) ;
    \draw (i1) --  (i2) ;
    \draw (i2) --  (l2) ;
\end{scope}

\begin{scope}[xshift=7cm,yshift=0]
\tikzstyle{every node}=[fill, draw,inner sep=2pt,scale=0.8]
    \draw [color=blue] (1,1)  node (i1){};
    \draw [circle,color=red] (2,2)  node (i2){};

\tikzstyle{every node}=[inner sep=1pt, minimum width=14pt,scale=0.7]

    \draw (0,0)  node (m){$1$};
    \draw (2,0)  node (l1){$2$};
    \draw (3,1)  node (l2){$3$};

    \draw (m) --  (i1) ;
    \draw (i1) --  (l1) ;
    \draw (i1) --  (i2) ;
    \draw (i2) --  (l2) ;
\end{scope}
\begin{scope}[xshift=10.5cm,yshift=-1cm]
\tikzstyle{every node}=[fill, draw,inner sep=2pt,scale=0.8]
    \draw [circle,color=red] (1,1)  node (i1){};
    \draw [circle,color=red] (2,2)  node (i2){};

\tikzstyle{every node}=[inner sep=1pt, minimum width=14pt,scale=0.7]

    \draw (0,0)  node (m){$1$};
    \draw (2,0)  node (l1){$2$};
    \draw (3,1)  node (l2){$3$};

    \draw (m) --  (i1) ;
    \draw (i1) --  (l1) ;
    \draw (i1) --  (i2) ;
    \draw (i2) --  (l2) ;
\end{scope}

\begin{scope}[xshift=0,yshift=-5cm]

\tikzstyle{every node}=[fill, draw,inner sep=2pt,scale=0.8]
    \draw [color=blue] (1,1)  node (i1){};
    \draw [color=blue] (2,2)  node (i2){};

\tikzstyle{every node}=[inner sep=1pt, minimum width=14pt,scale=0.7]

    \draw (0,0)  node (m){$1$};
    \draw (2,0)  node (l1){$3$};
    \draw (3,1)  node (l2){$2$};

    \draw (m) --  (i1) ;
    \draw (i1) --  (l1) ;
    \draw (i1) --  (i2) ;
    \draw (i2) --  (l2) ;
\end{scope}

\begin{scope}[xshift=3.5cm,yshift=-6cm]
\tikzstyle{every node}=[fill, draw,inner sep=2pt,scale=0.8]
    \draw [circle,color=red] (1,1)  node (i1){};
    \draw [color=blue] (2,2)  node (i2){};

\tikzstyle{every node}=[inner sep=1pt, minimum width=14pt,scale=0.7]

    \draw (0,0)  node (m){$1$};
    \draw (2,0)  node (l1){$3$};
    \draw (3,1)  node (l2){$2$};

    \draw (m) --  (i1) ;
    \draw (i1) --  (l1) ;
    \draw (i1) --  (i2) ;
    \draw (i2) --  (l2) ;
\end{scope}

\begin{scope}[xshift=7cm,yshift=-6cm]
\tikzstyle{every node}=[fill, draw,inner sep=2pt,scale=0.8]
    \draw [color=blue] (1,1)  node (i1){};
    \draw [circle,color=red] (2,2)  node (i2){};

\tikzstyle{every node}=[inner sep=1pt, minimum width=14pt,scale=0.7]

    \draw (0,0)  node (m){$1$};
    \draw (2,0)  node (l1){$3$};
    \draw (3,1)  node (l2){$2$};

    \draw (m) --  (i1) ;
    \draw (i1) --  (l1) ;
    \draw (i1) --  (i2) ;
    \draw (i2) --  (l2) ;
\end{scope}
\begin{scope}[xshift=10.5cm,yshift=-5cm]
\tikzstyle{every node}=[fill, draw,inner sep=2pt,scale=0.8]
    \draw [circle,color=red] (1,1)  node (i1){};
    \draw [circle,color=red] (2,2)  node (i2){};

\tikzstyle{every node}=[inner sep=1pt, minimum width=14pt,scale=0.7]

    \draw (0,0)  node (m){$1$};
    \draw (2,0)  node (l1){$3$};
    \draw (3,1)  node (l2){$2$};
    
    \draw (m) --  (i1) ;
    \draw (i1) --  (l1) ;
    \draw (i1) --  (i2) ;
    \draw (i2) --  (l2) ;
\end{scope}

\begin{scope}[xshift=4cm,yshift=-3cm]
\tikzstyle{every node}=[fill, draw,inner sep=2pt,scale=0.8]
    \draw [color=blue] (3,1)  node (i1){};
    \draw [circle,color=red] (2,2)  node (i2){};

\tikzstyle{every node}=[inner sep=1pt, minimum width=14pt,scale=0.7]

    \draw (2,0)  node (m){$2$};
    \draw (4,0)  node (l1){$3$};
    \draw (1,1)  node (l2){$1$};
    
    \draw (m) --  (i1) ;
    \draw (i1) --  (l1) ;
    \draw (i1) --  (i2) ;
    \draw (i2) --  (l2) ;
\end{scope}
\end{tikzpicture}
 \caption{Set of bicolored combs for $n=3$}
\label{fig:bicoloredcombs}
  \end{figure}
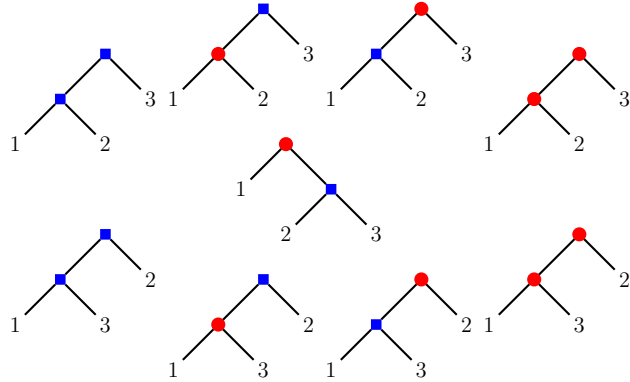

We refer to such trees as bicolored {\em combs} because the monochromatic ones are the usual left
combs in the sense of  \cite{Wachs1998};
indeed  if a  bicolored comb  is
monochromatic then the right child of every internal node is a leaf and   the left-most leaf label
of the tree is the smallest label.  
In this case we get the usual left comb, which has the form,
\begin{center} \begin{tikzpicture}[thick,scale=0.6]

\tikzstyle{every node}=[draw,scale=0.5]

    \draw [circle,radius=20pt,color=black] (1,1)  node (i1){};
    \draw [circle,color=black] (2,2)  node (i2){};
    \draw [circle,color=black] (4,4)  node (i4){};
    \draw [circle,color=black] (3,3)  node (i3){};

\tikzstyle{every node}=[inner sep=1pt, minimum width=14pt,scale=0.7]

    \draw (0,0)  node (m){$m$};
    \draw (2,0)  node (l1){$l_2$};
    \draw (3,1)  node (l2){$l_3$};
    \draw (4,2)  node (l3){$l_{k-1}$};
    \draw (5,3)  node (l4){$l_k$};

    \draw (m) --  (i1) ;
    \draw (i1) --  (l1) ;
    \draw (i2) --  (l2) ;
    \draw (i3) --  (l3) ;
    \draw (i4) --  (l4) ;
    \draw (i1) --  (i2) ;
    \draw [dashed, thick] (i2) --  (i3) ;
    \draw [dotted, thick] (2.6,1.6) --  (3.3,2.3) ;

    \draw (i3) --  (i4) ;

\end{tikzpicture}\end{center}
where $m$ and all  the $l_j$ are leaves, and $m$ is the smallest label usually 1.

Bershtein, Dotsenko and Khoroshkin 
{\cite[Lemma 5.2] {BershteinDotsenkoKhoroshkin2007}}  present the results that
$\{[T,\sigma] : (T,\sigma) \in \comb_{n,i}\}$ spans $\lie_2(n,i)$ and $|\comb_n| = n^{n-1}$. Since
it was already known from \cite{Liu2010} and \cite{DotsenkoKhoroshkin2007} that $\dim \lie_2(n) =
n^{n-1}$, they conclude that $\{[T,\sigma] : (T,\sigma) \in \comb_{n,i}\}$ is a basis for
$\lie_2(n,i)$.   For the sake of completeness we give a detailed proof that the corresponding set
$\{\bar c(T,\sigma) : (T,\sigma) \in \comb_{n,i}\}$ spans cohomology and we give an 
alternative proof of $|\comb_n|= n^{n-1}$.

\begin{proposition}\label{proposition:combbasisspans}
The set $\{\bar c(T,\sigma) : (T,\sigma) \in \comb_{n,i} \}$ spans $\tilde H^{n-3}((\hat 0,
[n]^i))$, for  all $0 \le i \le n-1$.
\end{proposition}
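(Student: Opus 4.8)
The plan is to construct a \emph{straightening algorithm}, in the spirit of \cite[Section~4]{Wachs1998}, that rewrites an arbitrary generator $\bar c(T,\sigma)$ as a $\ZZ$-linear (resp.\ ${\bf k}$-linear) combination of the $\bar c(T',\sigma')$ with $(T',\sigma')\in\comb_{n,i}$, using only the relations (\ref{relation:1h}), (\ref{relation:3h}) and (\ref{relation:5h}) of Theorem~\ref{proposition:binarybasishomology}. All three relations preserve the number of red internal nodes, so the algorithm stays inside $\BT_{n,i}$; and since they hold in $\tilde H^{n-3}((\hat 0,[n]^i))$, the resulting expressions are valid in cohomology, which proves the proposition. (It also furnishes the straightening step left open in the proof of Theorem~\ref{proposition:binarybasishomology}.) The first move is always to \emph{normalize}: applying relation (\ref{relation:1h}) from the bottom up, at each internal node swap the two subtrees whenever the left one does not contain the smallest leaf label below that node. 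This shows every $\bar c(T,\sigma)$ equals, up to sign, $\bar c$ of a normalized labeled bicolored binary tree, so it suffices to straighten normalized trees.

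Call an internal node $v$ of a normalized tree \emph{bad} if its right child $y$ is internal but the comb coloring condition fails there, i.e.\ it is not the case that $v$ is red and $y$ is blue. A normalized tree lies in $\comb_{n,i}$ exactly when it has no bad node, so such trees are the terminal states of the algorithm. To a normalized tree $T$ attach the pair $s(T)=(r(T),\beta(T))$, ordered lexicographically, where $r(T)=\sum_{v\in I(T)}\bigl(\text{number of leaves in the right subtree of }v\bigr)$ and $\beta(T)$ is a weighting of the bad nodes of $T$ under which both reducing the number of bad nodes and moving a bad node to strictly greater depth strictly decrease $\beta$ (for instance $\beta(T)=\sum_{v\ \mathrm{bad}}2^{-\mathrm{depth}(v)}$). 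For fixed $n$ only finitely many labeled bicolored binary trees occur, so $s$ takes values in a well-ordered set.

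The reduction step is the heart of the matter. Let $T$ be normalized but not a comb, and choose a bad node $v$ of maximal depth; write the subtree at $v$ as $\Upsilon_1\wedge(\Upsilon_2\wedge\Upsilon_3)$, where $\Upsilon_2\wedge\Upsilon_3$ is the right child and, since $T$ is normalized, the smallest leaf label below $v$ lies in $\Upsilon_1$. If $v$ and its right child have the same color, solving relation (\ref{relation:3h}) for the term corresponding to $T$ expresses $\bar c(T,\sigma)$ as a $\pm$-combination of $\bar c$'s of the two trees obtained from $T$ by replacing the subtree at $v$ with $(\Upsilon_1\wedge\Upsilon_2)\wedge\Upsilon_3$, respectively $\Upsilon_2\wedge(\Upsilon_1\wedge\Upsilon_3)$, keeping all colors. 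A short computation with the recursion $r(S_1\wedge S_2)=|S_2|+r(S_1)+r(S_2)$ shows $r$ drops by the number of leaves of $\Upsilon_3$ for the first tree; for the second, re-normalizing by one application of (\ref{relation:1h}) turns it into the tree with $(\Upsilon_1\wedge\Upsilon_3)\wedge\Upsilon_2$ at $v$ (because the minimum lies in $\Upsilon_1$), and again $r$ drops by the number of leaves of $\Upsilon_3$; so both resulting normalized trees have strictly smaller $r$. If instead $v$ is blue and its right child red --- the only \emph{bad} mixed-color pattern, since red-over-blue is exactly the comb-allowed one --- then solving relation (\ref{relation:5h}) for this term writes $\bar c(T,\sigma)$ as a $\pm$-combination of $\bar c$'s of: (a) the tree with the same shape and labels as $T$ but with $v$ recolored red and its right child recolored blue; and (b) four trees whose subtree at $v$ has shape $(\Upsilon_1\wedge\Upsilon_2)\wedge\Upsilon_3$ or $\Upsilon_2\wedge(\Upsilon_1\wedge\Upsilon_3)$. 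As in the previous case, each tree in (b) has $r$ strictly smaller than $T$ after at most one re-normalization. The tree in (a) has the same value of $r$, but $v$ is no longer bad; the only other color change is at $v$'s right child $\Upsilon_2\wedge\Upsilon_3$, one level below $v$, which --- because $v$ was chosen to be a deepest bad node --- was not bad in $T$. Hence in passing to (a) at most one bad node is moved from $\mathrm{depth}(v)$ to $\mathrm{depth}(v)+1$ and none appears at depth $\le\mathrm{depth}(v)$, so $\beta$ strictly decreases while $r$ is unchanged. In every case each resulting tree $T'$ satisfies $s(T')<s(T)$, so by well-foundedness the algorithm terminates in a linear combination of elements of $\comb_{n,i}$, which then span $\tilde H^{n-3}((\hat 0,[n]^i))$.

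The step I expect to be the main obstacle is this last verification for relation (\ref{relation:5h}): tracking its six terms, the compensating normalizations and their effect on $r$, and --- the genuinely delicate point --- checking that the single term (a) of unchanged shape does not merely trade one bad node for another at the same depth; this is precisely what forces the choice of $v$ as a \emph{deepest} bad node. The accompanying sign bookkeeping coming from (\ref{relation:1h}) and from the tree moves in (\ref{relation:3h}) and (\ref{relation:5h}) is routine and immaterial for the spanning statement, which needs only control of which $\bar c(T',\sigma')$ survive the reductions.
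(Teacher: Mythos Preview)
Your overall strategy coincides with the paper's: straighten using relations (\ref{relation:1h}), (\ref{relation:3h}), (\ref{relation:5h}) under a lexicographic measure whose first coordinate is the right-heaviness of the tree. Your $r(T)$ counts leaves rather than internal nodes in right subtrees, but these differ by the constant $n-1$, so that part is equivalent to the paper's weight $w(T)$.

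The gap is in your second coordinate $\beta$. Your claim that after move~(a) ``none appears at depth $\le\mathrm{depth}(v)$'' is false: recoloring $v$ from blue to red can make the \emph{parent} $p$ of $v$ bad. If $v$ is the right child of $p$ and $p$ is red, then in $T$ the pair $(p,v)$ is red-over-blue, hence $p$ is not bad; after the swap it is red-over-red, hence $p$ is bad at depth $\mathrm{depth}(v)-1$. A concrete instance: take the normalized tree on leaves $1,2,3,4$ with shape $1\substack{\red\\\wedge}\bigl(2\substack{\blue\\\wedge}(3\substack{\red\\\wedge}4)\bigr)$. The unique bad node is the blue one at depth~$1$, so it is your deepest bad node. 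Move~(a) produces $1\substack{\red\\\wedge}\bigl(2\substack{\red\\\wedge}(3\substack{\blue\\\wedge}4)\bigr)$, whose unique bad node is the root at depth~$0$. Thus $\beta$ jumps from $2^{-1}$ to $2^{0}$ while $r$ is unchanged, so $s$ increases and your termination argument fails.

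The paper's fix is to take as second coordinate the number of \emph{inversions}: pairs $(x,y)$ with $x$ blue and $y$ a red right-descendant of $x$. Under move~(a) this drops by exactly one, independently of what sits above $v$, because the lost inversion $(v,\text{right child of }v)$ is not compensated: any gained pair $(z,v)$ with $z$ a blue ancestor along right edges is offset by the lost pair $(z,\text{right child of }v)$. Replacing your $\beta$ by this inversion count (and dropping the ``deepest bad node'' requirement, which is then unnecessary) repairs the argument.
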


\begin{proof}  We prove this result by ``straightening'' via the relations in  
Theorem~\ref{proposition:binarybasishomology}.
Define the {\em weight} $w(T)$ of a bicolored binary tree $T$ to be 
$$w(T) = \sum_{x \in I(T)} r(x),$$
where $I(T)$ is the set of internal nodes  of  $T$ and $r(x)$ is the number of internal nodes in the right subtree of $x$.  We say a node $y$ of $T$ is  a {\em right descendent} of a node $x$ if $y$ can be reached from $x$ along a path of right edges.  Next we define   an  {\em inversion} of $T$ to be a pair of internal nodes $(x,y)$ of $T$ such that $x$ is blue and $y$ is a red right descendent of $x$.  Let $\inv(T)$ be the number of inversions of $T$.  The {\em weight-inversion pair} of $T$ is $(w(T),\inv(T))$.  We order these pairs lexicographically, that is we say  $(w(T),\inv(T))< (w(T^\prime),\inv(T^\prime))$ if either $w(T) < w(T^\prime)$ or $w(T)=w(T^\prime)$ and $\inv(T) < \inv(T^\prime)$.  For $\Upsilon = (T,\sigma) \in \mathcal BT_n$, let $w(\Upsilon) := w(T)$ and $\inv(\Upsilon) := \inv(T)$.  Also define the weight-inversion pair of $\Upsilon$ to be that of $T$.

It follows from (\ref{relation:1h}) that the chains of the form $\bar c(\Upsilon)$, where $\Upsilon$
is a normalized bicolored binary tree in $\BT_{n,i}$, span $\tilde H^{n-3}((\hat 0, [n]^i))$. 
Hence to prove the result we need only  show that if $\Upsilon \in \BT_{n,i}$ is a normalized
bicolored binary tree that is not a bicolored comb then $\bar c(\Upsilon)$ can be expressed as a
linear combination of chains of the form $\bar c(\Upsilon^\prime)$, where $\Upsilon^\prime$ is a
normalized bicolored binary tree in $\BT_{n,i}$ such that  $(w(\Upsilon^\prime),\inv(\Upsilon^\prime)) < (w(\Upsilon),\inv(\Upsilon))$ in lexicographic order.
It will then follow by induction on the weight-inversion pair that $\bar c(\Upsilon)$ can be expressed as a
linear combination of chains of the form $\bar c(\Upsilon^\prime)$, where $\Upsilon^\prime \in
\comb_{n,i}$.

Now let $\Upsilon \in \BT_{n,i}$ be a normalized  bicolored binary tree that is not a bicolored
comb.  Then $\Upsilon$ must have a subtree of one of the following   forms: $\Upsilon_1
\substack{\blue \\ \wedge} (\Upsilon_2 \substack{\blue \\ \wedge} \Upsilon_3)$, $\Upsilon_1
\substack{\red \\ \wedge} (\Upsilon_2 \substack{\red \\ \wedge} \Upsilon_3)$, or $\Upsilon_1
\substack{\blue \\ \wedge} (\Upsilon_2 \substack{\red \\ \wedge} \Upsilon_3)$. We will show that in
all three cases $\bar c(\Upsilon)$ can be expressed as a linear combination of chains with a smaller
weight-inversion pair.

\noindent{\bf Case 1:} $\Upsilon$ has a subtree of the form $\Upsilon_1 \substack{\blue \\ \wedge}
(\Upsilon_2 \substack{\blue \\ \wedge} \Upsilon_3)$.
We can therefore express $\Upsilon$ as $\alpha(\Upsilon_1 \substack{\blue \\ \wedge} (\Upsilon_2
\substack{\blue \\ \wedge} \Upsilon_3))\beta$. Using  relation (\ref{relation:3h}) (and  relation
(\ref{relation:1h})) we have that
   
\[ \bar c( \alpha(\Upsilon_1 \substack{\blue \\ \wedge} (\Upsilon_2 \substack{\blue \\ \wedge}
\Upsilon_3))\beta )= 
\pm\bar c(\alpha((\Upsilon_1 \substack{\blue \\ \wedge} \Upsilon_2) \substack{\blue \\ \wedge}
\Upsilon_3)\beta)
\pm \bar c(\alpha((\Upsilon_1 \substack{\blue \\ \wedge} \Upsilon_3)\substack{\blue \\ \wedge}
\Upsilon_2)\beta).
\]
(The  signs in the relations of Theorem~\ref{proposition:binarybasishomology} are not relevant here and have therefore been suppressed.)

It is easy to see that 
\begin{eqnarray*} w(\alpha((\Upsilon_1 \substack{\blue \\ \wedge} \Upsilon_2) \substack{\blue \\
\wedge} \Upsilon_3)\beta) &=& w( \alpha((\Upsilon_1 \substack{\blue \\ \wedge}
\Upsilon_3)\substack{\blue \\ \wedge} \Upsilon_2)\beta) \\ &=& w(\alpha(\Upsilon_1 \substack{\blue
\\ \wedge} (\Upsilon_2 \substack{\blue \\ \wedge} \Upsilon_3))\beta) -|I(\Upsilon_3)|-1.\end{eqnarray*}
Hence $\bar c(\Upsilon)$ can be expressed as a linear combination of chains of smaller weight,  and therefore of smaller weight-inversion pair.

\noindent{\bf Case 2:} $\Upsilon$ has a subtree of the form $\Upsilon_1 \substack{\red \\ \wedge}
(\Upsilon_2 \substack{\red \\ \wedge} \Upsilon_3)$.
An  argument analogous to that of Case 1 shows that $\bar c(\Upsilon)$ can be expressed as a linear combination of chains of smaller  weight-inversion pair.

\noindent{\bf Case 3:} $\Upsilon$ has a subtree of the form $\Upsilon_1 \substack{\blue \\ \wedge}
(\Upsilon_2 \substack{\red \\ \wedge} \Upsilon_3)$.
Using  relation (\ref{relation:5h}) (and  relation (\ref{relation:1h})) we have that

\begin{eqnarray*} \bar c(\alpha( \Upsilon_1 \substack{\blue \\ \wedge} (\Upsilon_2 \substack{\red\\
\wedge} \Upsilon_3) )\beta)&=& 
\pm \bar c(\alpha(\Upsilon_1 \substack{\red \\ \wedge} (\Upsilon_2\substack{\blue \\ \wedge}
\Upsilon_3))\beta) \\ & &
\pm \bar c(\alpha((\Upsilon_1 \substack{\blue \\ \wedge} \Upsilon_2)\substack{\red \\ \wedge}
\Upsilon_3)\beta) \\ & &
\pm \bar c(\alpha((\Upsilon_1 \substack{\red \\ \wedge} \Upsilon_2 )\substack{\blue \\ \wedge}
\Upsilon_3)\beta) \\ & &
\pm \bar c(\alpha((\Upsilon_1 \substack{\blue \\ \wedge} \Upsilon_3 )\substack{\red \\ \wedge}
\Upsilon_2)\beta) \\ & &
\pm \bar c(\alpha((\Upsilon_1 \substack{\red \\ \wedge} \Upsilon_3 )\substack{\blue \\ \wedge}
\Upsilon_2)\beta) .
\end{eqnarray*}

Just as in Case 1, all the labeled bicolored trees on the right hand side of the equation, except for the first, have weight smaller than that of $\alpha(\Upsilon_1 \substack{\blue \\ \wedge} (\Upsilon_2 \substack{\red\\
\wedge} \Upsilon_3) )\beta$.  The first labeled bicolored  tree $\alpha(\Upsilon_1 \substack{\red \\ \wedge} (\Upsilon_2\substack{\blue \\ \wedge}
\Upsilon_3))\beta$ has the same weight as that of $\alpha(\Upsilon_1 \substack{\blue \\ \wedge} (\Upsilon_2 \substack{\red\\
\wedge} \Upsilon_3) )\beta$.  However the inversion number is reduced, that is
$$\inv( \alpha(\Upsilon_1 \substack{\red \\ \wedge} (\Upsilon_2\substack{\blue \\ \wedge}
\Upsilon_3))\beta) = \inv(\alpha(\Upsilon_1 \substack{\blue \\ \wedge} (\Upsilon_2 \substack{\red\\
\wedge} \Upsilon_3) )\beta)-1.$$
Hence the weight-inversion pair for the first bicolored labeled tree is less than that of $\Upsilon:=\alpha(\Upsilon_1 \substack{\blue \\ \wedge} (\Upsilon_2 \substack{\red\\
\wedge} \Upsilon_3) )\beta$ just as it is for the other bicolored labeled trees on the right hand side of the equation.  We conclude that $\bar c(\Upsilon)$ can be expressed as a linear combination of chains of smaller weight-inversion pair.
\end{proof}

\begin{proposition}[Bershtein, Dotsenko and Khoroshkin
\cite {BershteinDotsenkoKhoroshkin2007}]
\label{proposition:combbasiscardinality} Let $n \ge 1$.  Then
 $|\comb_n|=n^{n-1}$.
\end{proposition}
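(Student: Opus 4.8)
The plan is to establish a recursion for $c_n := |\comb_n|$, convert it into a differential equation for the exponential generating function $C(x) := \sum_{n \ge 1} c_n x^n/n!$, and identify the solution as the tree function $T(x) := \sum_{n \ge 1} n^{n-1}x^n/n!$. (For a finite set $A$ of positive integers the number of bicolored combs with leaves labeled by a permutation of $A$ depends only on $|A|$, by order-isomorphism, and equals $c_{|A|}$; we use this freely.) Two structural observations will drive the recursion. First, if a bicolored comb with at least two leaves has a blue root, then the right child of the root must be a leaf --- were it internal, the coloring restriction at the root would force the root to be red; hence a blue-rooted bicolored comb on a $k$-set ($k \ge 2$) is exactly a tree $L' \substack{\blue \\ \wedge} (\text{leaf } a)$ with $a$ any non-minimum label (to preserve normalization) and $L'$ an arbitrary bicolored comb on the remaining $k-1$ labels, so there are $(k-1)c_{k-1}$ of them. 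Second, writing a bicolored comb $T$ on $[n]$ ($n \ge 2$) as $T = L \substack{\col \\ \wedge} R$, normalization forces the label $1$ into $L$; if $R$ is a single leaf then $\col$ is unconstrained and $L$ is an arbitrary comb on the complementary labels, while if $R$ has $\ge 2$ leaves the coloring restriction forces $\col$ red and the root of $R$ blue, and nothing else couples $L$, $R$, and $\col$.

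Combining these gives, for $n \ge 2$,
\[
c_n = 2(n-1)c_{n-1} + \sum_{k=2}^{n-1} \binom{n-1}{k} (k-1)c_{k-1}\, c_{n-k}, \qquad c_1 = 1,
\]
which one checks yields $c_2 = 2$ and $c_3 = 9$, consistent with Figure~\ref{fig:bicoloredcombs}. Setting $d_1 := 2$ and $d_k := (k-1)c_{k-1}$ for $k \ge 2$, the recursion reads $c_n = \sum_{k=1}^{n-1}\binom{n-1}{k} d_k c_{n-k}$; passing to $C(x)$ and $D(x) := \sum_{k \ge 1} d_k x^k/k!$, this becomes $C'(x)\bigl(1 - D(x)\bigr) = 1$, while the definition of the $d_k$ becomes $D'(x) = 2 + x C'(x)$. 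Differentiating the first identity, substituting $1 - D = 1/C'$ and then the second identity, yields the ODE $C'' = 2(C')^2 + x(C')^3$ with $C(0) = 0$ and $C'(0) = 1$, which has a unique formal power series solution (the equation recursively determines each coefficient). It then remains to check that the tree function $T$, characterized by $T = xe^T$, satisfies the same equation and initial conditions: from $T = xe^T$ one gets $xT'(1-T) = T$, and differentiating this and dividing by $x(1-T)$, using $T/(x(1-T)) = T'$ and $1/(1-T) = 1 + xT'$, gives exactly $T'' = 2(T')^2 + x(T')^3$; also $T(0) = 0$ and $T'(0) = 1$. Hence $C = T$, so $c_n = n!\,[x^n]\,T(x) = n^{n-1}$.

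The main obstacle is the first step: extracting the recursion correctly, i.e., being careful about precisely which combinations of shape, node colors, and leaf labels the definition of a bicolored comb permits --- in particular the fact that a blue root forces a leaf as its right child, and that in the root decomposition $T = L \substack{\col \\ \wedge} R$ the constraints listed above are the only ones. Once the recursion is in hand, the generating-function manipulation and the verification that the tree function solves the resulting ODE are routine. (Alternatively, one could avoid generating functions and prove $c_n = n^{n-1}$ directly by strong induction from the recursion, which reduces to an Abel-type binomial identity; the generating-function route seems cleaner.)
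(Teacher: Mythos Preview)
Your proof is correct. The combinatorial decomposition you use is essentially the same as the paper's: both split a bicolored comb at the root into left and right subtrees, observe that normalization forces the minimum label into the left subtree, that a blue root forces the right subtree to be a single leaf, and that a blue-rooted comb on a $k$-set is of the form $L'\substack{\blue\\\wedge}(\text{leaf})$ with $L'$ an arbitrary comb on the remaining $k-1$ labels.

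Where you diverge is in how you extract $n^{n-1}$ from the recursion. The paper proceeds by strong induction: assuming $|\comb_k|=k^{k-1}$ for $k<n$, the decomposition yields
\[
|\comb_n|=(n-1)^{n-1}+\sum_{k=1}^{n-1}\binom{n-1}{k}(n-k)^{n-k-1}(k-1)^{k-1},
\]
and then a specialization of Abel's identity~(\ref{proposition:abelidentity}) (with $x=z=-1$, $y=n$, degree $n-1$) collapses the right side to $n^{n-1}$. You instead keep the recursion in terms of the unknown sequence $c_n$, translate it into the ODE $C''=2(C')^2+x(C')^3$ for the EGF, and recognize that the tree function $T=xe^{T}$ satisfies the same ODE with the same initial data. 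Your parenthetical remark about the alternative ``Abel-type binomial identity'' route is exactly what the paper does. Each approach has its merits: the paper's stays closer to the combinatorics and reuses an identity already invoked elsewhere in the paper, while yours avoids having to know Abel's identity at the cost of a short analytic detour; the step verifying uniqueness of the formal power series solution and the derivation of $T''=2(T')^2+x(T')^3$ from $T=xe^T$ are both clean and correctly carried out.
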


\begin{proof} We present a different proof than that of \cite {BershteinDotsenkoKhoroshkin2007}.
Our proof is by induction on $n$. The cases $|\comb_1|=1$ and $|\comb_2|=2$ are trivially verified.
For $n \ge 3$ assume that
$|\comb_k|=k^{k-1}$ for any $k<n$.  
We claim that 
\begin{equation} \label{combcount}
|\comb_n|=(n-1)^{n-1}+\sum_{k=1}^{n-1}\binom{n-1}{k}(n-k)^{n-k-1}(k-1)^{k-1}.
\end{equation}
To prove the claim we show that the  term that precedes the summation counts blue-rooted bicolored
combs and the $k$th term of the sum counts red-rooted bicolored combs whose right subtree has $k$
leaves.  To construct a blue-rooted bicolored comb $T \in \comb_n$, we can
choose the right subtree, which is a leaf, in $n-1$ different ways, and the left subtree, which is a
bicolored comb,  in $(n-1)^{n-2}$
different ways, by induction.   Hence there are $(n-1)^{n-1}$ blue-rooted bicolored combs.  
To construct a red-rooted bicolored comb $T \in \comb_n$ whose right subtree has $k$ leaves, first
choose
$k$ labels for the right subtree in $\binom{n-1}{k}$ different ways. Then choose a right subtree
that uses these labels. Since the right subtree must be a blue-rooted bicolored comb, there are
$(k-1)^{k-1}$ ways to choose such a subtree by the previous case.  Now choose the left subtree,
which is a bicolored comb, in  $(n-k)^{n-k-1}$ different ways by induction.

By setting $x,z:=-1$, $y:=n$ and $n:=n-1$ in Abel's polynomial identity
(\ref{proposition:abelidentity}), we have 
\begin{eqnarray*}
(n-1)^{n-1}&=&-\sum_{k=0}^{n-1}\binom{n-1}{k}(n-k)^{n-k-1}(k-1)^{k-1}\\
&=& n^{n-1} -\sum_{k=1}^{n-1}\binom{n-1}{k}(n-k)^{n-k-1}(k-1)^{k-1}.
\end{eqnarray*}
It therefore follows from (\ref{combcount}) that $|\comb_n|=n^{n-1}$.
\end{proof}

\begin{theorem}\label{proposition:combbasiscohomology}
 The set $\{\bar{c}(T,\sigma) : (T,\sigma) \in \comb_{n,i}\}$ 
 is a basis for \newline
 $\tilde H^{n-3}((\hat 0, [n]^i))$.
\end{theorem}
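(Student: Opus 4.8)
The plan is to obtain this theorem as a pure rank count from ingredients already assembled, with no new combinatorics. First I would invoke Proposition~\ref{proposition:combbasisspans}, which says that for each $i$ the set $\{\bar c(T,\sigma) : (T,\sigma) \in \comb_{n,i}\}$ spans $\tilde H^{n-3}((\hat 0, [n]^i))$. By Theorem~\ref{theorem:homotopy} together with Proposition~\ref{prop:free} of the appendix, this cohomology module is free over ${\bf k}$, and by Corollary~\ref{proposition:dimensionhat} its rank is $|\T_{n,i}|$. A free module of rank $r$ (over a field or over $\ZZ$) cannot be generated by fewer than $r$ elements, so the spanning statement forces the inequality $|\comb_{n,i}| \ge |\T_{n,i}|$ for every $i$.

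Next I would sum these inequalities over $i$ and compare with the two available global counts. By Proposition~\ref{proposition:combbasiscardinality}, $\sum_{i=0}^{n-1} |\comb_{n,i}| = |\comb_n| = n^{n-1}$, while by Corollary~\ref{proposition:dimensionhat} (equivalently, by setting $t=1$ in~(\ref{equation:drake})) we have $\sum_{i=0}^{n-1} |\T_{n,i}| = n^{n-1}$. Thus
\[
n^{n-1} = |\comb_n| = \sum_{i=0}^{n-1} |\comb_{n,i}| \;\ge\; \sum_{i=0}^{n-1} |\T_{n,i}| = n^{n-1},
\]
so equality holds throughout, and hence the termwise inequalities must be equalities: $|\comb_{n,i}| = |\T_{n,i}| = \rank \tilde H^{n-3}((\hat 0, [n]^i))$ for each $i$.

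Finally I would use the elementary fact that a spanning set of a free module of rank $r$ consisting of exactly $r$ elements is a basis. Over a field this is immediate linear algebra; over $\ZZ$ one observes that the induced surjection ${\bf k}^{r}\twoheadrightarrow \tilde H^{n-3}((\hat 0,[n]^i))\cong {\bf k}^{r}$ (sending the standard basis vectors to the chains $\bar c(T,\sigma)$, $(T,\sigma)\in\comb_{n,i}$) is a surjective endomorphism of a free module of finite rank, hence represented by an integer matrix with vanishing cokernel, hence of determinant $\pm1$ and invertible. Consequently that surjection is an isomorphism, which in particular shows that the chains $\bar c(T,\sigma)$ for distinct bicolored combs in $\comb_{n,i}$ are distinct and linearly independent, so they form a basis of $\tilde H^{n-3}((\hat 0,[n]^i))$.

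There is essentially no obstacle internal to this argument: all the real work is already done in the straightening procedure of Proposition~\ref{proposition:combbasisspans} and in the Abel-identity enumeration of Proposition~\ref{proposition:combbasiscardinality}. The only point that needs a moment's care is ensuring the rank count is valid over $\ZZ$ and not merely over a field, which is exactly what the surjective-endomorphism remark takes care of.
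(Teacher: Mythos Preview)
Your proposal is correct and follows essentially the same approach as the paper: combine the spanning result of Proposition~\ref{proposition:combbasisspans}, the enumeration $|\comb_n|=n^{n-1}$ of Proposition~\ref{proposition:combbasiscardinality}, and the rank computation of Corollary~\ref{proposition:dimensionhat} to conclude that a spanning set of the correct size is a basis. The only cosmetic difference is that the paper argues with the direct sum $\bigoplus_i \tilde H^{n-3}((\hat 0,[n]^i))$ all at once, whereas you establish $|\comb_{n,i}|\ge|\T_{n,i}|$ termwise and then sum; your extra remark on surjective endomorphisms over $\ZZ$ is a welcome bit of care that the paper leaves implicit.
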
 
\begin{proof}It follows from Propositions~\ref{proposition:combbasisspans}
and~\ref{proposition:combbasiscardinality} that $\{\bar{c}(T,\sigma) : (T,\sigma) \in \comb_n\}$
spans $\oplus_{i=0}^{n-1} \tilde H^{n-3}((\hat 0, [n]^i))$  and is of cardinality $n^{n-1}$.  Since,
by Corollary \ref{proposition:dimensionhat}, $\rank \oplus_{i=0}^{n-1} \tilde H^{n-3}((\hat 0,
[n]^i)) = n^{n-1}$, the result holds.
\end{proof}

\begin{remark} \label{remark:finalstep} Since  the only relations used in the straightening
algorithm of Proposition~\ref{proposition:combbasisspans} are the relations of the presentation
given in 
Theorem~\ref{proposition:binarybasishomology},  it follows from
Theorem~\ref{proposition:combbasiscohomology} that these relations are  the {\em only} relations
needed to present $\tilde H^{n-3}((\hat 0, [n]^i))$.  Thus the final step of the proof of
Theorem~\ref{proposition:binarybasishomology} is now complete.  \end{remark}    

\begin{remark} Note that by switching left and right, small and large, blue and red, we get 8
different variations of bicolored comb bases.  
\end{remark}

\subsection{A bicolored Lyndon basis for $\tilde H^{n-3}((\hat 0, [n]^i))$ and $\lie_2(n,i)$}
\label{decsubsec}
In this section, we describe the ascent-free chains of the EL-labeling of  $[\hat 0,[n]^i]$ given in
Theorem~\ref{theorem:ellabelingposet}.  Recall from  Theorem~\ref{elth} that these yield a basis for
$H^{n-3}((\hat 0,[n]^i))$. By applying the isomorphism of Theorem~\ref{theorem:liehomisomorphism}, 
one gets a corresponding basis for $\lie_2(n,i)$, which  is the classical Lyndon basis for $\lie(n)$
when $i=0,n-1$. 

We begin by recalling the Lyndon basis for $\lie(n)$.  A Lyndon tree is a labeled binary tree
$(T,\sigma)$ such that for each internal node $x$ of $T$ the smallest leaf label of the subtree
$T_x$ rooted at $x$ is in the left subtree of $T_x$ and the second smallest label is in the right
subtree of $T_x$.    Let ${\textsf{Lyn}}_n$ be the set of Lyndon trees whose leaf labels form the
set $[n]$.  The set $\{[T,\sigma] : (T,\sigma) \in {\textsf{Lyn}}_n\}$  is the classical Lyndon
basis for $\lie(n)$.

For each internal node $x$ of a binary tree let $L(x)$ denote the left child of $x$ and $R(x)$
denote the right child.  For each node $x$ of a bicolored labeled binary tree $(T,\sigma)$ define
its  {\em valency} $v(x)$ to be the smallest leaf label of the subtree rooted at $x$.  A Lyndon
tree is depicted in Figure \ref{fig:lyndonandvalency} illustrating the valencies of the
internal nodes. The following alternative characterization of Lyndon tree is easy to
verify.

\begin{proposition} \label{prop:valen} Let $(T,\sigma)$ be a  labeled binary tree.  Then
$(T,\sigma)$ is a Lyndon tree if  and only if it is normalized and for every internal node $x$ of
$T$ we have 
\begin{equation} \label{eq:lynnode} v(R(L(x))> v(R(x)).\end{equation}
\end{proposition}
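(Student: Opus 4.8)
The plan is to record two elementary facts and then prove each implication by induction on the number of leaves of $T$ (the base case, a single leaf, being trivial).

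First, observe that each of the three properties at issue — being a Lyndon tree, being normalized, and satisfying~(\ref{eq:lynnode}) at every internal node — is inherited by the two principal subtrees $T_{L(r)}$ and $T_{R(r)}$ of $(T,\sigma)$, where $r$ is the root, simply because all three are conjunctions of conditions indexed by the internal nodes. (For~(\ref{eq:lynnode}) we read the condition at an internal node $x$ as vacuously satisfied when $L(x)$ is a leaf, since then $R(L(x))$ is undefined.) Second, I will use the following auxiliary claim: if $(S,\cdot)$ is a Lyndon tree with internal root $s$, then $v(R(s))$ equals the second smallest leaf label of $S$. This is immediate from the definition: the Lyndon condition at $s$ puts the smallest label $a$ of $S$ in the left subtree and the second smallest label $b$ of $S$ in $T_{R(s)}$; since $a\notin T_{R(s)}$ and every label of $S$ other than $a$ is $\ge b$, we get $v(R(s))=\min T_{R(s)}=b$.

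For the forward implication, assume $(T,\sigma)$ is a Lyndon tree. Normalization follows by induction: the principal subtrees are Lyndon by heredity, hence normalized by the inductive hypothesis, and at the root the smallest label of $T$ lies in the left subtree $T_{L(r)}$ by the Lyndon condition, so it is the smallest label of $T_{L(r)}$, which by normalization of $T_{L(r)}$ sits at its leftmost leaf — the leftmost leaf of $T$. To check~(\ref{eq:lynnode}) at an internal node $x$ whose left child $L(x)$ is also internal, apply the auxiliary claim to the Lyndon subtree $T_x$ to see that $v(R(x))$ is the second smallest label of $T_x$, and apply it to the Lyndon subtree $T_{L(x)}$ to see that $v(R(L(x)))$ is the second smallest label of $T_{L(x)}$. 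Since the smallest label of $T_x$ lies in $T_{L(x)}$, the second smallest label of $T_x$ is the minimum of $v(R(L(x)))$ and $v(R(x))=\min T_{R(x)}$; these two quantities lie in the disjoint subtrees $T_{L(x)}$ and $T_{R(x)}$ and are therefore distinct, and since the second smallest label of $T_x$ equals $v(R(x))$ we conclude $v(R(x))<v(R(L(x)))$, which is~(\ref{eq:lynnode}).

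For the reverse implication, assume $(T,\sigma)$ is normalized and satisfies~(\ref{eq:lynnode}) at every internal node. By heredity and induction the principal subtrees $T_1=T_{L(r)}$ and $T_2=T_{R(r)}$ are Lyndon trees, so it remains only to verify the two Lyndon conditions at the root $r$. Normalization already places the smallest label of $T$ at its leftmost leaf, which lies in $T_1$, so it suffices to show the second smallest label of $T$ lies in $T_2$. If $T_1$ is a single leaf this is clear, as $T_1$ then contributes only the smallest label. If instead $L(r)$ is internal, then $T_1$ is Lyndon, so by the auxiliary claim its second smallest label is $v(R(L(r)))$, and~(\ref{eq:lynnode}) at $r$ gives $v(R(L(r)))>v(R(r))=\min T_2$; hence, removing the smallest label of $T$ (which lies in $T_1$), the minimum of what remains is attained in $T_2$, as required. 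No step presents a real obstacle; the only thing demanding a little care is the bookkeeping about which subtree a given small label occupies and invoking the auxiliary claim on the correct subtree.
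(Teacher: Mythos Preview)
Your proof is correct. The paper does not supply a proof for this proposition, deeming it ``easy to verify,'' so there is nothing to compare against; your inductive argument with the auxiliary claim that $v(R(s))$ equals the second smallest label of a Lyndon subtree $T_s$ is exactly the kind of verification the authors had in mind.
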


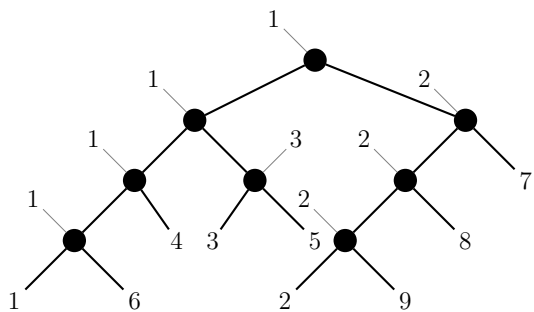
\begin{figure}[h]
        \centering
        \begin{tikzpicture}[thick,scale=0.8]

\tikzstyle{every node}=[circle,fill,draw,inner sep=1pt, minimum width=10pt,scale=0.8]

    \draw [circle,color=black] (6,4)  node (i1)[pin=above left:\color{black}1]{};
    \draw [circle,color=black] (8.5,3)  node (i2)[pin=above left:\color{black}2]{};
    \draw [circle,color=black] (7.5,2)  node (i3)[pin=above left:\color{black}2]{};
    \draw [color=black] (6.5,1)  node (i4)[pin=above left:\color{black}2]{};
    \draw [color=black] (4,3)  node (i5)[pin=above left:\color{black}1]{};
    \draw [color=black] (5,2)  node (i6)[pin=above right:\color{black}3]{};
    \draw [circle,color=black] (3,2)  node (i7)[pin=above left:\color{black}1]{};
    \draw [color=black] (2,1)  node (i8)[pin=above left:\color{black}1]{};
\tikzstyle{every node}=[inner sep=1pt, minimum width=14pt,scale=0.8]

    \draw (4.3,1)  node (l1){3};
    \draw (5.5,0)  node (l2){2};
    \draw (1,0)  node (l3){1};
    \draw (3,0)  node (l4){6};
    \draw (6,1)  node (l5){5};
    \draw (3.7,1)  node (l6){4};
    \draw (7.5,0)  node (l7){9};
    \draw (9.5,2)  node (l8){7};
    \draw (8.5,1)  node (l9){8};

    \draw (i1) --  (i2) ;
    \draw (i1) --  (i5) ;
    \draw (i2) --  (i3) ;
    \draw (i2) --  (l8) ;
    \draw (i3) --  (i4) ;
    \draw (i3) --  (l9) ;
    \draw (i4) --  (l7) ;
    \draw (i4) --  (l2) ;
    \draw (i5) --  (i6) ;
    
    \draw (i5) --  (i7) ;
    \draw (i6) --  (l5) ;
    \draw (i6) --  (l1) ;
    \draw (i7) --  (l6) ;
    \draw (i7) --  (i8) ;
    \draw (i8) --  (l3) ;
    \draw (i8) --  (l4) ;
\end{tikzpicture}
 \caption{Example of a Lyndon tree. The numbers above the  lines correspond to the valencies of the
internal nodes}
\label{fig:lyndonandvalency}
  \end{figure}

	We will say that an internal node $x$ of  a labeled binary tree $(T,\sigma)$ is a {\em
Lyndon node} if  (\ref{eq:lynnode}) holds. 
Hence Proposition~\ref{prop:valen} says that $(T,\sigma)$ is a Lyndon tree if and only if it is
normalized and all its internal nodes are Lyndon nodes. 

A {\it bicolored Lyndon tree} is a  normalized bicolored binary tree that satisfies the following
coloring restriction: for each internal node $x$ that is not a Lyndon node, $x$ is colored blue and 
its left child is colored red.
The set of bicolored Lyndon trees for $n=3$ is depicted in Figure \ref{fig:bicoloredlyndons}.

 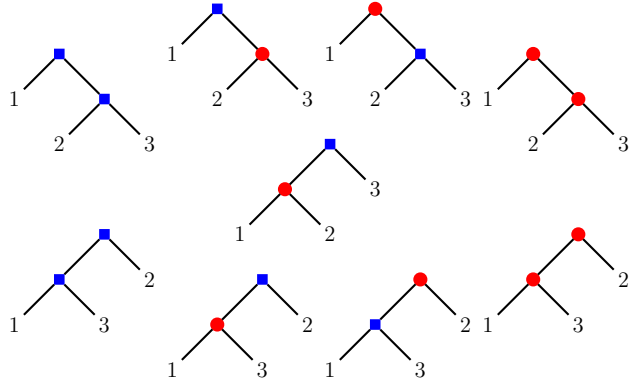
\begin{figure}[h]
        \centering
         \begin{tikzpicture}[thick,scale=0.6]
\begin{scope}[xshift=-1cm,yshift=-1cm]

\tikzstyle{every node}=[fill, draw,inner sep=2pt,scale=0.8]
    \draw [color=blue] (3,1)  node (i1){};
    \draw [color=blue] (2,2)  node (i2){};

\tikzstyle{every node}=[inner sep=1pt, minimum width=14pt,scale=0.7]

    \draw (2,0)  node (m){$2$};
    \draw (4,0)  node (l1){$3$};
    \draw (1,1)  node (l2){$1$};
    
    \draw (m) --  (i1) ;
    \draw (i1) --  (l1) ;
    \draw (i1) --  (i2) ;
    \draw (i2) --  (l2) ;
\end{scope}

\begin{scope}[xshift=2.5cm,yshift=0]

\tikzstyle{every node}=[fill, draw,inner sep=2pt,scale=0.8]
    \draw [circle,color=red] (3,1)  node (i1){};
    \draw [color=blue] (2,2)  node (i2){};

\tikzstyle{every node}=[inner sep=1pt, minimum width=14pt,scale=0.7]

    \draw (2,0)  node (m){$2$};
    \draw (4,0)  node (l1){$3$};
    \draw (1,1)  node (l2){$1$};
    
    \draw (m) --  (i1) ;
    \draw (i1) --  (l1) ;
    \draw (i1) --  (i2) ;
    \draw (i2) --  (l2) ;
\end{scope}

\begin{scope}[xshift=6cm,yshift=0]

\tikzstyle{every node}=[fill, draw,inner sep=2pt,scale=0.8]
    \draw [color=blue] (3,1)  node (i1){};
    \draw [circle,color=red] (2,2)  node (i2){};

\tikzstyle{every node}=[inner sep=1pt, minimum width=14pt,scale=0.7]

    \draw (2,0)  node (m){$2$};
    \draw (4,0)  node (l1){$3$};
    \draw (1,1)  node (l2){$1$};
    
    \draw (m) --  (i1) ;
    \draw (i1) --  (l1) ;
    \draw (i1) --  (i2) ;
    \draw (i2) --  (l2) ;
\end{scope}
\begin{scope}[xshift=9.5cm,yshift=-1cm]

\tikzstyle{every node}=[fill, draw,inner sep=2pt,scale=0.8]
    \draw [circle,color=red] (3,1)  node (i1){};
    \draw [circle,color=red] (2,2)  node (i2){};

\tikzstyle{every node}=[inner sep=1pt, minimum width=14pt,scale=0.7]

    \draw (2,0)  node (m){$2$};
    \draw (4,0)  node (l1){$3$};
    \draw (1,1)  node (l2){$1$};
    
    \draw (m) --  (i1) ;
    \draw (i1) --  (l1) ;
    \draw (i1) --  (i2) ;
    \draw (i2) --  (l2) ;
\end{scope}

\begin{scope}[xshift=0,yshift=-5cm]

\tikzstyle{every node}=[fill, draw,inner sep=2pt,scale=0.8]
    \draw [color=blue] (1,1)  node (i1){};
    \draw [color=blue] (2,2)  node (i2){};

\tikzstyle{every node}=[inner sep=1pt, minimum width=14pt,scale=0.7]

    \draw (0,0)  node (m){$1$};
    \draw (2,0)  node (l1){$3$};
    \draw (3,1)  node (l2){$2$};

    \draw (m) --  (i1) ;
    \draw (i1) --  (l1) ;
    \draw (i1) --  (i2) ;
    \draw (i2) --  (l2) ;
\end{scope}

\begin{scope}[xshift=3.5cm,yshift=-6cm]
\tikzstyle{every node}=[fill, draw,inner sep=2pt,scale=0.8]
    \draw [circle,color=red] (1,1)  node (i1){};
    \draw [color=blue] (2,2)  node (i2){};

\tikzstyle{every node}=[inner sep=1pt, minimum width=14pt,scale=0.7]

    \draw (0,0)  node (m){$1$};
    \draw (2,0)  node (l1){$3$};
    \draw (3,1)  node (l2){$2$};

    \draw (m) --  (i1) ;
    \draw (i1) --  (l1) ;
    \draw (i1) --  (i2) ;
    \draw (i2) --  (l2) ;
\end{scope}

\begin{scope}[xshift=7cm,yshift=-6cm]
\tikzstyle{every node}=[fill, draw,inner sep=2pt,scale=0.8]
    \draw [color=blue] (1,1)  node (i1){};
    \draw [circle,color=red] (2,2)  node (i2){};

\tikzstyle{every node}=[inner sep=1pt, minimum width=14pt,scale=0.7]

    \draw (0,0)  node (m){$1$};
    \draw (2,0)  node (l1){$3$};
    \draw (3,1)  node (l2){$2$};

    \draw (m) --  (i1) ;
    \draw (i1) --  (l1) ;
    \draw (i1) --  (i2) ;
    \draw (i2) --  (l2) ;
\end{scope}
\begin{scope}[xshift=10.5cm,yshift=-5cm]
\tikzstyle{every node}=[fill, draw,inner sep=2pt,scale=0.8]
    \draw [circle,color=red] (1,1)  node (i1){};
    \draw [circle,color=red] (2,2)  node (i2){};

\tikzstyle{every node}=[inner sep=1pt, minimum width=14pt,scale=0.7]

    \draw (0,0)  node (m){$1$};
    \draw (2,0)  node (l1){$3$};
    \draw (3,1)  node (l2){$2$};
    
    \draw (m) --  (i1) ;
    \draw (i1) --  (l1) ;
    \draw (i1) --  (i2) ;
    \draw (i2) --  (l2) ;
\end{scope}

\begin{scope}[xshift=5cm,yshift=-3cm]
\tikzstyle{every node}=[fill, draw,inner sep=2pt,scale=0.8]
    \draw [circle,color=red] (1,1)  node (i1){};
    \draw [color=blue] (2,2)  node (i2){};

\tikzstyle{every node}=[inner sep=1pt, minimum width=14pt,scale=0.7]

    \draw (0,0)  node (m){$1$};
    \draw (2,0)  node (l1){$2$};
    \draw (3,1)  node (l2){$3$};

    \draw (m) --  (i1) ;
    \draw (i1) --  (l1) ;
    \draw (i1) --  (i2) ;
    \draw (i2) --  (l2) ;

\end{scope}
\end{tikzpicture}
 \caption{Set of bicolored Lyndon trees for $n=3$}
\label{fig:bicoloredlyndons}
  \end{figure}

Clearly if a bicolored Lyndon tree is  monochromatic then all its nodes are Lyndon nodes.  Hence the
monochromatic ones are the classical Lyndon trees.  

Let $\lyn_{n,i}$ be the set of bicolored Lyndon trees in   $ \BT_{n,i}$.   We will show
 that the ascent-free chains of the EL-labeling of  $[\hat 0,[n]^i]$ 
given in Theorem~\ref{theorem:ellabelingposet} are of the form $c(T,\sigma,\tau)$, where $(T,\sigma)
\in \lyn_{n,i}$ 
and $\tau$ is a certain linear extension of the internal nodes of $T$, which we now describe.
It is easy to see that there is a unique linear extension of the internal notes of $(T,\sigma) \in 
\BT_{n,i}$ in which the valencies of the nodes weakly decrease.   Let $\tau_{T,\sigma}$ denote the
permutation that  induces this linear extension.

\begin{theorem}\label{thm:ascfreeEL}
The set $\{c(T,\sigma,\tau_{T,\sigma}) : \, (T,\sigma) \in \lyn_{n,i}\}$ is the set of ascent-free
maximal
chains of the EL-labeling of $[\hat 0, [n]^i]$ given in Theorem \ref{theorem:ellabelingposet}.
\end{theorem}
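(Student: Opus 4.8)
The plan is to adapt Wachs's description of the ascent-free chains of the Bj\"orner--Stanley labeling of $\Pi_n$ \cite{Wachs1998} to the present setting, the new ingredient being the weight (i.e.\ color) coordinate of the labeling $\lambda$. The first step is to read off the label word of an arbitrary maximal chain $c(T,\sigma,\tau)$. For an internal node $x$ of $(T,\sigma)$ write $v(x)$ for its valency, and let $v_{\tau(1)},\dots,v_{\tau(n-1)}$ be the internal nodes in the order in which they are processed along $c(T,\sigma,\tau)$, with subtree $L_{\tau(k)}\substack{\col_{\tau(k)}\\ \wedge}R_{\tau(k)}$ rooted at $v_{\tau(k)}$. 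Unwinding the definitions of $\lambda$ and of $c(T,\sigma,\tau)$, the $k$th letter of $\lambda(c(T,\sigma,\tau))$ is
\[
\lambda_k=\Bigl(v(v_{\tau(k)}),\ \max\bigl(v(L(v_{\tau(k)})),v(R(v_{\tau(k)}))\bigr)\Bigr)^{u(\col_{\tau(k)})},
\]
so that $\lambda_k\in\Gamma_{m_k}$ with $m_k:=v(v_{\tau(k)})$. Since $\Lambda_n=\Gamma_1\oplus\cdots\oplus\Gamma_n$ is an ordinal sum, $\lambda_k<\lambda_{k+1}$ in $\Lambda_n$ holds if and only if $m_k<m_{k+1}$, or $m_k=m_{k+1}$ and $\lambda_k<\lambda_{k+1}$ inside $\Gamma_{m_k}$. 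Therefore $c(T,\sigma,\tau)$ is ascent-free if and only if (i) $m_1\ge m_2\ge\cdots\ge m_{n-1}$, and (ii) for every $k$ with $m_k=m_{k+1}$ we have $\lambda_k\not<\lambda_{k+1}$ in $\Gamma_{m_k}$.

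Condition (i) is handled at once: by the remark preceding the theorem, $\tau_{T,\sigma}$ is the \emph{unique} linear extension of the internal nodes of $T$ along which valencies weakly decrease, so (i) holds exactly when $\tau=\tau_{T,\sigma}$. Moreover every maximal chain of $[\hat 0,[n]^i]$ is of the form $c(T,\sigma,\tau)$ for some $(T,\sigma)\in\BT_{n,i}$ and $\tau\in\mathcal{E}(T)$, and we may take $(T,\sigma)$ normalized: building the tree from the chain by always making the block of smaller minimum the left child produces a normalized representative without changing the chain, hence without changing the valencies $m_k$ or the labels $\lambda_k$. Consequently every ascent-free maximal chain of $[\hat 0,[n]^i]$ equals $c(T,\sigma,\tau_{T,\sigma})$ for a unique normalized $(T,\sigma)\in\BT_{n,i}$, and the theorem reduces to showing that, among normalized trees, condition (ii) holds precisely for the bicolored Lyndon trees.

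This last point is the heart of the proof, and the step I expect to be the main obstacle. Fix a valency $m$. In a normalized tree the internal nodes of valency $m$ form the left spine running from the leaf labeled $m$ up to the topmost node of valency $m$, and in $\tau_{T,\sigma}$ they are processed consecutively: everything of valency exceeding $m$ is processed earlier and everything of smaller valency later, and among the valency-$m$ nodes (which form a chain) the linear-extension property forces the deeper nodes to come first. Hence a consecutive pair $k,k+1$ with $m_k=m_{k+1}=m$ is exactly a node $z=v_{\tau(k)}$ together with its parent $x=v_{\tau(k+1)}$, where $z=L(x)$ and $v(z)=v(x)=m$; and conversely every non-Lyndon internal node of $T$ arises as such an $x$, since in a normalized tree a non-Lyndon node has an internal left child of equal valency. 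For such a pair we get $\lambda_k=(m,v(R(z)))^{\varepsilon}$ and $\lambda_{k+1}=(m,v(R(x)))^{\varepsilon'}$, where $\varepsilon$ (resp.\ $\varepsilon'$) is $0$ or $1$ according as $z$ (resp.\ $x$) is blue or red, with $v(R(z))\ne v(R(x))$; and the Lyndon condition $v(R(L(x)))>v(R(x))$ at $x$ reads exactly $v(R(z))>v(R(x))$.

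A one-line inspection of the order on $\Gamma_m$ --- namely $(m,b)^u\le(m,c)^v$ iff $b\le c$ and $u\le v$ --- now finishes the argument. If $x$ is a Lyndon node then $v(R(z))>v(R(x))$, so $\lambda_k\not<\lambda_{k+1}$ irrespective of colors; if $x$ is not a Lyndon node then $v(R(z))<v(R(x))$, so $\lambda_k<\lambda_{k+1}$ unless $\varepsilon>\varepsilon'$, i.e.\ unless $z$ is red and $x$ is blue. Running over all $m$, condition (ii) is therefore equivalent to: every non-Lyndon internal node of $T$ is colored blue and has a red left child --- that is, to $(T,\sigma)\in\lyn_{n,i}$. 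Combined with the second paragraph this yields both inclusions and hence the theorem. Aside from the bookkeeping about $\tau_{T,\sigma}$, the points requiring care are the label-word formula, the claim that equal-valency internal nodes are processed consecutively along $\tau_{T,\sigma}$, and the fact that a non-Lyndon node of a normalized tree has an internal left child of equal valency; none of these should pose a genuine difficulty.
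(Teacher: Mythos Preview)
Your proposal is correct and follows essentially the same approach as the paper's proof: both identify the $k$th label as $(v(v_{\tau(k)}),v(R(v_{\tau(k)})))^{u_k}$ for a normalized tree, observe that ascent-freeness forces the valency-decreasing linear extension $\tau_{T,\sigma}$, note that consecutive equal valencies correspond to a node and its left-child, and then check that the order on $\Gamma_m$ forces the bicolored Lyndon coloring at non-Lyndon nodes. The only difference is organizational---you package the two directions into the single equivalence ``(i)+(ii) $\Leftrightarrow$ bicolored Lyndon,'' while the paper proves the forward and backward inclusions separately---and your general label formula with the $\max$ (valid before normalizing) is a mild expositional improvement.
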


\begin{proof}

We begin by showing that $c:=c(T,\sigma,\tau)$ is ascent-free whenever $(T,\sigma) \in \lyn_{n,i}$
and $\tau= \tau_{T,\sigma}$ .  Let  $x_i$ be the $i$th internal node of $T$ in postorder.  Then by
the definition of $\tau:=\tau_{T,\sigma}$,
\begin{equation} \label{eq:val} v(x_{\tau(1)}) \ge v(x_{\tau(2)}) \ge \dots \ge 
v(x_{\tau(n-1)}),\end{equation} 
where $v$ is the valency.  For each $i$, the $i$th letter of  the label word $\lambda(c)$ is given
by 
$$\lambda_i(c) = (v(L(x_{\tau(i)})), v(R(x_{\tau(i)})))^{u_i}= (v(x_{\tau(i)}),
v(R(x_{\tau(i)})))^{u_i},$$ where  $u_i = 0$ if $x_{\tau(i)}$ is blue and is $1$ if $x_{\tau(i)}$ is
red.  Note that since $(T,\sigma)$ is normalized, $v(R(x_{\tau(i)})) \ne v(R(x_{\tau(i+1)}))$ for all $i \in [n-1]$.  Now suppose the word $\lambda(c)$ has an ascent  at $i$.  Then it follows from (\ref{eq:val})
that 
\begin{equation} \label{eq:ascent} v(x_{\tau(i)}) = v(x_{\tau(i+1)}),\,\,\, v(R(x_{\tau(i)}))<
v(R(x_{\tau(i+1)})),\,\mbox{ and } \, u_i \le u_{i+1} .\end{equation}  
The equality of valencies implies that $x_{\tau(i)}= L(x_{\tau(i+1)})$ since $(T,\sigma)$ is
normalized.  
Hence by (\ref{eq:ascent}),  $$ v(R(L(x_{\tau(i+1)})))<v(R(x_{\tau(i+1)})).$$
It follows that $x_{\tau(i+1)}$ is not a Lyndon node.  So by the coloring restriction on bicolored
Lyndon trees, 
$x_{\tau(i+1)}$ must be colored blue and its left child $x_{\tau(i)}$ must be colored red.  This
implies $u_i = 1$ and  
$u_{i+1} = 0$, which contradicts (\ref{eq:ascent}).  Hence the chain $c$ is ascent-free.

Conversely, assume $c$ is an ascent-free maximal chain of $[\hat 0, [n]^i]$.  Then $c =
c(T,\sigma,\tau)$ for some  bicolored labeled tree $(T,\sigma)$ and some permutation $\tau \in
\sym_{n-1}$.  We can assume without loss of generality that $(T,\sigma)$ is normalized.  Since $c$
is ascent-free, (\ref{eq:val}) holds.  This implies that $\tau$ is the unique permutation that
induces the valency-decreasing linear extension, namely $ \tau_{T,\sigma}$.  

If all internal nodes of $(T,\sigma)$ are Lyndon nodes we are done.  So  let $i\in [n-1]$ be such
that  $x_{\tau(i)}$ is not a Lyndon node.   That is $$  v(R(L(x_{\tau(i)})))< v(R(x_{\tau(i)})) .$$
  Since $(T,\sigma)$ is normalized and (\ref{eq:val}) holds,  $ L(x_{\tau(i)})= x_{\tau(i-1)}$. 
Hence,
$v(R(x_{\tau(i-1)}))<v(R(x_{\tau(i)})) $.  Since $(T,\sigma)$ is normalized we also have
$v(L(x_{\tau(i-1)})) = v(L(x_{\tau(i)}))$.  Hence to avoid an ascent at $i-1$ in $c$, we must color
$x_{\tau(i-1)}$ red and $x_{\tau(i)}$  blue, which is precisely what we need to conclude that
$(T,\sigma)$ is a bicolored Lyndon tree.
\end{proof}

From Theorem~\ref{elth}, Lemma~\ref{lemma:52} and Theorem~\ref{theorem:liehomisomorphism} we have
the following corollary.
\begin{corollary} \label{corollary:lyndonbasiscohomology} The set  $\{\bar c(T,\sigma) : \,
(T,\sigma) \in \lyn_{n,i}\}$ is a basis for $\tilde H^{n-3}((\hat 0, [n]^i))$ and the set
$\{[T,\sigma] : \, (T,\sigma) \in \lyn_{n,i}\}$ is a basis for $ \lie_2(n,i)$.
\end{corollary}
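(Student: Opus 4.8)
The plan is to deduce both assertions from results already in hand. First I would apply Theorem~\ref{elth}(2) to the EL-labeling $\lambda$ of $\widehat{\Pi_n^w}$ from Theorem~\ref{theorem:ellabelingposet}, taking $x = \hat 0$ and $y = [n]^i$. Since $\Pi_n^w$ is pure of length $n-1$, every maximal chain of $[\hat 0, [n]^i]$ has length $n-1$, so the theorem yields that $\{\bar c : c$ is an ascent-free maximal chain of $[\hat 0, [n]^i]\}$ is a basis for $\tilde H^{n-3}((\hat 0, [n]^i))$. By Theorem~\ref{thm:ascfreeEL} this set of ascent-free chains is exactly $\{c(T,\sigma,\tau_{T,\sigma}) : (T,\sigma) \in \lyn_{n,i}\}$, so $\{\bar c(T,\sigma,\tau_{T,\sigma}) : (T,\sigma) \in \lyn_{n,i}\}$ is a basis for $\tilde H^{n-3}((\hat 0, [n]^i))$.

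Next I would use Lemma~\ref{lemma:52}: since $\lyn_{n,i} \subseteq \BT_{n,i}$, the lemma applies to every $(T,\sigma) \in \lyn_{n,i}$ and gives $\bar c(T,\sigma,\tau_{T,\sigma}) = \sgn(\tau_{T,\sigma})\,\bar c(T,\sigma)$ in $\tilde H^{n-3}((\hat 0,[n]^i))$, with $\sgn(\tau_{T,\sigma}) \in \{1,-1\}$. Replacing each member of a basis by that member scaled by a unit again produces a basis, so $\{\bar c(T,\sigma) : (T,\sigma) \in \lyn_{n,i}\}$ is a basis for $\tilde H^{n-3}((\hat 0, [n]^i))$; this proves the cohomology half of the corollary.

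For the statement about $\lie_2(n,i)$, I would transport the basis just obtained through the isomorphism $\phi : \lie_2(n,i) \to \tilde H^{n-3}((\hat 0, [n]^i)) \otimes \sgn_n$ of Theorem~\ref{theorem:liehomisomorphism}. Twisting by $\sgn_n$ leaves the underlying module unchanged, so $\{\bar c(T,\sigma) : (T,\sigma) \in \lyn_{n,i}\}$ remains a basis of the codomain of $\phi$; scaling each such vector by the unit $\sgn(\sigma)\sgn(T)$ then produces the basis $\{\phi([T,\sigma]) : (T,\sigma) \in \lyn_{n,i}\}$, and applying $\phi^{-1}$ returns $\{[T,\sigma] : (T,\sigma) \in \lyn_{n,i}\}$, which is therefore a basis for $\lie_2(n,i)$. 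I expect no substantive obstacle: the only points needing a word of care are that scaling basis vectors by the units $\pm 1$ preserves the basis property (here one uses that $\pm 1$ exhaust the units of $\ZZ$; over a field this is trivial) and that $\lyn_{n,i} \subseteq \BT_{n,i}$, so that Lemma~\ref{lemma:52} and $\phi$ are genuinely applicable to bicolored Lyndon trees --- both immediate from the definitions.
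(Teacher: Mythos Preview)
Your proposal is correct and follows essentially the same route as the paper, which simply records that the corollary follows from Theorem~\ref{elth}, Lemma~\ref{lemma:52} and Theorem~\ref{theorem:liehomisomorphism} (with Theorem~\ref{thm:ascfreeEL} implicitly supplying the identification of the ascent-free chains). Your write-up spells out each step in a bit more detail than the paper does, but the argument is the same.
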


\begin{remark} Note that by switching left and right, small and large, blue and red, we get 8
different variations of bicolored Lyndon bases.
\end{remark}

\subsection{Liu's bicolored Lyndon basis} \label{sec:liu-lyn}  In this section we describe a
different generalization of the  Lyndon basis due to Liu \cite{Liu2010}. The basis we present
is actually a twisted version of the one in \cite{Liu2010} and has an easier description. The
two bases are related by a simple bijection. In Section~\ref{sec:treebasis} we will use this basis
to prove that a certain naturally constructed set of fundamental cycles is a basis for homology of
the interval $(\hat 0, [n]^i)$ .

We need to define a different valency from that of the previous section. This valency is referred to
in \cite{Liu2010} as the \emph{graphical root}. 
Recall that given an internal node $x$ of a binary tree,  $L(x)$ denotes the left child of $x$ and
$R(x)$ denotes the right child.  For each node $x$ of a bicolored labeled binary tree $(T,\sigma)$, 
define its  {\em valency} $v(x)$ recursively as follows: 
$$v(x) = \begin{cases} \mbox{label of } x &\mbox{if $x$ is a leaf} \\
\min\{v(L(x)),v(R(x))\} &\mbox{if $x$ is a blue internal node} \\
\max\{v(L(x)),v(R(x))\} &\mbox{if $x$ is a red internal node.}
\end{cases}$$

 A {\em Liu-Lyndon tree} is a bicolored labeled binary tree $(T,\sigma)$ such that for each 
internal node $x$ of $T$,
 \begin{enumerate}
 \item $v(L(x)) = v(x)$
  \item  if $x$ is $\blue$ and $L(x)$ is $\blue$ then $$v(R(L(x))) > v(R(x)) $$
 \item  if $x$ is $\red$ then $L(x)$ is $\red$ or is a leaf; in the former case, $$v(R(L(x)))
<v(R(x)) .$$
 \end{enumerate}
 
 Note that condition (1) is equivalent to the condition that $v(L(x)) < v(R(x))$ if $x$ is blue and
$v(L(x)) > v(R(x))$ if $x$ is red.  Note also that every subtree of a Liu-Lyndon tree is a
Liu-Lyndon tree.
 The set of Liu-Lyndon trees for $n=3$ is depicted in Figure \ref{fig:Liu-lyndon}. 
  
 \begin{figure}[h]
        \centering
         \begin{tikzpicture}[thick,scale=0.6]
\begin{scope}[xshift=-1cm,yshift=-1cm]

\tikzstyle{every node}=[fill, draw,inner sep=2pt,scale=0.8]
    \draw [color=blue] (3,1)  node (i1){};
    \draw [color=blue] (2,2)  node (i2){};

\tikzstyle{every node}=[inner sep=1pt, minimum width=14pt,scale=0.7]

    \draw (2,0)  node (m){$2$};
    \draw (4,0)  node (l1){$3$};
    \draw (1,1)  node (l2){$1$};
    
    \draw (m) --  (i1) ;
    \draw (i1) --  (l1) ;
    \draw (i1) --  (i2) ;
    \draw (i2) --  (l2) ;
\end{scope}

\begin{scope}[xshift=2.5cm,yshift=0]

\tikzstyle{every node}=[fill, draw,inner sep=2pt,scale=0.8]
    \draw [circle,color=red] (3,1)  node (i1){};
    \draw [color=blue] (2,2)  node (i2){};

\tikzstyle{every node}=[inner sep=1pt, minimum width=14pt,scale=0.7]

    \draw (2,0)  node (m){$3$};
    \draw (4,0)  node (l1){$2$};
    \draw (1,1)  node (l2){$1$};
    
    \draw (m) --  (i1) ;
    \draw (i1) --  (l1) ;
    \draw (i1) --  (i2) ;
    \draw (i2) --  (l2) ;
\end{scope}

\begin{scope}[xshift=6cm,yshift=0]

\tikzstyle{every node}=[fill, draw,inner sep=2pt,scale=0.8]
    \draw [color=blue] (3,1)  node (i1){};
    \draw [circle,color=red] (2,2)  node (i2){};

\tikzstyle{every node}=[inner sep=1pt, minimum width=14pt,scale=0.7]

    \draw (2,0)  node (m){$1$};
    \draw (4,0)  node (l1){$3$};
    \draw (1,1)  node (l2){$2$};
    
    \draw (m) --  (i1) ;
    \draw (i1) --  (l1) ;
    \draw (i1) --  (i2) ;
    \draw (i2) --  (l2) ;
\end{scope}
\begin{scope}[xshift=9.5cm,yshift=-1cm]

\tikzstyle{every node}=[fill, draw,inner sep=2pt,scale=0.8]
    \draw [circle,color=red] (3,1)  node (i1){};
    \draw [circle,color=red] (2,2)  node (i2){};

\tikzstyle{every node}=[inner sep=1pt, minimum width=14pt,scale=0.7]

    \draw (2,0)  node (m){$2$};
    \draw (4,0)  node (l1){$1$};
    \draw (1,1)  node (l2){$3$};
    
    \draw (m) --  (i1) ;
    \draw (i1) --  (l1) ;
    \draw (i1) --  (i2) ;
    \draw (i2) --  (l2) ;
\end{scope}

\begin{scope}[xshift=0,yshift=-5cm]

\tikzstyle{every node}=[fill, draw,inner sep=2pt,scale=0.8]
    \draw [color=blue] (1,1)  node (i1){};
    \draw [color=blue] (2,2)  node (i2){};

\tikzstyle{every node}=[inner sep=1pt, minimum width=14pt,scale=0.7]

    \draw (0,0)  node (m){$1$};
    \draw (2,0)  node (l1){$3$};
    \draw (3,1)  node (l2){$2$};

    \draw (m) --  (i1) ;
    \draw (i1) --  (l1) ;
    \draw (i1) --  (i2) ;
    \draw (i2) --  (l2) ;
\end{scope}

\begin{scope}[xshift=3.5cm,yshift=-6cm]
\tikzstyle{every node}=[fill, draw,inner sep=2pt,scale=0.8]
    \draw [circle,color=red] (1,1)  node (i1){};
    \draw [color=blue] (2,2)  node (i2){};

\tikzstyle{every node}=[inner sep=1pt, minimum width=14pt,scale=0.7]

    \draw (0,0)  node (m){$2$};
    \draw (2,0)  node (l1){$1$};
    \draw (3,1)  node (l2){$3$};

    \draw (m) --  (i1) ;
    \draw (i1) --  (l1) ;
    \draw (i1) --  (i2) ;
    \draw (i2) --  (l2) ;
\end{scope}

\begin{scope}[xshift=7cm,yshift=-6cm]
\tikzstyle{every node}=[fill, draw,inner sep=2pt,scale=0.8]
    \draw [circle,color=red] (1,1)  node (i1){};
    \draw [circle,color=red] (2,2)  node (i2){};

\tikzstyle{every node}=[inner sep=1pt, minimum width=14pt,scale=0.7]

    \draw (0,0)  node (m){$3$};
    \draw (2,0)  node (l1){$1$};
    \draw (3,1)  node (l2){$2$};
    
    \draw (m) --  (i1) ;
    \draw (i1) --  (l1) ;
    \draw (i1) --  (i2) ;
    \draw (i2) --  (l2) ;
\end{scope}
\begin{scope}[xshift=9.5cm,yshift=-5cm]

\tikzstyle{every node}=[fill, draw,inner sep=2pt,scale=0.8]
    \draw [circle,color=red] (3,1)  node (i1){};
    \draw [color=blue] (2,2)  node (i2){};

\tikzstyle{every node}=[inner sep=1pt, minimum width=14pt,scale=0.7]

    \draw (2,0)  node (m){$3$};
    \draw (4,0)  node (l1){$1$};
    \draw (1,1)  node (l2){$2$};
    
    \draw (m) --  (i1) ;
    \draw (i1) --  (l1) ;
    \draw (i1) --  (i2) ;
    \draw (i2) --  (l2) ;
\end{scope}

\begin{scope}[xshift=5cm,yshift=-3cm]
\tikzstyle{every node}=[fill, draw,inner sep=2pt,scale=0.8]
    \draw [color=blue] (3,1)  node (i1){};
    \draw [circle,color=red] (2,2)  node (i2){};

\tikzstyle{every node}=[inner sep=1pt, minimum width=14pt,scale=0.7]

    \draw (2,0)  node (m){$1$};
    \draw (4,0)  node (l1){$2$};
    \draw (1,1)  node (l2){$3$};
    
    \draw (m) --  (i1) ;
    \draw (i1) --  (l1) ;
    \draw (i1) --  (i2) ;
    \draw (i2) --  (l2) ;

\end{scope}
\end{tikzpicture}
 \caption{Set of  Liu-Lyndon trees for $n=3$}
\label{fig:Liu-lyndon}
  \end{figure}
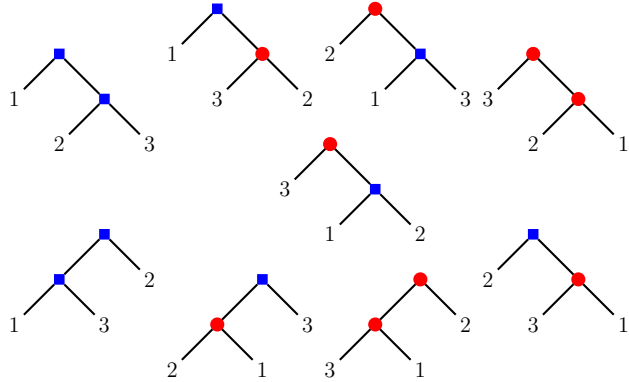
 
Let $\liu_{n,i} $ be the set of Liu-Lyndon trees in $\BT_{n,i}$.  When $i=0$, all internal nodes are
blue and it follows from the definition that $\liu_{n,0} $ is the set of Lyndon trees on $n$ leaves.
 When $i=n-1$, all internal nodes are red and it follows from the definition that $\liu_{n,n-1} $ 
consists of labeled binary trees  obtained from Lyndon  trees by replacing each  label $j$ by  label
$n-j$.

 In \cite{Liu2010} Liu proves that $\{ [T,\sigma] : (T,\sigma) \in \liu_{n,i} \} $ is a basis for
$\lie_{n,i}$ by using a perfect pairing between   $\lie_{n,i}$ and another module that she
constructs.
In the next section, we will use  the natural pairing between cohomology and homology of $(\hat 0,
[n]^i)$ to prove this result.  

We will need a bijection of Liu \cite{Liu2010}. Let $A$ be a finite subset of the positive integers
and let $0 \le i \le |A|-1$. Extend the definitions of $\mathcal T_{n,i}$ and $\liu_{n,i}$ by
letting $\mathcal T_{A,i}$  be the set of rooted trees on node set $A$ with $i$ descents and
$\liu_{A,i}$ be the set of Liu-Lyndon trees with leaf label set $A$ and $i$ red internal nodes. 
Define $\psi: \mathcal T_{A,i} \to \liu_{A,i}$ recursively as follows: if $|A|=1$, let $ \psi(T)$ be
the labeled binary tree whose single leaf is labeled with the sole element of $A$.  Now suppose $|A|
> 1$ and $r_T \in A$ is the root of $T$.  Let $x$ be the smallest child of $r_T$ that is larger than
$r_T$. If no such node exists let $x$ be the largest child of $r_T$. Let  $T_x$ be the subtree of $T$ rooted at $x$ 
and let
$T\setminus T_x$ be the subtree of $T$ obtained by removing $T_x$ from $T$.  Now let $$\psi(T) =
\psi(T\setminus T_x) \,\substack{\col \\ \wedge} \,\psi(T_x) ,$$ where $$\col = \begin{cases} \blue
& \mbox{ if } x > r_T \\ \red & \mbox{ if }x <r_T. \end{cases}$$

   It will be convenient to refer to descent edges of $T$  (i.e., edges $\{x,p_T(x)\}$, where $x <
p_T(x)$) as red edges, and nondescent edges (i.e., edges $\{x,p_T(x)\}$, where $x > p_T(x)$) as blue
edges. Hence
 $\psi$ takes  blue edges to  blue internal nodes and  red edges to  red internal nodes. 
Consequently $\psi(T) \in \BT_{A,i}$ if $T \in \mathcal T_{A,i}$.
By induction we see that the valuation of the root of $\psi(T)$ is equal to the root of $T$.   It
follows from this  that $\psi(T) \in \liu_{A,i}$.  It is not difficult to describe the inverse of
$\psi$ and thereby prove the following result.

\begin{proposition}[\cite{Liu2010}]\label{proposition:lyndontrees}
For all finite sets $A$ and $0 \le i \le |A|$, the map 
\[
\psi:\T_{A,i} \rightarrow \liu_{A,i}\]
is a well-defined bijection.
\end{proposition}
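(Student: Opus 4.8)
The plan is to construct an explicit inverse map $\phi : \liu_{A,i} \to \T_{A,i}$ and show that $\phi$ and $\psi$ are mutually inverse, everything proved by induction on $|A|$. The groundwork for $\psi$ being well defined has already been laid in the paragraph preceding the statement: the valency of the root of $\psi(T)$ equals the root $r_T$ of $T$, and blue (nondescent) edges of $T$ correspond to blue internal nodes of $\psi(T)$ while red (descent) edges correspond to red internal nodes, so $\psi(T) \in \BT_{A,i}$. What remains is to check the three conditions in the definition of a Liu-Lyndon tree. Writing $\psi(T) = \psi(T \setminus T_x) \substack{\col \\ \wedge} \psi(T_x)$, condition (1) at the root is immediate, since $v(L) = r_T = v(\psi(T \setminus T_x))$, $v(R) = x = v(\psi(T_x))$, and $\col = \blue$ precisely when $x > r_T$. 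For conditions (2) and (3) one unwinds the recursion of $\psi$ down the left spine of $\psi(T)$: the children of $r_T$ get peeled off successively, first those larger than $r_T$ in increasing order and then those smaller than $r_T$ in decreasing order, so the right-subtree valencies along the left spine of $\psi(T)$ form an increasing run of values $> r_T$ (at blue spine nodes) followed by a decreasing run of values $< r_T$ (at red spine nodes); conditions (2) and (3) are exactly the assertions that these runs are monotone and that the spine colors switch from blue to red at most once. The internal nodes off the spine lie inside $\psi(T \setminus T_x)$ or $\psi(T_x)$, which are Liu-Lyndon trees by the inductive hypothesis.

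Next I would define $\phi$ by reversing one step of $\psi$. For a single leaf labeled $a$, let $\phi$ of it be the one-node rooted tree on $\{a\}$. For $S = S_1 \substack{\col \\ \wedge} S_2$, set $r := v(S_1)$, which equals $v(S)$ by Liu-Lyndon condition (1), and $x := v(S_2)$, which by induction is the root of $\phi(S_2)$; then let $\phi(S)$ be the rooted tree obtained from $\phi(S_1)$ (whose root is $r$ by induction) by adjoining $\phi(S_2)$, that is, by declaring $x$ a child of $r$. Since the leaf sets of $S_1$ and $S_2$ are disjoint, $x \neq r$ and $x$ differs from the children of $r$ already present in $\phi(S_1)$, so $\phi(S)$ is a genuine rooted tree on $A$; and because a blue root of $S$ forces $x > r$ (so $\{x,r\}$ is a nondescent edge) while a red root forces $x < r$ (a descent edge), an induction on the numbers of red and blue internal nodes gives $\phi(S) \in \T_{A,i}$.

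The crux is to verify $\psi \circ \phi = \mathrm{id}$ and $\phi \circ \psi = \mathrm{id}$, and the one delicate point is that in $\phi(S)$ the node $x = v(S_2)$ is exactly the child of $r$ that $\psi$ selects first — namely the smallest child of $r$ exceeding $r$ when $\col = \blue$, and the largest child of $r$ (with all children of $r$ smaller than $r$) when $\col = \red$. This is where conditions (2) and (3) do the work: they put $v(R(L(S)))$ on the correct side of $v(R(S)) = x$, and in the red case force the entire left spine of $S_1$ to be red, so that among the children of $r$ in $\phi(S)$ — which are $x$ together with the right-subtree roots along the left spine of $\phi(S_1)$ — the node $x$ is precisely the one $\psi$ peels off first, with the matching color. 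Granting this, $\psi(\phi(S)) = \psi(\phi(S_1)) \substack{\col \\ \wedge} \psi(\phi(S_2)) = S_1 \substack{\col \\ \wedge} S_2 = S$ by induction, and dually $\phi(\psi(T)) = T$, because re-adjoining $T_x$ to $T \setminus T_x$ at $r_T$ restores $T$. I expect this spine bookkeeping — matching the left spine of a Liu-Lyndon tree against the rule ``peel the children above the root in increasing order, then those below it in decreasing order'' — to be the only real obstacle; the rest is a routine double induction.
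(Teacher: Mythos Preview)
Your proposal is correct and follows precisely the approach the paper indicates: the paper's own treatment of this proposition consists of the single sentence ``It is not difficult to describe the inverse of $\psi$ and thereby prove the following result,'' and you have carried this out, constructing the inverse $\phi$ explicitly and verifying both compositions by induction. Your spine analysis is slightly more elaborate than strictly necessary---by the inductive hypothesis only the root of $\psi(T)$ needs checking for conditions (1)--(3), and this requires looking only one level down to the root of $\psi(T\setminus T_x)$---but it is correct and gives a clear picture of why the Liu--Lyndon conditions encode exactly the child-selection rule in $\psi$.
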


\begin{remark}\label{rem:equi} It follows from Corollary~\ref{proposition:dimensionhat}, 
Theorem~\ref{proposition:combbasiscohomology}, 
Corollary~\ref{corollary:lyndonbasiscohomology}  and Proposition~\ref{proposition:lyndontrees} that
$$|\T_{n,i}| = |\comb_{n,i}| = |\lyn_{n,i}| = |\liu_{n,i}| .$$  It would be desirable to find nice  bijections 
between the given sets like that of Proposition ~\ref{proposition:lyndontrees}.  
In \cite{Dleon2013b} Gonz\'alez D'Le\'on constructs such a bijection between  $ \comb_{n,i}$ and $
\lyn_{n,i}$.  We 
leave open the problem of finding a bijection between $\T_{n,i}$ and $ \comb_{n,i}$ or $ \lyn_{n,i}$.
\end{remark}

\subsection{The tree basis for homology} \label{sec:treebasis}
We now present a generalization of Bj\"orner's NBC basis for homology 
of $\overline{\Pi}_n$ (see \cite[Proposition~2.2]{Bjorner1982}).    
Recall that in Section~\ref{ranksec}, we associated a weighted partition $\alpha(F)$ with each 
forest 
$F= \{T_1,\dots,T_k\}$ on node set $[n]$, by letting  $$\alpha(F) = \{A_1^{w_1},\dots,
A_k^{w_k}\},$$ where $A_i$ is the node set  of $T_i$  and $w_i$ is the number of descents  of $T_i$.

 Let $T$ be a rooted tree on node set $[n]$.  For each subset 
$E$ of the edge set $E(T)$ of $T$, let $T_E$  be the subgraph of $T$
with node
set $[n]$ and edge set $E$.  Clearly $T_E$ is a forest on $[n]$. 
 We define $\Pi_T$ to be the induced subposet of 
 $\Pi_n^{w}$ on the set $\{\alpha(T_E) : E \in E(T)\}$.  See Figure~\ref{figpit} for an example
of $\Pi_T$. The poset $\Pi_T$ 
 is clearly isomorphic to the boolean algebra $\BB_{n-1}$.  Hence 
 $\Delta(\overline{\Pi_T})$ is the barycentric subdivision of the 
 boundary of the $(n-2)$-simplex.
We let $\rho_T$ denote
a fundamental cycle of the spherical complex $\Delta(\overline{\Pi_T})$, that is, a generator of the unique nonvanishing integral simplicial homology of $\Delta(\overline{\Pi_T})$.  Note that $\rho_T = \sum_{c \in \mathcal M(\Pi_T)} \pm \bar c$.

\begin{figure}[h]
        \centering
        \begin{subfigure}[b]{0.5\textwidth}
                \centering
                   \begin{tikzpicture}[thick,scale=0.8]
\tikzstyle{every node}=[circle, draw,inner sep=1pt, minimum width=14pt,scale=0.8]

    \draw (1,2)  node (v3){3};
    \draw (0,1) node (v4){4};
    \draw (2,1) node (v1){1};
    \draw (0,-0.5) node (v2){2};
    \draw [color=blue] (v3) --  (v4) ;
    \draw [color=red] (v3) --  (v1) ;
    \draw [color=red] (v4) --  (v2) ;

\end{tikzpicture}
                \caption{$T$}
        \end{subfigure}%
        ~ 
        \begin{subfigure}[b]{0.5\textwidth}
                \centering
                \begin{tikzpicture}[thick,scale=1]
\tikzstyle{every node}=[inner sep=1pt, minimum width=14pt,scale=0.6]
    \node (v1234) at (0,3){$1234^{2}$};
    \node (v13-24) at (-2,2){$13^{1}|24^{1}$};
    \node (v134-2) at (0,2){$134^{1}|2^{0}$};
    \node (v1-234) at (2,2){$1^{0}|234^{1}$};
    \node (v13-2-4) at (-2,1){$13^{1}|2^{0}|4^{0}$};
    \node (v1-24-3) at (0,1){$1^{0}|24^{1}|3^{0}$};
    \node (v1-2-34) at (2,1){$1^{0}|2^{0}|34^{0}$};
    \node (v1-2-3-4) at (0,0){$1^{0}|2^{0}|3^{0}|4^{0}$};

    \draw (v1234) -- (v13-24);
    \draw (v1234) -- (v134-2);
    \draw (v1234) -- (v1-234);
    \draw (v13-24) -- (v13-2-4);
    \draw (v13-24) -- (v1-24-3);
    \draw (v134-2) -- (v13-2-4);
    \draw (v134-2) -- (v1-2-34);
    \draw (v1-234) -- (v1-24-3);
    \draw (v1-234) -- (v1-2-34);
    \draw (v13-2-4) -- (v1-2-3-4);
    \draw (v1-24-3) -- (v1-2-3-4);
    \draw (v1-2-34) -- (v1-2-3-4);

\end{tikzpicture}
                \caption{$\Pi_T$}
        \end{subfigure}
 \caption{Example of a tree $T$ with two descent edges (red edges) and the corresponding poset
$\Pi_T$}
\label{figpit}
  \end{figure}
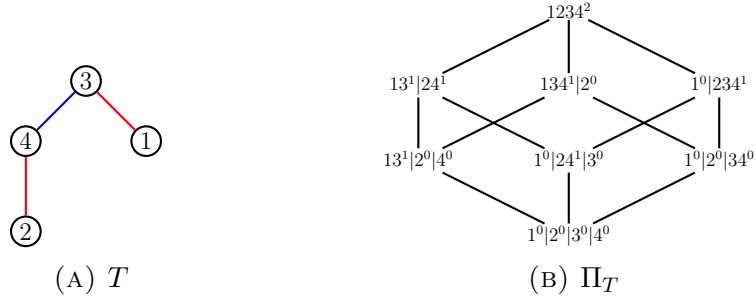

The set $\{\rho_T : T \in \mathcal T_{n,0} \}$ is   precisely 
the interpretation of the Bj\"orner NBC basis for homology of $\overline{\Pi}_n$ given in
\cite[Proposition~2.2]{Wachs1998}, and  the set $\{\rho_T : T \in \mathcal T_{n,n-1} \}$ is a
variation of this basis. Bj\"orner's NBC basis 
is dual to the Lyndon basis $\{\bar c(\Upsilon) :  \Upsilon \in  {\textsf{Lyn}_n}\}$ for cohomology of
$\overline{\Pi}_n$ (using the natural pairing between homology and cohomology).   While it is 
not true in general that $\{\rho_T : T \in \mathcal T_{n,i} \}$ is  dual  to any of the
generalizations of the bases given in the previous sections, 
we are able to prove that it is a basis by  pairing it with the Liu-Lyndon basis for cohomology.

\begin{theorem} \label{thm:treebasis} The set $\{\rho_T : T \in \mathcal T_{n,i} \}$ is a basis for
$\tilde H_{n-3}((\hat 0, [n]^i))$ and the set $\{\bar c(\Upsilon) : \Upsilon \in \liu_{n,i} \}$ is a
basis for $\tilde H^{n-3}((\hat 0, [n]^i))$.
\end{theorem}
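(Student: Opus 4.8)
The plan is to exploit the perfect pairing between $\tilde H_{n-3}((\hat 0,[n]^i))$ and $\tilde H^{n-3}((\hat 0,[n]^i))$. By Corollary~\ref{proposition:dimensionhat} together with the equalities recorded in Remark~\ref{rem:equi}, the four numbers $\rank \tilde H_{n-3}((\hat 0,[n]^i))$, $\rank \tilde H^{n-3}((\hat 0,[n]^i))$, $|\mathcal T_{n,i}|$ and $|\liu_{n,i}|$ all coincide. Hence it suffices to prove that the square matrix
\[
M \;=\; \Big(\,\langle\, \rho_T,\ \bar c(\psi(T'))\,\rangle\,\Big)_{T,T'\in\mathcal T_{n,i}}
\]
is nonsingular, where $\psi:\mathcal T_{n,i}\to\liu_{n,i}$ is the bijection of Proposition~\ref{proposition:lyndontrees} and $\langle\,\cdot\,,\,\cdot\,\rangle$ is the natural pairing of homology with cohomology: nonsingularity of $M$ forces both $\{\rho_T : T\in\mathcal T_{n,i}\}$ and $\{\bar c(\psi(T')) : T'\in\mathcal T_{n,i}\}=\{\bar c(\Upsilon):\Upsilon\in\liu_{n,i}\}$ to be linearly independent, and, having the correct cardinality, they are then bases.

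Next I would compute the entries of $M$. Since $\rho_T=\sum_{c\in\mathcal M(\Pi_T)}\pm\bar c$ and $\bar c(\psi(T'))$ is the restriction to the open interval of a single maximal chain $c(\psi(T'))$ of $[\hat 0,[n]^i]$, the pairing $\langle \rho_T,\bar c(\psi(T'))\rangle$ equals $\pm1$ if $c(\psi(T'))\in\mathcal M(\Pi_T)$ and $0$ otherwise. The diagonal entries are units: the recursive definition of $\psi$ shows that every subtree of $\psi(T)$ has leaf set equal to the node set of a connected subgraph of $T$ obtained by successively detaching subtrees, and that the number of red internal nodes of such a subtree equals the number of descents of the corresponding subtree of $T$; consequently every weighted partition occurring along $c(\psi(T))$ has the form $\alpha(T_E)$ for some $E\subseteq E(T)$, so $c(\psi(T))\in\mathcal M(\Pi_T)$ and $M_{T,T}=\pm1$.

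The crux is to equip $\mathcal T_{n,i}$ with a total order $\preceq$ for which $M$ is triangular, i.e.\ $c(\psi(T'))\in\mathcal M(\Pi_T)$ implies $T\preceq T'$, with equality only for $T=T'$. I would prove this by induction on $n$, using the recursive structure of $\psi$ and Proposition~\ref{proposition:upperlowerideals}. The top covering step of $c(\psi(T'))$ merges the node sets $A$ and $B$ of the two principal subtrees of $\psi(T')$; hence any $T$ with $c(\psi(T'))\in\mathcal M(\Pi_T)$ is split by its last-added edge into rooted subtrees on exactly these node sets, and the two ``halves'' of the chain live in a product poset $[\hat 0,[|A|]^{j}]\times[\hat 0,[|B|]^{k}]$, so the inductive hypothesis pins the two halves down to $T'\setminus T'_x$ and $T'_x$. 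One then checks that the only remaining freedom is the choice of the $A$-endpoint of the last edge (the $B$-endpoint must be the root of $T'_x$, and the descent condition restricts this endpoint still further), with $T'$ itself corresponding to the choice dictated by $\psi$; defining $\preceq$ via a lexicographic comparison of the sequences of added edges read bottom-up from the chain, suitably tie-broken so that this distinguished choice is $\preceq$-extreme, yields the required triangularity and hence the nonsingularity of $M$.

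The main obstacle, as just indicated, is this last step: a single maximal chain of $[\hat 0,[n]^i]$ genuinely lies in $\Pi_T$ for several rooted trees $T$ (one for each admissible attachment of the last edge, recursively), so $M$ is not diagonal, and the real work is in choosing $\preceq$ and the tie-breaking rule so that the bijection $\psi$ always selects the $\preceq$-extreme realizing tree, and in making the ``detach a subtree'' recursion of $\psi$ mesh cleanly with the weight bookkeeping in the induction.
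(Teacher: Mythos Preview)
Your strategy is exactly the paper's: pair the proposed homology classes against the proposed cohomology classes via $\psi$, show the resulting square matrix is unit upper-triangular with respect to a suitable order on $\mathcal T_{n,i}$, and invoke Proposition~\ref{proposition:dual} together with Theorem~\ref{theorem:homotopy}(2). The diagonal computation and the reduction of the off-diagonal entries to the question ``when is $c(\psi(T'))\in\mathcal M(\Pi_T)$?'' are also just as in the paper.

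The one place where your sketch drifts is the inductive step. You write that the inductive hypothesis ``pins the two halves down to $T'\setminus T'_x$ and $T'_x$,'' and then speak of ``remaining freedom'' in the endpoint of the connecting edge. These two statements are in tension, and in fact the first is too strong: the chain $c(\psi(T'_x))$ can lie in $\Pi_{T_2}$ for several rooted trees $T_2$, not only for $T_2=T'_x$. What the induction actually yields is an \emph{order relation} $T_1\preceq T'\setminus T'_x$ and $T_2\preceq T'_x$, and the definition of $\preceq$ must then be set up so that these two relations, together with the fact that the removed edge $e'$ on the $T'$ side contains the root, force $T\preceq T'$. This is precisely the recursive partial order $\le_{\text{Liu}}$ introduced by Liu: $T\preceq T'$ whenever there exist same-colored edges $e$ of $T$ and $e'$ of $T'$ with $e'$ incident to $r_{T'}$, $\alpha(T_{E(T)\setminus\{e\}})=\alpha(T'_{E(T')\setminus\{e'\}})$, and the resulting components compare recursively. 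The paper simply cites this order from \cite{Liu2010} (where it is shown to be a genuine partial order) and then proves the key implication $c(\psi(T'))\in\mathcal M(\Pi_T)\Rightarrow T\le_{\text{Liu}}T'$ as a short lemma by the induction you outline. So your ``main obstacle'' is already resolved in the literature; once you replace the ad hoc lexicographic order you were reaching for with $\le_{\text{Liu}}$, the argument goes through cleanly.
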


Our main tool in proving this theorem is Proposition~\ref{proposition:dual} (of the Appendix), which
involves the bilinear form $\langle,\rangle$ defined in Appendix~\ref{section:homologyposets}.
 In order to apply 
Proposition~\ref{proposition:dual} we need   total orderings  of the sets $\mathcal T_{n,i}$ and
$\liu_{n,i}$.  Recall 
Liu's bijection $\psi: \mathcal T_{n,i} \to \liu_{n,i}$ given in
Proposition~\ref{proposition:lyndontrees}. We will show that any linear 
extension $\{T_1,T_2,\dots, T_{|\mathcal T_{n,i}|} \}$ of a certain partial   ordering  on $\mathcal
T_{n,i}$ provided by Liu \cite{Liu2010}  
yields a matrix $\langle \rho_{T_j}, \bar c(\psi(T_k) )\rangle_{1 \le j,k \le |\mathcal T_{n,i}|}$ 
that is upper-triangular with diagonal entries equal to $\pm 1$. Theorem~\ref{thm:treebasis}  will
then follow from Proposition~\ref{proposition:dual} and Theorem~\ref{theorem:homotopy}~(2).

We define Liu's partial ordering $\le_{\text{Liu}}$ of $\mathcal T_{A,i}$ recursively.  For $|A|\le
2$, the set $\mathcal T_{A,i}$ has only one element.  So assume that $|A| \ge 3$ and   that 
$\le_{\text{Liu}}$ has been defined for all $\mathcal T_{B,j}$ where $|B| < |A|$. Let $T, T^\prime
\in  \mathcal T_{A,i}$.   
We say that $T \preceq T^\prime$ if there exist edges $e$ of $T$ and $e^\prime$ of $T^\prime$ such
that the following conditions hold
\begin{itemize}
\item $e$ and $e^\prime $ have the same color,
\item $e^\prime$ contains the root of $T^\prime$,
\item $\alpha(T_{E(T)\setminus\{e\}}) = \alpha(T^\prime_{E(T^\prime)\setminus\{e^\prime\}}) $
\item $T_1 \le_{\text{Liu}} T_1^\prime$,
\item $T_2 \le_{\text{Liu}} T_2^\prime$,
\end{itemize}
where $T_1$ and $T_2$ are the connected components (trees) of  the forest  obtained by removing $e$
from $T$, and $T^\prime_1$ and $T^\prime_2$ are the corresponding connected components (trees) of 
the forest  obtained by removing $e^\prime$ from $T^\prime$.

Now define $\le_{\text{Liu}}$ to be the transitive closure of the relation $\preceq$ on $\mathcal
T_{A,i}$.  It follows from \cite[Lemma 8.12]{Liu2010} that this relation is the same as the relation
$\le_{\text{op}}$ that was defined in \cite[Definition 7.11]{Liu2010} and was proved to be a partial
order in  \cite[Lemma 7.13]{Liu2010}.  

\begin{lemma} \label{lem:liuorder} Let $T,T^\prime \in \mathcal T_{n,i}$ and let $\psi: \mathcal
T_{n,i} \to \liu_{n,i}$ be the bijection of Proposition~\ref{proposition:lyndontrees}.  If $
c(\psi(T^\prime)) \in \mathcal M(\Pi_T)$ then $T\le_{\text{Liu}} T^\prime$.
\end{lemma}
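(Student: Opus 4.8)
The plan is to argue by induction on $n$; the cases $n \le 2$ are immediate, since then $\mathcal T_{n,i}$ is a singleton and $\le_{\text{Liu}}$ is reflexive. (Every smaller instance that arises below involves fewer than $n$ vertices, so the inductive hypothesis applies in the natural extension of the lemma to an arbitrary finite vertex set in place of $[n]$.) Assume $n \ge 3$. Let $r$ be the root of $T'$ and $x$ the child of $r$ chosen in the recursive definition of $\psi$, so that $\psi(T') = \psi(T'_1)\,\substack{\col\\\wedge}\,\psi(T'_2)$ with $T'_1 := T' \setminus T'_x$, $T'_2 := T'_x$, the root of $\psi(T')$ being red exactly when $x < r$. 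Put $e' := \{r,x\} \in E(T')$, so that $e'$ contains the root of $T'$, removing $e'$ from $T'$ produces $T'_1$ and $T'_2$, and $e'$ is a descent edge of $T'$ exactly when the root of $\psi(T')$ is red. Let $A$, $B$ be the vertex sets of $T'_1$, $T'_2$ and let $a := \des(T'_1)$, $b := \des(T'_2)$; then $\psi(T'_1) \in \BT_{A,a}$ and $\psi(T'_2) \in \BT_{B,b}$, and since $\psi(T') \in \BT_{n,i}$ we have $i = a + b + \varepsilon'$, where $\varepsilon'$ is $1$ if $e'$ is a descent edge and $0$ otherwise.

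First I would record the shape of the maximal chain $c(\psi(T'))$, using Definition~\ref{definition:treechain} and the fact that the postorder listing of the internal nodes of $\psi(T')$ consists of those of $\psi(T'_1)$, then those of $\psi(T'_2)$, then the root. For $0 \le k \le |A|-1$ the rank $k$ weighted partition of $c(\psi(T'))$ is the rank $k$ weighted partition of $c(\psi(T'_1))$ (a weighted partition of $A$) together with the singletons of $B$; for $|A|-1 \le k \le n-2$ it is $A^a$ together with the rank $k - |A| + 1$ weighted partition of $c(\psi(T'_2))$; and the last step merges $A^a$ and $B^b$. In particular the rank $n-2$ element of $c(\psi(T'))$ is $\{A^a, B^b\}$.

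Now I would use the hypothesis $c(\psi(T')) \in \mathcal M(\Pi_T)$. Because $E \mapsto \alpha(T_E)$ is an isomorphism from the boolean algebra on $E(T)$ onto $\Pi_T$, the chain corresponds to a saturated chain $\emptyset = E_0 \subset E_1 \subset \cdots \subset E_{n-1} = E(T)$ with rank $k$ element $\alpha(T_{E_k})$. Let $e$ be the unique edge of $T$ outside $E_{n-2}$; deleting $e$ from $T$ gives two trees, labeled $T_1$ and $T_2$ so that $T_1$ has vertex set $A$ and $T_2$ has vertex set $B$ (possible since $\alpha(T_{E_{n-2}}) = \{A^a, B^b\}$). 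Matching weights forces $\des(T_1) = a$ and $\des(T_2) = b$; and since every edge of $T$ other than $e$ is a descent edge of $T$ iff it is a descent edge of whichever of $T_1, T_2$ contains it, $\des(T_1) + \des(T_2) = \des(T) - \varepsilon = i - \varepsilon$, where $\varepsilon$ is $1$ if $e$ is a descent edge and $0$ otherwise. Hence $\varepsilon = i - a - b = \varepsilon'$, so $e$ and $e'$ have the same color, and moreover $\alpha(T_{E(T) \setminus \{e\}}) = \{A^a, B^b\} = \alpha(T'_{E(T') \setminus \{e'\}})$.

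It remains to obtain the recursive comparisons. For $0 \le k \le |A|-1$ the vertices of $B$ are singletons in $\alpha(T_{E_k})$; since $T$ is a tree, the edge $e$ and every edge of $T_2$ meet $B$, so $E_k \subseteq E(T_1)$, and comparing with the shape recorded above, the rank $k$ element of $c(\psi(T'_1))$ equals $\alpha((T_1)_{E_k}) \in \Pi_{T_1}$; thus $c(\psi(T'_1)) \in \mathcal M(\Pi_{T_1})$, and the inductive hypothesis applied to $A$ yields $T_1 \le_{\text{Liu}} T'_1$. Symmetrically, for $|A|-1 \le k \le n-2$ the set $A$ is a single block of $\alpha(T_{E_k})$, which (again as $T$ is a tree) forces $E(T_1) \subseteq E_k \subseteq E(T_1) \cup E(T_2)$, so the rank $k - |A| + 1$ element of $c(\psi(T'_2))$ equals $\alpha((T_2)_{E_k \setminus E(T_1)}) \in \Pi_{T_2}$; hence $c(\psi(T'_2)) \in \mathcal M(\Pi_{T_2})$ and the inductive hypothesis yields $T_2 \le_{\text{Liu}} T'_2$. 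The edges $e$, $e'$ together with these two comparisons verify the five clauses of the definition of $\preceq$, so $T \preceq T'$ and therefore $T \le_{\text{Liu}} T'$. The step I expect to require the most care is this last one: extracting, from the single hypothesis that $c(\psi(T'))$ lies in the boolean algebra $\Pi_T$, that both halves of the chain are forced into the sub-boolean-algebras $\Pi_{T_1}$ and $\Pi_{T_2}$, while keeping the descent/weight bookkeeping and the recursive structure of $\psi$ aligned throughout.
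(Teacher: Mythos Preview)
Your proof is correct and follows essentially the same approach as the paper's: decompose $\psi(T')$ at its root, locate the edge $e$ of $T$ that the chain adds last, verify that the two halves of the chain lie in $\Pi_{T_1}$ and $\Pi_{T_2}$, apply induction, and conclude $T \preceq T'$. Your version is considerably more explicit about the bookkeeping (the descent-count matching that pins down the color of $e$, the inclusion arguments forcing $E_k \subseteq E(T_1)$ and $E(T_1) \subseteq E_k$ in the two ranges), whereas the paper compresses all of this into a brief ``first note that'' observation.
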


\begin{proof}  First note that if  $\Upsilon_1 \substack{\col \\ \wedge} \Upsilon_2$ is a bicolored
labeled binary tree such that  $c(\Upsilon_1 \substack{\col \\ \wedge} \Upsilon_2) $ is a maximal
chain in $\Pi_T$ then there is an edge $e$ of $T$ whose color equals $\col$ and whose removal  from
$T$ yields a forest whose connected components (trees) $T_1$ and $T_2$ satisfy: $c(\Upsilon_1)$  is
a maximal chain in $\Pi_{T_1}$ and $c(\Upsilon_2)$  is a maximal chain in $\Pi_{T_2} $.

Now recalling the definition of $\psi$, let $x$ be the child of the root $r_{T^\prime}$ of
$T^\prime$, for which 
$$\psi(T^\prime) = \psi(T^\prime\setminus T^\prime_x) \,\substack{\col \\ \wedge} \,\psi(T^\prime_x)
,$$
where $\col$ equals the color of the edge  $\{x,r_{T^\prime}\}$.  Let $e$ be the edge of $T$ whose
removal yields the subtrees $T_1$ and $T_2$ such that $c(\psi(T^\prime\setminus T^\prime_x)) \in
\mathcal M(\Pi_{T_1})$ and $c(\psi(T^\prime_x)) \in \mathcal M(\Pi_{T_2})$.  Then the color of $e$
is the same as that of  the edge $\{x,r_{T^\prime}\}$.  By induction we can assume that
$$T_1 \le_{\text{Liu}} T^\prime\setminus T^\prime_x \,\,\,\mbox{ and }\,\,\, T_2 \le_{\text{Liu}}
T^\prime_x .$$
Since $e$ and $e^\prime:= \{x,r_{T^\prime}\}$ satisfy the conditions of the definition of $\preceq$,
we have $T \preceq T^\prime$, which implies the result.
\end{proof} 

\begin{proof}[Proof of Theorem~\ref{thm:treebasis}]  Let $T_1,\dots,T_m$ be any linear extension of
$\le_{\text{Liu}}$ on $\mathcal T_{n,i}$, where $m = |\mathcal T_{n,i}|$.    It follows from
Lemma~\ref{lem:liuorder} that the matrix $$M:=\langle \rho_{T_j}, \bar c(\psi(T_k) )\rangle_{1 \le
j,k \le m}$$ 
 is upper-triangular, where $\langle , \rangle$ is the bilinear form defined in Appendix~\ref{section:homologyposets}. 
Since $c(\psi(T))$ is a maximal chain of  $\Pi_T$ for all $T\in \mathcal T_{n,i}$, the diagonal
entries of  $M$ are equal to $\pm 1$.  Hence $M$ is invertible over $\ZZ$ or any field.  The result
now follows from Propositions~\ref{proposition:lyndontrees} and~\ref{proposition:dual} and
Theorem~\ref{theorem:homotopy} (2).
\end{proof}

\begin{remark} Theorems~\ref{theorem:liehomisomorphism} and~\ref{thm:treebasis}  yield an
alternative proof of Liu's result that 
$\{ [T,\sigma] : (T,\sigma) \in \liu_{n,i} \} $ is a basis for $\lie_{n,i}$. \end{remark}

\subsection{Bases for cohomology of the full weighted partition poset}

In this section we use  bicolored combs and bicolored Lyndon trees  to construct   bases for 
$\tilde H^{n-2}(\Pi_n^w \setminus{\{\hat 0}\})$. 

For a chain $c$ in $\Pi_n^w$, let $$\breve c:= c \setminus \{\hat 0\}.$$  
The codimension 1 chains of  $\Pi_n^w \setminus \{\hat 0\}$ are of the form $\breve c$, where $c$ is
either
\begin{enumerate}
\item unrefinable in some maximal interval $[\hat 0, [n]^i]$ except between one pair of adjacent elements $x < y$, where $[x,y]$ is 
an interval of length 2 in $[\hat 0, [n]^i]$, or 
 \item unrefinable in $[\hat 0, x]$, where $x$   is a weighted partition of $[n]$ consisting of  exactly two blocks.
 \end{enumerate}
The former case yields the cohomology relations of Types I, II and III given in Section~\ref{section:genhom}, with 
$\bar c$ replaced by $\breve c$.  The latter case yields 
 the additional cohomology relation:
\begin{enumerate}
\item[{\bf Type IV}:] The  two blocks of $x$ are either $0$-merged to
get a single-block partition $z_1$ or $1$-merged to get a single-block partition $z_2$. The open
interval
$(x,\hat{1})$ is equal to $\{z_1,z_2\}$, see Figure \ref{fig:type4}. Hence the Type IV elementary
cohomology relation is
$$(\breve c \cup \{z_1\} )+  (\breve c \cup \{z_2\} ) = 0.$$
\end{enumerate}

\begin{figure}
\centering
\begin{tikzpicture}[line join=bevel,scale=0.8]
\begin{scope}
  \tikzstyle{every node}=[inner sep=0pt, scale=0.65, minimum width=4pt]
  \node (v1-2-3-4) at (0,0)  {$A^{a}| B^{b}$};
  \node (v12-34) at (0,4)  {$\hat{1}$};
  \node (v12-3-4) at (-2,2)  {$AB^{a+b}$};
  \node (v1-2-34) at (2,2)  {$AB^{a+b+1}$};
  \draw [] (v12-34) -- (v12-3-4);
  \draw [] (v12-34) -- (v1-2-34);
  \draw [] (v1-2-34) -- (v1-2-3-4); 
  \draw [] (v12-3-4) -- (v1-2-3-4);
 \end{scope}
\end{tikzpicture}
\caption{Type IV cohomology relation}
\label{fig:type4}
\end{figure}
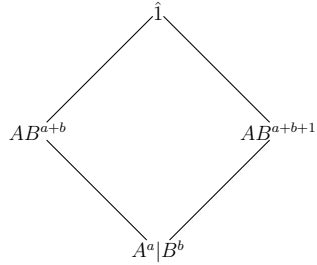

 The reader can verify, using the cohomology relations of Type~I (with $\bar c$ replaced by, $\breve c$), that the 
 proof of Lemma
\ref{lemma:52} goes through for $\tilde H^{n-2}(\Pi_n^w \setminus\{\hat 0\})$.   
Hence $\tilde H^{n-2}(\Pi_n^w \setminus\{\hat 0\})$ 
is generated by chains of the form $\breve c(\Upsilon)$ where $\Upsilon \in \BT_n$.  The  reader can also check, 
using the  relations of Types~I,~II, and~III, that the relations in 
Theorem~\ref{proposition:binarybasishomology}  hold (with $\bar c$ replaced by $\breve c$). It follows from the 
cohomology relation of Type~IV that 
\begin{equation} \label{eq:type4}
 \breve c(\Upsilon_1
\substack{\red \\ \wedge} \Upsilon_2) = - \breve c(\Upsilon_1
\substack{\blue \\ \wedge}\Upsilon_2),\end{equation}
  for all  $\Upsilon_1
\substack{\red \\ \wedge} \Upsilon_2 \in \BT_n$.

Recall $\comb_n = \bigcup_{i=0}^{n-1} \comb_{n,i}$ and let $\lyn_n = 
\bigcup_{i=0}^{n-1} \lyn_{n,i}$.
\begin{theorem}\label{proposition:combbasiscohomologyhat}
 The sets $$\{\breve c(T,\sigma) : \,(T,\sigma) \in \comb_n, \, \col(\root(T))=
\blue \}$$ and $$\{\breve c(T,\sigma) : \,(T,\sigma) \in \lyn_n, \, \col(\root(T)) = \red \}$$
are bases for 
 $\tilde H^{n-2}(\Pi_n^w \setminus\{\hat 0\})$. 
\end{theorem}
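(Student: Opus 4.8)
The plan is to treat the two claimed bases separately — the bicolored Lyndon basis directly from the EL-shellability of $\widehat{\Pi_n^w}$, and the bicolored comb basis by combining a spanning argument with a cardinality count. For the latter, recall from Corollary~\ref{proposition:dimensionhat} (with Proposition~\ref{prop:free}) that $\tilde H^{n-2}(\Pi_n^w\setminus\{\hat 0\})$ is a free ${\bf k}$-module of rank $(n-1)^{n-1}$, so any generating set of that cardinality is automatically a basis, over $\ZZ$ or over any field. Recall also that $\overline{\widehat{\Pi_n^w}}=\Pi_n^w\setminus\{\hat 0\}$, that the $\breve c$-version of Lemma~\ref{lemma:52} holds, and that this cohomology is presented by the chains $\breve c(T,\sigma)$, $(T,\sigma)\in\BT_n$, modulo the relations (\ref{relation:1h}), (\ref{relation:3h}), (\ref{relation:5h}) (with $\bar c$ replaced by $\breve c$) together with (\ref{eq:type4}).

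For the bicolored Lyndon set I would use the EL-labeling of $\widehat{\Pi_n^w}$ (Theorem~\ref{theorem:ellabelingposet}) and Theorem~\ref{elth}: it is enough to determine the ascent-free maximal chains of $\widehat{\Pi_n^w}$. Every such chain has the form $c\cup\{\hat 1\}$ with $c$ a maximal chain of some $[\hat 0,[n]^r]$, and its label word is $\lambda(c)$ with the letter $(1,n+1)^0$ appended. For the chain to be ascent-free, $c$ itself must be ascent-free in $[\hat 0,[n]^r]$, so by Theorem~\ref{thm:ascfreeEL} we have $c=c(T,\sigma,\tau_{T,\sigma})$ with $(T,\sigma)\in\lyn_{n,r}$; and, since $(T,\sigma)$ is normalized, the final merge of $c$ joins the block containing $1$ to another block, so the last letter of $\lambda(c)$ is $(1,m)^{u}$ with $m\le n$ and $u=u(\col(\root(T)))$, which is $<(1,n+1)^0$ in $\Lambda_n$ exactly when $u=0$. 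Hence the top step fails to create an ascent precisely when the root of $T$ is red, so the ascent-free maximal chains of $\widehat{\Pi_n^w}$ are the $c(T,\sigma,\tau_{T,\sigma})\cup\{\hat 1\}$ with $(T,\sigma)\in\lyn_n$ and $\col(\root(T))=\red$. Theorem~\ref{elth} then gives that $\{\breve c(T,\sigma,\tau_{T,\sigma}):(T,\sigma)\in\lyn_n,\ \col(\root(T))=\red\}$ is a basis, and by the $\breve c$-version of Lemma~\ref{lemma:52} this coincides, up to signs, with the asserted set $\{\breve c(T,\sigma):(T,\sigma)\in\lyn_n,\ \col(\root(T))=\red\}$.

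For the bicolored comb set, the cardinality count is already in hand: the proof of Proposition~\ref{proposition:combbasiscardinality} shows there are exactly $(n-1)^{n-1}$ blue-rooted bicolored combs in $\BT_n$, so it remains to prove that $\{\breve c(T,\sigma):(T,\sigma)\in\comb_n,\ \col(\root(T))=\blue\}$ spans. Using the $\breve c$-versions of Lemma~\ref{lemma:52} and (\ref{relation:1h}) and then (\ref{eq:type4}), cohomology is generated by the $\breve c(\Upsilon)$ with $\Upsilon=\Upsilon_1\substack{\blue \\ \wedge}\Upsilon_2$ a normalized bicolored binary tree with blue root. I would then run a straightening in the spirit of Proposition~\ref{proposition:combbasisspans}, now also invoking (\ref{relation:3h}), (\ref{relation:5h}) and (\ref{eq:type4}): if $\Upsilon_2$ is a leaf, recurse on $\Upsilon_1$ (first made blue-rooted by (\ref{eq:type4})) and reattach the leaf under a blue root, which preserves both the comb shape and normalization; if $\Upsilon_2$ is blue-rooted, apply (\ref{relation:3h}) to rewrite $\breve c(\Upsilon)$ in terms of trees whose right subtree is a proper subtree of $\Upsilon_2$; and if $\Upsilon_2$ is red-rooted, apply (\ref{relation:5h}) and then use (\ref{eq:type4}) and (\ref{relation:1h}) to return each of the six resulting terms to normalized blue-rooted form.

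The step I expect to be the main obstacle is proving that this last straightening terminates. The weight--inversion statistic used in Proposition~\ref{proposition:combbasisspans} need not strictly decrease here, because composing (\ref{relation:5h}) with (\ref{eq:type4}) can raise the inversion count. I expect the remedy is to measure a normalized blue-rooted tree $\Upsilon=\Upsilon_1\substack{\blue \\ \wedge}\Upsilon_2$ by the pair consisting of the number of leaves of $\Upsilon_2$ and a bit recording whether $\Upsilon_2$ is red-rooted, ordered lexicographically, with an outer induction on the total number of leaves; one then checks that every term produced in the three cases above has strictly smaller such measure (the outputs of the blue-rooted case have a strictly smaller right subtree, and the one same-size output of the red-rooted case has a blue-rooted rather than red-rooted right subtree). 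Once spanning is established, the cardinality count and Corollary~\ref{proposition:dimensionhat} force the blue-rooted comb set to be a basis, which completes the proof.
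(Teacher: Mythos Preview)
Your treatment of the bicolored Lyndon set is correct and coincides with the paper's proof: you identify the ascent-free chains of $\widehat{\Pi_n^w}$ as those with top label $(1,n+1)^0$ preceded by $(1,m)^1$, hence red-rooted bicolored Lyndon trees, and then invoke Theorem~\ref{elth} and the $\breve c$-version of Lemma~\ref{lemma:52}.

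For the bicolored comb set your overall strategy (spanning by straightening plus the count $(n-1)^{n-1}$) also matches the paper, but there is a genuine gap in the base case of your inner recursion. When $\Upsilon_2=\ell$ is a leaf you propose to ``recurse on $\Upsilon_1$ (first made blue-rooted by (\ref{eq:type4}))''. Relation (\ref{eq:type4}) is the Type~IV cohomology relation coming from the interval $(x,\hat 1)$; it only applies at the root of the whole tree, not at a proper subtree. Hence you cannot recolor the root of $\Upsilon_1$ inside $\Upsilon_1\substack{\blue\\\wedge}\ell$, and the outer induction on the total number of leaves does not transfer: the identity $\breve c(\Upsilon_1)=\sum a_i\,\breve c(\Psi_i)$ in $\tilde H^{\,n-3}(\Pi_{n-1}^w\setminus\{\hat 0\})$ uses (\ref{eq:type4}) in the smaller poset and therefore does not lift to an identity among the $\breve c(\Upsilon_1\substack{\blue\\\wedge}\ell)$ in $\tilde H^{\,n-2}(\Pi_n^w\setminus\{\hat 0\})$.

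The paper avoids this by first straightening the whole tree into (possibly red-rooted) bicolored combs using only the subtree relations (\ref{relation:1h})--(\ref{relation:5h}) via Proposition~\ref{proposition:combbasisspans}, then applying (\ref{eq:type4}) once at the actual root, and finally, if the resulting right child is blue, applying a single Case~1 step (which strictly shrinks the right subtree). A clean fix within your framework is: in the base case, straighten $\Upsilon_1$ \emph{as a subtree} into a combination of bicolored combs using only (\ref{relation:1h})--(\ref{relation:5h}) (this is exactly Proposition~\ref{proposition:combbasisspans}, no (\ref{eq:type4}) needed); then observe that for any bicolored comb $\Psi$ and leaf $\ell$, the tree $\Psi\substack{\blue\\\wedge}\ell$ is already a blue-rooted bicolored comb, since the comb condition imposes no constraint at an internal node whose right child is a leaf. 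This removes the need for both (\ref{eq:type4}) on subtrees and the outer induction, and your inner (\,size of right subtree, right-subtree-is-red\,) measure then handles the remaining two cases exactly as you described.
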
 

\begin{proof}  

{\em The Comb Basis}:  We prove, by induction on the size $r(\Upsilon)$ of the right subtree of $\Upsilon$,  that if 
$\Upsilon$ is a normalized tree in  $\BT_n$ then $\breve c(\Upsilon)$ can be expressed as a linear combination of 
chains of the form $\breve c(\Upsilon^\prime)$, where $\Upsilon^\prime$ is a blue-rooted bicolored comb.   Since 
the relations in 
Theorem~\ref{proposition:binarybasishomology}  hold (with $\bar c$ replaced by $\breve c$),
we can use  the straightening algorithm in the proof of Proposition~\ref{proposition:combbasisspans} to express 
 $\breve c(\Upsilon)$  as a linear combination of chains of the form $\breve c(\Upsilon^\prime)$, where 
 $\Upsilon^\prime$ is a bicolored comb whose right subtree has size at most $r(\Upsilon)$.  
 If $\Upsilon^\prime$ is red-rooted we can use relation~(\ref{eq:type4}) to change the root color to blue.  The only 
 way that the modified blue-rooted $\Upsilon^\prime$  will fail to be a bicolored comb is if the right child of its root is 
 blue, in which case we can apply
 Case~1 of the straightening algorithm to $\Upsilon^\prime$.   We thus have that $\breve c(\Upsilon^\prime)$ is a 
 linear combination of two chains   
 $\breve c(\Upsilon_1)$ and $\breve c(\Upsilon_2)$, where each 
 $\Upsilon_i \in \BT_n$ and $r(\Upsilon_i)  < r(\Upsilon^\prime) \le r(\Upsilon)$.  
 By induction, each $\breve c(\Upsilon_i)$  is a linear combination of chains 
 associated with blue-rooted bicolored combs. The same is thus true for each $\breve c(\Upsilon^\prime)$ and for 
 $\breve c(\Upsilon)$.  Hence 
 $\{\breve c(T,\sigma) : \,(T,\sigma) \in \comb_n, \, \col(\root(T))=\blue \}$ 
spans.  We conclude that this set is a basis by the step in the proof of 
Proposition~\ref{proposition:combbasiscardinality}
that shows that there are $(n-1)^{n-1}$ blue-rooted combs and Corollary~\ref{proposition:dimensionhat}.

{\em The Lyndon Basis:} 
From the EL-labeling of
Theorem
\ref{theorem:ellabelingposet} we have that all the maximal chains of $\widehat{\Pi_n^w}$ have last 
label
$(1,n+1)^0$. Then for a maximal chain to be ascent-free it must have a second to last label of
the form $(1,a)^1$ for $a \in [n]$.   By
Theorem~\ref{thm:ascfreeEL}, we see that the ascent-free chains correspond to red-rooted 
bicolored Lyndon trees.  It therefore follows from Theorem~\ref{elth} and Lemma~\ref{lemma:52} (with $\bar c$ 
replaced by $\breve c$) that  the second set is a basis for $\tilde H^{n-2}(\Pi^w_n \setminus\{\hat 0\})$.
\end{proof}

Since the comb basis  was shown to span $\tilde
H^{n-3}(\Pi_n^w\setminus \{\hat 0 \})$ by using only the relations of  
Theorem~\ref{proposition:binarybasishomology} and relation (\ref{eq:type4}) we can conclude that these are the 
only relations in a presentation of $\tilde
H^{n-3}(\Pi_n^w\setminus \{\hat 0 \})$.  We summarize with the following result.

\begin{theorem}
 The set $\{ \breve{c}(\Upsilon) : \Upsilon \in \BT_{n}\}$ is a generating set for $\tilde
H^{n-3}(\Pi_n^w\setminus \{\hat 0 \})$,
subject only to the relations of Theorem~\ref{proposition:binarybasishomology} (with $\bar c$ replaced by 
$\breve c$) and relation (\ref{eq:type4}).
\end{theorem}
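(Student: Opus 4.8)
The plan is to establish this presentation of $\tilde H^{n-3}(\Pi_n^w\setminus\{\hat 0\})$ by the same two-sided argument used to prove Theorem~\ref{proposition:binarybasishomology}: show both that the listed relations actually hold in cohomology, and that they already span all cohomology relations, so that no further relations are needed. Let $M$ be the free ${\bf k}$-module on the set $\{\breve c(\Upsilon):\Upsilon\in\BT_n\}$, let $R$ be the submodule generated by the relations of Theorem~\ref{proposition:binarybasishomology} (with $\bar c$ replaced by $\breve c$) together with relation~(\ref{eq:type4}), and let $q\colon M\to \tilde H^{n-2}(\Pi_n^w\setminus\{\hat 0\})$ be the natural surjection sending $\breve c(\Upsilon)$ to its cohomology class. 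I must prove first that $R\subseteq\ker q$, and second that $\ker q\subseteq R$, i.e.\ $q$ factors through an isomorphism $M/R\cong \tilde H^{n-2}(\Pi_n^w\setminus\{\hat 0\})$.

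First I would verify $R\subseteq\ker q$. That the generating set $\{\breve c(\Upsilon):\Upsilon\in\BT_n\}$ spans and that the relations of Theorem~\ref{proposition:binarybasishomology} hold (with $\breve c$ in place of $\bar c$) is exactly the content of the paragraph preceding the statement: Lemma~\ref{lemma:52} goes through verbatim for $\tilde H^{n-2}(\Pi_n^w\setminus\{\hat 0\})$ using the Type~I relations, and the Type~II and Type~III elementary cohomology relations (now read inside $\Pi_n^w\setminus\{\hat 0\}$ rather than inside $(\hat 0,[n]^i)$) yield relations~(\ref{relation:3h}) and~(\ref{relation:5h}) by precisely the computations in the proof of Theorem~\ref{proposition:binarybasishomology}. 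Finally, relation~(\ref{eq:type4}) is a direct restatement of the Type~IV elementary cohomology relation: for $\Upsilon_1\substack{\col\\\wedge}\Upsilon_2\in\BT_n$ the chain $c$ obtained from $\breve c(\Upsilon_1\wedge\Upsilon_2)$ by deleting its rank-$(n-1)$ element is unrefinable in $\Pi_n^w\setminus\{\hat 0\}$ except across the length-$2$ interval $[x,\hat 1]$, whose proper part has the two elements $z_1,z_2$ of Figure~\ref{fig:type4}, and setting the coboundary of that codimension-$1$ chain to zero gives $\breve c(\Upsilon_1\substack{\red\\\wedge}\Upsilon_2)+\breve c(\Upsilon_1\substack{\blue\\\wedge}\Upsilon_2)=0$. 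Hence $R\subseteq\ker q$ and $q$ descends to a surjection $\bar q\colon M/R\twoheadrightarrow \tilde H^{n-2}(\Pi_n^w\setminus\{\hat 0\})$; in particular $\rank M/R\ge \rank \tilde H^{n-2}(\Pi_n^w\setminus\{\hat 0\})=(n-1)^{n-1}$ by Corollary~\ref{proposition:dimensionhat}.

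The reverse inequality $\rank M/R\le (n-1)^{n-1}$ is the crux, and it is supplied by the straightening argument already carried out in the proof of Theorem~\ref{proposition:combbasiscohomologyhat}. There it is shown, using \emph{only} the relations of Theorem~\ref{proposition:binarybasishomology} (with $\breve c$) together with~(\ref{eq:type4}), that every generator $\breve c(\Upsilon)$ with $\Upsilon$ normalized can be rewritten as a ${\bf k}$-linear combination of $\breve c(\Upsilon')$ with $\Upsilon'$ a blue-rooted bicolored comb; combined with relation~(\ref{relation:1h}) (used to normalize an arbitrary labeled tree), this shows that the image in $M/R$ of the set $\{\breve c(T,\sigma):(T,\sigma)\in\comb_n,\ \col(\root(T))=\blue\}$ spans $M/R$. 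That set has cardinality $(n-1)^{n-1}$ by the blue-rooted count established inside the proof of Proposition~\ref{proposition:combbasiscardinality}. Therefore $\rank M/R\le(n-1)^{n-1}=\rank\tilde H^{n-2}(\Pi_n^w\setminus\{\hat 0\})$, so $\bar q$ is an isomorphism. The main obstacle, as in Theorem~\ref{proposition:binarybasishomology}, is this spanning/upper-bound step — but here there is nothing new to do, since the straightening algorithm and the blue-rooted comb count were both already executed in Section~\ref{section:combinatorialbases}; what remains is only to observe that those arguments invoke no cohomology relation beyond the ones listed in the statement. Assembling these observations gives the claimed presentation.
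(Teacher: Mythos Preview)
Your proposal is correct and follows essentially the same approach as the paper. The paper's own argument is the single paragraph immediately preceding the theorem: since the blue-rooted comb spanning set for $\tilde H^{n-2}(\Pi_n^w\setminus\{\hat 0\})$ in Theorem~\ref{proposition:combbasiscohomologyhat} was obtained by a straightening that uses only the listed relations, and since that set has cardinality $(n-1)^{n-1}$ equal to the rank of cohomology, those relations suffice; you have simply written this out more formally as the two-sided rank comparison $\rank M/R\ge\rank\tilde H^{n-2}$ (surjectivity) and $\rank M/R\le(n-1)^{n-1}$ (straightening plus blue-rooted comb count).
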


\section{Whitney cohomology} \label{section:whitney}
Whitney cohomology (over the field ${\bf k}$) of a poset $P$ with a minimum element $\hat 0$ is
defined for each integer $r$ as follows
$$WH^r(P):= \oplus_{x \in P} \tilde H^{r-2}((\hat 0, x);{\bf k}).$$  Whitney (co)homology was
introduced in \cite{Bac1975} and   further studied in \cite{Sund1994, Wachs1999}.
It is shown in \cite{OrlikSolomon1980} that if $P$ is a geometric lattice then there is a vector
space isomorphism between  $\oplus_r WH^r(P)$ and the Orlik-Solomon algebra of $P$ that becomes a
graded $G$-module isomorphism when $G$ is a group acting on $P$. 
The symmetric group $\sym_n$ acts naturally on $WH^r(\Pi_n)$ and on the multilinear component
$\land^r \lie(n)$,  of the  $r$th exterior power of the free Lie algebra on $[n]$.
In \cite{BarceloBergeron1990} Barcelo and Bergeron, working with the Orlik-Solomon algebra,
establish 
the  following $\sym_n$-module isomorphism 
$$WH^{n-r}(\Pi_n) \simeq_{\sym_n} \land^r\lie(n) \otimes \sgn_n.$$
In \cite{Wachs1998} Wachs shows that an extension of her correspondence between generating sets of
$\tilde H^{n-3}(\overline{\Pi}_n)$ and $\lie(n) \otimes \sgn_n$ can be used to prove this result.

Let $\land^r \lie_2(n)$ be the multilinear component of the exterior algebra of the  free Lie
algebra on $[n]$ with two compatible brackets.  A {\em bicolored binary forest} is a sequence of bicolored
binary trees.  Given a bicolored binary forest $F$ with $n$ leaves and  $\sigma \in \sym_n$, let
$(F,\sigma)$ denote the {\it labeled}  bicolored binary forest  whose $i$th leaf from left to right
has label $\sigma(i)$.  Let $\mathcal {BF}_{n,r}$ be the set of labeled  bicolored binary forests
with $n$ leaves and $r$ trees.   If the $j$th labeled bicolored binary tree of 
$(F,\sigma)$ is $(T_j,\sigma_j)$ for each $j=1,\dots r$ then define
$$[F,\sigma] := [T_1,\sigma_1] \land \dots \land [T_r,\sigma_r],$$ where
now $\land$ denotes the wedge product operation in the exterior algebra.  The set $\{[F,\sigma] :
(F,\sigma) \in \mathcal {BF}_{n,r}\}$ is a generating set for $\land^r \lie_2(n)$.

The set $\mathcal {BF}_{n,r}$ also provides a natural generating set for $WH^{n-r}(\Pi_n^w)$.  For
$(F,\sigma) \in \mathcal {BF}_{n,r}$, let $ c(F,\sigma)$ be the unrefinable chain of $\Pi_n^w$ whose
rank $i$ partition is obtained from its rank $i-1$ partition by $\col_i$-merging the blocks $L_i$
and $R_i$, where $\col_i$ is the color of the $i$th postorder internal node $v_i$ of $F$, and $L_i$ 
and $R_i$ are  the respective sets of leaf labels in the left and right subtrees of $v_i$.

The symmetric group $\sym_n$ acts naturally on $\land^r \lie_2(n)$ and on $WH^r(\Pi_n^w)$ for each
$r$.
We have the following generalization of Theorem~\ref{theorem:liehomisomorphism} and \cite[Theorem
7.2]{Wachs1998}.  The proof is similar to that of Theorem~\ref{theorem:liehomisomorphism} and is
left to the reader.
\begin{theorem}\label{th:whitney} For each $r$, there is an $\sym_n$-module isomorphism
$$\phi:\land^r \lie_2(n) \to WH^{n-r}(\Pi_n^w) \otimes \sgn_n$$ determined by
$$\phi([F,\sigma]) = \sgn(\sigma) \sgn(F) \bar c(F,\sigma), \qquad (F,\sigma) \in \mathcal
{BF}_{n,r},$$
where if $F$ is the sequence $T_1,\dots,T_r$ of bicolored binary trees  then 
$$\sgn(F) :=  (-1)^{I(T_2)+I(T_4) + \dots + I(T_{2\lfloor r/2 \rfloor})} \sgn(T_1) \sgn(T_2) \dots
\sgn(T_r).$$
\end{theorem}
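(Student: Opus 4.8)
The plan is to follow the blueprint already laid out in the proof of Theorem~\ref{theorem:liehomisomorphism}, adapting it from bicolored binary trees to bicolored binary forests. First I would set up the presentations on both sides. On the Lie side, iterating the presentation of Proposition~\ref{proposition:binarybasislie} for each tree in a forest, together with the standard relations of the exterior algebra (antisymmetry $[T_1,\sigma_1]\wedge[T_2,\sigma_2] = -[T_2,\sigma_2]\wedge[T_1,\sigma_1]$ of wedge factors, and $x\wedge x = 0$), gives a presentation of $\land^r\lie_2(n)$ with generators $\{[F,\sigma] : (F,\sigma)\in\BF_{n,r}\}$. On the Whitney cohomology side, one needs the analog of Theorem~\ref{proposition:binarybasishomology} for $WH^{n-r}(\Pi_n^w)$: since $WH^{n-r}(\Pi_n^w) = \oplus_{x\in\Pi_n^w, \rho(x) = r}\tilde H^{n-r-2}((\hat 0,x))$ and each lower interval $[\hat 0,x]$ is a product of maximal intervals of smaller weighted partition posets (Proposition~\ref{proposition:upperlowerideals}(3)), the generating chains $\bar c(F,\sigma)$ and the cohomology relations among them are obtained from the tree case by working within each block factor. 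The relations are: relations (\ref{relation:1h}), (\ref{relation:3h}), (\ref{relation:5h}) applied inside any single tree $T_j$ of the forest, plus a ``swap'' relation exchanging two entire adjacent trees $T_j,T_{j+1}$ with a sign $(-1)^{|I(T_j)|\,|I(T_{j+1})|}$ coming from reordering the postorder listing (this is exactly Lemma~\ref{lemma:52} applied to the linear extension that lists all internal nodes of $T_{j+1}$ before those of $T_j$), plus the relation killing $\bar c(F,\sigma)$ when two trees of $F$ have equal leaf-label sets (forced since those ranks would coincide, so the chain degenerates).

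Second I would check that $\phi$ is well defined, i.e.\ that each Lie relation maps to a cohomology relation. The within-tree relations (\ref{relation:1})--(\ref{relation:5}) map to (\ref{relation:1h})--(\ref{relation:5h}) by the identical computation carried out in the proof of Theorem~\ref{theorem:liehomisomorphism}, since $\phi$ restricted to the portion of the forest involving $T_j$ is — up to the overall sign $\sgn(\sigma)\sgn(F)$, which is unaffected — the map $\phi$ of that theorem. The exterior antisymmetry relation $[F,\sigma]\wedge(\text{swap }T_j, T_{j+1})$ maps to the tree-swap cohomology relation: this is where the correction term $(-1)^{I(T_2)+I(T_4)+\cdots}$ in the definition of $\sgn(F)$ is needed. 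Swapping $T_j$ and $T_{j+1}$ introduces a factor $(-1)^{l(w_j)l(w_{j+1})}$ in $\sgn(\sigma)$ (Lemma~\ref{lemma:signpermutationrelation}, with $l(w_j)$ the number of leaves $=|I(T_j)|+1$), a factor $(-1)^{|I(T_j)|+|I(T_{j+1})|}$ in the ``every other tree'' correction of $\sgn(F)$, possibly a factor from $\sgn(T_j)\sgn(T_{j+1})$ if one reinterprets — actually no, $\sgn(T_j)\sgn(T_{j+1})$ is symmetric — so the net sign is $(-1)^{l(w_j)l(w_{j+1}) + |I(T_j)| + |I(T_{j+1})|} = (-1)^{(|I(T_j)|+1)(|I(T_{j+1})|+1) + |I(T_j)| + |I(T_{j+1})|} = (-1)^{|I(T_j)||I(T_{j+1})| + 1}$, and the extra $-1$ is exactly the sign of the exterior antisymmetry, so everything matches the $(-1)^{|I(T_j)||I(T_{j+1})|}$ of the tree-swap cohomology relation. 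Finally, $x\wedge x = 0$ maps to the degenerate-chain relation since equal trees (in particular equal leaf sets) force $\bar c(F,\sigma) = 0$.

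Third, once $\phi$ is a well-defined $\sym_n$-equivariant surjection of generating sets, it is an isomorphism provided the stated relations generate all cohomology relations and the dimensions agree. The dimension count follows from $WH^{n-r}(\Pi_n^w) = \oplus_{\alpha : \rho(\alpha) = r}\tilde H^{n-r-2}((\hat 0,\alpha))$, Corollary~\ref{corollary:lyndonbasiscohomology} (or Theorem~\ref{proposition:combbasiscohomology}) applied to each factor via Proposition~\ref{proposition:upperlowerideals}(3), matching the Bershtein--Dotsenko--Khoroshkin comb-type count for $\land^r\lie_2(n)$; alternatively one cites that the tree-level isomorphism of Theorem~\ref{theorem:liehomisomorphism} already matches $\lie_2(k,j)$ with $\tilde H^{k-3}((\hat 0,[k]^j))$ and takes products/exterior powers. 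That the listed relations suffice follows from the tree case (Remark~\ref{remark:finalstep}: the straightening algorithm for combs uses only relations (\ref{relation:1h})--(\ref{relation:5h})) applied blockwise, plus the obvious straightening of forests into some canonical order of their trees using the swap relation.

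The main obstacle I expect is bookkeeping the signs in $\sgn(F)$ correctly against the three distinct sources — the permutation sign from reordering leaf labels, the tree signs $\sgn(T_j)$, and the exterior-algebra sign — and in particular verifying that the ``every other tree'' correction $(-1)^{I(T_2)+I(T_4)+\cdots+I(T_{2\lfloor r/2\rfloor})}$ is precisely the right choice so that \emph{both} the within-tree relations and the tree-swap relation come out consistent (a naive choice like $\sgn(F) = \prod_j\sgn(T_j)$ would fail the swap check). This is exactly the kind of calculation the authors deem routine enough to leave to the reader, and the structural content — reducing to Theorem~\ref{theorem:liehomisomorphism} via the product decomposition of lower intervals — is straightforward.
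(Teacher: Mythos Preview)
Your proposal is correct and follows exactly the approach the paper intends: the authors' ``proof'' consists solely of the sentence ``The proof is similar to that of Theorem~\ref{theorem:liehomisomorphism} and is left to the reader,'' and you have carried out precisely that adaptation, including the key sign check for the tree-swap relation (which is where the correction $(-1)^{I(T_2)+I(T_4)+\cdots}$ in $\sgn(F)$ earns its keep). One small indexing slip: the nonzero summands of $WH^{n-r}(\Pi_n^w)$ come from $x$ with $\rho(x)=n-r$ (equivalently $|x|=r$ blocks), not $\rho(x)=r$; and your remark about $x\wedge x=0$ is vacuous in the multilinear component since distinct trees in $(F,\sigma)$ have disjoint leaf-label sets.
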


\begin{corollary}\label{cor:whitdim} For $0 \le r \le n-1$, $$\dim \land^{n-r} \lie_2(n) = \dim WH^{r}(\Pi_n^w) =
\binom{n-1}r n^r.$$
Moreover if $\land \lie_2(n)$ is the multilinear component of the exterior algebra of the free Lie
algebra on $n$ generators and $WH(\Pi_n^w) = \oplus_{r \ge 0} WH^r(\Pi_n^w)$ then
$$\dim \land \lie_2(n) = \dim WH(\Pi_n^w) = (n+1)^{n-1}.$$
\end{corollary}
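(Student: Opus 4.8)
The plan is to compute $\dim WH^r(\Pi_n^w)$ directly by a Möbius‑function count and then read off the free Lie algebra side from Theorem~\ref{th:whitney}. Substituting $n-r$ for $r$ in that theorem gives an $\sym_n$‑module isomorphism $\land^{n-r}\lie_2(n) \cong WH^r(\Pi_n^w)\otimes\sgn_n$, so in particular $\dim\land^{n-r}\lie_2(n) = \dim WH^r(\Pi_n^w)$, and it suffices to prove $\dim WH^r(\Pi_n^w) = \binom{n-1}{r}n^r$.

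First I would observe that for every $x \in \Pi_n^w$ the closed interval $[\hat 0, x]$ is a closed interval of $\widehat{\Pi_n^w}$, hence EL‑shellable by Theorem~\ref{theorem:ellabelingposet} (closed intervals of EL‑shellable posets are EL‑shellable), hence Cohen–Macaulay. Thus $(\hat 0, x)$ has the homotopy type of a wedge of spheres all of dimension $\rho(x)-2$, so $\tilde H^{j}((\hat 0, x))$ vanishes unless $j = \rho(x)-2$, in which case it is free of rank $|\mu_{\Pi_n^w}(\hat 0, x)|$ (the reduced Euler characteristic of $\Delta(\overline{[\hat 0,x]})$ being $\mu_{\Pi_n^w}(\hat 0,x)$, with the standard convention that the $x=\hat 0$ summand $\tilde H^{-2}((\hat 0,\hat 0))$ is one‑dimensional). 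Hence in $WH^r(\Pi_n^w) = \bigoplus_{x \in \Pi_n^w}\tilde H^{r-2}((\hat 0,x))$ only the summands with $\rho(x) = r$ survive, giving $\dim WH^r(\Pi_n^w) = \sum_{x:\,\rho(x)=r}|\mu_{\Pi_n^w}(\hat 0,x)|$. Now I would apply Corollary~\ref{corfor}, which gives $|\mu_{\Pi_n^w}(\hat 0,\alpha)| = |\{F \in \mathcal F_n : \alpha(F) = \alpha\}|$. Since $\rho(\alpha) = n-|\alpha|$, the weighted partitions $x$ of rank $r$ are exactly those with $n-r$ blocks, and $\alpha(F)=x$ for such an $x$ precisely when $F$ is a rooted forest on $[n]$ with $n-r$ trees; conversely every such forest $F$ has $\rho(\alpha(F))=r$. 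So the sum collapses to the number $|\mathcal F_n^{n-r}|$ of rooted forests on $[n]$ with $n-r$ trees, and Proposition~\ref{proposition:numberofforests} yields
\[
\dim WH^r(\Pi_n^w) = |\mathcal F_n^{n-r}| = \binom{n-1}{n-r-1}\,n^{r} = \binom{n-1}{r}\,n^r,
\]
which together with Theorem~\ref{th:whitney} establishes the first displayed formula.

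For the remaining assertions I would sum over $r$: by the binomial theorem
\[
\dim WH(\Pi_n^w) = \sum_{r=0}^{n-1}\binom{n-1}{r}\,n^r = (n+1)^{n-1},
\]
and, reindexing with $s=n-r$ (noting $\land^s\lie_2(n)=0$ unless $1\le s\le n$ and that $\land^0\lie_2(n)$ contributes nothing to the multilinear component when $n\ge 1$),
\[
\dim\land\lie_2(n) = \sum_{s=1}^{n}\dim\land^s\lie_2(n) = \sum_{r=0}^{n-1}\binom{n-1}{r}\,n^r = (n+1)^{n-1}.
\]
The argument is short once Theorem~\ref{theorem:ellabelingposet}, Corollary~\ref{corfor} and Proposition~\ref{proposition:numberofforests} are in hand; I expect the only delicate point to be bookkeeping — the degree shift in the definition of $WH^r$ together with the boundary convention for the degenerate interval $(\hat 0,\hat 0)$, and the remark that Cohen–Macaulayness of \emph{all} lower intervals $[\hat 0,x]$ is itself a consequence of EL‑shellability of $\widehat{\Pi_n^w}$ — rather than any genuine combinatorial obstacle.
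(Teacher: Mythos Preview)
Your proof is correct and follows essentially the same route as the paper. The paper's proof simply cites the already-computed signless Whitney number $|w_r(\Pi_n^w)|=\binom{n-1}{r}n^r$ from equation~(\ref{whiteq}) (which in turn was derived from Corollary~\ref{corfor} and Proposition~\ref{proposition:numberofforests} via the characteristic polynomial), whereas you unpack that computation explicitly and also make explicit the Cohen--Macaulayness step needed to identify $\dim WH^r(\Pi_n^w)$ with $|w_r(\Pi_n^w)|$; the underlying ingredients are identical.
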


\begin{proof} Since $\dim WH^{r}(\Pi_n^w)$ equals the signless $r$th Whitney number of the first
kind $|w_r(\Pi_n^w)|$, the result follows from Theorem~\ref{th:whitney}, equation (\ref{whiteq}),
and the binomial formula.
\end{proof}

For a result that is closely related to Corollary~\ref{cor:whitdim}, see 
\cite[Theorem~2]{BershteinDotsenkoKhoroshkin2007}.

\section{Related work}\label{section:futurework} 
In \cite{Dleon2013a} Gonz\'alez D'Le\'on considers a more general version of $\Pi_n^{w}$   and uses
it to study $\lie_k(n)$,
the multilinear component  of the free Lie algebra with $k$ compatible brackets, where $k$ is an
arbitrary positive integer. In particular, he
uses an EL-labeling of the generalized version of $\Pi_n^{w}$ to obtain a combinatorial description
of the dimension of $\lie_k(n)$. 
This answers a question posed by Liu \cite{Liu2010} on how to 
generalize $\lie(n)$ further and to
find the right combinatorial objects to compute the dimensions.
The comb basis and the Lyndon basis are also further generalized in this paper to
multicolored versions.

By Theorem~\ref{proposition:combbasiscohomology} and 
Corollary~\ref{corollary:lyndonbasiscohomology} we conclude that the set of bicolored combs 
 and bicolored
Lyndon trees are equinumerous (cf. Remark~\ref{rem:equi}).   In 
\cite{Dleon2013a} Gonz\'alez D'Le\'on presents bijections between
the multicolored combs, multicolored Lyndon trees and a certain class of permutations, which generalize  the
classical
bijections between the sets of combs, Lyndon trees and permutations in $\sym_{n-1}$.

It can be concluded from equation (\ref{equation:drake}) that the generating polynomial of rooted
trees enumerated by number of descents
$\sum_{i = 0}^{n-1} |\T_{n,i}| t^i$ has only negative real roots.  Since the 
polynomial is also
palindromic (or symmetric), this implies it can be written using nonnegative coefficients in the basis 
$\{t^i(1+t)^{n-1-2i}\}_{i=0}^{\lfloor \frac{n-1}{2} \rfloor}$, 
a property known as $\gamma$-positivity. In \cite{Dleon2013c} the $\gamma$-positivity
 property is discussed further and generalized. In particular, formulas and 
combinatorial interpretations of the $\gamma$-coefficients in terms of sets of normalized labeled binary trees are
provided.

In a forthcoming paper we will study  a more general weighted partition poset obtained by associating weights 
to the bonds of an arbitrary graph on $n$-vertices.

\section*{Acknowledgement}
The authors would like to thank the referee for  valuable comments and 
 for pointing out an error in an earlier proof of Proposition~\ref{proposition:combbasisspans}.

\appendix
\section{Homology and Cohomology of a Poset}
\label{section:homologyposets}
We give a brief review of poset (co)homology with group actions.  For further information see
\cite{Wachs2007}.  

Let $P$ be a
 finite poset  of length $\ell$. The reduced simplicial (co)homology of  $P$ is defined to be the
reduced simplicial (co)homology of its order complex $\Delta(P)$, where $\Delta(P)$ is the
simplicial complex whose faces are the chains of $P$.  We will review the definition here by dealing
directly with the chains of $P$, and not resorting to the order complex of $P$.
  
  Let ${\bf k}$ be an arbitrary field or the ring of integers $\ZZ$. The  (reduced) chain and
cochain complexes
\[
 \cdots
\stackrel{\xrightarrow{\partial_{r+1}}}{\xleftarrow[\hspace*{0.05in}\delta_{r}\hspace*{0.05in}]{}}
C_r(P) 
\stackrel{\xrightarrow{\hspace*{0.05in}\partial_{r}\hspace*{0.05in}}}{\xleftarrow[\delta_{r-1}]{}}C_
{r-1}(P)
\stackrel{\xrightarrow{\partial_{r-1}}}{\xleftarrow[\delta_{r-2}]{}}\cdots
\] are defined
by letting $C_r(P)$  be the ${\bf k}$-module generated by the chains of length $r$ in $P$, for each integer $r$, 
and letting the boundary maps $\partial_r:C_r(P)\rightarrow C_{r-1}(P)$ be defined on chains by 
\[
\partial_r(\alpha_0<\alpha_1<\cdots<\alpha_r)=\sum_{i=0}^{r}(-1)^{i}(\alpha_0<\cdots
<\hat{\alpha_i}<\cdots<\alpha_r),
\]
where $\hat{\alpha_i}$ means that the element $\alpha_i$ is omitted from the chain.  Note that  
$C_{-1}(P)$ is generated by the empty chain and $C_r(P) = (0)$ if $r< -1$ or $r>\ell$.

Let $\langle,\rangle$ be the bilinear form on $\bigoplus _{r=-1}^{\ell} C_r(P)$ for which the chains of $P$ form an orthonormal basis.   
This allows us to define the coboundary map $\delta_r:C_r(P)\rightarrow C_{r+1}(P)$ by
\[
\langle \delta_r(c),c' \rangle=\langle c,\partial_{r+1}(c') \rangle.
\]
Equivalently,
\begin{equation}\label{equation:cohomologyboundary}
\delta_r(\alpha_0<\cdots<\alpha_r)=\sum_{i=0}^{r+1}(-1)^{i}\sum_{\alpha \in
(\alpha_{i-1},\alpha_i)}(\alpha_0<\cdots <\alpha_{i-1}<\alpha<\alpha_i<\cdots<\alpha_r),
\end{equation}
for all chains $\alpha_0<\cdots<\alpha_r$, where  $\alpha_{-1}=\hat{0}$ and $\alpha_{r+1}=\hat{1}$ of the augmented poset $\hat P$ in which a minimum element $\hat 0$ and a maximum element $\hat 1$ have been adjoined to $P$. 

Let $r \in \ZZ$.  Define the cycle space
$Z_r(P):= \ker \partial_r$ and the boundary space
$B_r(P) := \im \partial_{r+1}$.  Homology of the poset $P$ in dimension $r$
is defined by
$$\tilde H_r(P) : = Z_r(P) / B_r(P).$$
Define the cocycle space
$Z^r(P):= \ker \delta_r$ and the coboundary space
$B^r(P) := \im \delta_{r-1}$.  Cohomology of the poset $P$ in dimension $r$
is defined by
$$\tilde H_r(P) : = Z^r(P) / B^r(P).$$

For $x \le y$ consider the open interval $(x,y)$ of $P$.  Note that if $y$ covers $x$ then $(x,y)$ is the empty poset whose only chain is the empty chain. Therefore $\tilde H_r((x,y))= \tilde H^r((x,y)) = 0$ unless $r=-1$, in which case $\tilde H_r((x,y)) = \tilde H^r((x,y)) = \bold k$.  If $y=x$ then we adapt the convention that $\tilde H_r((x,y))= \tilde H^r((x,y)) = 0$ unless $r=-2$, in which case $\tilde H_r((x,y)) = \tilde H^r((x,y)) = \bold k$.

\begin{proposition}\label{prop:free} Let $P$ be a finite poset of length $\ell$ whose order complex
has the homotopy type of a wedge of $m$ spheres of dimension $\ell-2$.  
 Then $\tilde H_{\ell-2}(P) $ and $\tilde H^{\ell-2}(P) $ are isomorphic free ${\bf k}$-modules of
rank $m$.
\end{proposition}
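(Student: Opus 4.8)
The plan is to deduce the statement from the homotopy hypothesis by the standard passage through integral (co)homology and the universal coefficient theorems, so that nothing about $P$ beyond its homotopy type is used.

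First I would invoke homotopy invariance of reduced simplicial homology and cohomology: since $\Delta(P)\simeq\bigvee_{j=1}^m S^{\ell-2}$, for every coefficient ring ${\bf k}$ (an arbitrary field or $\ZZ$) and every $r$ one has $\tilde H_r(P;{\bf k})\cong\tilde H_r(\bigvee_{j=1}^m S^{\ell-2};{\bf k})$, and likewise $\tilde H^r(P;{\bf k})\cong\tilde H^r(\bigvee_{j=1}^m S^{\ell-2};{\bf k})$. Next I would record the well-known integral computation: $\tilde H_r(\bigvee_{j=1}^m S^{\ell-2};\ZZ)$ is $\ZZ^m$ for $r=\ell-2$ and $0$ otherwise; in particular every $\tilde H_r(P;\ZZ)$ is a \emph{free} abelian group, concentrated in degree $\ell-2$.

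With torsion-freeness in hand I would apply the universal coefficient theorems. In the homology version the $\mathrm{Tor}$ term, and in the cohomology version the $\mathrm{Ext}^1$ term, both involve $\tilde H_{r-1}(P;\ZZ)$, which is free, so these terms vanish; hence $\tilde H_r(P;{\bf k})\cong\tilde H_r(P;\ZZ)\otimes_\ZZ{\bf k}$ and $\tilde H^r(P;{\bf k})\cong\mathrm{Hom}_\ZZ(\tilde H_r(P;\ZZ),{\bf k})$. Specializing to $r=\ell-2$ gives $\tilde H_{\ell-2}(P;{\bf k})\cong{\bf k}^m\cong\tilde H^{\ell-2}(P;{\bf k})$, both free of rank $m$, and any two free ${\bf k}$-modules of the same finite rank are isomorphic, which is the claim. (When $\ell-2$ is the top dimension of $\Delta(P)$, as in all of our applications, the coefficient theorems can be bypassed: $C_{\ell-1}(P)=0$ forces $\tilde H_{\ell-2}(P;{\bf k})=\ker\partial_{\ell-2}$, a submodule of a free module, hence free of rank equal to the Betti number $m$; dually $\tilde H^{\ell-2}(P;{\bf k})$ is the cokernel of $\delta_{\ell-3}$, which is free of the same rank precisely because the integral homology has no torsion.)

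There is no substantive obstacle here; the only points needing care are treating ${\bf k}=\ZZ$ on the same footing as a field — which is exactly why one must first extract torsion-freeness of the integral groups before quoting the coefficient theorems — and checking the degenerate cases ($m$ small, or $\ell-2=-1$ with the convention $S^{-1}=\emptyset$ and reduced homology in degree $-1$ equal to ${\bf k}$), which are consistent with the formulas above.
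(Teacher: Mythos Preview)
Your argument is correct; the paper actually states Proposition~\ref{prop:free} without proof, treating it as a standard fact from algebraic topology, so there is nothing to compare against. Your route via homotopy invariance together with the universal coefficient theorems is exactly the expected justification, and it correctly handles the case ${\bf k}=\ZZ$ by first establishing torsion-freeness of the integral groups.

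One small remark on your parenthetical: as the proposition is phrased, $P$ has length $\ell$, so $\Delta(P)$ has dimension $\ell$, and the spheres live in dimension $\ell-2$, two below the top. In the paper's applications $P$ is always the proper part of a bounded poset of length $\ell$, so $\Delta(P)$ in fact has dimension $\ell-2$ and your ``top dimension'' shortcut applies; but your main argument does not need this, since it works in any degree. So the proof stands either way.
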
 
 
 The following proposition gives a useful tool for identifying bases for top homology and top
cohomology.

\begin{proposition}[see {\cite[Theorem 1.5.1]{Wachs2007}}, {\cite[Proposition 6.4]{ShWa2007}}]
\label{proposition:dual} Let $P$ be a finite poset of length $\ell$ whose order complex has the
homotopy type of a wedge of $m$ spheres of dimension $\ell-2$.  
Let $\{\rho_1,\rho_2,...,\rho_m\} \subseteq Z_{\ell-2}(P)$ and $\{\gamma_1,\gamma_2,...,\gamma_m\}
\subseteq Z^{\ell-2}(P)$. If the  matrix $(<\rho_i,\gamma_j>)_{i,j \in [m]}$
is invertible over ${\bf k}$ then the sets $\{\rho_1,\rho_2,...,\rho_m\}$ and
$\{\gamma_1,\gamma_2,...,\gamma_m\}$
are bases for $\tilde H_{\ell-2}(P;{\bf k})$ and $\tilde H^{\ell-2}(P;{\bf k})$ respectively.
\end{proposition}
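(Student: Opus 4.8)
The plan is to show that the bilinear form $\langle,\rangle$ descends to a \emph{perfect} pairing between $\tilde H_{\ell-2}(P;{\bf k})$ and $\tilde H^{\ell-2}(P;{\bf k})$, and then to finish with a short linear-algebra argument against a dual pair of bases. Write $d:=\ell-2$. Since $\Delta(P)$ is homotopy equivalent to a wedge of $m$ spheres of dimension $d$, the reduced (co)homology of $P$ is concentrated in dimension $d$, and by Proposition~\ref{prop:free} both $\tilde H_d(P;{\bf k})$ and $\tilde H^d(P;{\bf k})$ are free ${\bf k}$-modules of rank $m$.

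First I would check that $\langle,\rangle$ descends from $C_d(P)$ to a well-defined pairing on $\tilde H_d(P)\times\tilde H^d(P)$. Using the defining adjunction $\langle\delta_r c,c'\rangle=\langle c,\partial_{r+1}c'\rangle$, one gets $\langle\partial_{d+1}c,\gamma\rangle=\langle c,\delta_d\gamma\rangle=0$ for $c\in C_{d+1}(P)$ and $\gamma\in Z^d(P)$, and likewise $\langle\rho,\delta_{d-1}x\rangle=\langle\partial_d\rho,x\rangle=0$ for $\rho\in Z_d(P)$ and $x\in C_{d-1}(P)$; hence $\langle B_d,Z^d\rangle=\langle Z_d,B^d\rangle=0$ and the restriction of $\langle,\rangle$ to $Z_d\times Z^d$ factors through a pairing $\tilde H_d(P)\times\tilde H^d(P)\to{\bf k}$. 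The crux is that this pairing is perfect. Over a field it is immediate from the orthogonality relations $(\im\delta_{d-1})^{\perp}=\ker\partial_d$ and $(\im\partial_{d+1})^{\perp}=\ker\delta_d$ inside $C_d(P)$ with its orthonormal basis of chains (so that $(Z^d)^{\perp}=B_d$ and $(Z_d)^{\perp}=B^d$ after taking double complements); in general, including ${\bf k}=\ZZ$, it follows from the universal coefficient theorem, since $\tilde H_{d-1}(P;\ZZ)=0$ kills the Ext term and identifies $\tilde H^d(P;{\bf k})$ with $\Hom_{\bf k}(\tilde H_d(P;{\bf k}),{\bf k})$ compatibly with $\langle,\rangle$. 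This perfectness step is the only one carrying genuine content, and establishing it cleanly over $\ZZ$ is the main obstacle; it is precisely the input imported from \cite[Theorem 1.5.1]{Wachs2007} and \cite[Proposition 6.4]{ShWa2007}.

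Granting a perfect pairing of free rank-$m$ modules, fix a ${\bf k}$-basis $\{h_1,\dots,h_m\}$ of $\tilde H_d(P;{\bf k})$ and let $\{h^1,\dots,h^m\}$ be the dual basis of $\tilde H^d(P;{\bf k})$, so $\langle h_k,h^l\rangle=\delta_{kl}$. Let $\bar\rho_i$ and $\bar\gamma_j$ denote the classes of $\rho_i$ and $\gamma_j$, and write $\bar\rho_i=\sum_k R_{ik}h_k$, $\bar\gamma_j=\sum_l G_{jl}h^l$ with $R,G\in M_m({\bf k})$. Then
\[
\langle\rho_i,\gamma_j\rangle=\langle\bar\rho_i,\bar\gamma_j\rangle=\sum_k R_{ik}G_{jk}=(RG^{\top})_{ij},
\]
so the hypothesis that $(\langle\rho_i,\gamma_j\rangle)_{i,j}$ is invertible over ${\bf k}$ says exactly that $RG^{\top}$ is invertible; hence $\det R$ and $\det G$ are units of ${\bf k}$, so $R$ and $G$ are each invertible over ${\bf k}$. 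Therefore $\{\bar\rho_i\}_{i=1}^m$ and $\{\bar\gamma_j\}_{j=1}^m$ are obtained from bases by invertible linear substitutions and are themselves bases, of $\tilde H_d(P;{\bf k})$ and $\tilde H^d(P;{\bf k})$ respectively. Since $d=\ell-2$, this is the assertion of the proposition.
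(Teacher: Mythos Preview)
The paper does not give its own proof of this proposition; it is stated in the appendix with citations to \cite[Theorem~1.5.1]{Wachs2007} and \cite[Proposition~6.4]{ShWa2007}, and then used as a black box in Section~\ref{sec:treebasis}. So there is nothing in the paper to compare your argument against line by line.

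That said, your proof is correct and is essentially the standard argument behind the cited results. The two key ingredients are exactly as you identify them: first, the adjunction between $\partial$ and $\delta$ shows that $\langle , \rangle$ descends to a pairing $\tilde H_d(P)\times\tilde H^d(P)\to{\bf k}$; second, the wedge-of-spheres hypothesis kills $\tilde H_{d-1}$, so over $\ZZ$ the universal coefficient theorem identifies $\tilde H^d$ with $\Hom(\tilde H_d,\ZZ)$ and the pairing is perfect (over a field the orthogonality argument you give suffices directly). Your factorization $(\langle\rho_i,\gamma_j\rangle)=RG^\top$ then forces both change-of-basis matrices to be invertible, which is the conclusion. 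This is the approach one finds in the references the paper cites.
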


Let $G$ be a finite group. A $G$-poset is a poset $P$
together with a $G$-action on its elements that preserves the partial order; i.e., $x <
y \implies  gx < gy$ in $P$.  

Now assume that ${\bf k}$ is a field. Let $P$ be a $G$-poset and let $0 \le r \le \ell$. Since $g
\in G$ takes $r$-chains to $r$-chains, $g$ acts as a linear
map on the chain space $C_r(P)$ (over ${\bf k}$). It is easy to see that for all $g \in G$ and $c
\in C_r(P)$,
$$g \partial_r (c) = \partial_r(gc)\,\, \mbox{ and } \,\, g\delta_r(c) = \delta_r(gc).$$
Hence $g$ acts as a linear map on the vector spaces $\tilde H_r(P)$ and on $\tilde H^r(P)$. This
implies that
whenever $P$ is a $G$-poset, $\tilde H_r(P)$ and $\tilde H^r(P) $ are $G$-modules. The bilinear
form $\langle , \rangle$, induces a pairing between $\tilde H_r(P) $ and $H^r(P)$, which allows one
to view them as dual $G$-modules. For $G = \sym_n$ we have the $\sym_n$-module isomorphism
\begin{equation}\label{eq:isocohom} \tilde H_r(P) \simeq_{\sym_n} \tilde H^r(P)\end{equation}since
dual $\sym_n$-modules are isomorphic.

\bibliographystyle{plain}
\def\cprime{$'$}

\end{document}